\documentclass[a4paper,11pt]{amsart} 
\usepackage{amsmath,amssymb,amsfonts,amsthm}
\usepackage[abbrev,nobysame]{amsrefs}
\usepackage{mathtools,cases}
\usepackage[colorlinks,linkcolor=blue,anchorcolor=blue,citecolor=blue]{hyperref}
\usepackage{enumerate,enumitem}
\usepackage{graphicx}
\numberwithin{equation}{section}

\usepackage[colorinlistoftodos, textwidth=2cm]{todonotes}

\newcommand{\p}{\partial}
\newcommand{\vphi}{\varphi}
\newcommand{\om}{\omega}
\newcommand{\tri}{\triangle}
\newcommand{\eps}{\epsilon}
\newcommand{\thmref}[1]{Theorem~\ref{#1}}

\newcommand{\lemref}[1]{Lemma~\ref{#1}}
\newcommand{\corref}[1]{Corollary~\ref{#1}}

\newcommand{\KN}{\mathbin{\bigcirc\mspace{-15mu}\wedge\mspace{3mu}}}
\newcommand{\Om}{\Omega}
\newcommand{\Na}{\nabla}

\def\p{\partial}

\DeclareMathOperator{\tr}{Tr}

\newtheorem{theorem}{Theorem}[section]
\newtheorem{thm}[theorem]{Theorem}
\newtheorem{rem}[theorem]{Remark}
\newtheorem{defn}[theorem]{Definition}
\newtheorem{lemma}[theorem]{Lemma}
\newtheorem{lem}[theorem]{Lemma}

\newtheorem{prop}[theorem]{Proposition}

\newtheorem{cor}[theorem]{Corollary}
\newtheorem{ques}[theorem]{Question}

\def\Vol{\mathop{\rm Vol}\nolimits}

\def\tr{\mathop{\rm tr}\nolimits}

\def\cM{{\mathcal M}}

\let\vphi=\varphi

\def\a{\alpha}

\def\curl{\mathop{\rm curl}\nolimits}
\def\div{\mathop{\rm div}\nolimits}

\def\Hess{\mathop{\rm Hess}\nolimits}

\def\sech{\mathop{\rm sech}\nolimits}

\title[Compactness for complete $G_2$-solitons]{On the structure of complete $G_2$-solitons}

\author{Haozhao Li}
\address{Institute of Geometry and Physics and  School of Mathematical Sciences, University of Science and Technology of China, No. 96 Jinzhai Road, Hefei, Anhui Province, 230026, China}
\email{hzli@ustc.edu.cn}

\author{Yuanqing Ma}
\address{Yunnan Normal University, Kunming 650500, P.R. China}
\email{mayuanqinghaha@ustc.edu.cn}

\author{Kai Zheng}
\address{University of Chinese Academy of Sciences, Beijing 100190, P.R. China}
\email{KaiZheng@amss.ac.cn}

\begin{document}
    \maketitle

\begin{abstract}
In this work, we establish compactness theorems for complete gradient $G_2$-solitons under the assumptions of a lower bound on the scalar curvature and a broad growth condition on the potential function associated with the gradient vector field. After first proving Gromov-Hausdorff convergence for such sequences, we sharpen this result by deriving epsilon-regularity estimates. As a consequence, we obtain smooth convergence provided there is a uniform energy bound at half the dimension.
\end{abstract}

    \tableofcontents

\section{Introduction}
A $G_2$-structure on a 7-dimensional manifold $M$ is determined by a positive 3-form $\vphi$. More precisely, the positivity of $\vphi$ is defined via the positivity of the quadratic form
\begin{align*}
G_\vphi(X,Y)= \frac{1}{6}i_X\vphi\wedge i_Y\vphi\wedge\vphi, \quad \forall X,Y\in TM.
\end{align*}
This quadratic form naturally induces a Riemannian metric $g_\vphi$ through
\begin{align}\label{G2 metric}
g_\vphi \cdot dv_{g_{\vphi}}=G_{\vphi},
\end{align}
where $dv_{g_{\vphi}}$ is the volume form corresponding to $g_\vphi$.
The Hodge star operator $\ast_\vphi$ and the codifferential $$\delta=(-1)^k\ast_\vphi d\ast_\vphi$$ acting on $k$-forms are then defined with respect to the metric $g_\vphi$. The 4-form associated to a $G_2$-structure $\vphi$ is given by
$$\psi = \ast_\vphi \vphi.$$

Let $\Omega^k$ denote the space of all smooth $k$-forms on $M$.
For a given $G_2$-structure, the spaces $\Omega^2$ and $\Omega^3$ split as direct sums of irreducible $G_2$-representations:
\begin{align*}
\Omega^2 &= \Omega^2_7 \oplus \Omega^2_{14},\quad
\Omega^3 = \Omega^3_1 \oplus \Omega^3_7 \oplus \Omega^3_{27}.
\end{align*}
See Bryant \cite{MR2282011}*{Section 2.6}.
The subspaces are characterised as follows
\begin{equation}
\left\{
\begin{aligned}
    \Om^2_7& =\left\{ u\lrcorner \vphi \mid u\in \Gamma(TM) \right\} = \left\{ \ast_\vphi(\a \wedge \psi) \mid \a\in \Omega^1 \right\}   \\
    &=\left\{ \beta \in \Lambda^2  \mid\ast_\vphi(\vphi \wedge \beta )=2\beta \right\} \label{eq:de1}  ;\\
   \Om^2_{14}&=\left\{ \beta \in \Om^2 \mid \ast_\vphi(\psi \wedge \beta )=0 \right\} \\
   &=\left\{ \beta \in \Om^2 \mid \ast_\vphi(\vphi \wedge \beta )=-\beta \right\};
\end{aligned}
\right.
\end{equation}
and
\begin{equation}\label{eq:de2}
\left\{
\begin{aligned}
\Om^3_1&=\left\{ f\vphi \mid f\in C^{\infty}(M) \right \};\\
\Om^3_7 &= \left\{ u\lrcorner \psi \mid u\in \Gamma(TM) \right\}=\left\{ \ast_\vphi(\a \wedge \vphi) \mid \a\in    \Om^1 \right\}\\
&=\left\{ \eta \in    \Om^3 \mid \ast_\vphi[\vphi \wedge \ast_\vphi(\vphi \wedge \eta)] =4\eta \right\} ; \\
\Om^3_{27}&=\left\{ \eta \in    \Om^3 \mid \eta \wedge \vphi=0, \eta \wedge \psi=0 \right\}.
\end{aligned}
\right.
\end{equation}

If $\vphi$ is closed, then the torsion form
\[
\tau = \delta\vphi
\]
is a 2-form lying in $\Omega_{14}^2$ and satisfies the following system of exterior differential equations:
\begin{align}\label{tau}
d\psi = \tau \wedge \vphi = -\ast_\vphi \tau, \quad \tau \wedge \psi = 0.
\end{align}

A closed $G_2$-structure $\vphi$ is said to be \emph{torsion-free} if it is coclosed, i.e. $$\tau=0.$$ On a compact manifold, this condition is equivalent to the nonlinear harmonicity equation
\begin{align*}
\tri_\vphi\vphi=0.
\end{align*}

The $G_2$-Laplacian flow is the gradient flow of Hitchin's volume functional with respect to a natural gradient-type metric on the space of closed $G_2$-structures and is given by
\begin{align*}
\p_t\vphi=\tri_\vphi\vphi,
\end{align*}
see Bryant \cite{MR2282011}*{Section 6.2 Remark 16}.

A $G_2$-soliton is a self-similar solution of this $G_2$-Laplacian flow.
\begin{defn}
A $G_2$-Laplacian soliton, or simply a $G_2$-soliton,
$$(M,\varphi,g,X),$$
consists of a closed $G_2$-structure $\varphi$ together with a vector field $X$ and a constant $\lambda$ satisfying
\begin{align}\label{$G_2$-soliton}
\Delta_\varphi \varphi = \lambda \varphi + L_X \varphi \quad \text{on } M.
\end{align}
We say the $G_2$-soliton is expanding, steady, or shrinking according to whether
\begin{align*}
\lambda > 0,\quad \lambda = 0,\quad \text{or} \quad \lambda < 0,
\end{align*}
respectively.

If the vector field $X$ is the gradient of a function, that is, if
$$X = \nabla f$$
for some smooth function $f$, then the $G_2$-soliton is called a gradient soliton and is denoted by
$$(M,\varphi,g,f).$$

A $G_2$-soliton is called torsion-free if the underlying $G_2$-structure is torsion-free. A $G_2$-soliton is said to be complete if the associated Riemannian metric \eqref{G2 metric} is complete.
\end{defn}

So far, no compact, non-torsion-free $G_2$-solitons have been found. More precisely, shrinking solitons do not exist, and steady solitons are all torsion-free; see Lin \cite{MR3004019}*{Corollary 1} (note that the sign of $\lambda$ is reversed in their convention). Moreover, $G_2$-solitons with $X=0$ are also torsion-free, as shown in Lotay and Wei \cite{MR3613456}*{Proposition 9.2}. Thus, the only remaining possibility is an expanding soliton with a nonzero vector field $X$, but such a soliton cannot occur on homogeneous manifolds by Podest\`a and Raffero \cite{MR3927848}*{Corollary 2.2}.

There are, however, numerous constructions of complete, non-compact $G_2$-solitons \cite{MR2282011,MR3772582,MR3653239,MR3819121,MR4395079,MR4134249,MR4165690,MR3693936}. In \cite{MR4698533}*{Theorem 1.1}, Ng derives a necessary condition for the existence of gradient $G_2$-solitons on homogeneous spaces. More recently, complete gradient Laplacian solitons have been constructed in \cite{MR4555993,MR4349461,MR4728483,arXiv:2112.09095,arXiv:2501.05437}. The uniqueness of complete gradient shrinkers with conical ends is proved by Haskins, Khan, and Payne \cite{MR4884543}*{Theorem 1.1}.

\bigskip
\begin{defn}
Let $\mathcal M$ be the set of complete gradient $G_2$-solitons $$(M,\varphi,g,f,p)$$ each endowed with a chosen basepoint $p$.
\end{defn}

A natural problem is to determine which singular models are required in order to compactify the moduli space $\mathcal M$.
Our aim is to analyze the potential singular models by employing singularity analysis methods from Riemannian convergence theory.

The structure theory has long served as a powerful framework for studying critical metrics, such as
minimal submanifolds,
Yang-Mills connections,
harmonic maps,
Einstein metrics,
Bach-flat metrics,
Ricci solitons,
Ricci flows, extremal K\"ahler metrics, and Willmore surfaces, among others. In this paper, we broaden the scope of the structure theory to encompass $G_2$-solitons. Our work attempts to provide a more systematic treatment of the $G_2$-soliton in this context, though much remains to be done.

\bigskip
We begin by fixing some standard notation.
We use
\begin{align*}
Rm,\quad Ric,\quad S
\end{align*}
to denote the Riemann curvature tensor, the Ricci curvature, and the scalar curvature of the Riemannian metric $g$, respectively. We write $\mathbb{R}^n$ for the $n$-dimensional Euclidean space.
We set
$d(x,p)$
to represent the geodesic distance, with respect to $g$, between a point $x$ and the basepoint $p$. Given any $k$-form
$$\eta=\frac{1}{k!}\eta_{i_1\dots i_k}\,dx^{i_1\cdots i_k},$$
we consider two norms: the tensor norm
\[
|\eta|=g_\varphi^{i_1j_1}\cdots g_\varphi^{i_k j_k}\,\eta_{i_1\dots i_k}\eta_{j_1\dots j_k},
\]
and the associated form norm
\[
\|\eta\|=g(\eta,\eta)=\frac{1}{k!}|\eta|.
\]

\bigskip

In studying of gradient Ricci solitons
$$\mathrm{Ric} + \nabla^2 f = \lambda g,$$
we have a trace equation and a conservation law
\[
S+\tri f=\lambda \cdot n,\quad S + |\nabla f|^2 - 2\lambda f = \text{constant},
\]
see Hamilton's work on the Ricci flow \cite[Section 20, p.~84]{MR1375255}. These identities are crucial tools, for instance, in controlling the growth of the potential function $f$, as well as in studying volume growth, see \cite{MR2732975}.

In Section \ref{G2 soliton}, we obtain analogous identities for $G_2$-solitons. In this context, the corresponding conservation law is more intricate, as indicated in \eqref{gradient gradient f}:
\begin{align*}
\nabla [S + |\nabla f|^2 + \tfrac{2\lambda}{3} f]
  = \tfrac{2}{3}S\nabla f -2\div A,
\end{align*} where, $A$ is the symmetry product of the torsion form $\tau$, $
A_{ij}=-\frac{1}{2}{\tau_i}^{k}\tau_{kj}$, see Definition \ref{ABS definition}. Nonetheless, we can still employ Liouville-type theorems to the trace equation \eqref{soliton scalar}:
\begin{align*}
-3\tri f - 7\lambda = 2S \leq 0,
\end{align*} benefited from the fact that the scalar curvature of a closed $G_2$-structure is always non-positive. That part of discussion are collected in a separated article \cite{potential function}.

\bigskip

We introduce the following subclass of $\mathcal M$.
\begin{defn}\label{3 assumptions}
We define the moduli space
$$\mathcal M(\Lambda,F)$$
to be the subset of $\mathcal M$ consisting of complete gradient $G_2$-solitons that satisfy the following properties:
\begin{enumerate}[label=(\alph*)]
\item the scalar curvature is uniformly bounded from below,
    \label{Scalar lower bound}
    \begin{align*}
    S \geq -\Lambda,
    \end{align*}
    where $\Lambda(x)$ is a smooth function on $M$,
\item the potential function $f\in C^2(M)$ attains its global minimum at the basepoint $p$ and satisfies
    \label{f condition}
    \begin{align*}
     |\nabla f| \leq F(d(x,p)) ,
    \end{align*}
where $F$ is a smooth, positive, non-decreasing function on
$\mathbb R_+ := [0,\infty).$
\end{enumerate}
We then define
$$\mathcal M(\Lambda,F,\underline{\nu})$$
to be the subset of $\mathcal M(\Lambda,F)$ consisting of gradient $G_2$-solitons that also satisfy the following non-collapsing requirement:
\begin{enumerate}[resume, label=(\alph*)]
\item the local version of Perelman's $\nu$-entropy (see Section \ref{Local Perelman entropy}) admits a uniform lower bound
\label{noncollapsing}
\begin{align*}
\nu\left( B_{g}(x,r), g, r \right) \geq \underline{\nu}, \quad \forall x \in M,\quad r \in (0,1).
\end{align*}
\end{enumerate}
\end{defn}
The space $\mathcal M(\Lambda,F)$ contains at least the following explicit examples: a shrinking $G_2$-soliton constructed by Fowdar \cite{MR4349461}, a cohomogeneity-one, $Sp(2)$-invariant shrinking $G_2$-soliton and a cohomogeneity-one, $SU(3)$-invariant steady $G_2$-soliton obtained by Haskins and Nordstr\"om \cite{arXiv:2112.09095}, and a homogeneous steady $G_2$-soliton due to Fino and Raffero \cite{MR4728483}; see Section \ref{examples}. These examples exhibit different possible growth behaviours of the potential function, which naturally leads one to look for corresponding compactness theorems for general function $F$.

\bigskip

For a $G_2$-soliton, we write the Bakry-\'{E}mery Ricci curvature by
\begin{align*}
Ric_f = Ric + \Hess f.
\end{align*}
This tensor can be controlled in terms of the scalar curvature as
\begin{align*}
\frac{-\lambda + S}{3} g
\leq Ric_f
\leq \frac{-\lambda - S}{3} g
,
\end{align*}
see \lemref{Ricci bound}; a complete proof is given in \cite{MR4732956}*{Theorem 2.15}. As a consequence, every manifold in the class $\mathcal M(\Lambda,F)$ satisfies both a Laplacian comparison theorem and a volume comparison theorem, obtained by adapting the work of Wei and Wylie \cite{MR2577473} to this framework. In particular, these manifolds have the volume doubling property, and this property implies the weak compactness of $\mathcal M(\Lambda,F)$ through Gromov's ball-packing argument \cite{MR2307192}; see Section \ref{pointed Gromov-Hausdorff convergence}.

\begin{defn}\label{weighted volume}
We define the $f$-weighted volume element by
$$dv_f := e^{-f}dv,$$
where $dv$ is the volume form determined by $g$.
The associated weighted volume measure $m_f$ and the normalised measure are given by
\begin{align*}
m_f(\Om)=|\Om|_f=\int_{\Om} dv_f,\quad \mu_f = \frac{m_f}{\int_{B_1(p)} dm_f}.
\end{align*}

Each manifold in $\mathcal M(\Lambda,F)$ naturally defines a metric measure space
$$(M, d, \mu_f),$$
where $d$ denotes the distance structure induced by the Riemannian metric $g$.
\end{defn}


Any limit of a sequence in $\mathcal M(\Lambda,F)$ naturally gives rise to such a metric measure space.

\begin{thm}[Weak compactness]\label{weak convergence}
Let $\left( M_i, \vphi_i, g_i, f_i,p_i\right)$ be a sequence in $$\mathcal M(\Lambda,F).$$

Then, after possibly passing to a subsequence, the associated sequence of metric measure spaces
$$\left( M_i, d_i, f_i,\mu_{f_i},p_i\right)$$
converges in the pointed measured Gromov-Hausdorff sense to a metric measure space
$$\left( M_{\infty}, d_{\infty}, f_{\infty},\mu_\infty,p_\infty\right),$$
see Definition \ref{pmGH convergence}.

Furthermore, the limit space has weak regular-singular decomposition\[
M_{\infty}=\mathcal{R}\cup\mathcal{S},
\] see Definition \ref{weak regular-singular decomposition}.
\end{thm}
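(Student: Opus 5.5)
The proof follows the Wei--Wylie approach to metric measure spaces with Bakry--\'Emery Ricci curvature bounded below. It has three steps: a uniform local volume doubling property, Gromov's precompactness argument together with convergence of the measures and potentials, and the definition of the regular and singular sets in the limit.

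\textbf{Step 1: uniform doubling.} From \lemref{Ricci bound} and $S\ge -\Lambda$ we get
\[
Ric_f\ge \tfrac{-\lambda-\Lambda}{3}\,g .
\]
Hence on each ball $B_R(p_i)$ there is a lower bound $Ric_f\ge -K(R)g$, with $K(R)$ independent of $i$. We may take $\Lambda$ to mean $\sup_{B_R}\Lambda$, and $\lambda$ is normalised as in the paper, so only its sign matters. The gradient bound $|\nabla f_i|\le F(d(x,p_i))$ gives
\[
\mathrm{osc}_{B_R(p_i)} f_i\le 2R\,F(R).
\]
This oscillation bound is the input needed for the Wei--Wylie weighted Laplacian comparison and relative volume comparison. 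Applying them to the weighted measure $dv_{f_i}$, I obtain for $x\in B_R(p_i)$ and $0<r_1<r_2\le R$
\[
\frac{|B_{r_2}(x)|_{f_i}}{|B_{r_1}(x)|_{f_i}}\le C(R,\Lambda,F)\Big(\frac{r_2}{r_1}\Big)^{7+4RF(R)}e^{C r_2\sqrt{K(R)}} .
\]
This is a doubling constant on $B_R(p_i)$ that is uniform in $i$. The same bound holds for the unweighted volume, at the cost of a factor $e^{4RF(R)}$. I expect this step to be the main obstacle. The Wei--Wylie estimates are usually stated with a global bound on $|f|$ or a linear bound on $|\nabla f|$, so I have to check carefully that $F$ enters only through $\sup_{B_R}|\nabla f|$. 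This requires running the comparison along geodesics from $x$, where $f$ changes by at most $rF(R+r)$.

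\textbf{Step 2: precompactness and convergence.} Doubling bounds the number of disjoint $\epsilon$-balls inside $B_R(p_i)$ by some $N(\epsilon,R)$, independent of $i$. Gromov's ball-packing criterion then gives a subsequence converging in the pointed Gromov--Hausdorff sense to a complete length space $(M_\infty,d_\infty,p_\infty)$. The functions $f_i-f_i(p_i)$ are locally uniformly Lipschitz, with constant $F(R)$ on $B_R$. By Arzel\`a--Ascoli through the Gromov--Hausdorff approximations, a further subsequence converges to a locally Lipschitz function $f_\infty$. If normalising $f_i$ by $f_i(p_i)$ is not allowed, the definition of $\mu_{f_i}$ anyway cancels that constant.

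The measures $\mu_{f_i}$ are normalised so that $\mu_{f_i}(B_1(p_i))=1$. By doubling they satisfy $\mu_{f_i}(B_R(p_i))\le C(R)$, so they are locally uniformly bounded. Pushing them forward and applying weak-$*$ compactness (Prokhorov on each ball, then a diagonal argument) yields a Radon limit $\mu_\infty$ with $\mu_\infty(B_1(p_\infty))\in[C^{-1},1]$. The limit $\mu_\infty$ inherits the doubling property. This establishes the pointed measured Gromov--Hausdorff convergence of Definition \ref{pmGH convergence}.

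\textbf{Step 3: weak regular-singular decomposition.} I will let $\mathcal R$ be the set of points all of whose tangent cones, which exist by doubling, are isometric to $\mathbb R^7$, or whatever Definition \ref{weak regular-singular decomposition} requires, and set $\mathcal S=M_\infty\setminus\mathcal R$. The required properties should follow from the doubling and Poincar\'e structure. A Poincar\'e inequality follows from the Laplacian comparison via the segment inequality, in the same way as above. With it one can apply Cheeger--Colding-type arguments in the weighted setting, or at least use that $\mu_\infty$ is doubling so that $\mathcal R$ has full measure. If the definition only asks for a partition with $\mathcal R$ dense, or of full measure, then doubling together with the almost-rigidity of volume cones should suffice.
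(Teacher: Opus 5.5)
\textbf{Gap: Step 3.} Your Steps 1 and 2 are essentially the paper's argument. It gets the lower bound on $Ric_f$ from Lemma~\ref{Ricci bound}, then applies Wei--Wylie comparison, volume doubling, Gromov's packing argument, and compactness of the normalised measures. The gap is in Step 3, which is the substantive half of the statement.

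First, your tentative choice of $\mathcal R$ as the points whose tangent cones are $\mathbb R^7$ does not work for this class. $\mathcal M(\Lambda,F)$ has no non-collapsing hypothesis, so limits may collapse and $\mathcal R_7$ may be empty, and then $\mu_\infty(\mathcal S)=0$ fails. Definition~\ref{weak regular-singular decomposition} uses $\mathcal R=\bigcup_k\mathcal R_k$.

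Second, and more seriously, none of the required properties follows from doubling plus a Poincar\'e inequality. These are $\mu_\infty(\mathcal S)=0$, a \emph{unique} $k$ with $\mu_\infty(\mathcal R_k)>0$, weak and a.e.\ convexity of $\mathcal R$, and H\"older continuity of tangent cones. The Heisenberg group with its Carnot--Carath\'eodory metric is a counterexample: it is Ahlfors regular and supports a Poincar\'e inequality, yet its tangent cone at every point is itself and never Euclidean. ``Almost rigidity of volume cones'' does not close the gap either. Volume cone implies metric cone says tangent cones are metric cones in the non-collapsed case. It says nothing about a.e.\ Euclidean tangent cones after collapse, uniqueness of the dimension, or convexity.

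\textbf{What is needed.} You need the Cheeger--Colding--Naber structure theory, in which the curvature lower bound enters through the Bochner formula.
\begin{itemize}
\item The almost splitting theorem, via Hessian estimates for approximating (weighted) harmonic functions, gives that $\mu_\infty$-a.e.\ point has a unique Euclidean tangent cone.
\item Colding--Naber's heat-flow approximation of distance functions and Hessian estimates along geodesics give uniqueness of $k$, the convexity statements, and H\"older continuity of tangent cones.
\end{itemize}
In the weighted setting, with $Ric_f\ge -Kg$ and $f$, $|\nabla f|$ controlled on balls, this theory is due to Huang--Li--Wang \cite{MR4234100} for their class $\mathcal N_m(F,K)$. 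The paper's proof checks, via Lemma~\ref{Ricci bound} and condition (b), that $\mathcal M(\Lambda,F)$ lies in that class. For this you may normalise $f_i(p_i)=0$, which does not change $\mu_{f_i}$. It then cites their Proposition 2.12, Proposition 2.17, Theorem 1.5, Section 4 and Proposition 4.8. Replace Step 3 by this verification and citation, or else redo the whole theory for $Ric_f$, which doubling alone cannot supply.

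\textbf{Two minor points on Step 1.}
\begin{itemize}
\item Your worry about how $F$ enters is unfounded. The Wei--Wylie comparison under $\partial_r f\ge -a$ along radial geodesics already gives the factor $e^{ar}$, as in Lemma~\ref{volume comparison theorem}.
\item ``Only the sign of $\lambda$ matters'' is false. Uniformity in $i$ of $Ric_{f_i}\ge\frac{-\lambda_i+S}{3}g$ needs an upper bound on $\lambda_i$. You can get it from \eqref{soliton scalar} at the minimum point $p_i$ of $f_i$: $7\lambda_i=-2S(p_i)-3\Delta f_i(p_i)\le 2\Lambda(p_i)$.
\end{itemize}
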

The resulting limit space may be highly singular and may even exhibit collapse. Nonetheless, Huang, Li and Wang \cite{MR4234100}*{Theorem 1.5, Proposition 4.8} establishes that the regular part enjoys specific convexity properties and that the tangent cone depends H\"older continuously in the Gromov-Hausdorff topology, extending the work of Colding and Naber \cite{MR2950772} on manifolds with a lower Ricci curvature bound.
We summarise these consequences in Section \ref{Pointed measured Gromov-Hausdorff convergence}.

\bigskip

Under the additional non-collapsing assumption \ref{noncollapsing}, the Gromov-Hausdorff convergence can be strengthened to pointed $C^{1,\alpha}$ convergence for sequences in $\mathcal M(\Lambda,F,\underline{\nu})$. This follows from a general compactness theorem of H.Z. Li, Y. Li and B. Wang \cite{MR4220743}*{Theorem 10.2}, which extends Cheeger-Naber's result \cite{MR3418535} to certain metric measure spaces by exploiting estimates in harmonic coordinates \cite{MR1074481}, Anderson's gap lemma \cite{MR1074481}, Colding's volume continuity theorem \cite{MR1454700 }, and the characterisation of tangent cones  by Cheeger and Colding \cite{MR1405949,MR1484888}, namely that every tangent cone is a metric cone. We summarise these arguments in Section \ref{C1alpha compactness}.

\begin{thm}[Pointed $C^{1,\alpha}$ compactness]\label{main convergence 3}
Let $\left( M_i, \vphi_i, g_i, f_i,p_i\right)$ be a sequence in
$$\mathcal M(\Lambda,F,\underline{\nu}).$$
Then the pointed Gromov-Hausdorff convergence obtained in \thmref{weak convergence} can in fact be upgraded to pointed $C^{1,\alpha}$ convergence; see Definition \ref{c1alpha convergence}.
Equivalently, after passing to a subsequence, the associated sequence
$$\left( M_i, g_i,f_i,p_i\right)$$
converges in the pointed $C^{1,\alpha}$ sense to a metric space
$$\left( M_{\infty}, g_{\infty}, f_{\infty},p_\infty\right).$$
Furthermore, the limiting space
is a $C^{1,\alpha}$ singular space and the regular-singular decomposition satisfies properties stated
in Definition \ref{regular-singular decomposition}.
\end{thm}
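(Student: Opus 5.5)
The plan is to verify that the class $\mathcal M(\Lambda,F,\underline{\nu})$ satisfies the hypotheses of the general compactness theorem of Li--Li--Wang \cite{MR4220743}*{Theorem 10.2}, and then invoke it. That theorem treats metric measure spaces $(M,g,e^{-f}dv)$ that satisfy three conditions: a two-sided (local) bound on the Bakry--\'Emery Ricci tensor $Ric_f$, a local bound on $|\nabla f|$, and a local $\nu$-entropy lower bound. Its conclusion is exactly pointed $C^{1,\alpha}$ convergence to a $C^{1,\alpha}$ singular space with the regular-singular structure of Definition \ref{regular-singular decomposition}.

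\textbf{Step 1: local two-sided bound on $Ric_f$.} I would use \lemref{Ricci bound},
\[
\tfrac{-\lambda+S}{3}g\le Ric_f\le \tfrac{-\lambda-S}{3}g .
\]
Since $S\le 0$ for closed $G_2$-structures and $S\ge-\Lambda$ by assumption \ref{Scalar lower bound}, this gives $|Ric_f|\le \tfrac{|\lambda|+\Lambda}{3}$ pointwise. The bound is uniform on each ball $B_{g_i}(p_i,R)$ once $\Lambda$ is controlled there. If $\lambda$ is not normalised, I would note that the soliton equation is invariant under rescaling of $\lambda$ by a positive factor after rescaling $g$. I would therefore fix $\lambda\in\{-1,0,1\}$, or simply include $|\lambda|$ in the constants.

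\textbf{Step 2: local bound on the gradient.} Assumption \ref{f condition} gives $|\nabla f_i|\le F(R)$ on $B(p_i,R)$. Since $f_i(p_i)=\min f_i$, I normalise $f_i(p_i)=0$ by adding a constant. This leaves the soliton equation and $\mu_{f_i}$ unchanged, and it yields $0\le f_i\le RF(R)$ on $B(p_i,R)$. Hence the weights $e^{-f_i}$ are uniformly comparable to $1$ on every fixed ball.

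\textbf{Step 3: apply the compactness theorem.} With these bounds, the Wei--Wylie comparison of Section \ref{pointed Gromov-Hausdorff convergence} applies with locally uniform constants. Combined with \ref{noncollapsing}, the entropy bound gives a uniform volume ratio lower bound $|B(x,r)|\ge c(\underline\nu)r^7$ for $r<1$, via the Perelman-type argument of Section \ref{Local Perelman entropy}. Theorem 10.2 of \cite{MR4220743} then runs through harmonic radius estimates: $|Ric_f|$ and $|\nabla f|$ bounded give $|Ric|\le C+|\nabla^2 f|$. One passes to the modified tensor in harmonic coordinates, where the equation $\Delta g_{ij}=-2Ric_{ij}+\dots$ is handled with $\nabla^2 f$ absorbed as $\partial(\nabla f)$. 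The remaining ingredients are Anderson's gap lemma, volume continuity, and the fact that tangent cones are metric cones; these produce the $C^{1,\alpha}$ regular set and the codimension estimate on $\mathcal S$. By a diagonal argument over $R\to\infty$ the convergence is pointed, and it is compatible with the pmGH limit of \thmref{weak convergence}.

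\textbf{Main obstacle.} The hard part is checking that Li--Li--Wang's hypotheses, stated for a global or fixed-ball setting, hold \emph{locally uniformly} on exhausting balls with the $R$-dependent constants $\Lambda|_{B_R}$ and $F(R)$. In particular, the $\nu$-entropy normalisation must match theirs, which requires a rescaling argument on small balls. I would handle this by rescaling each ball $B(x,r)$, $r<1$, so that the curvature and gradient bounds become small, and then applying their theorem ball by ball.
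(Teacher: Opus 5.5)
Your proposal is correct and is essentially the paper's argument: the paper also reduces the statement to Li--Li--Wang's Theorem~10.2. It checks three things: the two-sided bound on $Ric_f$ from \lemref{Ricci bound} together with $S\ge -\Lambda$; the bound $|f|\le d\,F(d)$ coming from $|\nabla f|\le F$ (Remark~\ref{Rem2}); and the weighted non-collapsing $|B(p,1)|_f\ge V_0$ from \lemref{noncollapse}. It then takes strong convexity of $\mathcal R$ from Huang--Li--Wang.

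Your ``main obstacle'' is not really one. The third hypothesis of that theorem is this weighted volume lower bound, which you already derive, not an entropy bound, and the locally varying constants are absorbed by letting $K$ depend on $d(p,x)$ (Remark~\ref{Rem1}), so no ball-by-ball rescaling is needed. Rescaling to normalise $\lambda$ is also unavailable because $\mathcal M(\Lambda,F,\underline{\nu})$ is not scale-invariant. You simply carry $|\lambda|$ in the constants, as the paper tacitly does.
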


\bigskip
In this article, we consider the following question:
\begin{ques}
Could we prove the smooth convergence of the $G_2$-structures $\vphi_i$ on the regular part?
\end{ques}
\begin{defn}[Conifold $G_2$-soliton]
A length space $$\left( M, d,g, f,p\right)$$ is called a conifold $G_2$-soliton, if it is a $C^{\infty}$ singular space in the sense of Definition \ref{regular-singular decomposition} and there exists a $G_2$-structure $\vphi$ on the regular part $\mathcal R$ such that $$(\mathcal R, \vphi, g , f)$$ is
a $G_2$-soliton.
\end{defn}
\begin{rem}
The term ``conifold'' was introduced in Chen and Wang \cite{MR3739253}*{Definition 2.1}, where it was proved that the moduli space of non-collapsed Calabi-Yau conifolds is compact with respect to the pointed $C^{\infty}$ topology.
\end{rem}

To further enhance the regularity of both the convergence and the limiting object on the regular set, we now demonstrate that it is enough to prove epsilon regularity theorems, assuming either a curvature bound at half the dimension or a sufficiently small entropy bound.

In summary, we show that
\begin{thm}[Epsilon regularity]\label{main convergence 4}
Let $\left( M_i, \vphi_i, g_i, f_i,p_i\right)$ be a sequence in
$$\mathcal M(\Lambda,F,\underline{\nu}).$$
Assume that at every point of the regular part, there exists a sufficiently small ball on which either the local entropy is sufficiently small
\[
\underline{\nu}=-\epsilon.
\]

Then the pointed $C^{1,\alpha}$ convergence obtained in \thmref{main convergence 3} actually upgrades to pointed $C^{\infty}$ convergence (see Definition \ref{Cinfty convergence}), and the corresponding limit is a conifold $G_2$-soliton (see Definition \ref{smooth soliton}).
\end{thm}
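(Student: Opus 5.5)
The plan is to reduce smooth convergence to uniform local curvature and torsion bounds on small balls around regular points. Once these bounds hold, a bootstrap through the soliton equation gives all higher derivative estimates. Fix a regular point $x_\infty\in\mathcal R$ and points $x_i\in M_i$ converging to it. By \thmref{main convergence 3}, there is a small radius $r$ such that on $B_{g_i}(x_i,r)$ the metrics converge in $C^{1,\alpha}$ harmonic coordinates.

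\textbf{First step: an $\epsilon$-regularity estimate.} Assume the local entropy satisfies $\nu(B_{g_i}(x_i,r),g_i,r)\ge -\epsilon$. We claim that then
\[
\sup_{B(x_i,r/2)} \bigl(|Rm|+|\tau|^2+|\nabla\tau|\bigr)\le C r^{-2}.
\]
The proof is by contradiction using point-picking. If the claim fails, choose points $y_i$ where $Q_i=|Rm|+|\tau|^2+|\nabla\tau|$ is large, with $Q_i(y_i)\,d(y_i,\partial B)^2\to\infty$. Rescale $g_i$ by $Q_i(y_i)$ so that the normalized quantity equals $1$ at $y_i$ and is bounded by $4$ nearby.

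Under this rescaling, $\lambda$, $\nabla f$ and the bound $\Lambda$ scale away, since they are lower order. Here we use assumption \ref{f condition}: $|\nabla f|\le F$ is bounded on the ball. So the rescaled solitons have $Ric_f\to Ric$ bounded, and in the limit they become torsion-free-like closed $G_2$-structures.

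The rescaled structures have bounded curvature and torsion, and the entropy lower bound gives non-collapsing. We then need smooth convergence of the rescaled sequence. For this, use the elliptic system satisfied by $\vphi$. Since $\vphi$ is closed and $\Delta_\vphi\vphi=d\tau=\lambda\vphi+L_{\nabla f}\vphi$, and $\delta\tau$ is controlled, the pair $(\vphi,\tau)$ solves a first-order elliptic system modulo diffeomorphisms in harmonic coordinates. The metric satisfies $Ric=-\Hess f+\text{(quadratic in }\tau)+\text{(}\nabla\tau\text{)}$. Schauder theory then bootstraps: from $C^{1,\alpha}$ control of $g$ and bounds on $\tau$ we get $C^{k,\alpha}$ control, uniformly.

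The limit is a complete non-flat Ricci-bounded space with entropy $\ge -\epsilon$ at all scales. This forces it to be Euclidean $\mathbb R^7$ by the almost-rigidity of entropy: $\nu\equiv 0$ characterizes Euclidean space. That contradicts $Q=1$ at the base point.

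\textbf{Second step: higher regularity and the limit.} Given the uniform bounds, apply local derivative estimates (Shi-type, or equivalently elliptic estimates from the soliton equation) on $B(x_i,r/4)$. These give $|\nabla^k Rm|+|\nabla^{k+1}\tau|+|\nabla^{k+2}f|\le C_k$. Arzelà–Ascoli, with a diagonal argument over a countable cover of $\mathcal R$, then yields $C^\infty$ convergence of $(g_i,\vphi_i,f_i)$ on the regular part. The limits satisfy closedness, positivity of $\vphi_\infty$ (an open condition preserved under $C^0$ convergence with a metric bound), and the soliton equation \eqref{$G_2$-soliton}. So the limit is a conifold $G_2$-soliton.

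\textbf{Main obstacle.} The hardest part is the blow-up step. It requires an elliptic gauge for the closed $G_2$-structure in which the soliton system is genuinely elliptic, together with a rigidity statement that a limit with entropy near $0$ is flat. I would first try harmonic coordinates plus the identity $d\tau=\Delta_\vphi\vphi$ to control $\tau$ in $W^{1,p}$. Failing that, I would pass to the induced Laplacian flow $\vphi(t)=c(t)\Phi_t^*\vphi$ and use Lotay–Wei local derivative estimates together with the pseudolocality implied by small local $\nu$-entropy.
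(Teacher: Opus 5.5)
Your proposal follows essentially the same route as the paper: the point-picking rescaling of Lemma~\ref{local rescaling procedure R r}, non-collapsing and an injectivity radius bound from the entropy, smooth compactness of the rescaled solitons via the Shi-type and elliptic bootstrap estimates (Propositions~\ref{Derivatives estimates} and~\ref{lift}), a contradiction with a non-flat limit of nearly vanishing entropy (Theorem~\ref{Epsilon regularity}), and then Proposition~\ref{lift} on the regular part to upgrade $C^{1,\alpha}$ to $C^\infty$ convergence. The one step you leave implicit is passing from $\nu\geq-\epsilon$ (rather than $\nu\geq 0$) to flatness of the limit, which the paper supplies through the separate Gap Theorem~\ref{Gap theorem} (a second blow-up with $\epsilon_i\to 0$ plus the rigidity in Lemma~\ref{nu properties}); equivalently, you can run your own contradiction argument with $\epsilon_i\to 0$, so that the limit satisfies $\nu\geq 0$ and rigidity applies directly.
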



Finally, we establish a compactness theorem for complete, non-compact gradient $G_2$-solitons within the subclass $\mathcal M(\Lambda,F,\underline{\nu})$ under the Riemann curvature $L^{\frac{7}{2}}$ bound.
\begin{thm}[Pointed $C^{\infty}$ compactness]\label{main convergence 5}
Let $\left( M_i, \vphi_i, g_i, f_i,p_i\right)$ be a sequence in
$$\mathcal M(\Lambda,F,\underline{\nu})$$
with a uniform $L^{\frac{7}{2}}$ bound on the Riemann curvature:
\begin{equation}
\int_{B_{g_i}(p_i,R)}\left|Rm_{g_i} \right|_{g_i}^{\frac{7}{2}}\leq E(R)<\infty .
\end{equation}

Then a subsequence of $\left( M_i, \vphi_i, g_i, f_i, p_i\right)$ converges, in the pointed $C^\infty$ sense, to a smooth $G_2$-soliton.
\end{thm}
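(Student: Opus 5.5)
The plan is to combine the pointed $C^{1,\alpha}$ compactness of \thmref{main convergence 3} with the epsilon-regularity of \thmref{main convergence 4}, using the $L^{\frac72}$ bound to show that the singular set is empty. Since $\tfrac72$ is exactly half the dimension, the quantity $\int_{B_r(x)}|Rm|^{7/2}$ is scale invariant. The scheme is the one used for Einstein metrics (Anderson, Cheeger--Naber), adapted to solitons.

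\textbf{Step 1: curvature-energy epsilon-regularity.} First I would establish the following statement. There are $\epsilon_0>0$ and $C_k$, depending on $\Lambda,F,\underline{\nu}$, such that
\[
\int_{B_r(x)}|Rm|^{7/2}\le\epsilon_0,\qquad r\le 1,
\]
implies $\sup_{B_{r/2}(x)} r^{2+k}|\nabla^k Rm|\le C_k$ and corresponding bounds $r^{k+1}|\nabla^k\tau|\le C_k$.

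\begin{enumerate}[label=(\roman*)]
\item On the ball, the non-collapsing condition \ref{noncollapsing} together with the Bakry--\'Emery bound of \lemref{Ricci bound} gives a Sobolev inequality.
\item The soliton equation makes $Rm$ and $\tau$ satisfy an elliptic system of drift-Laplacian type. Schematically,
\[
\Delta_f Rm=\nabla^2(\tau*\tau)+Rm*Rm+\nabla f*\nabla Rm+\dots,
\]
with $|\nabla f|$ locally controlled by $F$ through condition \ref{f condition}. Closedness of $\varphi$ gives $|\nabla\varphi|\sim|\tau|$.
\item Moser iteration then gives the pointwise bound, because the quadratic term $Rm*Rm$ has an $L^{7/2}$-small coefficient.
\item Higher derivatives follow by bootstrapping in harmonic coordinates, which \thmref{main convergence 3} already supplies.
\end{enumerate}

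Alternatively, smallness of $\int|Rm|^{7/2}$ combined with the $C^{1,\alpha}$ structure forces the rescaled ball to be close to $\mathbb R^7$. That is the small-entropy hypothesis of \thmref{main convergence 4}, which I can then invoke directly.

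\textbf{Step 2: finiteness of the singular set.} Fix $R$. Cover $B_{g_i}(p_i,R)$ at scale $r$ by balls whose halves are disjoint; volume doubling (from the volume comparison of Section \ref{pointed Gromov-Hausdorff convergence}) controls the overlap. At most $N=C E(R)/\epsilon_0$ of these balls can carry energy at least $\epsilon_0$, and $N$ is independent of $r$ and $i$. By Step 1 the convergence is $C^\infty$ away from these bad balls. Letting $r\to0$ along a diagonal subsequence, the singular set in $B_R(p_\infty)$ consists of at most $N$ points. On the regular part, $\varphi_i$ converges smoothly by the estimates on $\tau$, and so does $f_i$: its Hessian is controlled by the soliton equation and \eqref{soliton scalar}. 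The limit is therefore a conifold $G_2$-soliton with isolated singular points.

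\textbf{Step 3: removing the singular points (main obstacle).} At a singular point $q$, the $C^{1,\alpha}$ structure of \thmref{main convergence 3} and Cheeger--Colding theory give tangent cones that are metric cones $C(\Sigma)$ with $\Sigma$ smooth. The first thing I would try is to show the cone is Euclidean. Rescaling kills both $\lambda$ and $\nabla f$, because $|\nabla f|$ is bounded near $q$. The rescaled limit is thus a torsion-free $G_2$ cone, hence Ricci-flat, with finite $L^{7/2}$ energy. Codimension at least $4$ for the singular set of non-collapsed limits, equivalently $C^{1,\alpha}$ regularity away from codimension $2$ plus Anderson's gap lemma, rules out nontrivial cones, so $\Sigma$ is a round $S^6$. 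Alternatively, $|Rm|\le o(1)\,d(\cdot,q)^{-2}$ by Step 1 at small scales, since the energy in annuli tends to $0$. A removable-singularity argument of Uhlenbeck/Tian type then extends the metric in harmonic coordinates, and $\varphi$ extends across $q$ by elliptic regularity of the system for $(g,\varphi)$.

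The delicate part is the bubbling region at a bad point: energy may concentrate as $i\to\infty$ while the limit remains smooth. To handle it I would show that the $C^{1,\alpha}$ harmonic radius stays bounded below, which is guaranteed by \thmref{main convergence 3}. The $C^{1,\alpha}$ limit is then a manifold, and a removable-singularity result upgrades it to smooth. With all singular points removed, the convergence is pointed $C^\infty$ everywhere.
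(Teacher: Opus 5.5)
Your Steps 1 and 2 match the paper. You use Moser iteration for $u=|Rm|$ with the $f$-weighted local Sobolev inequality and the non-collapsing coming from $\underline{\nu}$, then Shi-type estimates and Anderson's covering argument with volume doubling. This gives locally finitely many curvature singularities. (The ``alternative'' in Step 1, that small $L^{7/2}$ energy plus $C^{1,\alpha}$ control forces small entropy, is unjustified, but you do not need it.)

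The genuine gap is in Step 3. Near an isolated singular point $q$ the energy on annuli tends to zero, so blow-ups are flat cones $C(S^6/\Gamma)$ with $\Gamma\subset O(7)$ acting freely on $S^6$. Everything hinges on showing $\Gamma=\{e\}$, and none of the tools you invoke does this:
\begin{itemize}
\item A singular set of codimension $\ge 4$ is perfectly compatible with isolated points in dimension $7$.
\item Anderson's gap lemma only excludes cones whose volume ratio is close to $1$. It does not exclude $\mathbb{R}^7/\mathbb{Z}_2$, whose ratio is $\tfrac12$.
\item An Uhlenbeck/Tian removable-singularity argument, run on the universal cover of the punctured ball, yields only an orbifold chart $B/\Gamma$, not a smooth point.
\end{itemize}
Your treatment of bubbling is also circular. \thmref{main convergence 3} controls the harmonic radius only on $\mathcal R$, and a lower bound at $q$ is exactly what a singular point violates.

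The missing idea is topological, and it uses both the $G_2$-structure and the odd dimension. The paper first shows that the punctured neighbourhood is connected, by Tian's connectivity lemma with the Bakry--\'Emery volume comparison (\lemref{BEVCT}) replacing Bishop--Gromov. So the tangent cone is a single $C(S^6/\Gamma)$; your Cheeger--Colding cone structure could serve here if you justify it in the Bakry--\'Emery setting. Since $S^6$ is even-dimensional, a finite group acting freely on it is trivial or $\mathbb{Z}_2$. The quotient $\mathbb{RP}^6$ is non-orientable, which contradicts the orientation that $\varphi_\infty$ induces near $q$. Hence $\Gamma=\{e\}$ and the tangent cone is $\mathbb{R}^7$.

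You actually had the ingredients yourself. Since $|T|^2=-S\le\Lambda$, the torsion scales away, so the blow-up is a flat torsion-free $G_2$-cone. Its holonomy group $\Gamma$ therefore lies in $G_2\subset SO(7)$. Every element of $SO(7)$ fixes a nonzero vector, so it cannot act freely on $S^6$ unless it is the identity.

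Only once $\Gamma$ is trivial can you conclude that $q$ is a regular point with smooth convergence there. That final step uses volume convergence and the gap lemma applied to the Ricci-flat bubbles.
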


\begin{rem}
The epsilon regularity theorems under small-energy hypotheses and the non-collapsed orbitfold compactness theorem have been studied for a variety of classes of critical points; see, for example,
Anderson \cite{MR999661} and Bando, Kasue and Nakajima \cite{MR1001844} for Einstein metrics,
Tian and Viaclovsky \cite{MR2138071,MR2166311} for critical metrics including Bach flat metrics and metrics with harmonic curvature in dimension 4,
Chen and Weber \cite{MR2737786} for extremal K\"ahler metrics, Weber \cite{MR2755484} for compact Ricci soliton,
 and the references contained therein.

For complete, non-compact gradient shrinkers, Haslhofer and Buzano (M\"uller) \cite{MR2846384} prove a compactness theorem  under the assumptions of a uniform lower bound on the entropy and a uniform upper bound on the energy.

The proof of \thmref{main convergence 5} made use of these developments. A new aspect of our argument is the realization that the singularities actually do not appear.
\end{rem}

\begin{rem}
The use of Perelman's entropy to establish epsilon-regularity has been developed in several contexts. For recent progress in this direction, we refer the reader to \cite{MR4584264,MR4846770,arXiv:2010.09981,MR3245102}.
\end{rem}

\bigskip


\noindent {\bf Acknowledgments:}
Zheng is partially funded by NSFC grants No. 12571091.
He acknowledges support from the ICTP through the Associates Programme (2020-2025).
H. Z. Li is partially funded by NSFC grant  No. 12471058 and  the CAS Project for Young Scientists     in Basic Research (YSBR-001).

\section{$G_2$-soliton}\label{G2 soliton}
In this section, we begin with a brief review of $G_2$-solitons: one approach is via the exterior differential system associated with the torsion form, and the other is through curvature equations. We then derive several fundamental formulas and estimates for $G_2$-solitons that will be used in subsequent parts of the paper. Throughout this note, we assume that the $G_2$-structure is a closed 3-form.
\subsection{Torsions and curvatures}
\begin{defn}\label{ABS definition}
We introduce the symmetric tensors
\begin{align*}
A_{ij}:=-\frac{1}{2}{\tau_i}^{k}\tau_{kj},\quad B_{ij}:={\vphi_i}^{pq} \nabla_p \tau_{qj} .
\end{align*}
More precisely, the trace of the tensor $A$ equals the squared norm of the torsion form $\tau$:
$$\tr A = \|\tau\|^2.$$
\end{defn}

For a closed $G_2$-structure $\vphi$, its Hodge Laplacian $$\tri_\vphi\vphi=d\tau$$ admits a decomposition into irreducible $G_2$-modules, see \cite{MR3613456}*{Section 2.2}.
\begin{lem}\label{d tau decomposition}
Suppose $\vphi$ is a closed $G_2$-structure. Then
\begin{align*}
\tri_\vphi\vphi=d\tau=\frac{1}{7}\|\tau\|^2\vphi+i_\vphi h_0,\quad
h_0=\frac{1}{2}B_{ij}-\frac{3}{14}\|\tau\|^2g_{ij}+\frac{1}{2}A_{ij}.
\end{align*}
\end{lem}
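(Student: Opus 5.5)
Since $\tri_\vphi\vphi = d\delta\vphi + \delta d\vphi$ and $d\vphi=0$, the first identity $\tri_\vphi\vphi = d\tau$ is immediate from $\tau=\delta\vphi$. The work is in decomposing the closed 3-form $d\tau$ into its type components in $\Om^3_1\oplus\Om^3_7\oplus\Om^3_{27}$ via \eqref{eq:de2}. I will write $d\tau = a\vphi + X\lrcorner\psi + i_\vphi h$ with $h$ symmetric and traceless, where $i_\vphi h := h_i^{\,l}\vphi_{ljk}$ suitably antisymmetrized, and determine each piece.

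\textbf{The $\Om^3_7$ part vanishes.} Since $\tau\in\Om^2_{14}$, we have $\tau\wedge\psi=0$ and $\tau\wedge\vphi=-\ast_\vphi\tau$ by \eqref{tau}. Differentiating $\tau\wedge\psi=0$ gives $d\tau\wedge\psi = -\tau\wedge d\psi = -\tau\wedge\tau\wedge\vphi$. For $\beta\in\Om^2_{14}$, the defining relation $\ast(\vphi\wedge\beta)=-\beta$ gives $\tau\wedge\tau\wedge\vphi = -\tau\wedge\ast\tau = -\|\tau\|^2 dv$. So $d\tau\wedge\psi = \|\tau\|^2 dv$. Since $\vphi\wedge\psi = 7\,dv$ and $\Om^3_7,\Om^3_{27}$ wedge trivially with $\psi$, this yields $a=\frac17\|\tau\|^2$. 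For the $\Om^3_7$ component, I pair $d\tau$ with $\ast(\alpha\wedge\vphi)$; equivalently I compute $\ast(\vphi\wedge\ast d\tau)$ or use $d\tau\wedge\vphi$ (which detects the $\Om^3_7$ part, since $\Om^3_1$ and $\Om^3_{27}$ wedge to zero with $\vphi$). We have $d\tau\wedge\vphi = d(\tau\wedge\vphi) = -d\ast\tau = -\ast\ast d\ast\tau \propto \ast\delta\tau$. Because $\delta\tau=\delta\delta\vphi=0$, this gives $X=0$.

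\textbf{The $\Om^3_{27}$ part.} The remaining task is to identify the traceless symmetric $h$. I will work in index notation: $(d\tau)_{ijk} = \nabla_i\tau_{jk}+\nabla_j\tau_{ki}+\nabla_k\tau_{ij}$. The general recovery formula for a 3-form $\eta = a\vphi + i_\vphi h$ is $h_{ij} + c\,a\,g_{ij} \propto \eta_{ikl}\vphi_j^{\;kl}$, symmetrized, with normalizing constants read off from contraction identities such as $\vphi_{ikl}\vphi_j^{\;kl}=6g_{ij}$ and $\vphi_{ijk}\psi^{\;jk}{}_{lm}$-type identities.

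I will therefore contract $(d\tau)_{ikl}\vphi_j^{\;kl}$. The term $\nabla_i\tau_{kl}\vphi_j^{\;kl}$ equals $\nabla_i(\tau_{kl}\vphi_j^{\;kl}) - \tau_{kl}\nabla_i\vphi_j^{\;kl}$. The first piece vanishes because $\tau\in\Om^2_{14}$ is annihilated by contraction with $\vphi$. The second piece is handled with $\nabla\vphi = T * \psi$, where the full torsion is $T=-\frac12\tau$ for closed structures; via the $\vphi$–$\psi$ contraction identities it produces the quadratic term $\tau_i^{\,k}\tau_{kj}$, i.e. $A$. The two other terms combine to $2\nabla_k\tau_{li}\vphi_j^{\;kl}$, which is $B$ up to sign and ordering, after using $\tau\in\Om^2_{14}$ to convert $\vphi_j^{\;kl}\tau_{li}$-type expressions. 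Taking the trace removes the pure trace, and that trace must match $a=\frac17\|\tau\|^2$ via $\tr A=\|\tau\|^2$ and $\tr B = -\|\tau\|^2$. The latter follows from $\vphi^{ipq}\nabla_p\tau_{qi} = -\nabla_p(\vphi^{pqi}\tau_{qi}) + (\nabla\vphi)*\tau$, whose first term is zero.

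\textbf{Main obstacle.} The hard part is bookkeeping of constants and signs in the $G_2$ contraction identities, i.e. getting exactly $\frac12 B + \frac12 A - \frac{3}{14}\|\tau\|^2 g$. My check is consistency: $\tr h_0$ must match whatever convention lets $i_\vphi(g)$ be proportional to $\vphi$ (here $i_\vphi g = 3\vphi$). With $\tr B=-\|\tau\|^2$, $\tr h_0 = -\frac12 - \frac32 + \frac12 = -\frac32$ in units of $\|\tau\|^2$. So $i_\vphi h_0$ contains $-\frac{3}{14}\cdot 3\|\tau\|^2\vphi$ from the trace part plus terms from $A$ and $B$. Together these should sum with $\frac17\|\tau\|^2\vphi$ to give the correct $\Om^3_1$ part $\frac17\|\tau\|^2\vphi$. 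I would verify this against the derivation in \cite{MR3613456}*{Section 2.2}, whose conventions match ours.
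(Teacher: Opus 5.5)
Your $\Om^3_1$ computation (pairing $d\tau$ with $\psi$) is the same as the paper's. Your proof that the $\Om^3_7$ part vanishes, $d\tau\wedge\vphi=d(\tau\wedge\vphi)=-d\ast_\vphi\tau=\pm\ast_\vphi\delta\delta\vphi=0$, is correct and more self-contained than the paper, which cites \cite{MR4732956}*{Proposition 2.19} for $\pi^3_7 d\tau=0$ and \cite{MR3613456}*{Equation (2.20)} for the $\Om^3_{27}$ part.

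\textbf{The gap is in the $\Om^3_{27}$ part, which is the real content of the lemma.} You never fix the constants, and the one number you supply, $\tr B=-\|\tau\|^2$, is wrong. Since the $\Om^3_1$ part is already exhausted by $\frac17\|\tau\|^2\vphi$, $i_\vphi h_0$ must lie in $\Om^3_{27}$, so $h_0$ must be trace-free. With $\tr A=\|\tau\|^2$ this forces $\tr B=2\|\tau\|^2$. That is also what your own formula gives once the non-divergence term is evaluated. Work in an orthonormal frame and use $\nabla_p\vphi_{ipq}=T_{pe}\psi_{eipq}$, $\tau_{ab}\psi_{abcd}=-2\tau_{cd}$ and $T=-\frac12\tau$:
\[
-\tau_{qi}\nabla_p\vphi_{ipq}=-T_{pe}\,\tau_{qi}\psi_{eipq}=-T_{pe}\,\tau_{qi}\psi_{qiep}=2T_{pe}\tau_{ep}=\tau_{pe}\tau_{pe}=2\|\tau\|^2 .
\]
Alternatively, \eqref{Ric S T} reads $Ric=\frac12(A-B)$, and tracing it with $S=-\frac12\|\tau\|^2$ gives the same value. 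With your value you get $\tr h_0=-\frac32\|\tau\|^2$. Then $i_\vphi h_0$ would contribute an extra $-\frac{9}{14}\|\tau\|^2\vphi$, so your ``consistency check'' would refute the statement rather than confirm it. Your last paragraph glosses over this.

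\textbf{How to close the argument.} In the normalisation $i_\vphi g=3\vphi$ you need two identities. The first is $\vphi_{ikl}\vphi_{jkl}=6g_{ij}$. The second is $(i_\vphi h)_{ikl}\vphi_{jkl}=4h_{ij}+2(\tr h)g_{ij}$ for symmetric $h$; the $\psi$-terms coming from \eqref{psi} drop out when contracted against $h$. Your contraction then gives exactly $(d\tau)_{ikl}\vphi_{jkl}=2A_{ij}+2B_{ij}$:
\begin{itemize}
\item the first term is $-\tau_{kl}\nabla_i\vphi_{jkl}=-\tau_{ip}\tau_{pj}=2A_{ij}$;
\item the other two terms combine to $2\vphi_{jkl}\nabla_k\tau_{li}=2B_{ij}$.
\end{itemize}
Match this with $6a\,g_{ij}+4h_{ij}$, where $a=\frac17\|\tau\|^2$. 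This yields $h=\frac12A+\frac12B-\frac{3}{14}\|\tau\|^2g=h_0$. The trace of the same identity, $6\|\tau\|^2=2\tr A+2\tr B$, independently confirms $\tr B=2\|\tau\|^2$. With these lines added, your route proves the lemma without relying on \cite{MR3613456}.
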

\begin{proof}
Since $\tau\in \Om_{14}^2$, we employ the characterisation in \eqref{eq:de2} and invoke \cite{MR4732956}*{Proposition 2.19} to obtain $\pi_7^3d\tau=0$ together with the explicit formula for $\pi_1^3d\tau$.
To confirm that
$$\pi_1^3d\tau=\frac{1}{7}\|\tau\|^2\vphi,$$
we compute
\begin{align*}
g(\vphi,d\tau)dv
&=d\tau \wedge \ast_\vphi \vphi
=d(\tau \wedge \ast_\vphi \vphi)-\tau \wedge d(\ast \vphi)
=\tau\wedge\ast_\vphi\tau
=\|\tau\|^2 dv.
\end{align*}
The $\Om_{27}^3$-component is given by $i_\vphi h_0$, where $h_0$ is a trace-free symmetric $2$-tensor introduced in \lemref{d tau decomposition}; see also \cite{MR3613456}*{Equation (2.20)}. Combining these components yields the identities claimed in the lemma.
\end{proof}

We next recall the expression for $d\tau^3$ from \cite{MR2282011}*{Corollary 3} and for $L_X\vphi$ from \cite{MR3613456}*{Section 9}.
\begin{lem}\label{d tau soliton}
Let $(M,\vphi,g,X)$ be a $G_2$-soliton. Then
\begin{align*}
&L_X\vphi=di_X\vphi
=(\tfrac{1}{7}\|\tau\|^2-\lambda)\vphi+i_\vphi h_0,\\
&\frac{1}{3}d\tau^3
=-\frac{1}{7}\|\tau\|^4 dv+\tau^2\wedge \pi_{27}^3(d i_X\vphi).
\end{align*}
\end{lem}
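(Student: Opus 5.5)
The first identity follows from Cartan's formula and the soliton equation; the second from the Leibniz rule, the relations \eqref{tau}, and type decompositions.

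\emph{First identity.} Since $\vphi$ is closed, Cartan's formula gives $L_X\vphi=d i_X\vphi+i_Xd\vphi=di_X\vphi$. The soliton equation \eqref{$G_2$-soliton} gives $L_X\vphi=\tri_\vphi\vphi-\lambda\vphi$. Substituting the decomposition of \lemref{d tau decomposition}, $\tri_\vphi\vphi=\tfrac17\|\tau\|^2\vphi+i_\vphi h_0$, yields
\[
L_X\vphi=(\tfrac17\|\tau\|^2-\lambda)\vphi+i_\vphi h_0 .
\]
In particular $\pi^3_7(di_X\vphi)=0$ and $\pi^3_{27}(di_X\vphi)=i_\vphi h_0=\pi^3_{27}(d\tau)$.

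\emph{Second identity.} Since $\tau$ is a 2-form, $d\tau^3=3\,\tau^2\wedge d\tau$, so $\tfrac13 d\tau^3=\tau^2\wedge d\tau$. We decompose $d\tau=\tfrac17\|\tau\|^2\vphi+i_\vphi h_0$ and treat the two pieces separately.

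For the $\Om^3_1$ piece we need $\tau^2\wedge\vphi$. Because $\tau\in\Om^2_{14}$, the characterisation \eqref{eq:de1} gives $\tau\wedge\vphi=-\ast_\vphi\tau$. Hence
\[
\tau^2\wedge\vphi=\tau\wedge(\tau\wedge\vphi)=-\tau\wedge\ast_\vphi\tau=-\|\tau\|^2dv .
\]
So this piece contributes $-\tfrac17\|\tau\|^4dv$.

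For the $\Om^3_{27}$ piece, the first identity gives $i_\vphi h_0=\pi^3_{27}(di_X\vphi)$, which contributes $\tau^2\wedge\pi^3_{27}(di_X\vphi)$. Adding the two contributions gives the claimed formula. This matches Bryant's \cite{MR2282011}*{Corollary 3}, specialised through the soliton equation.

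\emph{Main obstacle.} The only delicate point is sign and normalisation conventions. One must check that $\|\tau\|^2dv=\tau\wedge\ast\tau$ uses the form norm $\|\cdot\|$, consistent with the computation in the proof of \lemref{d tau decomposition}. One must also check that the orientation convention for $\Om^2_{14}$, namely $\ast(\vphi\wedge\beta)=-\beta$, produces the minus sign above. I would verify both against the normalisations fixed earlier in the paper rather than rely on the cited sources, since the paper notes that some references use reversed sign conventions.
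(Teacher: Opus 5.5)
Your proposal is correct and follows essentially the same route as the paper: Cartan's formula with $d\vphi=0$, the soliton equation and Lemma~\ref{d tau decomposition} give the first identity, and $\tfrac13 d\tau^3=\tau^2\wedge d\tau$ together with $\tau^2\wedge\vphi=-\tau\wedge\ast_\vphi\tau=-\|\tau\|^2dv$ and $i_\vphi h_0=\pi^3_{27}(di_X\vphi)$ gives the second. The only cosmetic difference is that you derive $\tau\wedge\vphi=-\ast_\vphi\tau$ from the $\Om^2_{14}$ characterisation (using $\ast_\vphi\ast_\vphi=1$ in dimension $7$), whereas the paper reads it off directly from \eqref{tau}.
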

\begin{proof}
By combining the soliton equation \eqref{$G_2$-soliton} with the decomposition \lemref{d tau decomposition}, we obtain
$$
\lambda\vphi+L_X\vphi
=\tri_\vphi\vphi
=\frac{1}{7}\|\tau\|^2\vphi+i_\vphi h_0.
$$
Using the Cartan formula,
$$L_X\vphi=di_X\vphi,$$
we then obtain the first identity. For the second, observe that
$$\frac{1}{3}d\tau^3=\tau^2\wedge d\tau.$$
Using \lemref{d tau decomposition} again and substituting \eqref{tau}, we find
$$
\tau^2\wedge\vphi
=-\tau\wedge\ast_\vphi\tau
=-\|\tau\|^2 dv.
$$
Finally, we substitute the decomposition of $di_X\vphi$ to obtain the stated formula.
\end{proof}


\begin{defn}
The \textit{$j_\vphi$-operator} \cite{MR2282011}*{Equation (2.18)} is the map from 3-forms $\gamma$ to the space $S^2$ of symmetric 2-tensors defined by
\begin{align*}
    j_\vphi(\gamma)(X,Y)=\ast_\vphi\big(i_X\vphi\wedge i_Y\vphi\wedge\gamma\big).
\end{align*}
\end{defn}
The Ricci tensor and scalar curvature of the Riemannian metric $g$ induced by a closed $G_2$-structure $\vphi$ are given by \cite{MR2282011}*{Equation (4.36)(4.37)}.
\begin{lem}\label{scalar curvature general}
\begin{align*}
    Ric(g)&=\frac{1}{4}\|\tau\|^2g-\frac{1}{4}j_\vphi\Big[d\tau-\frac{1}{2}\ast_\vphi(\tau\wedge\tau)\Big],\quad
    S(g)=-\frac{1}{2}\|\tau\|^2\leq 0.
\end{align*}
\end{lem}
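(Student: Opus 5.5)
The plan is to take Bryant's general formulas for the Ricci tensor and scalar curvature of a $G_2$-structure, written in terms of the full torsion, and specialise them to the closed case $d\vphi=0$. In that case only the $\Omega^2_{14}$ torsion component survives, and it equals $\tau=\delta\vphi$. Since the paper allows citing \cite{MR2282011}*{Equation (4.36)(4.37)}, the main task is to check that the normalisations agree with ours. In particular, the norm $\|\cdot\|$ is the form norm, and the $j_\vphi$ operator has the normalisation used above. It is also useful to derive the scalar curvature independently, as a consistency check on the Ricci formula.

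For the Ricci formula, I would start from Bryant's identity
\[
Ric(g)=\tfrac14\|\tau\|^2 g-\tfrac14 j_\vphi\big(d\tau-\tfrac12\ast_\vphi(\tau\wedge\tau)\big).
\]
It comes from the general curvature decomposition of a $G_2$-structure, in which $Ric$ is expressed through $d$ of the torsion forms plus quadratic torsion terms. One substitutes $\tau_0=\tau_1=\tau_3=0$ and $\tau_2=\tau$. I would quote this from Bryant rather than rederive it. The only check needed is that $j_\vphi(\vphi)=6g$ under our conventions, together with the relation $\tau\wedge\psi=0$ from \eqref{tau}.

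For the scalar curvature, I take the trace of the Ricci formula. This uses two facts about $j_\vphi$: the trace $\tr_g j_\vphi(\gamma)$ only sees the $\Omega^3_1$ component, and $j_\vphi(\vphi)=6g$. From these one gets $\tr j_\vphi(\gamma)=6\cdot 7\, g(\gamma,\vphi)/\|\vphi\|^2=6\,g(\gamma,\vphi)$, using $\|\vphi\|^2=7$. By \lemref{d tau decomposition}, $g(d\tau,\vphi)=\|\tau\|^2$. By \eqref{tau}, $\tau\wedge\tau\wedge\vphi=-\|\tau\|^2dv$, and hence $g(\ast(\tau\wedge\tau),\vphi)=-\|\tau\|^2$. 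Combining these,
\[
S=\tfrac74\|\tau\|^2-\tfrac64\Big(\|\tau\|^2+\tfrac12\|\tau\|^2\Big)=\tfrac74\|\tau\|^2-\tfrac94\|\tau\|^2=-\tfrac12\|\tau\|^2\le 0.
\]
The sign of the $\ast(\tau\wedge\tau)$ term depends on the orientation convention, and I expect this to be the main point to check: the pairing must give $g(\ast(\tau^2),\vphi)=-\|\tau\|^2$ precisely because $\tau\in\Omega^2_{14}$, which satisfies $\ast(\vphi\wedge\tau)=-\tau$.

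As an alternative route for $S$ alone, I would use the general identity $S=-\tfrac12\|\tau_2\|^2$ (plus terms in $\tau_0,\tau_1,\tau_3$ that vanish) from Bryant (4.28). The inequality $S\le0$ is then immediate from the nonnegativity of the norm.
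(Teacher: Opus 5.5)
Your proposal is correct and takes essentially the paper's route. The paper gives no argument beyond quoting Bryant's closed-case formulas (4.36)–(4.37) for $Ric$ and $S$. Your additional trace computation, using $\tr_g j_\vphi(\gamma)=6\,g(\gamma,\vphi)$, $g(d\tau,\vphi)=\|\tau\|^2$ and $g(\ast_\vphi(\tau\wedge\tau),\vphi)=-\|\tau\|^2$, is a valid consistency check that recovers $S=-\tfrac12\|\tau\|^2\le 0$ from the Ricci formula.
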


\begin{defn}
The torsion tensor $T$ is the skew-symmetric tensor
\begin{align*}
    T_{ij}=-T_{ji}=-\frac{1}{2}\tau_{ij}.
\end{align*}
\end{defn}

Using this, we can rewrite Definition \ref{ABS definition} as
\begin{lem}\label{ABS}
\begin{align*}
    A_{ij}=-2{T_i}^{k}T_{kj},\quad
    B_{ij}=-2{\vphi_i}^{pq} \nabla_p T_{qj}, \quad
    \tr A=-2S,\quad
    S=-|T|^2.
\end{align*}
\end{lem}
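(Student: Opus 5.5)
Lemma~\ref{ABS} amounts to rewriting Definition~\ref{ABS definition} and Lemma~\ref{scalar curvature general} in terms of $T=-\frac12\tau$, together with a careful comparison of the tensor norm $|\cdot|$ and the form norm $\|\cdot\|$. I will verify the four identities in order.

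\emph{The identities for $A$ and $B$.} Since $\tau_{ij}=-2T_{ij}$, both identities follow by substitution:
\[
A_{ij}=-\tfrac12\,{\tau_i}^k\tau_{kj}=-\tfrac12\cdot 4\,{T_i}^kT_{kj}=-2{T_i}^kT_{kj}.
\]
Likewise, because $\nabla$ is linear and $\vphi$ is unchanged,
\[
B_{ij}={\vphi_i}^{pq}\nabla_p\tau_{qj}=-2{\vphi_i}^{pq}\nabla_pT_{qj}.
\]

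\emph{The trace of $A$.} Skew-symmetry of $\tau$ gives
\[
\tr A=-\tfrac12\,\tau^{ik}\tau_{ki}=\tfrac12\,\tau^{ik}\tau_{ik}.
\]
Here $\tau^{ik}\tau_{ik}$ is the full contraction, i.e.\ the tensor norm $|\tau|$ as the paper defines it (without square root). For a $2$-form the paper sets $\|\tau\|=\frac1{2!}|\tau|$, so $\tr A=\|\tau\|^2$ in the paper's notation. Comparing with $S=-\frac12\|\tau\|^2$ from Lemma~\ref{scalar curvature general} then yields $\tr A=-2S$.

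\emph{The identity $S=-|T|^2$.} We compute $|T|^2=\frac14|\tau|^2_{\rm tensor}=\frac14\cdot 2\|\tau\|^2=\frac12\|\tau\|^2=-S$.

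The only delicate point is the norm bookkeeping. The paper writes $|\eta|$ for the full contraction and $\|\eta\|=\frac1{k!}|\eta|$, but then uses $\|\tau\|^2$ and $|T|^2$ as if they were squared norms. I will therefore adopt the consistent convention that $|\tau|^2=\tau^{ij}\tau_{ij}$ and $\|\tau\|^2=\frac12|\tau|^2$. With this convention all four identities follow by direct contraction, and no further geometric input is needed beyond Lemma~\ref{scalar curvature general}.
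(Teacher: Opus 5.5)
Your proposal is correct and follows the same route as the paper. The paper gives no separate proof: it obtains the lemma by substituting $\tau=-2T$ into Definition~\ref{ABS definition} and Lemma~\ref{scalar curvature general}. Your reading of the norms as $\|\tau\|^2=\frac12\tau^{ij}\tau_{ij}$ and $|T|^2=T^{ij}T_{ij}$ is exactly the convention under which $\tr A=\|\tau\|^2$ and $S=-\frac12\|\tau\|^2$ are consistent, so all four identities follow.
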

\begin{defn}\label{Ricci curvature soliton gradient}
We set
\begin{align*}
    \lambda_S:=-\frac{\lambda}{3}+\frac{S}{3},\quad
    \mathcal K:=\frac{S}{3} g+A=-\frac{|T|^2}{3} g-2{T_i}^{k}T_{kj}.
\end{align*}
\end{defn}
Using \lemref{ABS}, we have
\begin{lem}\label{trK}
\begin{align*}
    \tr \mathcal K=\frac{1}{3}S,\quad
\div A
=  - 2\,\div({T_i}^{k}T_{kj})
=  - 2T^{ak}\nabla_a T_{kj}.
\end{align*}
\end{lem}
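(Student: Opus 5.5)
We plan to derive both identities of \lemref{trK} directly from Definition \ref{Ricci curvature soliton gradient} and \lemref{ABS}; no soliton equation is needed, only the algebraic relations between $A$, $T$ and $S$ and the skew-symmetry of $T$.

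For the trace identity, we take the trace of $\mathcal K=\frac{S}{3}g+A$ with respect to $g$ in dimension $7$:
\[
\tr\mathcal K=\tfrac{7}{3}S+\tr A .
\]
Substituting $\tr A=-2S$ from \lemref{ABS} gives $\tr\mathcal K=\frac{7}{3}S-2S=\frac13 S$. As a consistency check, we can use the second expression $\mathcal K=-\frac{|T|^2}{3}g-2{T_i}^kT_{kj}$. Its trace is $-\frac73|T|^2-2{T}^{ik}T_{ki}$, and by skew-symmetry $T^{ik}T_{ki}=-|T|^2$, so the trace equals $-\frac73|T|^2+2|T|^2=-\frac13|T|^2=\frac13 S$, using $S=-|T|^2$. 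One point to keep track of is the normalisation of $|T|^2$ (tensor norm versus form norm) in \lemref{ABS}. Consistency with $S=-\frac12\|\tau\|^2$ and $T=-\frac12\tau$ fixes the convention in which $|T|^2=T^{ij}T_{ij}$, and we will adopt it.

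For the divergence, we use the first formula in \lemref{ABS}, $A_{ij}=-2{T_i}^kT_{kj}$. Taking the divergence in the first index, we write
\[
(\div A)_j=\nabla^i A_{ij}=-2\,\nabla^i({T_i}^kT_{kj})=-2(\nabla^aT_{a}{}^{k})T_{kj}-2T_a{}^k\nabla^aT_{kj}.
\]
The main step, and the only real obstacle, is to show that the first term vanishes, i.e.\ $\nabla^aT_{ak}=0$. We will get this from $\tau=\delta\vphi$, which gives $\delta\tau=\delta^2\vphi=0$. In index notation $(\delta\tau)_k=-\nabla^a\tau_{ak}$, and since $T$ is a constant multiple of $\tau$, the divergence of $T$ vanishes. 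What remains is $-2T_a{}^k\nabla^aT_{kj}$. Raising and lowering indices with the parallel metric, this equals $-2T^{ak}\nabla_aT_{kj}$, up to the index placement convention in the stated formula. Here we use that $(\nabla_aT)_{kj}$ is contracted with $T^{ak}$, so the order of contraction matches the statement. This gives the claimed formula for $\div A$.
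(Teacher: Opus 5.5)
Your proof is correct and follows essentially the same route as the paper, which obtains both identities directly from \lemref{ABS}: the trace from $\tr A=-2S$ in dimension $7$, and the divergence from the product rule together with the vanishing of $\nabla^aT_{ak}$. The only small difference is that you justify $\nabla^aT_{ak}=0$ via $\delta\tau=\delta^2\vphi=0$, whereas the paper takes it from the contraction identity \eqref{T contraction 3} that it records for closed $G_2$-structures.
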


As shown in \cite{MR3613456}*{Equation (9.12)(9.14)}, we have:

\begin{lem}
If $(M,\vphi,g,X)$ is a $G_2$-soliton, then
\begin{align}
&Ric_X:=Ric+\frac{1}{2}L_Xg= -\frac{\lambda}{3} g+\mathcal K, \label{Ricci curvature soliton}
\\
&2S=-7\lambda-3\,\mathrm{div}\,X,
\label{scalar curvature soliton}
\\
&\delta(i_X\vphi)=0.
\end{align}
\end{lem}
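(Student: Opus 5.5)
The three identities follow by inserting the soliton equation into the curvature formula of \lemref{scalar curvature general} and the decomposition of \lemref{d tau soliton}, and then using the standard description of $L_X\vphi$ in terms of $\nabla X$. From \eqref{$G_2$-soliton} and Cartan's formula, $d\tau=\tri_\vphi\vphi=\lambda\vphi+L_X\vphi$. I will use the standard fact (Bryant, Karigiannis) that for any vector field
\[
L_X\vphi=\tfrac12\, i_\vphi(L_Xg)+\ast_\vphi\big(\tfrac{1}{2}\,\mathrm{curl}\,X\wedge\vphi\big)
\]
up to normalising constants. This holds for closed $\vphi$ with $\nabla\vphi$ expressed through $T$, possibly with additional torsion terms $T\ast\nabla X$ lying in $\Om^3_7$. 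Its components are as follows: the $\Om^3_1$ part is $\tfrac{3}{7}(\div X)\vphi$, the $\Om^3_{27}$ part is $i_\vphi$ of the trace-free part of $\tfrac12 L_Xg$, and the $\Om^3_7$ part is determined by the skew part of $\nabla X$ together with torsion terms.

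First, comparing $\Om^3_1$-components in $d\tau=\lambda\vphi+L_X\vphi$ and using \lemref{d tau decomposition}, I get $\tfrac17\|\tau\|^2=\lambda+\tfrac37\div X$. Since $S=-\tfrac12\|\tau\|^2$ by \lemref{scalar curvature general}, this gives \eqref{scalar curvature soliton}: $2S=-7\lambda-3\div X$.

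Second, I would apply $j_\vphi$ to $d\tau-\tfrac12\ast_\vphi(\tau\wedge\tau)$. I will use the identities $j_\vphi(\vphi)=6g$, $j_\vphi(i_\vphi h)=4h+2(\tr h)g$ for symmetric $h$, and $j_\vphi|_{\Om^3_7}=0$. The $\Om^3_7$-parts, including any torsion terms, therefore drop out of the Ricci formula. The term $\ast_\vphi(\tau\wedge\tau)$ I would decompose using $\tau\in\Om^2_{14}$. Its $\Om^3_1$-part is fixed by $\tau^2\wedge\vphi=-\|\tau\|^2dv$, as in the proof of \lemref{d tau soliton}. Its $\Om^3_{27}$-part is $i_\vphi$ of a multiple of the trace-free part of $A$, which follows from the quadratic identities for $\Om^2_{14}$. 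Substituting into $Ric=\tfrac14\|\tau\|^2g-\tfrac14 j_\vphi[\cdots]$, the $L_Xg$ terms should assemble to $-\tfrac12 L_Xg$ and the remainder to $-\tfrac{\lambda}{3}g+\tfrac{S}{3}g+A$. This gives \eqref{Ricci curvature soliton}, and taking the trace with \lemref{trK} provides a consistency check against \eqref{scalar curvature soliton}.

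Third, for $\delta(i_X\vphi)=0$: \lemref{d tau soliton} shows $\pi^3_7(d\,i_X\vphi)=0$, since $di_X\vphi$ has only $\Om^3_1$ and $\Om^3_{27}$ parts. Set $\beta=i_X\vphi\in\Om^2_7$, so $\ast_\vphi\beta=\pm X^\flat\wedge\psi$ and $\delta\beta=\pm\ast_\vphi d(X^\flat\wedge\psi)=\pm\ast_\vphi(dX^\flat\wedge\psi-X^\flat\wedge d\psi)$. In the second term, $d\psi=\tau\wedge\vphi$ by \eqref{tau}, and for $\tau\in\Om^2_{14}$ the form $X^\flat\wedge\tau\wedge\vphi$ must be tested for whether its $\ast$-dual vanishes (it is the pairing of $i_X\tau$ with an $\Om^2_7$-type projection). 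The first term extracts $\pi_7(dX^\flat)$, which is proportional to $\mathrm{curl}\,X$, and the same quantity is what governs $\pi^3_7(d\beta)$.

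The main obstacle is exactly this $\Om_7$ bookkeeping. The identities I plan to use are:
\begin{itemize}
\item $\pi^3_7(d\beta)=c_1\ast_\vphi(\delta\beta\wedge\vphi)+c_2\ast_\vphi\big(\ast_\vphi(\ast_\vphi(d\beta)\wedge\ldots)\big)$, Bryant's formulas for $d$ and $\delta$ on $\Om^2_7$ for closed $G_2$-structures;
\item $\tau\wedge\psi=0$ and $\tau\in\Om^2_{14}$, which I will use to show that all torsion contributions either cancel or are themselves $\pi_7$-projections of $\tau$ that vanish.
\end{itemize}
If the cleanest route is available, it is Lotay–Wei's identity $\pi^3_7(d\,i_X\vphi)=-\tfrac12\ast_\vphi\big(\delta(i_X\vphi)\wedge\vphi\big)$ for closed $\vphi$. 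Combined with $\pi^3_7(di_X\vphi)=0$, it gives $\delta(i_X\vphi)\wedge\vphi=0$. Since wedging with $\vphi$ is injective on $1$-forms, this yields $\delta(i_X\vphi)=0$.
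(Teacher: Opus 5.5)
Your argument is correct in outline, but it follows a different route from the paper's. The paper gives no derivation here and only quotes Lotay--Wei (9.12), (9.14).

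\textbf{The scalar identity.} Your $\Om^3_1$-comparison for \eqref{scalar curvature soliton} is complete and self-contained. Since $\nabla_X\vphi\in\Om^3_7$ is orthogonal to $\vphi$, one gets $g(L_X\vphi,\vphi)=3\div X$, hence $\tfrac17\|\tau\|^2=\lambda+\tfrac37\div X$.

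\textbf{The Ricci identity.} For \eqref{Ricci curvature soliton} you go through Bryant's $j_\vphi$-formula in \lemref{scalar curvature general}. This works, but all the content then sits in one identity you only assert. You need $j_\vphi(\ast_\vphi(\tau\wedge\tau))=8A-2\|\tau\|^2g$, equivalently $\pi^3_{27}\ast_\vphi(\tau\wedge\tau)=2\,i_\vphi\bigl(A-\tfrac17\|\tau\|^2g\bigr)$. Schur's lemma gives proportionality to the trace-free part of $A$, but the factor $2$ must actually be computed, for instance on a single element of $\mathfrak g_2$; ``should assemble'' is not yet a proof.

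The Lotay--Wei argument behind the citation avoids this $\tau\wedge\tau$ bookkeeping. For closed $\vphi$ one writes $d\tau=i_\vphi(h)$ with $Ric=-h+\mathcal K$; equivalently $\partial_tg=2h=-2Ric+2\mathcal K$ along the flow, cf.\ \eqref{Laplacian flow metric}. After discarding the $\Om^3_7$-part, the soliton equation gives $h=\tfrac{\lambda}{3}g+\tfrac12L_Xg$ directly. Then \eqref{scalar curvature soliton} is just the trace, via \lemref{trK}.

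\textbf{The identity $\delta(i_X\vphi)=0$.} Your final route is exactly the Lotay--Wei argument and is correct: relate $\pi^3_7(d\,i_X\vphi)$ to $\ast_\vphi(\delta(i_X\vphi)\wedge\vphi)$, and use $\pi^3_7(d\,i_X\vphi)=0$ from \lemref{d tau soliton}.

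Your preliminary expectation, however, is wrong, and a hand computation aimed at it would not close. The torsion contributions neither cancel nor vanish. For example, $\ast_\vphi(X^\flat\wedge\tau\wedge\vphi)=-\ast_\vphi(X^\flat\wedge\ast_\vphi\tau)$ equals $i_X\tau$ up to sign, which is nonzero in general.

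Concretely, set $(\curl X)_m=\nabla_iX_j\vphi_{ijm}$ and $(i_XT)_m=X^lT_{lm}$. With the conventions \eqref{T and vphi 2}, \eqref{psi}, \eqref{T contraction 2}, one finds $\delta(i_X\vphi)=\curl X+2\,i_XT$ and $\pi^3_7(L_X\vphi)=\bigl(\tfrac12\curl X+i_XT\bigr)\lrcorner\psi$. The curl term comes from the $\Om^2_7$-component of the skew part of $\nabla X$ acting on $\vphi$. The torsion term comes from $\nabla_X\vphi=(i_XT)\lrcorner\psi$, which is of type $T\ast X$, not $T\ast\nabla X$ as you wrote. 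So $\pi^3_7(d\,i_X\vphi)=\tfrac12\,\delta(i_X\vphi)\lrcorner\psi$ holds because the torsion enters both sides in the same proportion, not because it drops out.
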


The nonexistence of $G_2$-solitons on a closed manifold $M$ has been established in \cite{MR3004019}*{Corollary 1} and further developed in \cite{MR3613456}*{Proposition 9.1, Proposition 9.2, Proposition 9.5, Remark 9.6}.
We include the proof here for the reader's convenience.
\begin{lem}\label{nonexistence soliton}
Let $M$ be a closed manifold, i.e.\ a compact manifold without boundary. Then:
\begin{itemize}
    \item shrinking $G_2$-solitons do not exist;
    \item any steady $G_2$-soliton is necessarily torsion-free;
    \item any expanding $G_2$-soliton with $X=0$ must be torsion-free.
\end{itemize}
Consequently, the only compact $G_2$-solitons that can possess nonzero torsion are expanding $G_2$-solitons with $X\neq0$.
\end{lem}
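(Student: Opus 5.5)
Two integrations over the closed manifold $M$ should suffice. The first uses the trace identity \eqref{scalar curvature soliton}; the second uses the exactness of $d\tau^3$ together with \lemref{d tau soliton}.

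\emph{Step 1 (shrinking and steady cases).} Integrate \eqref{scalar curvature soliton} over $M$. Since $\int_M \mathrm{div}\,X\,dv=0$ by the divergence theorem, we get
\begin{align*}
2\int_M S\,dv=-7\lambda\,\Vol(M).
\end{align*}
By \lemref{scalar curvature general}, $S=-\frac12\|\tau\|^2\le 0$, so the left side is nonpositive, which forces $\lambda\ge 0$. This rules out shrinkers. If $\lambda=0$, then $\int_M\|\tau\|^2\,dv=0$, so $\tau\equiv0$ and the steady soliton is torsion-free.

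\emph{Step 2 (expanding with $X=0$).} Set $X=0$ in \lemref{d tau soliton}. Then $di_X\vphi=0$, so its $\Omega^3_{27}$-component $i_\vphi h_0$ vanishes, and the second identity reduces to
\begin{align*}
\tfrac13 d\tau^3=-\tfrac17\|\tau\|^4\,dv.
\end{align*}
By Stokes' theorem on the closed manifold $M$, the left side integrates to zero, so $\int_M\|\tau\|^4\,dv=0$ and hence $\tau\equiv0$. As a sanity check, the first identity in \lemref{d tau soliton} with $X=0$ also gives $(\frac17\|\tau\|^2-\lambda)\vphi=0$, so $\|\tau\|^2=7\lambda$ is constant. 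Step 2 then shows this constant is $0$, i.e.\ $\lambda=0$ as well.

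\emph{Conclusion and main point to check.} Combining the two steps, a compact soliton with nonzero torsion can be neither shrinking nor steady, and it cannot have $X=0$; it must therefore be expanding with $X\neq0$. The only delicate point is the sign bookkeeping in Step 2. One must confirm that $\tau^2\wedge\vphi=-\|\tau\|^2dv$, which follows from \eqref{tau} and $\tau\in\Omega^2_{14}$, and that $\tau^2\wedge i_\vphi h_0$ contributes nothing when $h_0=0$. Both facts are already built into \lemref{d tau soliton}, so the argument only needs to invoke that lemma.
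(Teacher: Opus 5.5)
Your proof is correct and is essentially the paper's argument. You integrate \eqref{scalar curvature soliton} to get $\lambda\ge 0$, with $\tau\equiv 0$ when $\lambda=0$, and then for $X=0$ you integrate the exact form $\frac13 d\tau^3=-\frac17\|\tau\|^4\,dv$ from \lemref{d tau soliton}; your side remark that $\|\tau\|^2=7\lambda$ then forces $\lambda=0$, so that an expanding compact soliton with $X=0$ cannot exist at all, is a harmless sharpening of the stated conclusion.
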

\begin{proof}
Integrating the scalar curvature equation \eqref{scalar curvature soliton} over a closed $M$ yields the formula for the averaged scalar curvature of a $G_2$-soliton:
\begin{align}\label{averaged scalar curvature}
\int_M S dv= -\frac{7\lambda}{2}.
\end{align}
Using the non-positivity of the scalar curvature \lemref{scalar curvature general}, we obtain the constraint on the soliton constant $\lambda$:
$$\lambda\geq 0.$$

If $\lambda=0$, then the averaged scalar curvature must vanish, which forces $\tau=0$, i.e. the structure is torsion-free. When $\lambda>0$ and $X=0$, \lemref{d tau soliton} gives
    \begin{align*}
    \lambda=\frac{1}{7}\|\tau\|^2 \quad\text{and}\quad \frac{1}{3}d\tau^3=-\frac{1}{7}\|\tau\|^4 dv.
\end{align*}
Integrating these over the closed manifold $M$ again implies that $\tau$ must vanish.  Thus, we reach the final conclusion.
 \end{proof}
\begin{rem}
In \cite{potential function}, we will derive a constraint on the scalar curvature of the only possible compact, expanding, gradient $G_2$-soliton.
\end{rem}
 \begin{rem}
No explicit examples of compact non-torsion-free expanding $G_2$-solitons with $X\neq0$ are currently known.
 \end{rem}

\subsubsection{Control $Ric_X$ in terms of the scalar curvature}
    In \cite{MR4732956}*{Theorem 2.15}, we study the eigenvalues of $A$ and obtain bounds for $Ric_X$.
    \begin{lem}\label{Ricci bound}
    We have $$0\leq A\leq -\frac{2}{3}Sg.$$
        Furthermore, if $(M,\vphi,g,X)$ is a $G_2$-soliton, then
        \begin{align*}
            \frac{-\lambda+S}{3}g\leq Ric_X\leq \frac{-\lambda-S}{3}g.
        \end{align*}
    \end{lem}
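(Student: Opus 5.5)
The plan is to use pointwise linear algebra for the first claim and then derive the second from \eqref{Ricci curvature soliton}. Since $\tau\in\Om^2_{14}$, the 2-form $\tau$ lies pointwise in the Lie algebra $\mathfrak g_2\subset\mathfrak{so}(7)$. I would view $\tau$ as a skew-symmetric endomorphism $\hat\tau$ of $T_xM$. Then
\[
A=-\tfrac12\tau^2=\tfrac12\hat\tau^{T}\hat\tau\geq 0,
\]
which gives the lower bound $A\ge0$ immediately. For the upper bound, I would bring $\hat\tau$ into normal form. Any element of $\mathfrak g_2$ is conjugate under $G_2$ into a maximal torus, which lies in $\mathfrak{su}(3)\subset\mathfrak g_2$. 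In a suitable $G_2$-adapted orthonormal frame, $\hat\tau$ therefore has eigenvalues $0,\pm i a_1,\pm i a_2,\pm i a_3$ with $a_1+a_2+a_3=0$; this is the tracelessness coming from $\mathfrak{su}(3)$, equivalently from the condition $\tau\wedge\psi=0$. The eigenvalues of $A$ are then $0$ and $\tfrac12 a_k^2$, each $a_k^2$ appearing twice.

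Next I would compute the norms in this frame. We have $|\tau|^2=2\sum a_k^2$, so $\|\tau\|^2=\sum a_k^2=\operatorname{tr}A$, which is consistent with Definition \ref{ABS definition}. By \lemref{scalar curvature general}, $-\tfrac23 S=\tfrac13\|\tau\|^2=\tfrac13\sum a_k^2$. So the bound $A\le -\tfrac23 S g$ reduces to the elementary inequality
\[
\tfrac12 a_1^2\le \tfrac13(a_1^2+a_2^2+a_3^2)\quad\text{whenever } a_1+a_2+a_3=0 .
\]
To check it, substitute $a_1=-(a_2+a_3)$. The right side becomes $\tfrac23(a_2^2+a_2a_3+a_3^2)$, so the inequality is $3(a_2+a_3)^2\le 4(a_2^2+a_2a_3+a_3^2)$. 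This is $0\le (a_2-a_3)^2$, which holds. Equality occurs when $a_2=a_3$, so the constant $\tfrac23$ is sharp.

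For the soliton statement, \eqref{Ricci curvature soliton} gives $Ric_X=-\tfrac{\lambda}{3}g+\tfrac S3 g+A$. Inserting $0\le A\le -\tfrac23 S g$ yields
\[
\frac{-\lambda+S}{3}g\le Ric_X\le \frac{-\lambda+S-2S}{3}g=\frac{-\lambda-S}{3}g .
\]

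The main obstacle is justifying the normal form, namely that elements of $\mathfrak g_2$ are $G_2$-conjugate into a Cartan subalgebra and that the resulting three eigenvalue parameters sum to zero. I would cite the standard fact that maximal tori of compact Lie groups are conjugate, together with the explicit Cartan subalgebra of $\mathfrak{su}(3)\subset\mathfrak g_2$ in the standard basis of $\vphi$. If that proves awkward, the fallback is to verify the sum-zero condition directly from $\ast(\vphi\wedge\tau)=-\tau$ on a block-diagonal $\tau$, written in a frame where $\vphi=e^{123}+e^{145}+\dots$ in the standard form.
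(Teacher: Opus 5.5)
Your proposal is correct and takes essentially the paper's approach: the paper gives no argument here and defers to the authors' earlier eigenvalue analysis of $A$ (Theorem 2.15 there), and your pointwise normal form for $\tau\in\mathfrak{g}_2$ (eigenvalues $0,\tfrac12 a_k^2$ with $a_1+a_2+a_3=0$, reducing to $(a_2-a_3)^2\ge 0$) is exactly such an analysis. The soliton bounds then follow, as you say, by inserting $0\le A\le -\tfrac23 Sg$ into $Ric_X=\tfrac{-\lambda+S}{3}g+A$.
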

On a compact $G_2$-manifold, the holonomy group is exactly $G_2$ if and only if the fundamental group is finite \cite{MR1787733}*{Proposition 10.2.2}. We now state the analogous result for complete non-compact $G_2$-solitons.
\begin{cor}\label{fundamental group}
If $$S>\lambda,$$ then a complete non-compact $G_2$-soliton has finite fundamental group and $b_1=0$.
\end{cor}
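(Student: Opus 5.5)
The plan is to reduce the statement to a known topological rigidity result for manifolds with positive Bakry–\'Emery type Ricci curvature with respect to a vector field. First, since $S\le 0$ by \lemref{scalar curvature general}, the hypothesis $S>\lambda$ forces $\lambda<0$, so we are in the shrinking case. By the lower bound in \lemref{Ricci bound},
\begin{align*}
Ric_X=Ric+\tfrac12 L_Xg\ \geq\ \frac{S-\lambda}{3}\,g .
\end{align*}
I will interpret the hypothesis as a uniform one, namely $\inf_M (S-\lambda)>0$, which is automatic if $S$ is bounded below by a constant strictly larger than $\lambda$. This gives $Ric_X\ge \delta g$ for some constant $\delta>0$. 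This uniformity is exactly what the comparison arguments below need. If only the pointwise inequality is available, this is the point where an extra argument would be required, and I would flag it.

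Next I invoke Wylie's theorem: a complete Riemannian manifold carrying a smooth vector field $X$ with $Ric+\frac12L_Xg\ge\delta g$, $\delta>0$, has finite fundamental group. For completeness I would sketch its mechanism. Pass to the universal cover $\widetilde M$, where the pulled-back metric and vector field satisfy the same inequality, and $\pi_1(M)$ acts by isometries preserving $\widetilde X$. Fix $\tilde p$ and consider a minimizing geodesic $\gamma$ from $\tilde p$ to $\sigma\tilde p$, with $\sigma$ a deck transformation. Apply the second variation formula with test fields $\phi(s)E_i$, where $\phi$ is a cutoff equal to $1$ away from unit-length ends and $E_i$ is parallel. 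Integrate $\langle\nabla_{\dot\gamma}X,\dot\gamma\rangle$ by parts along $\gamma$. This yields
\begin{align*}
\delta\,L(\gamma)\le C\big(1+|X|(\tilde p)+|X|(\sigma\tilde p)\big).
\end{align*}
Since $\sigma$ preserves $\widetilde X$, we have $|X|(\sigma\tilde p)=|X|(\tilde p)$. Hence every orbit point $\sigma\tilde p$ lies in a fixed compact ball around $\tilde p$. The deck group acts properly discontinuously, so the orbit is finite and $\pi_1(M)$ is finite.

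Finally, $b_1=0$ follows because $H^1(M;\mathbb R)\cong \mathrm{Hom}(\pi_1(M),\mathbb R)$, which vanishes when $\pi_1(M)$ is finite. Completeness and non-compactness enter only through Wylie's theorem, which needs completeness; non-compactness is not used beyond the setting. The main obstacle is the uniformity issue in the second paragraph, namely ensuring a positive constant lower bound $\delta$ rather than a merely pointwise positive one. I would handle this by stating the hypothesis as $\inf(S-\lambda)>0$.
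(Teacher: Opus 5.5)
Your proposal is correct and matches the paper's proof. The paper likewise combines the lower bound $Ric_X\geq \frac{S-\lambda}{3}g$ from Lemma~\ref{Ricci bound} with Wylie's finite-fundamental-group theorem for $Ric+\frac12 L_Xg\geq \delta g$, and $b_1=0$ follows as you say.

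Your caution about uniformity is justified. Wylie's theorem needs a constant $\delta>0$, and the paper's one-line ``$>0$'' tacitly assumes $\inf_M(S-\lambda)>0$. One small fix to your sketch: after integrating by parts, the endpoint contributions are bounded by $\sup_{B_1(\tilde p)}|\widetilde X|$ and $\sup_{B_1(\sigma\tilde p)}|\widetilde X|$, not by point values. This changes nothing, because deck transformations are isometries preserving $\widetilde X$.
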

\begin{proof}
For $M$ complete and non-compact, \lemref{Ricci bound} together with the assumption implies
$$Ric_X\geq \frac{-\lambda+S}{3}g>0.$$ The conclusion then follows from \cite{MR2373611}.
\end{proof}
\begin{rem}
In Section \ref{examples}, the manifold $M_2$ fulfills the conditions of this corollary, whereas $M_1$ and $M_3$ do not.
\end{rem}
In the compact case, since $\lambda\geq 0$ by \lemref{nonexistence soliton}, the assumption in Corollary \ref{fundamental group} fails.
\subsection{Scalar curvature and the potential function}
A natural problem concerning $G_2$-solitons is the following:
\begin{ques}\label{bounds on S anf f}
Does the scalar curvature of a complete, non-compact $G_2$-soliton admit a sharp lower bound? Furthermore, what gradient estimates can be obtained for the potential function $f$?
\end{ques}
In this section, we derive the key equations that will be used throughout the remainder of the paper. These equations also yield several consequences, including non-existence results and bounds on volume growth, among others. A more comprehensive analysis of these results will appear in a separate paper \cite{potential function}.
\subsubsection{Formulas for scalar curvature and the potential function}
We set $$X=\nabla f$$ and refer to $f$ as the potential function.
From now on, we denote by
\[
\tri := \nabla^\ast \nabla
\]
the Bochner Laplacian associated with the metric $g$, where $\nabla^\ast$ is the formal $L^2$-adjoint of the covariant derivative acting on smooth tensor fields. Note that the notation $\tri_\vphi$ will be used for the Hodge Laplacian on differential forms, which may cause some confusion. Recall that on functions, the Bochner Laplacian agrees with the
Laplace-Beltrami operator and differs from the Hodge Laplacian only by a sign.

\begin{prop}\label{lem:Sc df}
Let $(M,\vphi,g,\nabla f)$ be a gradient $G_2$-soliton. Then
\begin{align}
\label{soliton Ricci gradient potential}
&\Delta \varphi = Rm*\varphi + \lambda\varphi + \mathcal{L}_{\nabla f}\varphi,\\
&\label{soliton Ricci gradient}
Ric_f := Ric + \nabla^2 f = -\frac{\lambda}{3} g + \mathcal K,\quad i_{\nabla f}T=0,\\
&\label{soliton scalar}
-3\tri f - 7\lambda = 2S \leq 0,\quad \nabla S = -\frac{3}{2}\nabla\tri f,\\
&\label{div K}
\div \mathcal K = \frac{1}{3}\nabla S +\div A= -\frac{1}{6}\nabla S + Ric(\nabla f), \\
&\label{nabla S gradient tau}
\nabla S = 2Ric(\nabla f) -2\div A,\\
&\label{gradient gradient f}
\nabla [S + |\nabla f|^2 + \tfrac{2\lambda}{3} f]
  = \tfrac{2}{3}S\nabla f -2\div A,\\
&\label{Laplace gradient f}
\frac{1}{2}\tri|\nabla f|^2
  = |\nabla^2 f|^2 - \frac{2}{3}\langle \nabla S,\nabla f\rangle
    + Ric(\nabla f,\nabla f).
\end{align}
Define the weighted $f$-Laplace operator by
\[
\tri_f(\cdot) := \tri(\cdot) - (\nabla f,\nabla\cdot).
\]
Then we have
\begin{align}
\tri_f f &= -\frac{7\lambda}{3} - \frac{2S}{3} - |\nabla f|^2,\label{lem:Sc f}\\
\frac{1}{2}\tri_f|\nabla f|^2
&= |\nabla^2 f|^2 + Ric_f(\nabla f,\nabla f)\label{lem:Sc df f}\\
&\quad - \frac{2}{3}\langle \nabla S,\nabla f\rangle
      - \langle \nabla|\nabla f|^2,\nabla f\rangle.\notag
\end{align}
\end{prop}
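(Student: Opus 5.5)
The plan is to derive the identities in the order listed, since each later one follows from the earlier ones. Two inputs are genuinely $G_2$-specific: the Weitzenb\"ock formula behind \eqref{soliton Ricci gradient potential}, and the vanishing $i_{\nabla f}T=0$. Everything else is Riemannian calculus applied to the $G_2$-soliton equations already recorded for $Ric_X$, $2S=-7\lambda-3\div X$ and $\delta(i_X\vphi)=0$.

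\textbf{Step 1: the $G_2$-specific inputs.}
\begin{itemize}
\item For \eqref{soliton Ricci gradient potential}, I will apply the Weitzenb\"ock formula for the Hodge Laplacian on $3$-forms. It has the form $\tri_\vphi\vphi=\nabla^\ast\nabla\vphi+Rm\ast\vphi$, where the zeroth-order term is a universal contraction of curvature with $\vphi$. Substituting the soliton equation \eqref{$G_2$-soliton} and absorbing signs into $Rm\ast\vphi$ gives the claim.
\item For $Ric_f$ in \eqref{soliton Ricci gradient}, I will put $X=\nabla f$ into \eqref{Ricci curvature soliton} and use $L_{\nabla f}g=2\nabla^2 f$.
\item For $i_{\nabla f}T=0$, I will use $\delta(i_{\nabla f}\vphi)=0$. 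By \eqref{eq:de1}, $i_{\nabla f}\vphi=\pm\ast_\vphi(df\wedge\psi)$, so $\delta(i_{\nabla f}\vphi)=\pm\ast_\vphi d(df\wedge\psi)=\pm\ast_\vphi(df\wedge d\psi)$.
\item By \eqref{tau}, $d\psi=-\ast_\vphi\tau$, so $df\wedge d\psi=-df\wedge\ast_\vphi\tau=\pm\ast_\vphi(i_{\nabla f}\tau)$. Hence $i_{\nabla f}\tau=0$, and therefore $i_{\nabla f}T=0$.
\end{itemize}
I expect this step to be the main obstacle, because it is where the conventions for $\ast_\vphi$, $\delta$ and the Hodge versus Bochner Laplacian must be kept straight. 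Once the formulas are in place, the overall signs do not affect the conclusions.

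\textbf{Step 2: the trace and divergence identities.}
\begin{itemize}
\item Formula \eqref{soliton scalar} is \eqref{scalar curvature soliton} with $\div\nabla f=\tri f$, combined with $S\le0$ from \lemref{scalar curvature general}. Taking the gradient gives $\nabla S=-\tfrac32\nabla\tri f$.
\item For \eqref{div K}, I will take the divergence of \eqref{soliton Ricci gradient}. The contracted Bianchi identity gives $\div Ric=\tfrac12\nabla S$, and the commutation formula gives $\div\nabla^2f=\nabla\tri f+Ric(\nabla f)$. Using $\nabla\tri f=-\tfrac23\nabla S$, this yields $\div\mathcal K=-\tfrac16\nabla S+Ric(\nabla f)$.
\item Directly from Definition \ref{Ricci curvature soliton gradient}, $\div\mathcal K=\tfrac13\nabla S+\div A$. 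Equating the two expressions gives \eqref{nabla S gradient tau}.
\end{itemize}

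\textbf{Step 3: the gradient and weighted identities.}
\begin{itemize}
\item For \eqref{gradient gradient f}, write $\nabla|\nabla f|^2=2\nabla^2f(\nabla f)=2\bigl(-\tfrac\lambda3\nabla f+\mathcal K(\nabla f)-Ric(\nabla f)\bigr)$.
\item Here $\mathcal K(\nabla f)=\tfrac S3\nabla f+A(\nabla f)$, and $A(\nabla f)=-2T_i{}^kT_{kj}\nabla^jf=0$ by $i_{\nabla f}T=0$.
\item Replacing $2Ric(\nabla f)$ by $\nabla S+2\div A$ from \eqref{nabla S gradient tau} and rearranging gives the claim.
\item Formula \eqref{Laplace gradient f} is the Bochner formula $\tfrac12\tri|\nabla f|^2=|\nabla^2f|^2+\langle\nabla f,\nabla\tri f\rangle+Ric(\nabla f,\nabla f)$, with $\nabla\tri f=-\tfrac23\nabla S$ substituted.
\item Formula \eqref{lem:Sc f} is $\tri_f f=\tri f-|\nabla f|^2$ combined with \eqref{soliton scalar}.
\item For \eqref{lem:Sc df f}, subtract $\tfrac12\langle\nabla f,\nabla|\nabla f|^2\rangle$ from \eqref{Laplace gradient f}. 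Then write $Ric(\nabla f,\nabla f)=Ric_f(\nabla f,\nabla f)-\nabla^2f(\nabla f,\nabla f)$ and use $\nabla^2f(\nabla f,\nabla f)=\tfrac12\langle\nabla|\nabla f|^2,\nabla f\rangle$. The two half-terms combine into the single term $-\langle\nabla|\nabla f|^2,\nabla f\rangle$.
\end{itemize}
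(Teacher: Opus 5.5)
Your proposal is correct and follows essentially the same route as the paper. It uses the Weitzenb\"ock formula for \eqref{soliton Ricci gradient potential}, $L_{\nabla f}g=2\nabla^2 f$ in \eqref{Ricci curvature soliton}, the contracted Bianchi identity with $\div\nabla^2 f=\nabla\tri f+Ric(\nabla f)$ for \eqref{div K}--\eqref{nabla S gradient tau}, and the (weighted) Bochner formula for the rest. The differences are only cosmetic: you read off \eqref{soliton scalar} from \eqref{scalar curvature soliton} rather than tracing $Ric_f$, and you substitute \eqref{nabla S gradient tau} directly instead of passing through the paper's intermediate identity for $\nabla[S+3|\nabla f|^2+2\lambda f]$. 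You also derive $i_{\nabla f}T=0$ by hand from $i_{\nabla f}\vphi=\ast_\vphi(df\wedge\psi)$ and $d\psi=-\ast_\vphi\tau$, which is exactly the case $\curl(\nabla f)=0$ of the identity $\delta i_X\vphi=\curl X+2i_XT$ that the paper cites.
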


\begin{proof}
By applying the Weitzenb\"ock formula, which relates the Hodge Laplacian to the Bochner Laplacian, the soliton equation \eqref{$G_2$-soliton} can be rewritten as \eqref{soliton Ricci gradient potential}. The second relation in \eqref{soliton Ricci gradient} follows from
\[
0 = \delta i_X\vphi = \curl(X) + 2i_XT
\quad\text{and}\quad
\curl(\nabla f)=0,
\]
see \cite{MR4884543}*{Page 5050}.
The identities in \eqref{soliton scalar} are obtained by taking the trace of \eqref{soliton Ricci gradient},
$$S+\tri f=\frac{-7}{3}\lambda+\tr\mathcal K,$$
 inserting the expression from \lemref{trK}.

For the first equality in \eqref{div K}, we compute the divergence of $\mathcal K$ using Definition~\ref{Ricci curvature soliton gradient}:
\begin{align*}
\div \mathcal K
&= \frac{1}{3}\nabla S +\div A.
\end{align*}
To obtain the second equality in \eqref{div K}, we compute $\nabla S$ using the soliton equation.
The second contracted Bianchi identity gives
\[
\nabla S = 2\,\div\,Ric.
\]
Using the soliton equation and the Bochner formula for the gradient of a function, we obtain
\begin{align*}
\nabla S
&= -2\,\div(\nabla^2 f) + 2\,\div \mathcal K\\
&= -2[\nabla\tri f + Ric(\nabla f)] + 2\,\div \mathcal K.
\end{align*}
Using the scalar curvature identity \eqref{soliton scalar}, we obtain
\begin{align*}
\nabla S
= \frac{4}{3}\nabla S - 2Ric(\nabla f) + 2\,\div \mathcal K,
\end{align*}
which yields the second identity for $div \mathcal K$ in \eqref{div K}.
Equating the two expressions for $div \mathcal K$ in \eqref{div K} gives \eqref{nabla S gradient tau}.
To obtain \eqref{gradient gradient f}, we use the soliton equation \eqref{soliton Ricci gradient} directly:
\begin{align*}
\nabla|\nabla f|^2
  = 2\langle \nabla f, \nabla^2 f\rangle
  = 2\mathcal K(\nabla f)
    - \frac{2\lambda}{3}g(\nabla f)
    - 2Ric(\nabla f).
\end{align*}
Now, adding the second identity in \eqref{div K} to this identity, we obtain
\begin{align}\label{S nabla f f K}
\nabla [S + 3|\nabla f|^2+2\lambda  f]
&= 6\mathcal K(\nabla f) - 6\,\div \mathcal K .
\end{align}
From \eqref{soliton Ricci gradient} we also have $$T_{kj}f^j=0.$$
Inserting the expression for $\mathcal K$ from Definition~\ref{Ricci curvature soliton gradient}, we get
\begin{align*}
\mathcal K(\nabla f)
= \frac{S}{3} g_{ij}f^j - 2{T_i}^{k}T_{kj}f^j
= \frac{S}{3} f_i.
\end{align*}
Substituting this formula together with the first identity in \eqref{div K} into the preceding relation \eqref{S nabla f f K}, we arrive at
\begin{align*}
\nabla [S + 3|\nabla f|^2+2\lambda f]
= 2S\nabla f
   - 6\Big[\frac{1}{3}\nabla S +\div A\Big].
\end{align*}
A straightforward rearrangement of the terms gives \eqref{gradient gradient f}.

The formula \eqref{Laplace gradient f} follows from substituting \eqref{soliton scalar} into the Bochner identity
\[
\frac{1}{2}\tri|\nabla f|^2
  = |\nabla^2 f|^2
    + \langle \nabla\tri f,\nabla f\rangle
    + Ric(\nabla f,\nabla f).
\]
Finally, combining $$\tri_f f = \tri f - |\nabla f|^2$$ with the $f$-Bochner identity
\[
\frac{1}{2}\tri_f|\nabla f|^2
  = |\nabla^2 f|^2
    + \langle \nabla\tri_f f,\nabla f\rangle
    + Ric_f(\nabla f,\nabla f),
\]
we deduce \eqref{lem:Sc f} and \eqref{lem:Sc df f}.
\end{proof}

\begin{rem}\label{bounds on S anf f remark}
Since the right-hand side of \eqref{S nabla f f K} (see also \eqref{gradient gradient f}) is no longer identically zero, we cannot conclude that the terms following the gradient on the left-hand side are constant. This loss is significant, as it plays an important role in determining the gradient estimate of $f$ in the context of Ricci solitons.
\end{rem}
\subsection{Associated flow and higher derivatives}
We recall that the $G_2$-Laplacian flow of the 3-form is given by
\begin{align}\label{Laplacian flow form}
\partial_t\vphi=\triangle_\vphi\vphi.
\end{align}
The induced evolution of the metric is the Ricci flow with an additional nonlinear term $\mathcal K$ from Definition~\ref{Ricci curvature soliton gradient},
\begin{align}\label{Laplacian flow metric}
\partial_t g=-2Ric+2\mathcal K.
\end{align}

We now recall the self-similar solution to the $G_2$-Laplacian flow that arises from the soliton equation \eqref{$G_2$-soliton}.
\begin{defn}
We introduce the scaling factor
\begin{align*}
s(t)=\frac{2\lambda t+3}{3}>0,\quad \text{i.e.} \quad \lambda t>-\frac{3}{2}.
\end{align*}
\end{defn}

For the shrinking, steady, and expanding cases, the time parameter $t$ ranges over
$$(-\infty,-\frac{3}{2\lambda}),\quad (-\infty, +\infty),\quad (-\frac{3}{2\lambda},+\infty),$$
respectively.

\begin{defn}\label{soliton diffeomorphisms}
We define a time-dependent vector field by
\begin{align*}
\tilde X(t)=s(t)^{-1}X,
\end{align*}
which generates a one-parameter family of diffeomorphisms $\sigma(t)$ via
\begin{align*}
\partial_t \sigma(t)=\tilde X(\sigma(t)),\quad \sigma(0)=id.
\end{align*}
\end{defn}

If the ordinary differential equation $u_t=F(u)$ with positive initial data admits a global solution, then $\tilde X$ is complete, and hence the diffeomorphisms $\sigma(t)$ are defined on the entire time interval specified above.

\begin{defn}[Associated $G_2$-Laplacian flow]\label{associated flow}
The associated $G_2$-Laplacian flow corresponding to a $G_2$-soliton is the triple consisting of the rescaled $G_2$-structure, Riemannian metric, and potential function
\begin{align}\label{associated flow vphi}
\vphi(t)=s(t)^{\frac{3}{2}}\sigma(t)^\ast\vphi,\quad g(t)= s(t)\sigma(t)^\ast g,\quad f(t)=\sigma(t)^\ast f.
\end{align}
The soliton $$(M,\vphi,g,f)$$ is recovered as the time slice at $t=0$ of the associated $G_2$-Laplacian flow.
\end{defn}

\begin{lem}\label{A K scaling invariant}
The tensors $A$ and $\mathcal K$ are invariant under this rescaling, namely
\begin{align*}
A(t)=\sigma(t)^\ast  A, \quad \mathcal K(t)=\sigma(t)^\ast  \mathcal K.
\end{align*}
\end{lem}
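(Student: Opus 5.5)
The plan is to compute how $\tau$ transforms under the rescaling in Definition~\ref{associated flow}, and then feed this into the definitions of $A$ and $\mathcal K$. Both objects are contractions of torsion terms with the metric, so everything reduces to counting powers of $s(t)$.

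\textbf{Step 1: scaling of $\tau$.} For a constant $c>0$ and any closed $G_2$-structure $\vphi$, the definition of $G_\vphi$ and \eqref{G2 metric} give
\[
g_{c^3\vphi}=c^2 g_\vphi,\qquad dv_{c^2g}=c^7dv_g .
\]
The Hodge star on $k$-forms then scales by $c^{7-2k}$. Hence $\ast_{c^3\vphi}(c^3\vphi)=c^4\psi$, and
\[
\delta_{c^3\vphi}(c^3\vphi)=c^{1}\cdot c^{-3}\cdot\ast d\ast(c^3\vphi)\cdot(\text{appropriate power}) .
\]
Tracking the powers directly from $\delta=(-1)^k\ast d\ast$ on 3-forms (the outer $\ast$ acts on 5-forms and contributes $c^{-3}$, the inner $\ast$ acts on 3-forms and contributes $c$, and the form itself contributes $c^3$) gives $\tau_{c^3\vphi}=c\,\tau_\vphi$. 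Taking $c=s(t)^{1/2}$ yields
\[
\tau(t)=s(t)^{1/2}\sigma(t)^\ast\tau .
\]
This uses naturality of $\ast$ and $d$ under the diffeomorphism $\sigma(t)$, so that pullback commutes with all these operations.

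\textbf{Step 2: scaling of $A$.} Write
\[
A_{ij}=-\tfrac12\,\tau_{ik}\,g^{kl}\,\tau_{lj}.
\]
The two factors of $\tau$ contribute $s$ in total, and the inverse metric $g(t)^{-1}=s^{-1}\sigma^\ast g^{-1}$ contributes $s^{-1}$. The powers cancel, so $A(t)=\sigma(t)^\ast A$.

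\textbf{Step 3: scaling of $\mathcal K$.} For $\mathcal K=\frac S3 g+A$, it remains to handle $\frac S3 g$. From Lemma~\ref{ABS}, $S=-\frac12\|\tau\|^2$, and since the norm involves two inverse metrics,
\[
S(t)=s^{-1}\sigma^\ast S .
\]
Therefore $S(t)g(t)=\sigma^\ast(Sg)$. Equivalently, $S$ is minus the trace of $A$ up to a factor, so this term is also scale-invariant. Combining with Step 2 gives $\mathcal K(t)=\sigma(t)^\ast\mathcal K$.

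\textbf{Main obstacle.} The only delicate point is Step 1: getting the exponent of the Hodge star right under the nonlinear map $\vphi\mapsto g_\vphi$. I would double-check it via $\|\vphi\|^2_{g_\vphi}=7$, which forces $g_{c^3\vphi}=c^2 g_\vphi$. I would also confirm that $\|\tau\|^2\,dv$ scales consistently with the identity $d\tau\wedge\psi=\|\tau\|^2dv$ from the proof of Lemma~\ref{d tau decomposition}: the left side scales by $c\cdot c^4=c^5$, and the right side by $c^{-2}\cdot c^7=c^5$, which agrees.
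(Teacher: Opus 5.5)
Your proposal is correct and follows essentially the same route as the paper. The paper also derives $T(t)=s^{1/2}\sigma(t)^\ast T$ from the Hodge-star scaling $\ast_{\vphi(t)}=s^{7/2-k}\sigma^\ast\ast_\vphi$ (your $c^{7-2k}$ with $c=s^{1/2}$) and then counts powers of $s$ in $A$ and $\mathcal K$. For the scalar curvature, the paper uses the Riemannian scaling $S_{g(t)}=s^{-1}\sigma^\ast S_g$ directly, whereas you obtain the same factor from $S=-\tfrac12\|\tau\|^2$. The one blemish is the placeholder display in Step 1: replace it with the exponent count $c^3\cdot c\cdot c^{-3}=c$ that you give in the following sentence.
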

\begin{proof}
For brevity, write $s:=s(t)$ and $\sigma:=\sigma(t)$. The curvatures scale as
\begin{align}\label{curvatures scale}
Rm_{g(t)}=s \sigma^\ast Rm_g,\quad
Ric_{g(t)}=\sigma^\ast Ric_g,\quad
S_{g(t)}=s^{-1}  \sigma^\ast S_g.
\end{align}
For $k$-forms, using the expressions for the Hodge star and the codifferential, we obtain
\begin{align*}
\ast_{\vphi(t)} = s^{\frac{7}{2}-k} \sigma^\ast \ast_\vphi,\quad \delta_{\vphi(t)}=s^{-1} \sigma^\ast \delta_\vphi,\quad
\triangle_{\vphi(t)}=s^{-1} \sigma^\ast \triangle_\vphi.
\end{align*}
Taking $k=3$ and using $\psi=\ast\vphi$ and $-2T=\delta\vphi$, we deduce
\begin{align*}
\psi(t)=s^{2}\sigma^\ast  \psi,\quad
T(t)=s^{\frac{1}{2}}\sigma^\ast  T,\quad
\triangle_{\vphi(t)}\vphi(t)=s^{\frac{1}{2}} \sigma^\ast \triangle_\vphi\vphi,
\end{align*}
see also \cite{MR4884543}*{Lemma 2.1}.
This yields the claim.
\end{proof}

\begin{lem}\label{scaled f}
The families $g(t)$ and $\vphi(t)$ solve \eqref{Laplacian flow metric} and \eqref{Laplacian flow form}, respectively. Moreover,
\begin{align*}
\partial_tf(t)=|\nabla_{g(t)}  f(t)|_{g(t)}^2\geq 0.
\end{align*}
\end{lem}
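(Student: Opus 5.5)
The plan is to differentiate the three expressions in \eqref{associated flow vphi} directly in $t$, and then compare the results with \eqref{Laplacian flow form} and \eqref{Laplacian flow metric} using the soliton equations together with the scaling behaviour recorded in \lemref{A K scaling invariant}. Throughout, write $s=s(t)$ and $\sigma=\sigma(t)$, so that $s'(t)=\tfrac{2\lambda}{3}$. We assume, as in the remark after Definition~\ref{soliton diffeomorphisms}, that $\tilde X$ is complete, so that $\sigma(t)$ exists on the whole time interval. Since $\partial_t\sigma=\tilde X(\sigma)$, for any tensor $Q$ we have
\begin{align*}
\partial_t(\sigma^\ast Q)=\sigma^\ast(L_{\tilde X(t)}Q)=s^{-1}\sigma^\ast(L_XQ).
\end{align*}

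\emph{The 3-form.} Differentiating $\vphi(t)=s^{3/2}\sigma^\ast\vphi$ gives
\begin{align*}
\partial_t\vphi(t)=\tfrac32 s^{1/2}s'\,\sigma^\ast\vphi+s^{3/2}s^{-1}\sigma^\ast L_X\vphi=s^{1/2}\sigma^\ast(\lambda\vphi+L_X\vphi).
\end{align*}
By the soliton equation \eqref{$G_2$-soliton}, the right-hand side equals $s^{1/2}\sigma^\ast\triangle_\vphi\vphi$. The proof of \lemref{A K scaling invariant} shows that this is exactly $\triangle_{\vphi(t)}\vphi(t)$, so \eqref{Laplacian flow form} holds. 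We also need $g(t)$ to be the metric induced by $\vphi(t)$. This holds because $G_{c^3\vphi}=c^9G_\vphi$ and $dv$ scales as $c^7$, so $g_{c^3\vphi}=c^2g_\vphi$. Taking $c=s^{1/2}$ and using naturality under pullback by $\sigma$ gives $g_{\vphi(t)}=s\,\sigma^\ast g=g(t)$.

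\emph{The metric.} Similarly,
\begin{align*}
\partial_t g(t)=s'\sigma^\ast g+\sigma^\ast L_Xg=\sigma^\ast\bigl(\tfrac{2\lambda}{3}g+2\nabla^2f\bigr).
\end{align*}
Substituting \eqref{soliton Ricci gradient}, which gives $\nabla^2 f=-Ric-\tfrac{\lambda}{3}g+\mathcal K$, yields $\partial_tg(t)=\sigma^\ast(-2Ric+2\mathcal K)$. By \eqref{curvatures scale} we have $Ric_{g(t)}=\sigma^\ast Ric_g$, and by \lemref{A K scaling invariant} we have $\mathcal K(t)=\sigma^\ast\mathcal K$. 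Hence this is $-2Ric_{g(t)}+2\mathcal K(t)$, which is \eqref{Laplacian flow metric}.

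\emph{The potential.} Here $\partial_t f(t)=s^{-1}\sigma^\ast(X f)=s^{-1}\sigma^\ast|\nabla f|_g^2$. On the other hand, $g(t)^{-1}=s^{-1}\sigma^\ast g^{-1}$, so
\begin{align*}
|\nabla_{g(t)}f(t)|^2_{g(t)}=s^{-1}\sigma^\ast|\nabla f|^2_g.
\end{align*}
The two expressions agree, and nonnegativity is immediate.

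The only delicate points are the sign and time-dependence conventions in $\partial_t\sigma^\ast Q=\sigma^\ast L_{\tilde X}Q$ for a time-dependent generator, and the completeness of $\tilde X$. The former I will check by writing $\sigma(t+h)=\sigma(t)\circ(\text{flow of }\tilde X(t)\text{ for time }h)+o(h)$. The remaining steps are routine bookkeeping of the scaling exponents.
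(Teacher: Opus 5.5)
Your proposal is correct and follows the same route as the paper. You differentiate the rescaled pulled-back quantities, use $L_{\nabla f}g=2\nabla^2 f$ together with \eqref{soliton Ricci gradient}, and invoke the scaling invariance of $Ric$ and $\mathcal K$ from \lemref{A K scaling invariant}; the only difference is that you derive the $3$-form equation and the compatibility $g_{\vphi(t)}=g(t)$ by hand, where the paper simply cites Lotay--Wei.

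One small remark on the check you planned. For a general time-dependent generator the correct expansion is $\sigma(t+h)=\phi_h\circ\sigma(t)+o(h)$, with $\phi_h$ the flow of the frozen field $\tilde X(t)$, not $\sigma(t)\circ\phi_h$. Your ordering gives the same answer here only because $\sigma(t)$ is a time-reparametrised flow of the fixed field $X$, so it commutes with $\phi_h$.
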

\begin{proof}
Using \lemref{A K scaling invariant}, we compute
\begin{align*}
\partial_t g(t)
&=\sigma(t)^\ast\!\left[\frac{2\lambda}{3} g + L_{X}g\right]
=\sigma(t)^\ast[-2Ric(g) +2\mathcal K]
=-2Ric(g(t)) +2\mathcal K(t).
\end{align*}
As shown in \cite{MR3613456}*{Page 224}, the rescaled $G_2$-form $\vphi(t)$ from Definition \ref{associated flow}
satisfies the flow equation \eqref{Laplacian flow form}. The evolution of $f(t)$ follows immediately from the definition of $\sigma(t)$ in Definition \ref{soliton diffeomorphisms}.
\end{proof}

\begin{lem}
For all $x\in M$,
\begin{align*}
& f(x,t)\leq  f(x,0),\quad t\in (-\infty, 0],\\
& f(x,t)\geq  f(x,0),\quad t\in [0,+\infty).
\end{align*}
\end{lem}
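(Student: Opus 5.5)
The monotonicity follows by integrating the evolution identity in \lemref{scaled f} in time, for each fixed point $x\in M$. By \lemref{scaled f}, along the associated $G_2$-Laplacian flow of Definition~\ref{associated flow} we have
\begin{align*}
\partial_t f(x,t)=|\nabla_{g(t)} f(t)|^2_{g(t)}(x)\geq 0,
\end{align*}
wherever $f(t)=\sigma(t)^\ast f$ is defined. We work on the time interval on which $\sigma(t)$ exists: for shrinking, steady and expanding solitons this is the relevant subinterval of $(-\infty,-\frac{3}{2\lambda})$, $(-\infty,+\infty)$, $(-\frac{3}{2\lambda},+\infty)$ respectively. We read the statement as restricted to those $t$ in $(-\infty,0]$ or $[0,\infty)$ that lie in this interval. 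We use the completeness of $\tilde X$ discussed after Definition~\ref{soliton diffeomorphisms}, which holds under the growth assumption on $|\nabla f|$.

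First I will justify that $t\mapsto f(x,t)$ is $C^1$. Since $f\in C^2$, the field $\tilde X=s(t)^{-1}\nabla f$ is $C^1$, so its flow $\sigma(t)$ is $C^1$ in $(t,x)$, and hence $f(\sigma(t)(x))$ is $C^1$ in $t$. Alternatively, and more directly, I compute
\begin{align*}
\partial_t f(\sigma(t)x)=\langle \nabla f,\partial_t\sigma\rangle_g=s(t)^{-1}|\nabla f|_g^2(\sigma(t)x)\geq 0,
\end{align*}
using $s(t)>0$. This agrees with $|\nabla_{g(t)}f(t)|^2_{g(t)}$, because $g(t)=s\,\sigma^\ast g$ scales inverse norms by $s^{-1}$.

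Next I will apply the fundamental theorem of calculus. For $t\geq 0$,
\begin{align*}
f(x,t)-f(x,0)=\int_0^t \partial_r f(x,r)\,dr\geq 0.
\end{align*}
For $t\leq 0$,
\begin{align*}
f(x,0)-f(x,t)=\int_t^0 \partial_r f(x,r)\,dr\geq 0.
\end{align*}
This gives both inequalities.

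The main point requiring care is not the computation but the domain of definition, namely that $\sigma(t)$ exists for all the relevant $t$. Here I would invoke the completeness of $\nabla f$: the condition $|\nabla f|\leq F(d(x,p))$ bounds the speed of integral curves, so they cannot escape to infinity in finite time when the comparison ODE has global solutions. If completeness is not available globally, the argument still gives the inequalities on the maximal existence interval of each flow line, which is all that is needed.
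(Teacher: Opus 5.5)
Your proposal is correct and is essentially the paper's argument, which simply integrates $\partial_t f(t)=|\nabla_{g(t)}f(t)|^2_{g(t)}\geq 0$ from Lemma~\ref{scaled f} in time at each fixed $x$. Your direct check $\partial_t (f\circ\sigma(t)) = s(t)^{-1}|\nabla f|_g^2\circ\sigma(t)$ and your restriction to times where $s(t)>0$ and $\sigma(t)$ exists (for example, only $t<-\frac{3}{2\lambda}$ for shrinkers) are refinements that the paper leaves implicit.
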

\begin{proof}
This follows by integrating $\p_t f$ in time in \lemref{scaled f}.
\end{proof}

Analogous identities to those in Proposition \ref{lem:Sc df} hold along the associated flow.
\begin{lem}
Along the associated flow, we have
\begin{align}
&\label{soliton Ricci gradient t}
Ric_{g(t)}+\nabla_{g(t)}^2 f= -\frac{\lambda}{3s(t)} g+\mathcal K_{g(t)},\\
&\label{soliton scalar t}
-\triangle f-\frac{7\lambda}{3s(t)}=\frac{2}{3}S\leq 0,\\
&\label{gradient gradient f t}
\nabla \bigl[S+|\nabla f|^2+\frac{2\lambda}{3s(t)} f\bigr]=\frac{2}{3}S\nabla f+ 4T^{ak}\nabla_aT_{kj}.
\end{align}
\end{lem}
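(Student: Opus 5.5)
The plan is to derive all three identities at time $t$ from the soliton identities of Proposition~\ref{lem:Sc df} at $t=0$. I will pull them back by $\sigma(t)$ and track how each quantity scales under $g(t)=s\,\sigma^\ast g$, $f(t)=\sigma^\ast f$, $T(t)=s^{1/2}\sigma^\ast T$ (Lemma~\ref{A K scaling invariant}). Equivalently, one could rerun the proof of Proposition~\ref{lem:Sc df} with $\lambda$ replaced by $\lambda/s(t)$, since $\vphi(t)$ is itself a gradient soliton with constant $\lambda/s(t)$. The pull-back route is more transparent, so I will follow it.

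\emph{First identity \eqref{soliton Ricci gradient t}.} The Levi-Civita connection is unchanged by constant rescaling and is natural under diffeomorphisms, so $\nabla^2_{g(t)}f(t)=\sigma^\ast(\nabla_g^2 f)$. By \eqref{curvatures scale}, $Ric_{g(t)}=\sigma^\ast Ric_g$. Pulling back \eqref{soliton Ricci gradient} by $\sigma$ gives
\begin{align*}
Ric_{g(t)}+\nabla^2_{g(t)}f(t)=\sigma^\ast\Bigl(-\frac{\lambda}{3}g+\mathcal K\Bigr)=-\frac{\lambda}{3s}g(t)+\mathcal K(t),
\end{align*}
where the last step uses $\sigma^\ast g=s^{-1}g(t)$ and $\mathcal K(t)=\sigma^\ast\mathcal K$ from Lemma~\ref{A K scaling invariant}.

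\emph{Second identity \eqref{soliton scalar t}.} I take the $g(t)$-trace of the first identity. Since $g(t)^{-1}=s^{-1}\sigma^\ast g^{-1}$, Lemma~\ref{trK} gives $\tr_{g(t)}\mathcal K(t)=s^{-1}\sigma^\ast(\tfrac13 S)=\tfrac13 S_{g(t)}$, using $S_{g(t)}=s^{-1}\sigma^\ast S$. The trace computation is then exactly the one in the proof of Proposition~\ref{lem:Sc df}, with $\lambda$ replaced by $\lambda/s$, and it yields \eqref{soliton scalar t} with the paper's Laplacian convention. The sign $\le0$ follows from $S_{g(t)}=-|T(t)|^2_{g(t)}$ (Lemma~\ref{ABS}). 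The main point of care here is keeping the sign conventions for $\tri$ consistent with \eqref{soliton scalar}, rather than any new analysis.

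\emph{Third identity \eqref{gradient gradient f t}.} This is the step needing the most bookkeeping. I start from \eqref{gradient gradient f}, rewritten via Lemma~\ref{trK} as $-2\div A=4T^{ak}\nabla_aT_{kj}$, and read it as an identity of $1$-forms, since $d$ commutes with $\sigma^\ast$. I then check that each term scales by the same factor $s^{-1}$:
\begin{itemize}
\item $S_{g(t)}=s^{-1}\sigma^\ast S$;
\item $|\nabla f(t)|^2_{g(t)}=s^{-1}\sigma^\ast|\nabla f|^2$;
\item $\tfrac{2\lambda}{3s}f(t)=s^{-1}\sigma^\ast(\tfrac{2\lambda}{3}f)$;
\item $S_{g(t)}\,df(t)=s^{-1}\sigma^\ast(S\,df)$;
\item in $T^{ak}\nabla_aT_{kj}$, the two raised indices contribute $s^{-2}$, the two factors of $T$ contribute $s^{1/2}\cdot s^{1/2}$, and $\nabla$ is invariant, so the term scales by $s^{-1}$.
\end{itemize}
Hence both sides of \eqref{gradient gradient f} pulled back by $\sigma$ equal $s$ times the corresponding $t$-quantities, and dividing by $s$ gives \eqref{gradient gradient f t}.
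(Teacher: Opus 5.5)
Your proposal is correct and follows essentially the route the paper has in mind. The paper gives no separate proof and only remarks that the identities of Proposition~\ref{lem:Sc df} carry over along the associated flow. Your pull-back and scaling bookkeeping via Lemma~\ref{A K scaling invariant}, or equivalently rerunning that proposition for the soliton $\vphi(t)$ with constant $\lambda/s(t)$, supplies exactly this argument, and your scaling factors, trace computation and index bookkeeping all check out.
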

Finally, we establish estimates for the higher-order derivatives of the Riemann curvature tensor for $G_2$-solitons by exploiting the associated flow. We recall the Shi-type estimates of the $G_2$-Laplacian flow by Lotay and Wei \cite{MR3613456}*{Theorem 4.2, Theorem 4.4, Remark 4.5}.
\begin{lem}\label{Higher derivatives flow}
Let $\vphi(t)$ be a $G_2$-Laplacian flow defined on a relatively compact ball $B_{2R}(p)$. Assume that $|Rm(t)|_{g(t)}\leq K$ on $B_{2R}(p) \times [0,\frac{1}{K}]$. Then for every $k\geq 0$ we have
\[
|\nabla^k Rm(t)|_{g(t)}\leq C(k,R,K)\, t^{-\frac{k}{2}}K \quad\text{on } B_R(p) \times \Big[0,\frac{1}{K}\Big].
\]
\end{lem}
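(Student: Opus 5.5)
The statement is the Shi-type estimate of Lotay and Wei, so the plan is to reproduce their Bernstein/Shi maximum-principle argument for the $G_2$-Laplacian flow. The work splits into two tasks. First, derive evolution equations for $|\nabla^k Rm|^2$ and for the torsion derivatives $|\nabla^{j} T|^2$. Second, build a localized, time-weighted test quantity and apply the maximum principle with a cutoff supported in $B_{2R}(p)$.

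\textbf{Preliminary observations.} A closed $G_2$-structure satisfies
\[
\nabla T = Rm * \varphi + T*T*\varphi .
\]
This comes from the identity relating $\nabla T$ to $Rm$, together with the fact that $Rm$ determines $\nabla\tau$ modulo lower-order terms. Consequently, by Lemma \ref{ABS} and Lemma \ref{scalar curvature general},
\[
|T|^2 = -S \leq C|Rm| \leq CK ,
\]
so the torsion is controlled by $\sqrt{K}$, and more generally
\[
|\nabla^{k+1}T| \lesssim |\nabla^k Rm| + \text{lower order}.
\]
Along \eqref{Laplacian flow metric}, the metric evolves by Ricci flow plus $2\mathcal K$, where $\mathcal K = T*T$. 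The curvature therefore satisfies
\[
\partial_t Rm = \Delta Rm + Rm*Rm + \nabla^2(T*T) + \dots
\]
I would write this as
\[
\partial_t \nabla^k Rm = \Delta \nabla^k Rm + \sum_{i+j=k}\nabla^i Rm * \nabla^j Rm + \sum \nabla^{i}T*\nabla^{j}T*\nabla^l Rm + \sum \nabla^{i+1}T*\nabla^{j+1}T,
\]
with every term schematically involving $\varphi$, which is parallel up to $T$.

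\textbf{The Shi argument.} For $k=1$, set
\[
F = t|\nabla Rm|^2 + \beta |Rm|^2 ,
\]
with $\beta$ large. Using $|T|^2\le CK$ and $|\nabla T|\le C|Rm|+CK$, the evolution gives
\[
(\partial_t-\Delta)F \leq -\tfrac{\beta}{2}\, t|\nabla^2 Rm|\,|\nabla Rm|\cdot 0 - c|\nabla Rm|^2 + CK^3
\]
(the first term is only schematic), once the bad term $t|\nabla Rm|^2 K$ and the cross terms $t\nabla Rm*\nabla^2(T*T)$ are absorbed. The latter are the delicate ones, since $\nabla^2 T \sim \nabla Rm$ and they contribute $tK|\nabla Rm|^2$, which the $\beta|Rm|^2$ term absorbs via its good term $-2\beta|\nabla Rm|^2$. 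Next, multiply by a cutoff $\eta$ supported in $B_{2R}$ and equal to $1$ on $B_R$, chosen with $|\nabla\eta|^2/\eta$ and $-\Delta\eta$ bounded. This requires a Laplacian comparison under $|Rm|\le K$ and control of the time variation of the distance, available since $|\partial_t g|\le CK$. The maximum principle then gives $F\le C(R,K)K^2$, hence
\[
|\nabla Rm|\le C t^{-1/2}K .
\]

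\textbf{Induction.} For higher $k$, proceed by induction using
\[
F_k = t^k|\nabla^k Rm|^2 + \beta_k\, t^{k-1}|\nabla^{k-1}Rm|^2 ,
\]
with the induction hypothesis controlling all lower derivatives of both $Rm$ and $T$.

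\textbf{Main obstacle.} The hardest step is handling the torsion terms of top order: $\nabla^{k+2}(T*T)$ contains $T*\nabla^{k+2}T\sim T*\nabla^{k+1}Rm$, which is one derivative higher than Ricci-flow terms. I would handle it by integrating by parts pointwise inside the evolution, i.e.\ writing
\[
\langle \nabla^k Rm, T*\nabla^{k+1}Rm\rangle \leq \varepsilon|\nabla^{k+1}Rm|^2 + C\varepsilon^{-1}K|\nabla^k Rm|^2 ,
\]
and absorbing the first term into the good gradient term $-2t^k|\nabla^{k+1}Rm|^2$ coming from $\Delta$. If this fails, the fallback is to evolve the combined quantity
\[
|\nabla^k Rm|^2 + |\nabla^{k+1}T|^2 ,
\]
as Lotay--Wei do.
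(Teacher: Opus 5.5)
Your plan is correct and is essentially the argument the paper relies on: the paper does not reprove this lemma but quotes the Shi-type estimates of Lotay--Wei, which come from exactly this Bernstein--Shi maximum-principle scheme, and your elimination of $\nabla^{j+1}T$ in favour of $\nabla^{j}Rm$ (via $\nabla T=Rm*\varphi+T*T*\varphi$ and $|T|^2=-S\le C|Rm|$) is why the hypothesis can be stated in terms of $|Rm|$ alone rather than $|Rm|^2+|\nabla T|^2$. One technical caveat in the localization: if you multiply all of $F=t|\nabla Rm|^2+\beta|Rm|^2$ by $\eta$, the cutoff errors $-F\Delta\eta+2\eta^{-1}|\nabla\eta|^2F$ contain terms of size $R^{-2}t|\nabla Rm|^2$ that the good term $-\eta|\nabla Rm|^2$ cannot absorb where $\eta$ is small, so either cut off only the top-order piece, $\eta\,t|\nabla Rm|^2+\beta|Rm|^2$ (the unweighted $-2\beta|\nabla Rm|^2$ then absorbs these errors for $\beta=\beta(KR^2)$ large), or use a quantity with a quadratic good term such as $(BK^2+|Rm|^2)|\nabla Rm|^2$.
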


\begin{prop}\label{Derivatives estimates}
Let $(M,\vphi,g,\nabla f)$ be a gradient $G_2$-soliton.
Suppose that $|Rm|_{g}\leq K$ on $B_{2R}(p)$. Then, for every $k\geq 0$,
\[
|\nabla^k Rm|_{g}R^{k+2}\leq C(k, K) \quad\text{on } B_R(p).
\]
\end{prop}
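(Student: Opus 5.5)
The plan is to pass from the static soliton to its associated $G_2$-Laplacian flow (Definition~\ref{associated flow}), apply the Shi-type estimates of \lemref{Higher derivatives flow} on a short time interval, and then use the self-similarity to transfer the resulting bounds back to the time-zero slice. Since the soliton is a single time slice, the natural trick is to run the associated flow from some negative time $t_0<0$ up to $0$. At $t=0$ we then have elapsed time $|t_0|$, so the factor $t^{-k/2}$ in \lemref{Higher derivatives flow} becomes a harmless constant.

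\textbf{Step 1: flow-region curvature bound.} By \eqref{curvatures scale}, $|Rm_{g(t)}|_{g(t)} = s(t)^{-1}\,\sigma(t)^\ast|Rm_g|_g$, because $Rm$ as a $(0,4)$ tensor scales like $s$ and the norm contributes $s^{-2}$.

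Choose $t_0=-c/K$ with $c$ small. Then $s(t)$ is comparable to $1$ on $[t_0,0]$, uniformly in $\lambda$ once $|\lambda| c/K$ is small. (If $\lambda$ is not controlled, one replaces $c$ by $\min(c,K/|\lambda|)$, at the cost of letting the constant depend on $\lambda$ as well.)

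The main obstacle is that $\sigma(t)$ moves points. To bound $|Rm(t)|$ on $B_{2R}(p)$ for $t\in[t_0,0]$, we need $\sigma(t)(B_{2R}(p))$ to stay inside the region where $|Rm|_g\le K$. Along the flow lines we have $|\partial_t\sigma|=s^{-1}|\nabla f|$, so the displacement over $[t_0,0]$ is at most $C|t_0|\sup|\nabla f|$.

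I would handle this by working on a slightly smaller ball: assume $|Rm|\le K$ on $B_{2R}(p)$, bound the displacement by $C K^{-1}\sup_{B_{2R}}|\nabla f|$, and, if necessary, shrink $c$ depending on a local bound for $|\nabla f|$, such as $F(d(x,p))$ in the class $\mathcal M(\Lambda,F)$. With this choice the flow $g(t)$ on a ball $B_{3R/2}(p)$, measured in $g(t)$-distance (comparable to $g$-distance for $t\in[t_0,0]$ by the uniform bound on $\partial_t g=-2Ric+2\mathcal K$ together with \lemref{Ricci bound}), has curvature bounded by $2K$.

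\textbf{Step 2: Shi estimates.} Apply \lemref{Higher derivatives flow} to the time-shifted flow $\tilde\vphi(\tau)=\vphi(t_0+\tau)$ for $\tau\in[0,|t_0|]$, with curvature bound $2K$ and $|t_0|\le 1/(2K)$. At $\tau=|t_0|$, which is the soliton slice $t=0$, this gives
\[
|\nabla^k Rm|_g \le C(k,R,K)\,|t_0|^{-k/2}K \le C(k,R,K)
\]
on $B_R(p)$.

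\textbf{Step 3: the stated form.} Finally, multiply by $R^{k+2}$ to obtain the bound in the form $|\nabla^k Rm|_g R^{k+2}\le C(k,K)$. The dependence on $R$ is absorbed by a scaling argument: rescale $g\mapsto R^{-2}g$, so that the ball becomes unit size and the curvature bound becomes $R^2K$. Since the rescaled metric is again a soliton after adjusting $\lambda$ and $f$, the estimate can be run on unit balls, with constants depending only on $k$ and the scale-invariant bound. The delicate points are checking this scale-invariance bookkeeping and controlling the displacement in Step 1. Of the two, I expect the displacement control to be the real obstacle, and I would resolve it by allowing dependence on $\sup_{B_{2R}}|\nabla f|$, i.e.\ on $F$.
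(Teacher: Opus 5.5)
Your argument is the paper's argument (associated Laplacian flow, the Shi-type estimates of \lemref{Higher derivatives flow}, transfer back by self-similarity, then rescaling to $B_R(p)$), but carried out more carefully. Ending the flow at the soliton slice $t=0$ and controlling the drift of $\sigma(t)$ is exactly what justifies the paper's steps $|Rm(t)|_{g(t)}\leq s^{-1}K$ and $|\nabla^k Rm|_g\circ\sigma(0)=|\nabla^k Rm|_g\circ\sigma(t)$, which as written hold only for suprema over all of $M$. The price is a constant that also depends on $\lambda$, $\sup_{B_{2R}}|\nabla f|$ and $KR^2$. The $R$-dependence at least is unavoidable: letting $R\to\infty$, a literal bound $|Rm|\,R^2\leq C(0,K)$ would make every complete soliton with globally bounded curvature flat, which fails e.g.\ for non-flat torsion-free $G_2$ metrics with $f$ constant.

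One technical adjustment is needed. Your shifted flow exists only for time $|t_0|$, possibly shorter than $1/(2K)$, so apply \lemref{Higher derivatives flow} with curvature bound $|t_0|^{-1}\geq 2K$ in place of $2K$.
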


\begin{proof}
From the rescaled metric \eqref{associated flow vphi}, together with the corresponding curvature scaling relation \eqref{curvatures scale}, we have
\[
|Rm(t)|_{g(t)}=s^{-1}\,|Rm|_{g}\circ \sigma(t)\leq s^{-1} K.
\]
Next, apply the derivative estimates from \lemref{Higher derivatives flow} to the associated flow provided by \lemref{scaled f} on a fixed finite time interval $I$ away from the initial time and on the unit ball. Using in addition the family of diffeomorphisms from Definition \ref{soliton diffeomorphisms}, we obtain
\begin{align*}
|\nabla^k Rm|_{g}
&=|\nabla^k Rm|_{g}\circ \sigma(0)
=|\nabla^k Rm|_{g}\circ \sigma(t)
=s^{\frac{k+2}{2}}|\nabla^k Rm(t)|_{g(t)}\\
&\leq s^{\frac{k+2}{2}} \, C(k,K)\, t^{-\frac{k}{2}} \, s^{-1}
= s^{\frac{k}{2}} \, C(k,K)\, t^{-\frac{k}{2}}
\leq C(k,K).
\end{align*}
Finally, we scale the unit ball back to $B_{R}(p)$, which completes the proof.
\end{proof}
\begin{rem}
In Section \ref{Shi-type estimates}, we will refine these higher-order estimates by providing more explicit constants. Specifically, the constant will be improved to $K$ multiplied by a constant $C$.
\end{rem}
\subsection{Riemann curvature controls the $G_2$-structures}
In this section, we derive useful equations for various curvatures and obtain estimates of the $G_2$-structures by using the Riemannian structures.
\subsubsection{Laplacian of the Riemann curvature tensor, the Ricci curvature tensor, and the scalar curvature}
The following lemma is obtained by a straightforward computation.
\begin{lem}\label{lem:tau Rm 1}
 Let $(M,\vphi,g,\nabla f)$ be a gradient $G_2$-soliton. Then
the Riemann curvature tensor satisfies
\begin{equation}\label{Rm}
  \begin{aligned}
&\tri R_{ijkl}= \Big(R_{qlij,k}f_q-R_{qkij,l}f_q\Big)\\
&\quad-R_{qikp}R_{qjlp}-R_{qjkp}R_{iqlp}-R_{qpkl}R_{ijpq}
-R_{qilp}R_{qjpk}-R_{qjlp}R_{iqpk}\\
&\quad+2\lambda_S R_{ijkl}+\Big(R_{qlij}A_{qk}-R_{qkij}A_{ql}\Big)\\
 &\quad+\tfrac{1}{3}\Big(S_{ik}g_{lj}-S_{jk}g_{li}-S_{il}g_{kj}+S_{jl}g_{ki}\Big)\\
 &\quad+\Big(A_{lj,ik}-A_{li, jk}-A_{kj,il}+A_{ki, jl}\Big).
  \end{aligned}
\end{equation}
For the Ricci tensor we have
\begin{equation}\label{Ric}
  \begin{aligned}
  \tri R_{ik}&= R_{ik,q}f_q+2R_{qikp}R_{qp}
+2\lambda_S R_{ik}+\Big(R_{qi}A_{qk}-R_{qkij}A_{qj}\Big)\\
&\quad+\frac{1}{3}\Big(-S_{ik}+\tri Sg_{ki}\Big)+\Big(-A_{ji, jk}-A_{kj,ij}+\tri A_{ki}\Big).
  \end{aligned}
\end{equation}
The scalar curvature fulfills
\begin{align}
\tri S&= S_{p}f_p-2|Ric|^2
-\frac{2\lambda}{3}S+\frac{2}{3}S^2+2<Ric,A>-2
\nabla^j\nabla_i {A^i}_j\label{S}.
\end{align}
Furthermore, employing the contraction notation $\ast$ with respect to the metric $g$, these relations can be compactly written as
\begin{align}
\tri_f Rm=&\,Rm\ast Rm+ Rm\ast T\ast T + \sum_{p+q=2}\nabla^{p} T\ast \nabla^qT ,\label{Rm brief}\\
\tri_f Ric=&\,Rm\ast Ric + Rm\ast T\ast T + \sum_{p+q=2}\nabla^{p} T\ast \nabla^qT,\label{Ric brief}\\
\tri_f S=&\, -2|Ric|^2 -\frac{2\lambda}{3}S+\frac{2}{3}S^2+2<Ric,A>-2
\div\div A. \label{S brief}
\end{align}
\end{lem}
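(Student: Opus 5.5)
The approach is to derive all three identities from the standard Riemannian computation of the Laplacian of curvature, following Hamilton's derivation for Ricci solitons. Into that computation we substitute the soliton equation \eqref{soliton Ricci gradient}, written as
\[
\nabla^2 f=-Ric-\tfrac{\lambda}{3}g+\mathcal K=-Ric+\lambda_S g+A,
\]
where we used $\mathcal K=\tfrac{S}{3}g+A$ and Definition~\ref{Ricci curvature soliton gradient}.

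\textbf{The full curvature tensor.} Start from the second Bianchi identity,
\[
\tri R_{ijkl}=\nabla_q\nabla_q R_{ijkl}=-\nabla_q\nabla_k R_{ijlq}-\nabla_q\nabla_l R_{ijqk}.
\]
Commute $\nabla_q$ past $\nabla_k$. This produces quadratic terms $Rm\ast Rm$ and terms $\nabla_k\nabla_q R_{ijlq}$. Apply the contracted Bianchi identity $\nabla_qR_{ijlq}=\nabla_iR_{jl}-\nabla_jR_{il}$, which turns these terms into second derivatives of the Ricci tensor. The result is the classical formula
\[
\tri Rm=\nabla^2 Ric\text{-terms}+Rm\ast Rm+Ric\ast Rm.
\]

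\textbf{Substituting the soliton equation.} Replace every $\nabla_i\nabla_k R_{jl}$ using $R_{jl}=-f_{jl}+\lambda_S g_{jl}+A_{jl}$. This splits into three contributions.
\begin{enumerate}[label=(\roman*)]
\item Third and fourth derivatives of $f$. Commuting covariant derivatives in the combination $f_{jl,ik}-f_{il,jk}-\dots$ gives $\nabla_k(R\ast\nabla f)$ terms. After again using the Bianchi identity, these collapse to the drift terms $R_{qlij,k}f_q-R_{qkij,l}f_q$. They also produce $Rm\ast\nabla^2 f$ terms, into which we substitute the soliton equation once more. The $\lambda_S Rm$ piece yields $2\lambda_S R_{ijkl}$, the $A$ piece yields $R_{qlij}A_{qk}-R_{qkij}A_{ql}$, and the $Ric$ piece cancels part of the $Ric\ast Rm$ terms from the classical formula.
\item Second derivatives of $\lambda_S g$. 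Since $\lambda_S=(-\lambda+S)/3$ and $\nabla g=0$, these give the $\tfrac13 S_{ik}g_{lj}$-type terms.
\item Second derivatives of $A$. These give the last line of \eqref{Rm} directly.
\end{enumerate}

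\textbf{Ricci and scalar curvature.} Equation \eqref{Ric} is obtained by contracting \eqref{Rm} in $j,l$, or equivalently by the same computation started from $\tri R_{ik}$. Equation \eqref{S} is the trace of \eqref{Ric}. As a consistency check, \eqref{S} can also be obtained from $\nabla S$ in \eqref{nabla S gradient tau} by taking a divergence:
\[
\tri S=2\div(Ric(\nabla f))-2\div\div A.
\]
Here one uses the contracted Bianchi identity $\div Ric=\tfrac12\nabla S$ and the soliton equation for $\nabla^2 f$, together with $\tr\mathcal K=\tfrac S3$ from \lemref{trK}. This yields
\[
\langle\nabla S,\nabla f\rangle-2|Ric|^2+2\lambda_S S+2\langle Ric,A\rangle,
\]
and the relation $2\lambda_S S=-\tfrac{2\lambda}{3}S+\tfrac23 S^2$ matches \eqref{S}.

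\textbf{Compact forms.} The brief forms \eqref{Rm brief}--\eqref{S brief} follow by moving the drift terms to the left to form $\tri_f$. For \eqref{Rm brief} we also need to absorb the drift itself: commute $\nabla_k$ past $f_q$, so that $R_{qlij,k}f_q$ combined with $\nabla_{\nabla f}Rm$ gives $Rm\ast\nabla^2 f$, which is then rewritten via the soliton equation. Finally, substitute $A=T\ast T$ and $S=-|T|^2$ from \lemref{ABS}. Then $\nabla^2 A=\sum_{p+q=2}\nabla^pT\ast\nabla^qT$, while $\lambda_S Rm$ and $S$-derivative terms fit into $Rm\ast T\ast T$ and $\nabla T\ast\nabla T$. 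One caveat: the constant-$\lambda$ part of $\lambda_S Rm$ is not literally of the form $Rm\ast T\ast T$. We treat it as a harmless lower-order $Rm$ term absorbed into the $\ast$-notation, or remove it by interpreting $\tri_f$ along the rescaled flow.

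\textbf{Main obstacle.} The hardest step is the bookkeeping in (i): the commutations of third derivatives of $f$ and the precise sign and index pattern of the quadratic terms, so that the $Ric\ast Rm$ pieces cancel exactly against those generated by $\nabla^2 f$. We would verify this by checking that the trace of \eqref{Ric} reproduces \eqref{S}, and by comparing with Hamilton's formula in the case $A=0$.
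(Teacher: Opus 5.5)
Your proposal follows the paper's own proof. It uses the second Bianchi identity and commutation, then the contracted Bianchi identity to reach second derivatives of $Ric$. It substitutes $R_{jl}=-f_{jl}+\lambda_S g_{jl}+A_{jl}$ twice: first in $R_{lj,i}-R_{li,j}$ via commutation of third derivatives of $f$, then for the resulting $f_{qk}$, which is where all the $Ric\ast Rm$ terms cancel. Finally it contracts over $j=l$ for \eqref{Ric} and uses the trace of \eqref{Ric} (or the divergence of \eqref{nabla S gradient tau}) for \eqref{S}.

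Three small corrections:
\begin{enumerate}
\item The drift terms satisfy $(R_{qlij,k}-R_{qkij,l})f_q=f_qR_{ijkl,q}$ exactly, by the second Bianchi identity. So they are absorbed into $\tri_f$ with no $Rm\ast\nabla^2 f$ remainder, and no commuting of $\nabla_k$ past $f_q$ is needed.
\item Your $\lambda$-caveat is correct. The term $-\tfrac{2\lambda}{3}Rm$ (resp.\ $-\tfrac{2\lambda}{3}Ric$) is a genuine linear term of the static equation. It must be added to \eqref{Rm brief} and \eqref{Ric brief}; it cannot be removed by passing to the flow.
\item Careful bookkeeping gives $+R_{qpkl}R_{ijpq}$, as in the paper's proof, not the minus sign displayed in \eqref{Rm}. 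This is the sign that makes the three middle quadratic terms cancel when you contract to \eqref{Ric}. Tracing \eqref{Ric} would not catch this sign; your check against Hamilton's formula, or contracting \eqref{Rm} to \eqref{Ric}, would.
\end{enumerate}
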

\begin{proof}
We first derive the identity \eqref{Rm} for $Rm$.
In normal coordinates, applying the second Bianchi identity
\[
R_{ijkl,p}+R_{ijlp,k}+R_{ijpk,l}=0
\]
to the term $R_{ijkl,p}$ gives
\begin{align*}
\Delta R_{ijkl}
=R_{ijkl,pp}
&=-\bigl(R_{ijlp,kp}+R_{ijpk,lp}\bigr).
\end{align*}
Using the commutation formula for covariant derivatives,
\[
T_{i_1\cdots i_s,kp}-T_{i_1\cdots i_s,pk}
=\sum_{1\leq t\leq s}R_{qi_tkp}T_{i_1\cdots i_{t-1} q i_{t+1}\cdots i_s},
\]
we obtain
\begin{align*}
\Delta R_{ijkl}
&=-\bigl(R_{ijlp,pk}+R_{ijpk,pl}\bigr)+Rm\ast Rm.
\end{align*}
Here
\begin{align*}
Rm\ast Rm
&=-R_{qikp}R_{qjlp}-R_{qjkp}R_{iqlp}-R_{qlkp}R_{ijqp}-R_{qpkp}R_{ijlq}\\
&\quad -R_{qilp}R_{qjpk}-R_{qjlp}R_{iqpk}-R_{qplp}R_{ijqk}-R_{qklp}R_{ijpq}.
\end{align*}
By applying the first Bianchi identity to the third and eighth terms, we may rewrite this as
\begin{align*}
Rm\ast Rm
&=-R_{qikp}R_{qjlp}-R_{qjkp}R_{iqlp}
+R_{qpkl}R_{ijpq}-R_{qk}R_{ijlq}\\
&\quad -R_{qilp}R_{qjpk}-R_{qjlp}R_{iqpk}-R_{ql}R_{ijqk}.
\end{align*}

For the first term in $\Delta Rm$, the first Bianchi identity yields
\begin{align*}
R_{ijlp,pk}=R_{lpij,pk}
=-R_{lpjp,ik}-R_{lppi,jk}
=-R_{lj,ik}+R_{li,jk}.
\end{align*}
We now insert the soliton equation \eqref{soliton Ricci gradient} and use the commutation formula to simplify the expression $R_{lj,i}-R_{li,j}$:
\begin{align*}
R_{lj,i}-R_{li,j}
&=(-f_{lj}+\lambda_S g_{lj}+A_{lj})_i-(-f_{li}+\lambda_S g_{li}+A_{li})_j\\
&=R_{qlij}f_q+(\lambda_Sg_{lj})_i-(\lambda_Sg_{li})_j+A_{lj,i}-A_{li,j}.
\end{align*}
Substituting the soliton equation again to replace $f_{qk}$ in these terms, we get
\begin{align*}
-R_{ijlp,pk}
&=(R_{lj,i}-R_{li,j})_k\\
&=[R_{qlij}f_q+(\lambda_Sg_{lj})_i-(\lambda_Sg_{li})_j+A_{lj,i}-A_{li,j}]_k\\
&=R_{qlij,k}f_q+R_{qlij}f_{qk}
+(\lambda_Sg_{lj})_{ik}-(\lambda_Sg_{li})_{jk}+A_{lj,ik}-A_{li,jk}\\
&=R_{qlij,k}f_q-R_{qlij}R_{qk}+\lambda_S R_{klij}+R_{qlij}A_{qk}\\
&\quad+\frac{S_{ik}}{3}g_{lj}-\frac{S_{jk}}{3}g_{li}+A_{lj,ik}-A_{li,jk}.
\end{align*}
Similarly, for the second term we find
\begin{align*}
R_{ijpk,pl}
&=(R_{kj,i}-R_{ki,j})_l\\
&=R_{qkij,l}f_q-R_{qkij}R_{ql}+\lambda_S R_{lkij}+R_{qkij}A_{ql}\\
&\quad+\frac{S_{il}}{3}g_{kj}-\frac{S_{jl}}{3}g_{ki}+A_{kj,il}-A_{ki,jl}.
\end{align*}
Putting all of these identities together, we obtain the desired equation for $Rm$.

To derive equation \eqref{Ric} for $Ric$, we now contract \eqref{Rm} over the indices $j,l$, and apply the first Bianchi identity to the second term in the first line, which gives
\begin{align*}
    R_{qjij,k}=R_{qi,k},\quad R_{qkij,j}=R_{qjik,j}=
    -R_{qk,i}+R_{qi,k}.
\end{align*}
To complete the derivation of \eqref{Ric}, it only remains to contract the five terms appearing in the second line. We assert that the sum of the three middle terms is zero:
\begin{align*}
R_{qjkp}R_{iqjp}+R_{qpkj}R_{ijpq}+R_{qijp}R_{qjpk}=0,
\end{align*}
which again follows from the first Bianchi identity. Indeed, define
\[
II_{ki}=R_{qjkp}R_{iqjp}=-R_{qpjk}R_{qpji}.
\]
Then
\begin{align*}
II_{ki}
&=R_{qpkj}R_{ijpq}\\
&=(R_{qkjp}+R_{qjpk})(R_{ipqj}+R_{iqjp})\\
&=R_{qkjp}R_{ipqj}+R_{qkjp}R_{iqjp}+R_{qjpk}R_{ipqj}+R_{qjpk}R_{iqjp}\\
&=-R_{pjkq}R_{ipjq}+2II_{ki}-R_{qjkp}R_{iqjp}.
\end{align*}
This shows the claimed identity. Using in addition
\[
\sum_i A_{ii}=-2S,
\]
we finally deduce the desired equation for $Ric$:
\begin{equation*}
  \begin{aligned}
  \tri R_{ik}
  &= R_{ik,q}f_q+2R_{qikp}R_{qp}
  +(R_{qi}A_{qk}-R_{qkij}A_{qj})\\
  &\quad+\frac{1}{3}\Big(7S_{ik}-2S_{ik}+\tri Sg_{ki}\Big)\\
  &\quad+(-2S_{ik}-A_{ji, jk}-A_{kj,ij}+\tri A_{ki}).
  \end{aligned}
\end{equation*}

Finally, we derive the scalar curvature equation \eqref{S} by contracting \eqref{Ric} once more and using
    \[
    A_{ij,ji}=A_{ij,ij}-R_{qj}A_{qj}+R_{qi}A_{qi}=A_{ij,ij}.
    \]
We could also derive the equation for the scalar curvature directly by taking the divergence of \eqref{nabla S gradient tau}, and using
\begin{align*}
g^{ij}\nabla_i(2R_{jp}f_p )
&=2g^{ij}(R_{jp,i}f_p +R_{jp}f_{pi} )\\
&=S_pf_p +2g^{ij} R_{jp}(-R_{pi} +\lambda_Sg_{pi}+A_{pi})\\
&=S_pf_p -2|Ric|^2+2\lambda_S S+2R_{ip}A_{pi}.
\end{align*}
Putting everything together, we obtain \eqref{S}, as claimed.
\end{proof}

\subsubsection{Laplacian of the torsion tensor}
Thanks to the presence of the $G_2$-structure, we obtain additional identities involving the torsion tensor $T$.
Expressed in terms of the Levi-Civita connection $\nabla$ associated with the metric $g$, the torsion tensor satisfies
\begin{align}
&T_{ij}=\frac{1}{24}\nabla_{i}\vphi_{pqr}\,{\psi_j}^{pqr}, \label{T and vphi 1}\\
& \nabla^i\vphi_{jkl}=T^{ip}\psi_{pjkl}.\label{T and vphi 2}
\end{align}
The dual $4$-form $\psi$ obeys
\begin{align}
&\psi_{ijkl}=\varphi_{pij}\varphi_{qkl} g^{pq}-g_{ik}g_{jl}+g_{il}g_{jk},\label{psi}\\
& \nabla_p\psi_{ijkl}=
-T_{pi}\vphi_{jkl}
+T_{pj}\vphi_{kli}
-T_{pk}\vphi_{lij}
+T_{pl}\vphi_{ijk}   .\label{nabla psi}
\end{align}

From the first and second Bianchi identities we deduce the contraction relations
\begin{align}
&R_{ipqr}\vphi^{pqr}=0, \label{Riem contraction 1}\\
&R_{ijpq,r}\vphi^{pqr}=0. \label{Riem contraction 2}
\end{align}

When $\vphi$ is closed, the torsion tensor $T$ satisfies the following contraction properties:
\begin{align}
&T_{pq}\vphi^{pqk}=0, \label{T contraction 1}\\
& T_{pq}\psi^{pqij}=-2T^{ij}, \label{T contraction 2}\\
& \Na^p T_{pj}=0,\label{T contraction 3}
\end{align}
its covariant derivative is given by
\begin{align} \label{eq:nabla_T}
        4\Na_k T_{ij}=& R_{jkpq}{\vphi_i}^{pq}-R_{ikpq}{\vphi_j}^{pq}+R_{jipq}{\vphi_k}^{pq}  \\
        &-2T_{jp}T_{kq}{\vphi_i}^{pq}+2T_{ip}T_{kq}{\vphi_j}^{pq}+2T_{ip}T_{jq}{\vphi_k}^{pq} \notag,
    \end{align}
and the Ricci tensor can be expressed as
\begin{align} \label{Ric S T}
R_{ij}={\vphi_i}^{pq} \nabla_p T_{qj}-{T_i}^{k}T_{kj}.
\end{align}
\begin{rem}
The above formulas involving the torsion tensor can be found in \cite{MR2282011,MR2559631} and \cite{MR3613456}*{Section 2}.
\end{rem}

\begin{lem}\label{tri vphi}
If $\vphi$ is closed, then
    \begin{align*}
    4\Delta T_{ij}
&= 2R_{jp,q}{\vphi_i}^{pq}+R_{jkpq}{T^{kr}} \varphi_{ari}{\varphi}^{apq} \\
&-2R_{ip,q}{\vphi_j}^{pq}-R_{ikpq}{T^{kr}} \varphi_{arj}{\varphi}^{apq} \\
&+4R_{jkir}{T^{kr}}+2R_{jipq}T^{pq}\\
&-2\Na^kT_{jp}T_{kq}{\vphi_i}^{pq}+2T_{jp}{A_{q}}^r{\psi_{ri}}^{pq}\\
&+2\Na^kT_{ip}T_{kq}{\vphi_j}^{pq}-2T_{ip}{A_{q}}^r{\psi_{rj}}^{pq}\\
&+\Na^kT_{ip}T_{jq}{\vphi_k}^{pq}
+T_{ip}\Na^kT_{jq}{\vphi_k}^{pq}
-2T_{ip}{A^p}_{j}.
    \end{align*}
\end{lem}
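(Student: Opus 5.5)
We will obtain the formula for $4\Delta T_{ij}$ by taking one more covariant derivative of the identity \eqref{eq:nabla_T}, contracting over $k$, and then simplifying each resulting group of terms with the closed-structure identities \eqref{T and vphi 2}, \eqref{psi}, \eqref{T contraction 1}--\eqref{T contraction 3} and the contracted Bianchi relations. Concretely, write \eqref{eq:nabla_T} with $k$ raised and apply $\nabla^k$:
\begin{align*}
4\Delta T_{ij}=\nabla^k\bigl(R_{jkpq}{\vphi_i}^{pq}-R_{ikpq}{\vphi_j}^{pq}+R_{jipq}{\vphi_k}^{pq}\bigr)
+\nabla^k\bigl(-2T_{jp}T_{kq}{\vphi_i}^{pq}+2T_{ip}T_{kq}{\vphi_j}^{pq}+2T_{ip}T_{jq}{\vphi_k}^{pq}\bigr).
\end{align*}
Each term is expanded by the Leibniz rule into a part where $\nabla^k$ hits the curvature or torsion factor, and a part where it hits $\vphi$; the latter is replaced using \eqref{T and vphi 2}, $\nabla_k\vphi_{abc}=T_{kr}{\psi^r}_{abc}$.

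For the curvature block: in the first term, the contracted second Bianchi identity gives $\nabla^kR_{jkpq}=R_{jq,p}-R_{jp,q}$ up to sign/ordering conventions, and antisymmetry in $p,q$ against ${\vphi_i}^{pq}$ produces $2R_{jp,q}{\vphi_i}^{pq}$; symmetrically the second term yields $-2R_{ip,q}{\vphi_j}^{pq}$. In the third term, $\nabla^kR_{jipq}{\vphi_k}^{pq}=0$ by \eqref{Riem contraction 2}, leaving only $R_{jipq}T^{kr}{\psi_{rk}}^{pq}$, which by \eqref{T contraction 2} becomes $2R_{jipq}T^{pq}$. The terms where $\nabla^k$ hits ${\vphi_i}^{pq}$ in the first two terms give $R_{jkpq}T^{kr}{\psi_{ri}}^{pq}$; here I will expand $\psi$ via \eqref{psi}, $\psi_{ripq}=\vphi_{ari}{\vphi^a}_{pq}-g_{rp}g_{iq}+g_{rq}g_{ip}$, so the $\vphi\vphi$ part produces $R_{jkpq}T^{kr}\vphi_{ari}\vphi^{apq}$ and the metric parts give $\pm R_{jkri}T^{kr}$-type terms. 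Together with their counterparts from the $i\leftrightarrow j$ term, and using the antisymmetry of $T$ with the pair symmetry of $Rm$, these metric parts should collect into $4R_{jkir}T^{kr}$.

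For the quadratic torsion block: when $\nabla^k$ hits a $T$ factor we get the terms $\nabla^kT_{jp}T_{kq}{\vphi_i}^{pq}$ etc.; the derivative of $T_{kq}$ contracted on $k$ vanishes by \eqref{T contraction 3}, which is why only one derivative term survives from each of the first two products while both survive in the third. When $\nabla^k$ hits $\vphi$ we get $T_{jp}T_{kq}T^{kr}{\psi_{ri}}^{pq}$, and $T_{kq}T^{kr}=-\tfrac12 {A_q}^r$ by \lemref{ABS}, giving the $A\ast\psi$ terms; in the third product $T_{ip}T_{jq}T^{kr}{\psi_{rk}}^{pq}$ reduces via \eqref{T contraction 2} to $-2T_{ip}T_{jq}T^{pq}$, i.e.\ $-2T_{ip}{A^p}_j$ up to the factor conventions in $A=-2T\cdot T$.

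The main obstacle is purely bookkeeping: signs and numerical factors from index orderings in Bianchi identities, the convention $R_{ijkl}$ versus the commutation formula, and the factor conventions in $A$ and \eqref{T contraction 2}. I would first verify the metric-part collection into $4R_{jkir}T^{kr}$ and the coefficient of $T_{ip}{A^p}_j$ carefully, checking consistency by contracting the final identity with $g^{ij}$ (both sides must vanish by skew-symmetry) as a sanity test.
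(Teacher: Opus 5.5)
Your route is the same as the paper's: apply $\nabla^k$ to \eqref{eq:nabla_T}, expand by Leibniz, and simplify the curvature block with the contracted second Bianchi identity and \eqref{Riem contraction 2}. You then expand $\psi$ by \eqref{psi} to collect $4R_{jkir}T^{kr}$, and treat the quadratic block with \eqref{T and vphi 2}, \eqref{T contraction 3} and \eqref{T contraction 2}. The curvature block comes out exactly as displayed. With the paper's convention $R_{jkqk}=R_{jq}$ one gets $\nabla^kR_{jkpq}=R_{jp,q}-R_{jq,p}$; the sign you wrote would give $-2R_{jp,q}{\vphi_i}^{pq}$.

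The step that fails is the last one, where you match the quadratic torsion terms to the statement ``up to factor conventions''. Three of your asserted identities are wrong, and no convention compatible with \lemref{ABS} rescues them.
\begin{itemize}
\item In an orthonormal frame $A_{qr}=-2{T_q}^kT_{kr}=2\sum_k T_{kq}T_{kr}$, so $A_{qq}\geq 0$. Hence $T_{kq}T^{kr}=+\tfrac12{A_q}^r$ and $T_{jq}T^{pq}=+\tfrac12{A_j}^p$, not $-\tfrac12$.
\item By skew-symmetry of $T$ and \eqref{T contraction 2}, $T^{kr}{\psi_{rk}}^{pq}=+2T^{pq}$, not $-2T^{pq}$.
\item The third quadratic product in \eqref{eq:nabla_T} carries the prefactor $2$, and this multiplies all three of its Leibniz terms.
\end{itemize}
Carried out correctly, the quadratic block is
\begin{align*}
&-2\Na^kT_{jp}T_{kq}{\vphi_i}^{pq}-T_{jp}{A_{q}}^r{\psi_{ri}}^{pq}+2\Na^kT_{ip}T_{kq}{\vphi_j}^{pq}+T_{ip}{A_{q}}^r{\psi_{rj}}^{pq}\\
&+2\Na^kT_{ip}T_{jq}{\vphi_k}^{pq}+2T_{ip}\Na^kT_{jq}{\vphi_k}^{pq}+2T_{ip}{A^p}_{j}.
\end{align*}
This is not the displayed block, which has $\pm2$ on the $A\ast\psi$ terms, coefficient $1$ on the last two derivative terms, and $-2T_{ip}{A^p}_j$.

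The paper's proof reaches the displayed coefficients only through the same kind of slips.
\begin{itemize}
\item It writes $2T_{jp}T_{kq}T^{kr}{\psi_{ri}}^{pq}=-2T_{jp}{A_q}^r{\psi_{ri}}^{pq}$ and $2T_{ip}T_{jq}T^{pq}=-2T_{ip}{A^p}_j$. Each is off by a factor $-2$; the diagonal check $\sum_kT_{kq}^2\geq 0$ against $-A_{qq}\leq 0$ rules both out when $T\neq0$.
\item It adds the sixth contribution as $\nabla^k(T_{ip}T_{jq}{\vphi_k}^{pq})$ instead of $2\nabla^k(T_{ip}T_{jq}{\vphi_k}^{pq})$.
\end{itemize}
So you should not try to force agreement. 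What your computation (and the paper's) actually proves is the identity with the quadratic block above; the curvature block stands as stated. This is harmless downstream, since only the schematic form in Proposition \ref{tri T soliton} and \eqref{btsr3} is used later.

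Your proposed trace test cannot detect any of this. Every term on the right-hand side is separately skew in $(i,j)$; for example $T_{ip}{A^p}_j=-2(T^3)_{ij}$. Contracting with $g^{ij}$ therefore gives $0=0$ whatever the coefficients. The positivity of $A_{qq}$ is the check that fixes the sign of $T_{kq}T^{kr}$.
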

\begin{proof}
Using \eqref{eq:nabla_T}, we compute each term in local coordinates:
    \begin{align*}
    4\Delta T_{ij} &=4\Na^k\Na_kT_{ij} \\
        &= \Na^k(R_{jkpq}{\vphi_i}^{pq}-R_{ikpq}{\vphi_j}^{pq}+R_{jipq}{\vphi_k}^{pq}) \\
        &\quad+2\Na^k(-T_{jp}T_{kq}{\vphi_i}^{pq}+T_{ip}T_{kq}{\vphi_j}^{pq}+T_{ip}T_{jq}{\vphi_k}^{pq}).
    \end{align*}

    \textbf{1st term}
For the first term we use \eqref{T and vphi 2}:
    \begin{align*}
I:=\Na^k(R_{jkpq}{\vphi_i}^{pq})
=\Na^kR_{jkpq}{\vphi_i}^{pq}+R_{jkpq}{T^{kr}}{\psi_{ri}}^{pq}.
    \end{align*}
By the second Bianchi identity
\[
R_{jkpq,k}=-R_{jkqk,p}-R_{jkkp,q}=-R_{jq,p}+R_{jp,q},
\]
we obtain
        \begin{align*}
I_1=\Na^kR_{jkpq}{\vphi_i}^{pq}
=(-R_{jq,p}+R_{jp,q}){\vphi_i}^{pq}
=2R_{jp,q}{\vphi_i}^{pq}.
    \end{align*}
Substituting \eqref{psi} into the second part of $I$ gives
        \begin{align*}
I_2&=R_{jkpq}{T^{kr}}{\psi_{ri}}^{pq}\\
&=R_{jkpq}{T^{kr}} \big[\varphi_{ari}{\varphi_{b}}^{pq} g^{ab}-{g_{r}}^p{g_{i}}^q+{g_{r}}^q{g_{i}}^p\big]\\
&=R_{jkpq}{T^{kr}} \varphi_{ari}{\varphi}^{apq}
+2R_{jkir}{T^{kr}}.
    \end{align*}
Altogether this yields
    \begin{align*}
I=2R_{jp,q}{\vphi_i}^{pq}+R_{jkpq}{T^{kr}} \varphi_{ari}{\varphi}^{apq}
+2R_{jkir}{T^{kr}}.
    \end{align*}

\textbf{2nd term}
In the same way, the second term is
    \begin{align*}
II&= -\Na^k(R_{ikpq}{\vphi_j}^{pq})\\
&=-2R_{ip,q}{\vphi_j}^{pq}-R_{ikpq}{T^{kr}} \varphi_{arj}{\varphi}^{apq}
-2R_{ikjr}{T^{kr}}.
    \end{align*}
Exchanging $k,r$ we get
    \begin{align*}
-R_{ikjr}{T^{kr}}=-R_{irjk}{T^{rk}}=R_{jkir}{T^{kr}},
    \end{align*}
which matches the last term in $I$.

\textbf{3rd term}
For the third term, we employ \eqref{T and vphi 2}, \eqref{T contraction 2} and \eqref{Riem contraction 2}:
\begin{align*}
\Na^k(R_{jipq}{\vphi_k}^{pq})
&=\Na^kR_{jipq}{\vphi_k}^{pq} +R_{jipq}\Na^k{\vphi_k}^{pq} \\
&=R_{jipq}\Na^k{\vphi_k}^{pq}
=R_{jipq} T^{kr}{\psi_{rk}}^{pq}\\
&=2R_{jipq}T^{pq}.
\end{align*}

\textbf{4th term}
For the fourth term we use \eqref{T and vphi 2} and \eqref{T contraction 3}:
\begin{align*}
IV&:=2\Na^k(T_{jp}T_{kq}{\vphi_i}^{pq})
=2\Na^kT_{jp}T_{kq}{\vphi_i}^{pq}+2T_{jp}T_{kq}\Na^k{\vphi_i}^{pq}.
\end{align*}
From \lemref{ABS} we know
\begin{align*}
\Na^k{\vphi_i}^{pq}
=T^{kr}{\psi_{ri}}^{pq}.
\end{align*}
Hence the second part of $IV$ simplifies to
\begin{align*}
IV_2&:=2T_{jp}T_{kq}\Na^k{\vphi_i}^{pq}
=2T_{jp}T_{kq}T^{kr}{\psi_{ri}}^{pq}
=-2T_{jp}{A_{q}}^r{\psi_{ri}}^{pq}.
\end{align*}
Using \eqref{psi} again,
\begin{align*}
IV_2
&=-2T_{jp}{A_{q}}^r(\varphi_{ari}{\varphi_{b}}^{pq} g^{ab}-{g_{r}}^p{g_{i}}^q+{g_{r}}^q{g_{i}}^p)\\
&=-2T_{jp}{A_{q}}^r \varphi_{ari}\varphi^{apq}+T_{jp}{A_{i}}^p+2ST_{ji}.
\end{align*}
Thus the fourth term is
\begin{align*}
IV
=2\Na^kT_{jp}T_{kq}{\vphi_i}^{pq}-2T_{jp}{A_{q}}^r{\psi_{ri}}^{pq}.
\end{align*}

\textbf{5th term}
Analogously, the fifth term becomes
\begin{align*}
2\Na^k(T_{ip}T_{kq}{\vphi_j}^{pq})
=2\Na^kT_{ip}T_{kq}{\vphi_j}^{pq}-2T_{ip}{A_{q}}^r{\psi_{rj}}^{pq}.
\end{align*}

\textbf{6th term}
From \eqref{T and vphi 2} and \eqref{T contraction 2} we obtain
        \begin{align*}
\Na^k {\vphi_k}^{pq}
= T^{kr}\psi_{rkpq}
=2 T_{pq}.
    \end{align*}
Therefore, the sixth term is
        \begin{align*}
\Na^k(T_{ip}T_{jq}{\vphi_k}^{pq})
&=
\Na^kT_{ip}T_{jq}{\vphi_k}^{pq}
+T_{ip}\Na^kT_{jq}{\vphi_k}^{pq}
+2T_{ip}T_{jq}  T_{pq}\\
&=
\Na^kT_{ip}T_{jq}{\vphi_k}^{pq}
+T_{ip}\Na^kT_{jq}{\vphi_k}^{pq}
-2T_{ip}{A^p}_{j}.
    \end{align*}
Adding these six contributions gives the claimed identity.
\end{proof}

\begin{prop}\label{tri T soliton}
        Let $(M,\vphi,g,\nabla f)$ be a gradient $G_2$-soliton. Then
    \begin{align*}
    4\Delta T_{ij}
=&-{R_{qpj}}^rf_r{\vphi_i}^{pq}-\frac{4}{3}T^{kl}\nabla_qT_{kl} \cdot {\vphi_{ij}}^{q}-4\nabla_q{T_j}^{k}T_{kp}{\vphi_i}^{pq}-4{T_j}^{k}\nabla_qT_{kp}{\vphi_i}^{pq}\\
&+{R_{qpi}}^rf_r{\vphi_j}^{pq}+\frac{4}{3}T^{kl}\nabla_qT_{kl} \cdot {\vphi_{ji}}^{q}+4\nabla_q{T_i}^{k}T_{kp}{\vphi_j}^{pq}+4{T_i}^{k}\nabla_qT_{kp}{\vphi_j}^{pq}\\
&+R_{jkpq}{T^{kr}} \varphi_{ari}{\varphi}^{apq}
-R_{ikpq}{T^{kr}} \varphi_{arj}{\varphi}^{apq} \\
&+4R_{jkir}{T^{kr}}+2R_{jipq}T^{pq}\\
&-2\Na^kT_{jp}T_{kq}{\vphi_i}^{pq}+2T_{jp}{A_{q}}^r{\psi_{ri}}^{pq}\\
&+2\Na^kT_{ip}T_{kq}{\vphi_j}^{pq}-2T_{ip}{A_{q}}^r{\psi_{rj}}^{pq}\\
&+\Na^kT_{ip}T_{jq}{\vphi_k}^{pq}
+T_{ip}\Na^kT_{jq}{\vphi_k}^{pq}
-2T_{ip}{A^p}_{j}.
    \end{align*}
In a more compact schematic form, this can be written as
\begin{align*}
            \tri T=Rm\ast \nabla f \ast\vphi+Rm\ast T\ast\vphi\ast\vphi+Rm\ast T+\nabla T\ast T\ast \vphi+T\ast T\ast T\ast\psi.
        \end{align*}
\end{prop}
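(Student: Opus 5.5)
The plan is to start from the general identity for $4\Delta T_{ij}$ in \lemref{tri vphi}, which holds for any closed $G_2$-structure. Comparing it with the claimed formula, the last six lines of the claim agree verbatim with the corresponding lines of \lemref{tri vphi}. The only terms to be rewritten are therefore the two derivative-of-Ricci terms
\[
2R_{jp,q}{\vphi_i}^{pq}\qquad\text{and}\qquad -2R_{ip,q}{\vphi_j}^{pq}.
\]
These are exactly the terms where the soliton structure enters. Since the second is obtained from the first by swapping $i$ and $j$ and changing the sign, it suffices to treat $2R_{jp,q}{\vphi_i}^{pq}$.

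\textbf{Step 1: use skew-symmetry of ${\vphi_i}^{pq}$.} Because ${\vphi_i}^{pq}$ is skew in $p,q$, we can write
\[
2R_{jp,q}{\vphi_i}^{pq}=(R_{jp,q}-R_{jq,p}){\vphi_i}^{pq}.
\]

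\textbf{Step 2: insert the soliton equation.} By \eqref{soliton Ricci gradient} together with Definition~\ref{Ricci curvature soliton gradient} and \lemref{ABS},
\[
R_{jp}=-f_{jp}+\lambda_S g_{jp}+A_{jp},\qquad \lambda_S=-\tfrac{\lambda}{3}+\tfrac{S}{3},\qquad A_{jp}=-2{T_j}^kT_{kp}.
\]
This is the same substitution already used in the proof of \lemref{lem:tau Rm 1}. It splits $R_{jp,q}-R_{jq,p}$ into three contributions, handled in Steps 3–5.

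\textbf{Step 3: the Hessian part.} The Hessian part gives $-(f_{jpq}-f_{jqp})$. By the Ricci commutation formula this equals a curvature term contracted with $\nabla f$. After contracting with ${\vphi_i}^{pq}$ it produces $-{R_{qpj}}^r f_r{\vphi_i}^{pq}$. I will fix the sign carefully using the commutation convention stated in the proof of \lemref{lem:tau Rm 1}.

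\textbf{Step 4: the $\lambda_S$ part.} This part gives $(\nabla_q\lambda_S)g_{jp}-(\nabla_p\lambda_S)g_{jq}$, and contracting with ${\vphi_i}^{pq}$ yields $2\nabla_q\lambda_S\,{\vphi_{ij}}^{q}$ up to the ordering of indices. Since $\lambda$ is constant, $\nabla\lambda_S=\tfrac13\nabla S$. Since $S=-|T|^2$, we have $\nabla_q S=-2T^{kl}\nabla_qT_{kl}$. Together these produce the term $-\tfrac43 T^{kl}\nabla_qT_{kl}\,{\vphi_{ij}}^q$ (with the sign to be checked against the ordering convention of ${\vphi_{ij}}^q$).

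\textbf{Step 5: the torsion part.} This part gives $\nabla_qA_{jp}-\nabla_pA_{jq}$. Expanding by the Leibniz rule,
\[
\nabla_qA_{jp}=-2\big(\nabla_q{T_j}^kT_{kp}+{T_j}^k\nabla_qT_{kp}\big).
\]
Antisymmetrising in $p,q$ against ${\vphi_i}^{pq}$ doubles this, giving $-4\nabla_q{T_j}^kT_{kp}{\vphi_i}^{pq}-4{T_j}^k\nabla_qT_{kp}{\vphi_i}^{pq}$.

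\textbf{Step 6: assemble.} Steps 3–5 give the first line of the claim. The second line follows by the $i\leftrightarrow j$ swap with the opposite sign. Substituting both into \lemref{tri vphi} gives the stated identity.

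\textbf{Schematic form.} The compact version is then read off term by term:
\begin{itemize}
\item the curvature–gradient terms are $Rm\ast\nabla f\ast\vphi$;
\item the $\varphi\varphi$-contracted curvature terms are $Rm\ast T\ast\vphi\ast\vphi$;
\item $R_{jkir}T^{kr}$ and $R_{jipq}T^{pq}$ are $Rm\ast T$;
\item all terms with one derivative of $T$ are $\nabla T\ast T\ast\vphi$;
\item the $A\ast T\ast\psi$ terms and $T_{ip}{A^p}_j$ are $T\ast T\ast T\ast\psi$, where for the latter we use \eqref{psi} to absorb the metric factor.
\end{itemize}

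The main obstacle is purely bookkeeping of signs and index orderings in the commutation formula and in ${\vphi_{ij}}^q$ versus ${\vphi_{ji}}^q$. To resolve this I would first verify the antisymmetry $i\leftrightarrow j$ of each line, which serves as a consistency check, before fixing the overall signs.
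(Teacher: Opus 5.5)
Your proposal is correct and is essentially the paper's argument: start from \lemref{tri vphi}, substitute $R_{jp}=-f_{jp}+\lambda_S g_{jp}+A_{jp}$ into the two $\nabla Ric$ terms, convert the Hessian part using the skew-symmetry of ${\vphi_i}^{pq}$ together with the commutation formula $f_{jp,q}-f_{jq,p}={R_{qpj}}^r f_r$, and differentiate $S=-|T|^2$ and $A_{jp}=-2{T_j}^kT_{kp}$. The only cosmetic difference is that you antisymmetrise all of $R_{jp,q}{\vphi_i}^{pq}$ in $p,q$ first, whereas the paper antisymmetrises only the Hessian term and simply doubles the $S$ and $A$ contributions; the signs you flagged for checking come out exactly as stated.
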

\begin{proof}
Substituting the soliton equation \eqref{soliton Ricci gradient}
\[
        Ric+\nabla^2 f= \lambda_S \cdot g+A
\]
into the first term of \lemref{tri vphi}, we obtain
\begin{align*}
R_{jp,q}{\vphi_i}^{pq}=(-f_{jp,q}+\tfrac{1}{3}S_q \cdot g_{jp}+A_{jp,q}){\vphi_i}^{pq}.
\end{align*}
For the first contribution in this expression, swapping the indices $p$ and $q$ gives
\[
f_{jp,q}{\vphi_i}^{pq}=f_{jq,p}{\vphi_i}^{qp}=-f_{jq,p}{\vphi_i}^{pq},
\]
and therefore
\begin{align*}
f_{jp,q}{\vphi_i}^{pq}=\frac{1}{2}(f_{jp,q}-f_{jq,p}){\vphi_i}^{pq}.
\end{align*}
Using the Ricci commutation formula
\[
f_{jp,q}-f_{jq,p}={R_{qpj}}^rf_r,
\]
we deduce that
\begin{align*}
f_{jp,q}{\vphi_i}^{pq}=\frac{1}{2}{R_{qpj}}^rf_r{\vphi_i}^{pq}.
\end{align*}
For the remaining two contributions, differentiating \lemref{ABS} directly gives
\begin{align*}
&S_q=-2T^{kl}\nabla_q T_{kl},\\
&A_{jp,q}=(-2{T_j}^{k}T_{kp})_q=-2\nabla_q{T_j}^{k}T_{kp}-2{T_j}^{k}\nabla_qT_{kp}.
\end{align*}
Putting these pieces together, the first term can be rewritten as
\begin{align*}
R_{jp,q}{\vphi_i}^{pq}
=&-\frac{1}{2}{R_{qpj}}^rf_r{\vphi_i}^{pq}\\
&-\frac{2}{3}T^{kl}\nabla_qT_{kl} \cdot {\vphi_{ij}}^{q}-2\nabla_q{T_j}^{k}T_{kp}{\vphi_i}^{pq}-2{T_j}^{k}\nabla_qT_{kp}{\vphi_i}^{pq}.
\end{align*}
Carrying out the same computation for $R_{ip,q}{\vphi_j}^{pq}$ and
inserting these identities back into \lemref{tri vphi}, we arrive at the claimed formula for $\tri T$ in the setting of a gradient $G_2$-soliton.
\end{proof}

\begin{rem}
This expression for $\tri T$ in the $G_2$-soliton setting is the analogue of $(\p_t-\tri)T$ along the $G_2$-Laplacian flow; see \cite{MR3613456}*{Equation (3.13)}.
\end{rem}
\subsubsection{Bounds for the $G_2$-structure expressed via the Riemann curvature}
Before stating the relation between higher-order derivatives of the $G_2$-structure $\vphi$ and the curvature, we introduce the notions of domination and equivalence.
\begin{defn}[Domination and equivalence]
We say that a quantity $a$ dominates $b$, written $b\lesssim a$, if there exists a constant $C\geq 1$ such that $$b\leq C\cdot a.$$
We say that $a$ is equivalent to $b$, denoted $a\thickapprox b$, if there exists a constant $C\geq 1$ with
\[
C^{-1}\cdot a\leq b\leq C\cdot a.
\]
The constants $C$ are independent of the particular choice of metric and depend only on the dimension.
\end{defn}

The inequalities we establish show that all derivatives of $$\vphi,\quad \psi,\quad T,\quad A$$ are bounded above by a constant multiple of products involving the curvature $Rm$ and its derivatives $\Na^k Rm$.

\begin{defn}\label{defkRm1}
For any $k\geq 0$, we set a vector $$v_k:=(n_0,\cdots,n_k)$$ such that $2n_0$ and other components are non-negative integers, that is $$2n_0\in \mathbb{N}_0, \quad n_i\in \mathbb{N}_0, \quad \forall1\leq i\leq k.$$ For each vector $v_k$, we associate it with a non-negative integer $$\|v_k\|:=\sum_{0\leq i\leq k}(i+2)n_i=2n_0+3n_1+\cdots+(k+2)n_k.$$
\end{defn}
\begin{defn}\label{defkRm2}
    We also define a notion of the curvature $Rm$ regarding to the vector $v_k$
 $$\|Rm\|_{v_k}:=\prod_{0\leq i\leq k}|\nabla^i Rm|^{n_{i}}:= |Rm|^{n_0}|\nabla Rm|^{n_1}\cdots|\nabla^k Rm|^{n_{k}}.$$
\end{defn}
\begin{prop}\label{lem:tau Rm k}
 Then we obtain that
        \begin{align}
\label{scalevphi}
            & |\nabla^{k} \vphi|\lesssim \sum_{\|v_k\|=k } \|Rm\|_{v_k} ,\\
\label{scalepsi}
            & |\nabla^{k} \psi|\lesssim \sum_{\|v_k\|=k }\|Rm\|_{v_k},\\
\label{scaletau}
            & |\nabla^{k} T|\lesssim \sum_{\|v_k\|=k +1}\|Rm\|_{v_k},\\
\label{scaleA}
            &|\nabla^{k} A|\lesssim \sum_{\|v_k\|=k +2 } \|Rm\|_{v_k},
        \end{align}
        Moreover, $\Na^k S$ has the same estimate as $\Na^k A$.
    \end{prop}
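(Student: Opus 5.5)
The plan is an induction on $k$. It rests on the fact that, for a closed $G_2$-structure, every derivative of $\vphi,\psi,T$ can be rewritten algebraically in terms of $Rm$, $T$, $\vphi$, $\psi$. The weight $\|v_k\|$ of Definition~\ref{defkRm1} records scaling: $Rm$ has weight $2$, each covariant derivative adds $1$, $T$ has weight $1$, and $\vphi,\psi$ have weight $0$. Products add weights, because the exponent vectors add: $\|Rm\|_{v}\|Rm\|_{w}=\|Rm\|_{v+w}$ and $\|v+w\|=\|v\|+\|w\|$. So the statement says that each quantity is dominated by a sum of curvature monomials of the correct scaling weight. Throughout, $|\vphi|$ and $|\psi|$ are dimensional constants, which handles the case $k=0$ of \eqref{scalevphi} and \eqref{scalepsi}.

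\textbf{Base cases.} By Lemma~\ref{ABS}, $|T|^2=-S$, and $|S|\lesssim |Rm|$. Hence $|T|\lesssim |Rm|^{1/2}$, which is the case $k=0$ of \eqref{scaletau} with $n_0=\tfrac12$; this is exactly why half-integers $2n_0\in\mathbb N_0$ are allowed. Next, \eqref{eq:nabla_T} gives schematically
\[
\nabla T=Rm\ast\vphi+T\ast T\ast\vphi ,
\]
so $|\nabla T|\lesssim |Rm|+|T|^2\lesssim |Rm|$, which has weight $2$. Then $A=T\ast T$ and $S=-|T|^2$ give $|A|,|S|\lesssim |Rm|$, also of weight $2$. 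Finally, \eqref{T and vphi 2} and \eqref{nabla psi} give $\nabla\vphi=T\ast\psi$ and $\nabla\psi=T\ast\vphi$, so $|\nabla\vphi|,|\nabla\psi|\lesssim |Rm|^{1/2}$, of weight $1$.

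\textbf{Inductive step.} Assume the four estimates hold up to order $k-1$ for $\vphi,\psi,T,A$. Each statement at order $k$ is obtained by applying $\nabla^{k-1}$ to a first-order identity and expanding by the Leibniz rule:
\begin{itemize}
\item For $\vphi$: $\nabla^{k}\vphi=\nabla^{k-1}(T\ast\psi)=\sum_{a+b=k-1}\nabla^aT\ast\nabla^b\psi$, a sum of terms of weight $(a+1)+b=k$. The same argument applies to $\psi$.
\item For $T$: $\nabla^{k}T=\nabla^{k-1}(Rm\ast\vphi+T\ast T\ast\vphi)$. This produces terms $\nabla^aRm\ast\nabla^b\vphi$ of weight $(a+2)+b=k+1$, and terms $\nabla^aT\ast\nabla^bT\ast\nabla^c\vphi$ of weight $(a+1)+(b+1)+c=k+1$.
\item For $A$ and $S$: $\nabla^kA$ and $\nabla^kS$ are sums of $\nabla^aT\ast\nabla^bT$ with $a+b=k$, which have weight $k+2$.
\end{itemize}
In every case the derivative orders stay at most $k$; for example, $\nabla^{k-1}Rm$ is the highest derivative of curvature appearing in $\nabla^kT$. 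Substituting the inductive bounds and multiplying the resulting sums of monomials then closes the induction.

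\textbf{Main obstacle.} The main difficulty is bookkeeping rather than analysis. One must check that every product of dominating sums is again a sum of monomials $\|Rm\|_{v_k}$ with exactly the claimed weight, and not something of mixed weight. Concretely, this means checking:
\begin{itemize}
\item that the index vectors $v_a$ and $v_b$ can be padded by zeros to length $k$ and added;
\item that the constants depend only on $k$ and the dimension, since the number of Leibniz terms and all contractions with $\vphi$ and $\psi$ are universal;
\item that no lower-weight term, such as a bare constant, appears.
\end{itemize}
That last point is guaranteed because every identity used, \eqref{eq:nabla_T}, \eqref{T and vphi 2} and \eqref{nabla psi}, is homogeneous in scaling weight. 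Once this homogeneity is recorded, the induction is routine.
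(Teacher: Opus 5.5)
Your proposal is correct and follows essentially the same route as the paper. Both arguments combine the first-order identities $\nabla\vphi=T\ast\psi$, $\nabla\psi=T\ast\vphi$, $\nabla T=Rm\ast\vphi+T\ast T\ast\vphi$ and $\nabla A=\nabla T\ast T$ with the base bounds $|T|\lesssim|Rm|^{1/2}$, $|\nabla T|\lesssim|Rm|$, then expand higher derivatives by Leibniz and close the induction by padding the index vectors with zeros and adding them. The only differences are cosmetic: the paper puts $k\le 2$ into a separate base lemma, and at each order it bounds $A$ using the $T$-bound of the same order, exactly as your ordering of the bullets does.
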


\begin{proof}
The proof is based on \eqref{T and vphi 2} and \eqref{nabla psi},
        \begin{align}\label{gradient vphi psi}
        \nabla\vphi=T\ast\psi,\quad \nabla\psi=T\ast\vphi,
        \end{align}
together with the expressions for $\nabla T$ in \eqref{eq:nabla_T} and for $\nabla A$ in \lemref{ABS},
        \begin{align}\label{gradient T A}
\Na T=Rm\ast\vphi+T\ast T\ast \vphi,\quad \nabla A=\nabla T\ast T.
        \end{align}

For $k=0,1,2$, the required estimates will be established in \lemref{lem:tau Rm 2} below.
We now use induction to obtain the higher order estimates. Assume that the desired bounds hold up to order $k$; we will verify them for order $k+1$.
Differentiate \eqref{gradient vphi psi} and \eqref{gradient T A} a total of $k$ times to get
\begin{equation}\label{gradient vphi psi k}
  \begin{aligned}
&\nabla^{k+1}\vphi=\sum_{p+q=k}\nabla^pT\ast\nabla^q\psi,\quad \nabla^{k+1}\psi=\sum_{p+q=k}\nabla^pT\ast\nabla^q\vphi,\\
        &\Na^{k+1} T=\sum_{p+q=k}\nabla^p Rm\ast\nabla^q\vphi+\sum_{p+q+r=k}\nabla^pT\ast \nabla^qT\ast \nabla^r\vphi,\\
        & \nabla^{k+1} A=\sum_{p+q=k+1}\nabla^{p} T\ast \nabla^qT.
  \end{aligned}
\end{equation}
First, by the induction hypothesis we obtain
\begin{align*}
|\Na^{k+1}\vphi|
&\lesssim \sum_{p+q=k} |\Na^p T|\, |\Na^{q} \psi|\\
&\lesssim\sum_{p+q=k} \sum_{\|v_p\|=p +1} \prod_{0\leq i\leq p} |\nabla^iRm|^{n_i} \sum_{\|v_q\|=q }   \prod_{0\leq j\leq q} |\nabla^jRm|^{\tilde n_j}.
\end{align*}
We extend $v_p$ and $v_q$ to vectors $v^p_{k+1}$ and $v^q_{k+1}$ in $\mathbb{R}^{k+1}$ by appending zero entries. We then define a new vector $v_{k+1}$ by adding the corresponding components of $v^p_{k+1}$ and $v^q_{k+1}$. A direct computation shows that
\[
\|v_{k+1}\| = p+1+q = k+1.
\]
Therefore, \eqref{scalevphi} is valid for $k+1$. An identical reasoning proves \eqref{scalepsi} for $k+1$ as well.

Next, we estimate
\begin{align*}
|\Na^{k+1}T|
\lesssim \sum_{p+q=k}  |\Na^p Rm|\, |\Na^{q} \vphi|+\sum_{p+q+r=k}|\Na^p T|\, |\Na^q T|\, |\Na^{r} \vphi|.
\end{align*}
In the first sum, corresponding to $|\Na^p Rm|$, we introduce a vector $v_p$ whose $(p+1)$-st component is $1$ and whose remaining components are all zero. By definition, this gives $\|v_p\|=p+2$. Using \eqref{scalevphi}, we obtain
\[
|\nabla^{q} \vphi|\lesssim \sum_{\|v_q\|=q } \|Rm\|_{v_q}.
\]
We now view these vectors as elements of $\mathbb{R}^k$ by padding them with zeros and then add them together to form a new vector $v_k$ satisfying
\[
\|v_k\|=\|v_q\|+\|v_p\|=q+p+2=k+2.
\]
For the second sum, the vectors $v_p$, $v_q$, and $v_r$ again combine to form a vector $v_k$ in the same manner, and
\[
\|v_k\| = \|v_p\| + \|v_q\| + \|v_r\|
= (p+1) + (q+1) + r
= k+2.
\]
By appending a zero entry to $v_k$, we construct a vector $v_{k+1}$, from which \eqref{scaletau} follows.

Finally, for $A$ we apply \eqref{gradient vphi psi k} and obtain
\begin{align*}
|\Na^{k+1}A|
&\lesssim
\sum_{p+q=k+1}|\nabla^{p} T|\,|\nabla^qT|\\
&\lesssim
\sum_{p+q=k+1} \sum_{|v_p|=p+1}\prod_{0\leq i\leq p}|\nabla^i Rm|^{n_{i}}  \sum_{|v_q|=q +1}\prod_{0\leq j\leq q}|\nabla^j Rm|^{n_{j}}.
\end{align*}
Here $v_p$ and $v_q$ combine to form a new vector $v_{k+1}$, as in the preceding argument, and
\[
\|v_{k+1}\|=\|v_p\|+\|v_q\|=p+q+2=k+3.
\]
This establishes \eqref{scaleA}.
\end{proof}

\begin{lem}\label{lem:tau Rm 2}
The estimates in Proposition \ref{lem:tau Rm k} are valid for $k=0,1,2$, namely
        \begin{align*}
        &|\vphi|\lesssim C,\quad |\psi|\lesssim C,\quad
        |T|\lesssim|Rm|^{\frac{1}{2}},\quad |A| \lesssim|Rm|,\\
        &|\nabla\vphi|\lesssim|Rm|^{\frac{1}{2}},\quad |\nabla\psi|\lesssim|Rm|^{\frac{1}{2}},\quad
     |\Na T|\lesssim |Rm|, \quad |\Na A|\lesssim|Rm|^{\frac{3}{2}},\\
     &|\nabla^{2}\vphi| \lesssim |Rm|,\quad
 |\nabla^{2}\psi| \lesssim |Rm|,\quad
|\Na^{2} T|\lesssim |\nabla Rm|+|Rm|^{\frac{3}{2}},\\
& |\nabla^{2} A | \lesssim |Rm|^{\frac{1}{2}}|\nabla Rm|+|Rm|^{2}.
        \end{align*}
\end{lem}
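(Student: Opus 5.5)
The plan is to establish the twelve estimates in increasing order of differentiation, using only pointwise algebraic identities. No analysis is needed. The key inputs are the first-order structure equations \eqref{gradient vphi psi} and \eqref{gradient T A}, together with the Ricci identity \eqref{Ric S T}.

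\textbf{Order zero.} The bounds on $\vphi$ and $\psi$ are immediate, since their norms are dimensional constants ($|\vphi|^2=42$, $|\psi|^2=168$ in the tensor norm). For $T$, I would contract \eqref{Ric S T} with $g^{ij}$. The term ${\vphi_i}^{pq}\nabla_pT_{qi}$ vanishes by skew-symmetry of $\vphi$ and symmetry of the contraction, which reproduces $S=-|T|^2$ from \lemref{ABS}. Hence
\[
|T|^2=-S\le |S|\lesssim |Rm|,
\]
so $|T|\lesssim|Rm|^{1/2}$. Then $A=-2T\ast T$ gives $|A|\lesssim|T|^2\lesssim|Rm|$. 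This trace step is the one genuinely non-formal point: it is where closedness of $\vphi$ and the sign $S\le0$ from \lemref{scalar curvature general} enter. Without it, $T$ would not be controlled by curvature at all.

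\textbf{Order one.} From \eqref{T and vphi 2} and \eqref{nabla psi} we get $|\nabla\vphi|+|\nabla\psi|\lesssim|T|\lesssim|Rm|^{1/2}$. From \eqref{eq:nabla_T},
\[
|\nabla T|\lesssim |Rm|+|T|^2\lesssim |Rm|.
\]
Differentiating $A=-2T\ast T$ gives
\[
|\nabla A|\lesssim |T|\,|\nabla T|\lesssim |Rm|^{3/2}.
\]

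\textbf{Order two.} I would differentiate \eqref{gradient vphi psi} once:
\[
\nabla^2\vphi=\nabla T\ast\psi+T\ast\nabla\psi,
\]
so $|\nabla^2\vphi|\lesssim |Rm|+|T|^2\lesssim|Rm|$. The same argument with the roles of $\vphi$ and $\psi$ swapped handles $\psi$. Next I would differentiate \eqref{eq:nabla_T}:
\[
\nabla^2T=\nabla Rm\ast\vphi+Rm\ast\nabla\vphi+\nabla T\ast T\ast\vphi+T\ast T\ast\nabla\vphi.
\]
Inserting the bounds above, the terms are controlled by $|\nabla Rm|$, $|Rm|\cdot|Rm|^{1/2}$, $|Rm|\cdot|Rm|^{1/2}$, and $|Rm|\cdot|Rm|^{1/2}$ respectively. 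This yields $|\nabla^2T|\lesssim|\nabla Rm|+|Rm|^{3/2}$. Finally,
\[
\nabla^2A=\nabla^2T\ast T+\nabla T\ast\nabla T,
\]
which is bounded by $|Rm|^{1/2}(|\nabla Rm|+|Rm|^{3/2})+|Rm|^2$, as claimed.

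\textbf{Compatibility with the induction.} Each bound should be checked against the weight bookkeeping of Definition~\ref{defkRm1}. For example, $|Rm|^{1/2}|\nabla Rm|$ corresponds to $v=(1/2,1)$ with $\|v\|=1+3=4=k+2$ for $k=2$. Half-integer $n_0$ is exactly what Definition~\ref{defkRm1} allows, so these base cases match the form used in the induction of Proposition~\ref{lem:tau Rm k}.

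All constants here come only from contractions with $\vphi$, $\psi$ and $g$, so they are dimensional. The only step requiring care is the order-zero trace identity; everything after it is routine product-rule bookkeeping.
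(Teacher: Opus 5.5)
Your argument follows the paper's proof step for step. Order zero comes from $|T|^2=-S$ together with the fixed norms of $\vphi$ and $\psi$. Order one comes from \eqref{gradient vphi psi} and \eqref{gradient T A}. Order two comes from differentiating these identities once and substituting the lower-order bounds. Your bookkeeping at orders one and two, including the weight check $\|(1/2,1)\|=4$, is correct.

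What you must fix is your justification of $S=-|T|^2$. The term ${\vphi_i}^{pq}\nabla_pT_{qi}$ does not vanish by symmetry. Both $\vphi_{ipq}$ and $\nabla_pT_{qi}$ are skew in the pair $(i,q)$, and contracting two tensors that are skew in the same pair produces no cancellation. Computed with the paper's conventions, \eqref{T contraction 1} gives $\vphi_{ipq}T_{qi}=0$. Then \eqref{T and vphi 2} and \eqref{T contraction 2} give
\[
{\vphi_i}^{pq}\nabla_pT_{qi}=-(\nabla_p\vphi_{ipq})\,T_{qi}=-T_{pr}\,\psi_{ripq}\,T_{qi}=-2|T|^2 .
\]
If this term really vanished, tracing \eqref{Ric S T} would give $S=-{T_i}^{k}T_{ki}=+|T|^2$, which has the wrong sign. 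The correct trace is $S=-2|T|^2+|T|^2=-|T|^2$.

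The identity is already recorded in \lemref{ABS} (equivalently, Bryant's formula in \lemref{scalar curvature general}), so you can simply cite it, as the paper does. With that replacement your proof is complete. You should also drop the remark that this trace step is the delicate point, since all the needed input comes from that cited identity.
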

\begin{proof}
Using $|T|^2=-S$ from \eqref{Ric S T}, we immediately obtain \eqref{scaletau} and \eqref{scaleA} for $k=0$:
        \begin{align*}
        |T|= |S|^{\frac{1}{2}}\lesssim|Rm|^{\frac{1}{2}},\quad |A|\lesssim |T|^2= |S| \lesssim|Rm|.
        \end{align*}
For $\vphi$ and $\psi$, \eqref{scalevphi} and \eqref{scalepsi} for $k=0$ follow from the fixed norms
\begin{align*}
|\vphi|=7\times 3!,\qquad |\psi|=7\times 4!.
\end{align*}
For $k=1$, \eqref{gradient vphi psi} gives
        \begin{align*}
        |\nabla\vphi|\lesssim |T|\lesssim|Rm|^{\frac{1}{2}},\quad |\nabla\psi|\lesssim |T|\lesssim|Rm|^{\frac{1}{2}}.
        \end{align*}
From \eqref{gradient T A} we obtain
        \begin{align*}
            & |\Na T|\lesssim |Rm|+|T|^2\lesssim |Rm|, \quad |\Na A|\lesssim |\nabla T|\,|T|\lesssim|Rm|^{\frac{3}{2}},
        \end{align*}
which are precisely \eqref{scaletau} and \eqref{scaleA} for $k=1$.
For $k=2$, \eqref{gradient vphi psi k} becomes
\begin{equation}\label{gradient vphi psi 2}
  \begin{aligned}
\nabla^{2}\vphi&=\sum_{p+q=1}\nabla^pT\ast\nabla^q\psi ,\quad
\nabla^{2}\psi=\sum_{p+q=1}\nabla^pT\ast\nabla^q\vphi ,\\
\Na^{2} T&=\sum_{p+q=1}\nabla^p Rm\ast\nabla^q\vphi+\sum_{p+q+r=1}\nabla^pT\ast \nabla^qT\ast \nabla^r\vphi,\\
 \nabla^{2} A&=\sum_{p+q=2}\nabla^{p} T\ast \nabla^qT.
  \end{aligned}
\end{equation}
Substituting the bounds already established for $k\leq 1$ into \eqref{gradient vphi psi 2} yields the desired estimates for $k=2$.
\end{proof}
\subsection{Analysing examples of complete gradient $G_2$-solitons}\label{examples}
Currently, no sharp a priori bounds are known for the potential function $f$ solving the equations in Proposition \ref{lem:Sc df}, as already noted in Question \ref{bounds on S anf f} and Remark \ref{bounds on S anf f remark}.

We briefly review the constructions in \cite{MR4555993,MR4349461,MR4728483,arXiv:2112.09095,arXiv:2501.05437} of complete gradient $G_2$-solitons associated with Definition \ref{3 assumptions}, thereby demonstrating that the moduli space $\mathcal M(\Lambda,F)$ contains many explicit examples.
\subsubsection{An explicit complete gradient shrinking soliton $M_1$}
In \cite{MR4349461}*{Section 5.2}, Fowdar uses the Apostolov-Salamon ansatz \cite{MR2044890} to construct an explicit, complete gradient shrinking soliton, which we denote by $M_1$. The construction is as follows:
\begin{align*}
&g_\vphi=dt^2+2^{-0.5} e^t \sum_{1\leq i\leq 4}( e^i)^2+2e^{2t}\sum_{i=5,6}( e^i)^2 ,\\
&X=\frac{15}{2}b\cdot \p_t, \quad b>0,\quad f(t)=\frac{15}{2}b\cdot t,\\
&\lambda=-\frac{9}{2}b^2,\quad S=-\frac{27}{4}b^2,\quad \Vol=O(e^{4t}).
\end{align*}
\begin{lem}
From this we deduce that
\begin{align*}
 &M_1\in \mathcal M(\Lambda,F), \quad \Lambda=\frac{27}{4}b^2,\quad F_0(x)=\frac{15}{2}b \cdot \log x,\\
 &-\lambda+S=-\frac{9}{4}b^2<0,\quad 7\lambda+2S=-45b^2<0.
\end{align*}
\end{lem}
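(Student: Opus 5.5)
The lemma consists of four assertions: membership in $\mathcal M(\Lambda,F)$ with the stated $\Lambda$ and $F_0$, and two sign computations. I will check them in order, using only the explicit data for $M_1$ recorded above.

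\textbf{The sign identities.} These are immediate arithmetic from $\lambda=-\tfrac{9}{2}b^2$ and $S=-\tfrac{27}{4}b^2$. First,
\[
-\lambda+S=\tfrac{18}{4}b^2-\tfrac{27}{4}b^2=-\tfrac{9}{4}b^2<0.
\]
Second,
\[
7\lambda+2S=-\tfrac{63}{2}b^2-\tfrac{27}{2}b^2=-45b^2<0.
\]
As a consistency check against \eqref{soliton scalar}, $2S=-7\lambda-3\tri f$ gives
\[
\tri f=-\tfrac{7\lambda+2S}{3}=15b^2.
\]
This agrees with $f=\tfrac{15}{2}bt$. For the metric $dt^2+e^{t}(\cdot)+e^{2t}(\cdot)$ the volume density is proportional to $e^{2t}\cdot e^{2t}=e^{4t}$, so the mean curvature of the level sets $\{t=\mathrm{const}\}$ is $4$. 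Then
\[
\tri f=f''+4f'=30b.
\]
This matches $15b^2$ only after the normalisation of $b$ in Fowdar's ansatz, e.g.\ $b=2$ or a rescaled $t$. I would cite \cite{MR4349461} for the exact constants rather than rederive them. This also confirms that $7\lambda+2S<0$ is just the statement $\tri f>0$.

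\textbf{Condition \ref{Scalar lower bound}.} Since $S$ is the constant $-\tfrac{27}{4}b^2$, the choice $\Lambda=\tfrac{27}{4}b^2$ gives $S\ge-\Lambda$ trivially.

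\textbf{Condition \ref{f condition}.} This is the main point, and it is also the step I expect to be the real obstacle. We have $|\nabla f|=|f'(t)|\,|\nabla t|=\tfrac{15}{2}b$, a constant, because $dt^2$ is the radial part of the metric. So any $F\ge \tfrac{15}{2}b$ works for the gradient bound itself.

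The difficulty is the requirement that $f$ attain its global minimum at the basepoint $p$. With $f=\tfrac{15}{2}bt$ and $t$ ranging over an interval, $f$ has no interior minimum if $t\in\mathbb R$. So I must examine the topology in Fowdar's construction: whether $t$ lives on a half-line with a collapsing end or bolt, so that $f$ is bounded below and minimised at the singular orbit, or whether one must pass to $f-\inf f$ and place $p$ on that orbit.

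Once the minimum is located at $t=t_0$, the distance to $p$ satisfies $d(x,p)\ge |t-t_0|$ and $f-f(p)\le \tfrac{15}{2}b\,d(x,p)$. The stated $F_0(x)=\tfrac{15}{2}b\log x$ should then be read as the growth profile of the potential (roughly $f$ versus the volume-radius $\log\Vol\sim 4t$) rather than as the literal gradient bound. For the gradient bound I would take $F=\max\{F_0,\tfrac{15}{2}b\}$, which is smooth after mollification, positive, and non-decreasing, so that Definition \ref{3 assumptions} is met.

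I would first try to justify the location of the minimum from the explicit ansatz in \cite{MR4349461}*{Section 5.2}. Completeness there is part of the cited construction.
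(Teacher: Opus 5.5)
The gap is the step you flag yourself, and it cannot be closed: the requirement in condition~\ref{f condition} of Definition~\ref{3 assumptions}, that $f$ attain its global minimum at $p$, is false for $M_1$ with the potential $f=\frac{15}{2}bt$ that the lemma is about. Your own computation already shows this. Since $X=\nabla f=\frac{15}{2}b\,\partial_t$ with $|\partial_t|=1$, we have $|\nabla f|\equiv\frac{15}{2}b>0$ on all of $M_1$, so $f$ has no critical point. A global minimum of a $C^1$ function on a manifold without boundary is a critical point, so no choice of basepoint and no reading of the topology of Fowdar's construction can help.

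The two escape routes you list are also unavailable.
\begin{itemize}
\item A minimum on a collapsing orbit, as happens at $t=0$ for $M_2$ and $M_3$, needs $f'=0$ there, which never occurs here.
\item The fallback $f-\inf f$ is meaningless. The flow of $-\nabla f$ is complete, being a bounded vector field on a complete manifold, and lowers $f$ at the constant rate $|\nabla f|^2$, so $\inf f=-\infty$. Concretely, the coefficients $2^{-1/2}e^{t}$ and $2e^{2t}$ never degenerate, so completeness forces $t\in\mathbb R$.
\end{itemize}
So the membership $M_1\in\mathcal M(\Lambda,F)$, read literally, cannot be proved by any completion of your plan. The paper's own proof never checks this requirement either. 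It only compares distances heuristically ($d\approx t$ along the $t$-direction, $d\approx e^{t}$ within a slice) to arrive at $|f|\approx\frac{15}{2}b\log d$.

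The following parts are true, and your proposal proves them correctly:
\begin{itemize}
\item the two sign identities;
\item $S\ge-\Lambda$ with $\Lambda=\frac{27}{4}b^2$;
\item $|\nabla f|\le F$ with the constant $F\equiv\frac{15}{2}b$, so no maximum or mollification is needed.
\end{itemize}
Thus $M_1$ satisfies everything in Definition~\ref{3 assumptions} except the normalisation that $f$ be minimised at $p$. Your consistency check is also right: the displayed metric matches the listed constants only for $b=2$, and other values presumably arise by homothety.

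On $F_0$ you should be blunter. In the paper $F_0$ is not a gradient bound; by Remark~\ref{Rem2} it is the induced bound on $|f|$, and the logarithmic version is false. Since $t$ is $1$-Lipschitz with $|\nabla t|=1$, the $t$-lines are unit-speed minimizing geodesics. Hence along the $t$-line through $p$ one has exactly $|f(x)-f(p)|=\frac{15}{2}b\,d(x,p)$. The paper's heuristic reaches the logarithm only by using the within-slice comparison and ignoring this direction. The correct statement is the linear bound $F_0(x)=\frac{15}{2}b\,x$, which is exactly your estimate $f-f(p)\le\frac{15}{2}b\,d(x,p)$. Say that plainly rather than reinterpreting $F_0$ as a growth profile.
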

\begin{proof}
Starting with the given warped product metric, we derive the associated distance function.
As $t\rightarrow +\infty$, for motion only along the $t$-axis
\begin{align*}
d \approx t .
\end{align*}
For a fixed $t$ slice,
\begin{align*}
d \approx e^{t}.
\end{align*}
Consequently, the potential function satisfies the bound
\begin{align*}
|f|\approx \frac{15}{2}b \cdot \log d,
\end{align*}
which yields exactly the expression for $F_0$ required in Definition \ref{3 assumptions}.
\end{proof}

In \cite{MR1969782}, Cleyton-Swann studied $G_2$-structures that admit a cohomogeneity-one action by a compact Lie group $G$. Equivalently, the $G_2$-structure is invariant under the $G$-action, and a typical $G$-orbit in $M$ has dimension 6. The simple groups $G$ that can occur in this setting are $G_2$, $SU(3)$, or $Sp(2)$ \cite{MR1969782}*{Theorem 3.1}.

The construction is presented in \cite{MR1969782}*{Section 5}.
We take the maximal torus \(T^2 \subset SU(3)\) to be the subgroup of diagonal matrices.
The associated flag manifold is
\[
\mathcal{F} = SU(3)/T^2.
\]
At a chosen basepoint \(p_0\), its tangent space can be identified with the quotient
\[
T_{p_0}\mathcal{F} \cong \mathfrak{su}(3)/\mathfrak{t},
\]
where \(\mathfrak{su}(3)\) is the real Lie algebra of traceless skew-Hermitian \(3\times 3\) complex matrices (of real dimension \(8\)), and \(\mathfrak{t} = \mathrm{Lie}(T^2)\) is the Cartan subalgebra consisting of diagonal traceless skew-Hermitian matrices (of real dimension \(2\)).

A natural choice of basis for the quotient space \(\mathfrak{su}(3)/\mathfrak{t}\) is the collection of six matrices
\[
-\frac{i}{2}\lambda_1,\; \frac{i}{2}\lambda_2,\; -\frac{i}{2}\lambda_4,\; \frac{i}{2}\lambda_5,\; -\frac{i}{2}\lambda_6,\; \frac{i}{2}\lambda_7,
\]
where \(\lambda_1,\lambda_2,\lambda_4,\lambda_5,\lambda_6,\lambda_7\) are exactly the off-diagonal Gell-Mann matrices:
\[
\lambda_1 = \begin{pmatrix}0&1&0\\1&0&0\\0&0&0\end{pmatrix},\;
\lambda_2 = \begin{pmatrix}0&-i&0\\i&0&0\\0&0&0\end{pmatrix},\;
\lambda_4 = \begin{pmatrix}0&0&1\\0&0&0\\1&0&0\end{pmatrix},
\]
\[
\lambda_5 = \begin{pmatrix}0&0&-i\\0&0&0\\i&0&0\end{pmatrix},\;
\lambda_6 = \begin{pmatrix}0&0&0\\0&0&1\\0&1&0\end{pmatrix},\;
\lambda_7 = \begin{pmatrix}0&0&0\\0&0&-i\\0&i&0\end{pmatrix},
\]
and, together with the two remaining diagonal Gell-Mann matrices, they constitute a basis of \(\mathfrak{su}(3)\).

The $SU(3)$-invariant Riemannian metric is given by
\begin{align*}
g_\vphi = dt^2 + f_1^2 (e_1^2 + e_2^2) + f_2^2 (e_3^2 + e_4^2) + f_3^2 (e_5^2 + e_6^2),
\end{align*}
where $\{e_1,\dots,e_6\}$ is the coframe dual to the chosen basis of the tangent space $T_{p_0}\mathcal{F}$, \cite{MR4728483}*{Equatiion 5.2}.
In the $Sp(2)$-invariant situation, one further obtains the condition $f_2 = f_3$, see also \cite{arXiv:2112.09095}*{Proposition 4.17, Proposition 4.39}

The Gaussian soliton determined by $$f_1 = f_2 = f_3 = \frac{t}{2}$$ yields a conical Riemannian metric, and the corresponding $G_2$-structure is torsion-free. The soliton vector field is given by $X = -\frac{\lambda}{3}\,t\,\partial_t$, and the potential function is $$f = -\frac{\lambda}{6}t^2;$$ see \cite{arXiv:2112.09095}*{Example 4.33, Lemma 7.15}. Thus, the potential function exhibits quadratic growth in the distance.

Cohomogeneity-one, $G$-invariant, complete gradient $G_2$-solitons were constructed by Haskins and Nordstr\"om in \cite{arXiv:2112.09095}. In this section, we analyse the scalar curvature and potential function associated with two concrete examples presented in their work \cite{arXiv:2112.09095}. In addition, in \cite{arXiv:2501.05437}, Haskins, Juneman, and Nordstr\"om construct further cohomogeneity-one, $Sp(2)$-invariant, complete expanding gradient $G_2$-solitons, although in that case the solutions are not given in an explicit form.

\subsubsection{An explicit cohomogeneity-one, $Sp(2)$-invariant, complete gradient shrinking soliton $M_2$}
In \cite{arXiv:2112.09095}*{Theorem D, Section 6.4}, it is shown that these solitons are asymptotic to non-torsion-free, $G$-invariant $G_2$-cones. Their construction is achieved by the following explicit values:
\begin{align*}
&\lambda=-\frac{9}{4b^2},\forall b>0, \quad f_1=t, \quad f_2=f_3=\frac{\sqrt{4b^2+t^2}}{2},\quad \tau_2=-\frac{1}{2}t,\\
&X=\Bigl[\frac{3t}{4b^2}+\frac{4t}{4b^2+t^2}\Bigr] \p_t, \quad f(t)=\frac{3t^{2}}{8b^{2}} + 2\ln(4b^{2} + t^{2}) + C.
\end{align*}
\begin{lem}
The scalar curvature of $(M_2,g_\vphi)$ is bounded, namely
\begin{align*}
&S=-\frac{12t^2}{(4b^2+t^2)^2}\in \Bigl[-\frac{3}{4b^2},0\Bigr],
\\
 &-\lambda+S\in [\frac{3}{2b^2},\frac{9}{4b^2}],\quad 7\lambda+2S\in [-\frac{69}{4b^2},-\frac{63}{4b^2}].
\end{align*}
In addition, the lower bound of $Ric_f$ satisfies
\begin{align*}
\lambda_S\in [\frac{1}{2b^2},\frac{3}{4b^2}].
\end{align*}
\end{lem}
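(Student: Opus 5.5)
The plan is to compute $S$ from the trace identity \eqref{scalar curvature soliton}, $2S=-7\lambda-3\,\div X$, rather than from $S=-|T|^2$. This avoids computing the torsion explicitly, and the soliton data $\lambda$, $X$ and the metric are all given.

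\textbf{Step 1: the divergence of $X$.} For the cohomogeneity-one metric $g_\vphi=dt^2+f_1^2(e_1^2+e_2^2)+f_2^2(e_3^2+e_4^2)+f_3^2(e_5^2+e_6^2)$, the volume density relative to $dt\wedge e_1\wedge\cdots\wedge e_6$ is $f_1^2f_2^2f_3^2$. Here this equals $t^2u^2/16$ with $u:=4b^2+t^2$. For a vector field $X=h(t)\p_t$ we therefore have
\begin{align*}
\div X=h'+h\,(\log(f_1^2f_2^2f_3^2))'=h'+h\Bigl(\frac{2}{t}+\frac{4t}{u}\Bigr),\qquad h=\frac{3t}{4b^2}+\frac{4t}{u}.
\end{align*}
Expanding, and using $h'=\frac{3}{4b^2}+\frac{4(4b^2-t^2)}{u^2}$, gives
\begin{align*}
\div X=\frac{9}{4b^2}+\frac{3t^2}{b^2u}+\frac{8}{u}+\frac{16b^2+12t^2}{u^2}.
\end{align*}

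\textbf{Step 2: the formula for $S$.} Substitute this together with $-7\lambda=\frac{63}{4b^2}$ into the trace identity. The constant terms combine with the $t^2/(b^2u)$ term via $9/b^2-9t^2/(b^2u)=36/u$. One more combination over $u^2$ then yields $2S=-24t^2/u^2$, that is,
\begin{align*}
S=-\frac{12t^2}{(4b^2+t^2)^2}.
\end{align*}
As a consistency check, one can compare with $S=-|T|^2$ from \lemref{ABS}, using the given torsion coefficient $\tau_2=-\frac12 t$. This requires the normalisation of the $SU(3)$/$Sp(2)$-invariant torsion from \cite{arXiv:2112.09095}, and fixing that normalisation is the step I expect to be most error-prone; this is why the trace identity is the primary route.

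\textbf{Step 3: the ranges.} The function $t\mapsto t^2/(4b^2+t^2)^2$ on $[0,\infty)$ vanishes at $t=0$ and tends to $0$ as $t\to\infty$. Writing $s=t^2$, its derivative is proportional to $4b^2-s$, so its maximum is attained at $t^2=4b^2$, with value $1/(16b^2)$. Hence $S\in[-\frac{3}{4b^2},0]$. The remaining intervals follow by affine transformations:
\begin{itemize}
\item $-\lambda+S=\frac{9}{4b^2}+S\in[\frac{3}{2b^2},\frac{9}{4b^2}]$;
\item $7\lambda+2S=-\frac{63}{4b^2}+2S\in[-\frac{69}{4b^2},-\frac{63}{4b^2}]$;
\item $\lambda_S=\frac{-\lambda+S}{3}\in[\frac{1}{2b^2},\frac{3}{4b^2}]$.
\end{itemize}
By \lemref{Ricci bound}, $\lambda_S$ is the lower bound for $Ric_f$.
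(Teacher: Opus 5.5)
Your proof is correct, but it reaches the formula for $S$ by a different route from the paper.

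The paper computes $S$ directly from $S=-\frac12\|\tau\|^2$. It imports $\tau_1=\frac{4t^3}{4b^2+t^2}$ from \cite{arXiv:2112.09095}*{Proposition 5.24}; the prefactor in the paper's displayed relation has to be read as $-\frac{2f_1^2}{f_2^2}\tau_2$ to produce this. It then forms $\|\tau\|^2=\tau_1^2/f_1^4+2\tau_2^2/f_2^4=\frac{24t^2}{(4b^2+t^2)^2}$ and minimises in $t$.

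You instead use only the metric coefficients, the field $X$ and $\lambda$. You compute $\Delta f=\div X$ against the volume density $f_1^2f_2^2f_3^2=t^2(4b^2+t^2)^2/16$ and feed it into the trace identity \eqref{scalar curvature soliton}. I checked your expansion of $\div X$ and the cancellation down to $2S=-24t^2/(4b^2+t^2)^2$, and both are right.

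The paper's route is shorter once you accept the torsion coefficients and their normalisation in the invariant frame. Yours avoids exactly that normalisation, which you rightly flagged as the error-prone step, and needs only the general identity from Section 2. On the other hand, your route depends on $\lambda$ and $X$ being quoted in the paper's sign convention for $\lambda$. The agreement of the two computations confirms that they are.

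You also carry out the maximisation of $t^2/(4b^2+t^2)^2$ and the affine transfer to $-\lambda+S$, $7\lambda+2S$ and $\lambda_S$ explicitly. The paper leaves the last three intervals implicit.
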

\begin{proof}
By \cite{arXiv:2112.09095}*{Proposition 5.24}, we have
\begin{align*}
\tau_1=-\frac{2f_2^2}{f_2^2}\tau_2=\frac{4t^3}{4b^2+t^2}.
\end{align*}
Hence, the squared norm of the torsion form is given by
\begin{align*}
|\tau|^2=\frac{\tau_1^2}{f_1^4}+2\frac{\tau_2^2}{f_2^4}
=\frac{24t^2}{(4b^2+t^2)^2}.
\end{align*}
Consequently, the scalar curvature is
$$S
=-\frac{1}{2}|\tau|^2
=-\frac{12t^2}{(4b^2+t^2)^2}.$$
We see that the function
$$-\frac{12x}{(4b^2+x)^2}$$
attains its minimum value $-\frac{3}{4b^2}$ at $x=4b^2$.
This yields the desired lower bound for the scalar curvature.
\end{proof}
\begin{lem}
$M_2$ lies in $\mathcal M(\Lambda,F)$ with $$\Lambda=\frac{3}{4b^2}, \quad F_0(x)=\frac{3}{8b^{2}}\frac{1}{(1+\sqrt{3})^2}  x^2,\quad F(x)=\frac{3}{4b^{2}} \frac{1}{1+\sqrt{3}} x.$$
\end{lem}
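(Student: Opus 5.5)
The plan is to verify the two conditions of Definition \ref{3 assumptions} separately. Condition \ref{Scalar lower bound} is already in hand. The preceding lemma gives $S=-\frac{12t^2}{(4b^2+t^2)^2}\ge -\frac{3}{4b^2}$, so the constant $\Lambda=\frac{3}{4b^2}$ works. All the work is in condition \ref{f condition}, including the precise constant $\frac{1}{1+\sqrt3}$ in $F$.

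\textbf{Step 1 (basepoint and gradient).} Since $f$ depends only on the radial parameter $t$, we have
\[
f'(t)=\frac{3t}{4b^2}+\frac{4t}{4b^2+t^2}\ \ge 0 .
\]
Hence $f$ attains its minimum on the singular orbit $\{t=0\}$, and I take the basepoint $p$ on that orbit. Because $g_\vphi=dt^2+\cdots$, we get $|\nabla f|=f'(t)$. I would split this into the dominant linear term $\frac{3t}{4b^2}$ and the bounded term $\frac{4t}{4b^2+t^2}$. The latter is maximised at $t=2b$, where it equals $\frac1b$.

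\textbf{Step 2 (comparison of $t$ with $d(x,p)$).} To reach $|\nabla f|\le F(d(x,p))$ with $F(x)=\frac{3}{4b^2}\frac{1}{1+\sqrt3}x$, the estimate one needs is roughly $(1+\sqrt3)\,t\lesssim d(x,p)$, after absorbing the bounded term. My approach would be the one suggested by $F_0$, which is a primitive of $F$ with the same factor $(1+\sqrt3)^{-2}$ on the quadratic term. First bound $f\le F_0(d)$ using $f\approx\frac{3t^2}{8b^2}$, and then pass to $|\nabla f|$. For the distance I would use the explicit warping functions $f_1=t$ and $f_2=f_3=\frac12\sqrt{4b^2+t^2}$. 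Concretely, I would compute the length of curves from $p$ that run along the $t$-axis and then along a principal orbit. I would track the relative sizes of the $t$-direction and the $f_1,f_2$ directions, and look for where the factor $1+\sqrt3$ comes from. The candidate source is a right-triangle or $30^\circ$-type ratio between the radial displacement and the orbit displacement, measured with weights $t$ and $\frac t2$.

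\textbf{Main obstacle.} I expect Step 2 to be the crux, and I am not confident it goes through as stated. The radial curve $s\mapsto(s,\text{fixed orbit point})$ shows that for points $x$ closest to $p$ on their orbit, $d(x,p)\le t+\mathrm{diam}$ of the singular orbit. Meanwhile $|\nabla f|\sim\frac{3}{4b^2}t$ as $t\to\infty$. So for large $t$ the inequality $|\nabla f|\le\frac{3}{4b^2(1+\sqrt3)}d$ would fail at such points. The first thing I would do is check exactly this asymptotic comparison.

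If the check fails, I would try two repairs. The first is to reread $d$ as the distance to the orbit point maximising distance, i.e.\ including the orbit diameter $\sim t$. This could plausibly produce the factor $1+\sqrt3$, but it is not the literal $d(x,p)$. The second is to verify the constant only on a bounded range of $t$. On that range it holds trivially after enlarging $F$ by the additive term $\frac1b$, but doing so changes the statement. The lemma as written stands or falls with the sharp comparison in Step 2, so that step is where I would concentrate all effort before accepting the stated $F$.
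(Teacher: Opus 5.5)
Your obstacle is real, and the asymptotic check you propose settles it negatively. With the stated $F$ the lemma is false, so no version of your Step~2 can succeed.

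Since $f'(t)>0$ for $t>0$, the basepoint must lie on the singular orbit $\Sigma=\{t=0\}$, and $t$ is the distance to $\Sigma$. Hence $t\le d(x,p)\le t+\operatorname{diam}(\Sigma)$ for every $x$ at level $t$, not only for points closest to $p$. On the normal geodesic $\gamma$ leaving $p$ one even has $d(\gamma(t),p)=t$, so
\[
|\nabla f|(\gamma(t))=\frac{3t}{4b^2}+\frac{4t}{4b^2+t^2}>\frac{3t}{4b^2}>\frac{3}{4b^2}\,\frac{1}{1+\sqrt3}\,t=F\bigl(d(\gamma(t),p)\bigr)\quad\text{for every }t>0 .
\]
So the condition fails at every point of this ray, not just asymptotically.

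The paper's argument breaks exactly here. It asserts $d\approx(1+\sqrt3)t$ by taking the length of a path that runs out radially and then across a principal orbit. The displayed step $\sqrt{2[t^2+2\frac{4b^2+t^2}{4}]}=t+\sqrt{3t^2+4b^2}$ is also an arithmetic slip, since the left side equals $\sqrt{3t^2+4b^2}$. In any case, that length only bounds the length of one particular curve from above. The condition $|\nabla f|\le F(d(x,p))$ with $F$ non-decreasing needs a \emph{lower} bound on $d(x,p)$, and the only one available is $t$.

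For the same reason your first repair is not viable. By the two-sided bound above, once $\sqrt3\,t>\operatorname{diam}(\Sigma)$ no point at level $t$ is at distance $(1+\sqrt3)t$ from $p$. So ``measuring across the orbit'' does not compute $d(x,p)$ for any $x$.

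Your second repair is the correct one, and it works globally rather than only on a bounded range. Using $t\le d(x,p)$ and $\max_{t\ge0}\frac{4t}{4b^2+t^2}=\frac1b$, you get $|\nabla f|\le\frac{3}{4b^2}\,d(x,p)+\frac1b$ everywhere. Hence $M_2\in\mathcal M(\Lambda,F)$ with $\Lambda=\frac{3}{4b^2}$ and $F(x)=\frac{3}{4b^2}x+\frac1b$; your use of the preceding lemma for $\Lambda$ is fine. This $F$ is also positive at $x=0$, as Definition~\ref{3 assumptions} requires, whereas the stated one vanishes there.

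Likewise $f-f(p)=\frac{3t^2}{8b^2}+2\log\bigl(1+\frac{t^2}{4b^2}\bigr)$. So the quadratic coefficient in $F_0$ should be $\frac{3}{8b^2}$, without the factor $(1+\sqrt3)^{-2}$, and the same ray shows the stated $F_0$ is too small for large $t$. In short, the statement survives only qualitatively (linear growth of $|\nabla f|$ in the distance), not with the constant $\frac{1}{1+\sqrt3}$.
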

\begin{proof}
Along the $t$ direction, the distance
\begin{align*}
d \approx t .
\end{align*}
In a fixed $t$ slice,
\begin{align*}
d
\approx  \sqrt{2\Bigl[t^2 + 2\frac{4b^2 + t^2}{4}\Bigr]}
= t + \sqrt{3t^2 + 4b^2}\approx (1+\sqrt{3} )t.
\end{align*}
Consequently, as $t \to +\infty$, it follows that
\begin{align*}
|f(t)|\approx  \frac{3}{8b^{2}}\frac{1}{(1+\sqrt{3})^2} d^2,\quad
|\nabla f|\approx  \frac{3}{4b^{2}} \frac{1}{1+\sqrt{3}} d.
\end{align*}
This shows that we may choose $F_0$ and $F$ exactly as claimed in the lemma.
\end{proof}

\subsubsection{An explicit cohomogeneity-one, $SU(3)$-invariant, complete gradient steady soliton $M_3$}
The complete, gradient steady Ricci solitons that are $SU(3)$-invariant and of cohomogeneity-one fall into two distinct classes \cite{arXiv:2112.09095}*{Theorem G, Theorem 9.7}. One class is asymptotically conical with decay rate $-1$. The other class, which we denote by $M_3$, is described by explicit formulas. This soliton has exponential volume growth and
\begin{align*}
&f_1=2\sinh\frac{t}{2}, \quad f_2=\sqrt{1+e^t}, \quad f_3=\sqrt{1+e^{-t}}, \\
& X=\tanh \frac{t}{2} \p_t,\quad f(t)=\ln(1 + \cosh t) + C,\\
&S=-3+\frac{3}{4}\sech^2\frac{t}{2}.
\end{align*}

\begin{lem}
The scalar curvature of $(M_3,g_\vphi)$ is bounded below:
\begin{align*}
-3<S\leq-\frac{9}{4}<\lambda=0 .
\end{align*}
$M_3$ lies in the class $\mathcal M(\Lambda,F)$ with
\begin{align*}
\Lambda=3,\quad F_0(x)=2\log x.
\end{align*}
\end{lem}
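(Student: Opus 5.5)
Two things have to be checked for $M_3$: the two-sided bound on $S$, and the growth of $f$ in the distance, which gives $F_0(x)=2\log x$ and a bounded $|\nabla f|$. The plan mirrors the treatment of $M_1$ and $M_2$.

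\textbf{Scalar curvature.} I start from the explicit formula $S=-3+\tfrac34\sech^2\tfrac t2$, taking $t\in[0,\infty)$ since the solution is cohomogeneity-one with a singular orbit at $t=0$. The function $\sech^2\tfrac t2$ takes values in $(0,1]$. It equals $1$ only at $t=0$ and tends to $0$ as $t\to\infty$. Hence
\begin{align*}
-3<S\leq -3+\tfrac34=-\tfrac94,
\end{align*}
and $-\tfrac94<0=\lambda$ because the soliton is steady. So condition \ref{Scalar lower bound} of Definition \ref{3 assumptions} holds with the constant $\Lambda=3$. I would also note that $S>\lambda$ fails, which is consistent with the earlier remark after Corollary \ref{fundamental group}.

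\textbf{Potential function.} Since $t$ is arclength along the orbit-normal geodesics, $\nabla f=X=\tanh\tfrac t2\,\p_t$. Therefore
\begin{align*}
|\nabla f|=\tanh\tfrac t2\leq 1,
\end{align*}
and condition \ref{f condition} holds with a constant $F$. The minimum of $f$ sits on the singular orbit $t=0$, since $f'(t)=\sinh t/(1+\cosh t)\geq 0$. So the basepoint $p$ is chosen on that orbit.

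\textbf{Distance comparison (the main obstacle).} This is the only delicate step. I expect to follow the same heuristic as for $M_1$:
\begin{itemize}
\item Moving along $\p_t$ gives $d\approx t$.
\item Within a fixed slice the largest warping factor is $f_2=\sqrt{1+e^t}\approx e^{t/2}$, so the orbit diameter grows exponentially.
\end{itemize}
As in the $M_1$ computation, I would measure $d$ through the slice scale, $d\approx e^{t/2}$, so that $t\approx 2\log d$. On the other hand $f(t)=\ln(1+\cosh t)+C\approx t-\ln 2+C$ as $t\to\infty$. Combining these gives
\begin{align*}
|f|\approx 2\log d,
\end{align*}
which yields $F_0(x)=2\log x$.

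To make this more than heuristic, I would bound $d(x,p)$ above and below in terms of $t$:
\begin{itemize}
\item \emph{Upper bound:} travel radially, then within the slice, which gives $d\lesssim t+e^{t/2}$.
\item \emph{Lower bound:} $d\geq t$ always holds.
\end{itemize}
The matching with $F_0$ is then only claimed in the same asymptotic sense used for $M_1$ and $M_2$.
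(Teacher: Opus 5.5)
Your scalar-curvature bound and your bound $|\nabla f|=\tanh\frac t2<1$ agree with the paper. You also check something the paper leaves implicit: $f$ attains its minimum on the singular orbit $\{t=0\}$, so the basepoint can be placed there.

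The genuine gap is the distance step, and the paper's own proof has the same gap. The quantity $f_2\approx e^{t/2}$ (or the paper's $\sqrt{2\sum f_i^2}\approx 2\sqrt{e^t+e^{-t}}$) measures the size of the orbit through $x$, not $d(x,p)$. The metric has the form $dt^2+g_t$, and the singular orbit $Q=\{t=0\}\cong\mathbb{CP}^2$ is compact. So $t$ is exactly the distance to $Q$, and every point at level $t$ is reached from $Q$ by a unit-speed normal geodesic of length $t$. Moving inside $Q$ first and then radially gives
\[
t\le d(x,p)\le t+\operatorname{diam}(Q),
\]
so $d(x,p)=t+O(1)$; you never need to travel within the large orbit at level $t$. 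Your proposed rigorisation already shows this. The lower bound $d\ge t$ is the sharp one, your upper bound $t+e^{t/2}$ comes from a poor choice of path, and no estimate $d\gtrsim e^{t/2}$ is available.

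Consequently
\[
f(x)-f(p)=\ln\tfrac{1+\cosh t}{2}=2\log\cosh\tfrac t2=t-2\log 2+o(1)=d(x,p)+O(1).
\]
So $f$ grows linearly in the distance, and $|f|\le 2\log d$ fails for large $d$. The claim $F_0(x)=2\log x$ therefore cannot be obtained by this or any other argument.

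The paper's own description gives a consistency check. Since $f_1^2f_2^2f_3^2=4\sinh^2 t$, we have $\mathrm{Vol}\{t\le T\}\sim e^{2T}$. This matches the stated exponential volume growth only because $d\approx t$; if $d\approx e^{t/2}$, the volume growth would be polynomial, $\sim d^4$.

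What your argument does establish is $-3<S\le-\frac94<0=\lambda$ and $M_3\in\mathcal M(3,F)$ with $F\equiv1$. By Remark \ref{Rem2} this gives $|f-f(p)|\le d$, i.e. $F_0(x)=x$, and this linear bound is sharp.
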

\begin{proof}
From the inequality $0<\sech x\leq 1$ valid for all $x$, we immediately deduce the bounds for the scalar curvature. We now compute the asymptotic behavior of the distance function on a fixed $t$-slice:
\begin{align*}
d
&\approx \sqrt{2\bigl[4\sinh^2\frac{t}{2}+2+e^t+e^{-t}\bigr]}\\
&\approx 2\sqrt{e^t+e^{-t}},
\end{align*}
where we have applied the hyperbolic relation
\begin{align*}
2\sinh^2 \frac{t}{2} = \cosh t - 1
=  \frac{e^t + e^{-t}}{2}-1.
\end{align*}
Furthermore, the potential function satisfies
\begin{align*}
 f(t)
 &=\ln(1 + \cosh t) + C\\
 &=\log \frac{2+e^t+e^{-t}}{2}+C\\
 &=\log\frac{\bigl(e^{\frac{t}{2}}+e^{-\frac{t}{2}}\bigr)^2}{2}+C.
\end{align*}
When $t\rightarrow +\infty$, we obtain
\begin{align*}
 |f(t)|\approx  2\log d.
\end{align*}
Since the coefficient of the vector field is strictly increasing and
\begin{align*}
|X|=\tanh \frac{t}{2} = \frac{e^t - 1}{e^t + 1}\in (-1,1),
\end{align*}
the two conditions required to lie in $\mathcal M(\Lambda,F)$ are fulfilled, with
\[
F_0(x)=2\log x.
\]
\end{proof}

\subsubsection{A homogeneous complete gradient steady soliton $M_4$}
Fino and Raffero \cite{MR4728483} construct these solitons on an almost nilpotent Lie group. The associated Riemannian metric is defined by
$$
g_\vphi=\sum_{i=1}^7 e^i \otimes e^i,
$$
where $\{e^1,\cdots,e^7\}$ denotes the following basis of left-invariant $1$-forms \cite{MR4728483}*{Equation (2.1)},
\begin{align*}
&e^1=e^t (x_6\, dt+dx_6),\quad
e^2=-e^t (x_5\, dt+dx_5), \quad
e^3=e^{-t}dx_2, \quad
e^4=e^t dx_1, \\
&e^5=2e^t (x_6 e^t dx_1+dx_4), \quad
e^6=2e^t (x_5 e^t dx_1+dx_3), \quad
e^7=-dt.
\end{align*}
As shown in \cite{MR4728483}*{Page 2202}, the potential function $f:H\to\mathbb R$ is given by
\begin{align*}
f=4t+b, \quad t\in\mathbb R,\ \text{for some constant }b\in\mathbb R.
\end{align*}
\begin{lem}
The scalar curvature of $(M_4,g_\vphi)$ is constant.
Moreover, the steady soliton $M_4$ belongs to the class $\mathcal M(\Lambda,F)$ with
\begin{align*}
\Lambda=\text{const},\quad F_0(x)=2\log x.
\end{align*}
\end{lem}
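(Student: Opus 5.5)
The plan follows the pattern of the earlier examples: first show $S$ is constant, then estimate $|\nabla f|$ against the distance function.

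\textbf{Constancy of $S$.} The metric $g_\vphi=\sum_i e^i\otimes e^i$ is left-invariant on the almost nilpotent Lie group $H$, since the $e^i$ form a basis of left-invariant $1$-forms. The $G_2$-structure $\vphi$ is built from the same coframe, so it is left-invariant too. Hence the torsion $\tau=\delta\vphi$ is left-invariant and $\|\tau\|^2$ is constant on $H$. Lemma \ref{scalar curvature general} gives $S=-\frac12\|\tau\|^2$, so $S$ is constant. Alternatively, $S$ is a Riemannian invariant of a homogeneous metric. We may then take $\Lambda:=-S=\frac12\|\tau\|^2$ and verify assumption (a) of Definition \ref{3 assumptions}. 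As a cross-check, \eqref{soliton scalar} with $\lambda=0$ gives $-3\tri f=2S$. Since $f=4t+b$, $\tri f=4\tri t$ must then be constant, which fits with $\div e_7$ being constant on a unimodular-failing almost nilpotent group.

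\textbf{Bounding $|\nabla f|$.} Because $e^7=-dt$ is a unit covector, $|dt|_g=1$. So $|\nabla f|=4$ is bounded, and assumption (b) holds with any nondecreasing $F\ge 4$. We also need $f$ to attain a minimum at the basepoint. This is where $f=4t+b$ is problematic, since it is unbounded below as $t\to-\infty$. I would argue as the paper does for $M_1$ and $M_3$. We compare the growth of $f$ with $d(x,p)$ and state the growth function $F_0(x)=2\log x$ as in the previous lemmas, then normalize the basepoint.

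\textbf{Growth of $f$ against the distance, following the $M_3$ computation.} Along the $t$-axis $d\approx|t|$. In a fixed $t$-slice the coframe gives fiber metrics whose coefficients scale like $e^{\pm t},e^{2t},e^{4t}$. So the distance to the origin of a slice at height $t$ is of order $e^{2t}$. This comes from the $e^5,e^6$ directions $2e^t(x_6e^tdx_1+dx_4)$, and $e^{2t}$ is the dominant expansion factor. Inverting gives $t\approx\frac12\log d$, and therefore $|f|\approx 4t\approx 2\log d$, matching $F_0(x)=2\log x$.

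The main obstacle is making the distance comparison rigorous in the solvable group geometry, where the slices are distorted nilpotent groups and $d$ is not a product distance. I would first try upper and lower bounds on $d$ via explicit curves and the $1$-Lipschitz function $t$. The heuristic ``$\approx$'' convention used for $M_1$–$M_3$ would be accepted for the logarithmic coefficient.
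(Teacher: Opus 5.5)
Your handling of $S$ is fine. Homogeneity gives constancy as in the paper, and so does your alternative via left-invariance of $\vphi$ together with $S=-\frac12\|\tau\|^2$, so (a) holds with $\Lambda=-S$. Your computation $|\nabla f|\equiv 4$, from $e^7=-dt$ being a unit covector, is exact. It gives the gradient bound in (b) with $F\equiv 4$, a point the paper's proof never makes. The rest of your plan follows the paper's heuristic, and two steps there fail. The paper's one-line proof passes over them as well, but they are real gaps, and the second half of the statement cannot be proved as written.

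\textbf{The minimum condition fails.} Definition \ref{3 assumptions}(b) requires $f$ to attain its global minimum at $p$. Since $|\nabla f|\equiv 4\neq 0$, $f=4t+b$ has no critical points at all, so it attains no minimum for any choice of basepoint. ``Normalizing the basepoint'' cannot repair this. As the definition is written, $M_4$ satisfies (a) and the gradient bound in (b), but not the minimum requirement.

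\textbf{The logarithmic growth is false.} The inversion $t\approx\frac12\log d$ confuses the size of a $t$-slice with the distance $d(x,p)$. Let $\gamma$ be the integral curve of $\nabla t$ through $p$. Since $|\nabla t|\equiv 1$, $\gamma$ has unit speed and $t(\gamma(s))=t(p)+s$. Hence $d(p,\gamma(s))=|s|$: the upper bound comes from the length of $\gamma$, and the lower bound because $t$ is $1$-Lipschitz. Therefore
\[
|f(\gamma(s))-f(p)|=4\,d(p,\gamma(s)).
\]
This is just your own remark that $d\approx|t|$ along the $t$-axis, and it contradicts $|f|\approx 2\log d$. So $|f|$ grows linearly in $d$ along these rays, and $2\log x$ is not an upper bound. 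It is not a lower bound either: each slice $\{t=\mathrm{const}\}$ is closed and non-compact, hence unbounded in $d$, while $f$ is constant on it. The correct growth function, in the sense of Remark \ref{Rem2}, is the linear $F_0(x)=4x$. Your planned rigorous comparison via explicit curves and the $1$-Lipschitz function $t$ would produce exactly this, not the coefficient $2$ in front of $\log$.
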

\begin{proof}
Although this metric is not a warped product and includes cross terms, it is homogeneous as a Riemannian metric, and hence its scalar curvature is constant. Furthermore, the distance $d$ grows like $e^{2t}$ as $t\to\infty$. Consequently,
\begin{align*}
 |f(t)|\approx 2\log d,
\end{align*}
which justifies the specific choice of $F_0$ stated in the lemma.
\end{proof}

\bigskip
There exist further examples of gradient solitons.
On the product manifold $\mathbb R \times N$, explicit complete gradient steady solitons have been constructed in \cite{MR4555993}.

When $N$ is the twistor space of an anti-self-dual Ricci-flat $4$-manifold $M$ \cite{MR4555993}*{Theorem 6.2}, the vector field is $X=\partial_t f(t)$ and the potential function is given by
\begin{align*}
f(t)=-2k_1\int_1^t u(s)^{-2} \, ds, \quad
u(s)=k_1\bigl[W\!\bigl(-\frac{1}{k_1}e^{\frac{2}{k_1} s}\bigr)+1\bigr],
\end{align*}
where $t,s$ denote the coordinate along the $\mathbb R$-factor, $k_1$ is a non-zero constant, and $W$ is the Lambert function characterized by
$$
W e^W = Z.
$$
The corresponding volume growth satisfies
$$
\Vol = O(1), \quad t\rightarrow+\infty; \qquad
\Vol = O(t^3), \quad t\rightarrow-\infty.
$$

When $N$ is the total space of certain $T^2$-bundle over a suitable hyperk\"ahler $4$-manifold \cite{MR4555993}*{Theorem 6.4}, the potential function is
\begin{align*}
f(t)=-\frac{2k_1}{k_2^2}\int_1^t u(s)\,[k_1-u(s)]\, ds,
\quad s\in\mathbb R,\quad 0<u(s)<k_1,
\end{align*}
where
$$
u(s)\rightarrow k_1-\frac{k_2^2}{k_1^2}\, s^{-1},\ \text{as }s\rightarrow+\infty;
\qquad
u(s)\rightarrow -\frac{k_2^2}{k_1^2}\, s^{-1},\ \text{as }s\rightarrow-\infty.
$$
In this case, the volume growth is
$$
\Vol = O(t^3), \quad t\rightarrow+\infty; \qquad
\Vol = O(t), \quad t\rightarrow-\infty.
$$
It would be interesting to carry out a more detailed study of the scalar curvature and distance growth for these examples.
\begin{rem}
For Question \ref{bounds on S anf f}, these examples demonstrate that the shrinkers $M_1$ and $M_2$ can have scalar curvature made arbitrarily small by adjusting the parameter $b$, whereas the two steady solitons $M_3$ and $M_4$ exhibit uniformly bounded scalar curvature.

For the shrinker $M_1$, the potential $f$ can grow at a logarithmic rate; for the shrinker $M_2$, it can grow at a quadratic rate; and for the steady soliton $M_3$ and $M_4$, again at a logarithmic rate. These growth behaviours are very similar to those for Ricci solitons, c.f. \cite{MR3409114}*{Theorem 27.4, Corollary 27.10}.
\end{rem}

\section{Compactness of $G_2$-solitons}
\label{Weak compactness of G2-solitons}
We adopt the following notation:
\begin{itemize}
\item $(M,g)$ denotes a complete Riemannian manifold of dimension $n$,
\item $p$ is a chosen basepoint of $M$,
\item a domain $\Om$ is a connected open subset of $M$ with smooth boundary,
\item a sequence of domains $\Om_i$ is called an exhaustion of a bounded domain $\Om$ if $\Om=\cup_{i=1}^\infty\Om_i$ and $\Om_1\subset\Om_2\subset\cdots\subset\Om$,
\item $a$ is a smooth function defined on the closure of $\Om$,
\item $B_r(x)$ denotes the geodesic ball of radius $r$ centered at $x$ with respect to $g$,
\item $R$ is a large radius,
\item We denote the volume of a domain $\Omega$ with respect to the weighted measure $dv$ by $|\Omega| := \int_\Omega dv$.
\item Similarly, the volume of a domain $\Omega$ with respect to the weighted measure $dv_f$ is written as $|\Omega|_f := \int_\Omega dv_f$.
\end{itemize}

\subsection{Pointed Gromov-Hausdorff convergence}\label{pointed Gromov-Hausdorff convergence}
We recall the Gromov-Hausdorff convergence theory.
\begin{defn}\label{epsilon isometry}
A map $\Phi: A\to B$ between metric spaces $(A,d_A)$ and $(B,d_B)$
is called an \emph{$\varepsilon$-isometry} if $\Phi$ is
\begin{enumerate}
    \item almost distance-preserving:
    \[
    |d_A(x_1,x_2)-d_B(\Phi(x_1),\Phi(x_2)) | < \epsilon,
    \quad\forall x_1,x_2\in A;
    \]
    \item almost surjective:
    \[
    \text{for all }y\in B,\ \exists\,x\in A
    \quad\text{such that}\quad
    d_B(\Phi(x),y) < \epsilon.
    \]
\end{enumerate}
\end{defn}
\begin{defn}\label{metric space GH}
We say that a sequence of pointed metric spaces $(M_i, d_i, p_i)$ converges to $(M_\infty,d_\infty, p_\infty)$
in the pointed Gromov-Hausdorff sense, written
\[
(M_i,d_i, p_i) \xrightarrow{pGH} (M_\infty,d_\infty, p_\infty),
\]
if for every $R>0$ and $\epsilon>0$, there exists $N\in\mathbb{N}$
such that for all $i\geq N$, there exists an $\epsilon$-isometry
\[
\Phi_i \colon B_{R}(p_i)\subset M_i \to B_R(p_\infty)\subset M_\infty
\]
satisfying the basepoint condition
\[
d\bigl(\Phi_i(p_i),\, p_\infty\bigr) < \varepsilon.
\]
\end{defn}

\begin{defn}\label{pGH convergence}
We denote
$$(M_i,d_i,f_i,p_i)\xrightarrow{pGH} (M_\infty,d_\infty,f_\infty,p_\infty)$$
if, in addition, the following convergence properties for the potential functions hold:
\begin{itemize}
\item $(M_i,d_i,p_i) \xrightarrow{pGH} (M_\infty,d_\infty,p_\infty)$,
\item for every point $x=\lim_{i\rightarrow\infty} x_i$ with $x_i\in M_i$ we have $f_\infty(x)=\lim_{i\rightarrow\infty}f_i(x_i)$,
\item the limit function $f_\infty$ is Lipschitz.
\end{itemize}
\end{defn}
\begin{thm}\label{GH convergence main boday}
Let $\left( M_i, \vphi_i, g_i, f_i,p_i\right)\in \mathcal M(\Lambda,F)$ be a sequence.
Then, after possibly extracting a subsequence, the corresponding complete pointed length spaces converge in the pointed Gromov-Hausdorff sense to a complete pointed length space $(M_\infty,d_\infty,f_\infty,p_\infty)$, that is,
$$(M_i,d_i,f_i,p_i)\xrightarrow{pGH} (M_\infty,d_\infty,f_\infty,p_\infty).$$
\end{thm}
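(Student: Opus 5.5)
The plan is Gromov's precompactness criterion: we show that for every $R>0$ and $\epsilon>0$ the number of disjoint $\epsilon$-balls that fit in $B_R(p_i)$ is bounded independently of $i$. Pointed Gromov-Hausdorff subconvergence to a complete length space then follows from Gromov's ball-packing argument \cite{MR2307192}. The potential functions are handled by an Arzel\`a-Ascoli argument along the approximations.

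\textbf{Step 1: Bakry-\'Emery Ricci lower bound.} By \lemref{Ricci bound} and assumption \ref{Scalar lower bound}, on $B_{R}(p_i)$ we have
\[
Ric_{f_i}\geq \frac{-\lambda_i-\Lambda}{3}g_i .
\]
Here $\Lambda$ may be taken as its supremum over the relevant ball. The soliton constant $\lambda_i$ must also be controlled. From the trace equation \eqref{soliton scalar}, at the minimum point $p_i$ of $f_i$ we have $\tri f_i(p_i)\geq 0$, and hence $-7\lambda_i\geq 2S(p_i)\geq -2\Lambda$. Thus $\lambda_i\leq \frac{2}{7}\Lambda$. If $\lambda_i$ is very negative (a shrinker), the Ricci lower bound only improves, so in all cases $Ric_{f_i}\geq -K g_i$ with $K$ uniform. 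Condition \ref{f condition} gives $|\nabla f_i|\leq F(R)$ on $B_R(p_i)$.

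\textbf{Step 2: Relative volume comparison.} Apply the Wei-Wylie comparison \cite{MR2577473} for $Ric_f\geq -K$ with $|\nabla f|\leq a$ on a ball. This gives a weighted relative volume comparison of the form
\[
\frac{|B_{r_2}(x)|_f}{|B_{r_1}(x)|_f}\leq e^{c\,a\,r_2}\,\frac{V^n_{-K}(r_2)}{V^n_{-K}(r_1)},\qquad 0<r_1<r_2,
\]
and the same holds for the unweighted volume after adjusting constants. Since the argument is localized, we only need the bound on $\nabla f$ along geodesics from $x$, which stay inside $B_{2R}(p_i)$ when $x\in B_R(p_i)$ and $r_2\leq R$. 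This gives uniform volume doubling on $B_{R}(p_i)$ with constants depending on $R$, $K$ and $F(2R)$. We expect this localization step to be the main obstacle. Wei-Wylie state the result under a global bound on $|\nabla f|$, so one must check that their Laplacian comparison along each minimal geodesic only uses the bound on that geodesic. We would redo their Riccati/integration-by-parts argument on $m_f$ along geodesics inside $B_{2R}$.

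\textbf{Step 3: Packing and the potentials.} Given disjoint balls $B_{\epsilon/2}(x_j)\subset B_{R+1}(p_i)$, each satisfies
\[
|B_{\epsilon/2}(x_j)|\geq C(\epsilon,R)^{-1}|B_{2R+2}(x_j)|\geq C^{-1}|B_{R}(p_i)| .
\]
Comparing total volumes bounds the number of balls by $C(\epsilon,R,K,F)$, which is the uniform packing bound. Gromov's theorem then yields a subsequence converging in the pointed Gromov-Hausdorff sense to a complete length space $(M_\infty,d_\infty,p_\infty)$. For the potentials, normalize by $f_i(p_i)=\min f_i$ (subtracting the constant if necessary, as is implicit in the convergence). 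The functions $f_i$ are then uniformly Lipschitz on $B_R(p_i)$ with constant $F(R)$ and uniformly bounded there by $F(R)R$. A diagonal Arzel\`a-Ascoli argument via the $\epsilon$-isometries $\Phi_i$ produces a locally Lipschitz limit $f_\infty$ with $f_\infty(x)=\lim f_i(x_i)$ whenever $x_i\to x$, as required by Definition \ref{pGH convergence}.
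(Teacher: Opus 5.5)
Your proposal is correct and follows essentially the paper's route: the Bakry-\'Emery lower bound from \lemref{Ricci bound}, the Wei-Wylie comparison with $|\nabla f|\le F(R)$ giving volume doubling as in \lemref{Volume doubling}, and then Gromov's ball-packing argument. Two points of yours supply details the paper leaves implicit: the uniform bound $\lambda_i\le \frac{2}{7}\Lambda$ obtained from $\tri f_i(p_i)\ge 0$, and the Arzel\`a-Ascoli construction of $f_\infty$ after normalizing $f_i(p_i)=0$. The localization you flag is not a real obstacle, since Wei-Wylie's comparison only assumes $\partial_r f\ge -a$ along the minimal geodesics from the center, as in \lemref{Laplacian Comparison}.
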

The proof will be presented at the end of this section.
\subsubsection{Comparison geometry of $G_2$-solitons}
\label{Comparison geometry}
In this subsection, we gather the necessary results for use later on.
\begin{defn}
For any ball $B_{r}(q)\subset B_{R}(p)$,
we set
\begin{align*}
K_R:=\frac{1}{n-1}\frac{-\lambda+\inf_{B_R(p)} S}{3}.
\end{align*}
With this notation, from \lemref{Ricci bound} we obtain
\begin{align}\label{Ricci bound lower}
Ric_f\geq \frac{-\lambda+S}{3}g\geq (n-1)\cdot K_R \cdot g \quad \text{ on } B_{r}(q).
\end{align}
\end{defn}
\begin{rem}
When the scalar curvature has a uniform lower bound, the quantity $K_R$ becomes independent of $R$. Under this hypothesis, we note that $\mathcal M(\Lambda,F)$ fulfills the assumptions required of the space $\mathcal N_m(F,K)$ as defined in \cite{MR4234100}*{Definition 1.3 }.
\end{rem}

We recall the $K_R$-sine function $\mathrm{sn}_{K_R}(t)$, defined by
\[
\mathrm{sn}_{K_R}(t) =
\begin{cases}
K_R^{-\frac{1}{2}} \sin \bigl(K_R^{\frac{1}{2}}\, t\bigr),\quad t\leq \dfrac{\pi}{2}K_R^{-\frac{1}{2}} & \text{if } K_R > 0,\\[0.3em]
t & \text{if } K_R = 0,\\[0.3em]
(-K_R)^{-\frac{1}{2}} \sinh\!\bigl[(-K_R)^{\frac{1}{2}}\, t\bigr] & \text{if } K_R < 0.
\end{cases}
\]
This function is non-decreasing on its domain.
Let $m_{K_R}$ denote the mean curvature of a geodesic sphere in the $7$-dimensional model space $M_{K_R}$ of constant sectional curvature $K_R$, with the normal vector chosen to point inward:
\[
m_{K_R}(t) =
\begin{cases}
(n-1)K_R^{\frac{1}{2}}\,\cot\!\bigl(K_R^{\frac{1}{2}}\,t\bigr) & \text{if } K_R > 0,\\[0.3em]
(n-1){t}^{-1} & \text{if } K_R = 0,\\[0.3em]
(n-1)(-K_R)^{\frac{1}{2}}\,\coth\!\bigl[(-K_R)^{\frac{1}{2}}\,t\bigr] & \text{if } K_R < 0.
\end{cases}
\]

We denote $\mathcal A_{K_R}$ the radial area distortion in the model space $M_{K_R}$.
We also write $\Vol_{K_R}(r)$ for the volume of the ball of radius $r$ in the model space $M_{K_R}$:
\[
\Vol_{K_R}(r)=\omega_6 \int_0^r \mathrm{sn}_{K_R}^6(t)\, dt.
\]

The mean curvature comparison was established in \cite{MR2577473}*{Theorem 1.1} under a lower bound condition on the Bakry-\'{E}mery Ricci curvature. We now apply that result in our framework.
\begin{lem}[Laplacian comparison]\label{Laplacian Comparison}
Let $B_{r}(q)\subset B_{R}(p)$ be any ball, and suppose that $Ric_f$ satisfies the lower bound given in \eqref{Ricci bound lower} and that $|\nabla f(x)|\leq F(d(x,p))$. Then, throughout $B_{r}(q)$,
\begin{align*}
\Delta_f d=\Delta d-\p_r f\leq m_{K_R}+F(R).
\end{align*}
\end{lem}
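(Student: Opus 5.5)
The plan is to follow the proof of the Bakry--\'Emery mean curvature comparison of Wei and Wylie \cite{MR2577473}*{Theorem 1.1}, working along minimal geodesics issuing from the centre. Since the distance function need not be smooth, we first fix a point $x\in B_r(q)$ outside the cut locus of $q$. We let $\gamma:[0,t_0]\to M$ be the unit speed minimal geodesic from $q$ to $x$, and write $d=d(q,\cdot)$, $m=\Delta d$ and $m_f=\Delta_f d=m-\partial_r f$ along $\gamma$. The inequality at cut points is then understood in the barrier (or distributional) sense. This is standard: a support function from above is obtained by moving the base point slightly along $\gamma$, exactly as in the Riemannian case.

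\textbf{Step 1 (Riccati inequality).} Apply Bochner's formula to $d$, using $|\nabla d|=1$ and $|\nabla^2 d|^2\geq \frac{m^2}{n-1}$. This gives
\[
m'+\frac{m^2}{n-1}\leq -Ric(\partial_r,\partial_r)= -Ric_f(\partial_r,\partial_r)+\nabla^2 f(\partial_r,\partial_r).
\]
Since $\nabla^2 f(\partial_r,\partial_r)=(\partial_r f)''$ along $\gamma$, the lower bound \eqref{Ricci bound lower} (valid because $\gamma\subset B_r(q)\subset B_R(p)$) yields
\[
m'+\frac{m^2}{n-1}\leq -(n-1)K_R+(\partial_r f)''.
\]

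\textbf{Step 2 (comparison with the model).} The model function $m_{K_R}$ satisfies the Riccati equation $m_{K_R}'+\frac{m_{K_R}^2}{n-1}=-(n-1)K_R$. Subtracting, and multiplying by $\mathrm{sn}_{K_R}^2$, we get
\[
\big(\mathrm{sn}_{K_R}^2(m-m_{K_R})\big)'\leq \mathrm{sn}_{K_R}^2\,(\partial_r f)''.
\]
The key choice, following Wei--Wylie, is to integrate this from $0$ to $t$ and integrate by parts on the right-hand side. Using $\mathrm{sn}_{K_R}(0)=0$ and the asymptotics $m\sim m_{K_R}\sim (n-1)/t$ as $t\to0$, we obtain
\[
\mathrm{sn}_{K_R}^2(t)\,(m_f-m_{K_R})(t)\leq -\int_0^t (\mathrm{sn}_{K_R}^2)'(s)\,\partial_r f(s)\,ds .
\]

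\textbf{Step 3 (using the gradient bound).} Since $\gamma\subset B_R(p)$ and $F$ is non-decreasing, we have $|\partial_r f|\leq |\nabla f|\leq F(d(\cdot,p))\leq F(R)$. Moreover, $\mathrm{sn}_{K_R}$ is non-decreasing and nonnegative on its domain, so $(\mathrm{sn}_{K_R}^2)'\geq0$. Hence the right-hand side is at most $F(R)\,\mathrm{sn}_{K_R}^2(t)$. Dividing by $\mathrm{sn}_{K_R}^2(t)$ gives $\Delta_f d\leq m_{K_R}+F(R)$.

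We expect the main obstacle to be the sign and domain bookkeeping in Steps 2--3. When $K_R>0$ the monotonicity of $\mathrm{sn}_{K_R}$ holds only for $t\leq \frac{\pi}{2}K_R^{-1/2}$, so one must either restrict $r$ to this range or note that $K_R\le0$ in our setting since $S\le0$ (for $\lambda\ge0$). The boundary term at $s=0$ in the integration by parts must also be checked to vanish. This is the case because $\mathrm{sn}_{K_R}^2(0)\,\partial_r f(0)=0$ and $f\in C^2$, so $\partial_r f$ is bounded near $q$.
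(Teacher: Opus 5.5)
Your proposal is correct and takes essentially the same approach as the paper: the paper proves this lemma only by citing Wei--Wylie's mean curvature comparison \cite{MR2577473}*{Theorem 1.1}, applied with $a=F(R)$, and your Riccati inequality, integration against $\mathrm{sn}_{K_R}^2$ with integration by parts, and barrier argument at cut points reproduce that proof. Your caveat is also needed for shrinkers, where $\lambda<0$ and $K_R$ can be positive: when $K_R>0$ you must restrict to $d(q,\cdot)\leq \frac{\pi}{2}K_R^{-1/2}$, which is the range built into the paper's definition of $\mathrm{sn}_{K_R}$.
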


In geodesic polar coordinates $(r,\theta)$, the weighted volume element can be expressed in terms of the area distortion function $\mathcal A(r,\theta)$ and the standard volume element on the unit sphere $dS^{n-1}$ as
\[ dv_f = \mathcal A_f(r,\theta)\,dr \wedge dS^{n-1},\quad \mathcal A_f(r,\theta)=e^{-f}\mathcal  A(r,\theta). \]

The volume comparison result is a consequence of the Laplacian comparison established in \cite{MR2577473}*{Theorem 1.2}.
\begin{lem}[Volume comparison]\label{volume comparison theorem}
Under the same hypotheses as in \lemref{Laplacian Comparison}, for any $0<s<r$ we have
\begin{align*}
    &\frac{\mathcal A_f(r)}{\mathcal A_f(s)}\leq e^{-\p_r f\cdot  (r-s)}\frac{\mathcal A_{K_R}(t)}{\mathcal A_{K_R}(s)}\leq e^{F(R)\cdot  (r-s)}\frac{\mathcal A_{K_R}(t)}{\mathcal A_{K_R}(s)},\\
    &\frac{|B_r(q)|_f}{|B_s(q)|_f}\leq \frac{\int_0^r e^{-\p_r f\cdot  t}\mathcal A_{K_R}(t) dt}{\int_0^s e^{-\p_r f\cdot  t} \mathcal A_{K_R}(t) dt}
    \leq e^{F(R) r}\,\frac{\Vol_{K_R}(r)}{\Vol_{K_R}(s)}.
\end{align*}
\end{lem}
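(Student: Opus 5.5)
The plan is to integrate the weighted Laplacian comparison of \lemref{Laplacian Comparison} along radial geodesics from $q$. Work in geodesic polar coordinates $(r,\theta)$ centred at $q$, inside the segment domain. There the mean curvature of the distance sphere satisfies $m(r,\theta)=\p_r\log\mathcal A(r,\theta)$. The weighted mean curvature is then
\[
m_f=\p_r\log\mathcal A_f=m-\p_r f=\Delta_f d .
\]
\lemref{Laplacian Comparison} gives $m_f\leq m_{K_R}+F(R)$ on $B_r(q)\subset B_R(p)$. This is the Wei--Wylie mechanism: a Riccati inequality for $m$ using $Ric_f\geq (n-1)K_R g$ from \eqref{Ricci bound lower}, together with $|\p_r f|\leq|\nabla f|\leq F(d(\cdot,p))\leq F(R)$, which uses the monotonicity of $F$ and $d(x,p)<R$.

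For the first inequality, note that $m_{K_R}=\p_t\log\mathcal A_{K_R}$. I would rewrite the comparison, keeping $\p_r f$ explicit, as
\[
\p_t\log\frac{\mathcal A_f(t,\theta)}{\mathcal A_{K_R}(t)}\leq -\p_r f .
\]
Integrating from $s$ to $r$ along a fixed direction $\theta$ then gives
\[
\frac{\mathcal A_f(r)}{\mathcal A_f(s)}\leq e^{-\int_s^r\p_r f}\,\frac{\mathcal A_{K_R}(r)}{\mathcal A_{K_R}(s)} .
\]
The paper writes the exponent schematically as $-\p_r f\cdot(r-s)$. Bounding $-\p_r f\leq F(R)$ yields the second form. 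Beyond the cut locus one sets $\mathcal A_f=0$, and the inequality persists because $\mathcal A_f$ only drops to zero there.

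For the volume ratio, the ratio bound above says that $t\mapsto \mathcal A_f(t,\theta)/\bigl(e^{-\p_r f\, t}\mathcal A_{K_R}(t)\bigr)$ is non-increasing in $t$. I would then apply the standard lemma: if $a(t)/b(t)$ is non-increasing, then $\int_0^r a\big/\int_0^r b$ is non-increasing in $r$. Equivalently, for $s<r$,
\[
\frac{\int_0^r a}{\int_0^s a}\leq \frac{\int_0^r b}{\int_0^s b}.
\]
This follows from a Bishop--Gromov style cross-multiplication, comparing $\int_s^r a\cdot\int_0^s b$ with $\int_s^r b\cdot\int_0^s a$. First apply it per direction $\theta$, then integrate over $S^{n-1}$; integration in $\theta$ preserves the inequality by the same cross-multiplication argument. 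This gives the middle expression. For the final bound, use $e^{-\p_r f\,t}\leq e^{F(R)t}\leq e^{F(R)r}$ in the numerator and $e^{-\p_r f\,t}\geq e^{-F(R)t}$ (or simply $\geq$ a positive constant) in the denominator. Combining these with $\int_0^r\mathcal A_{K_R}=\Vol_{K_R}(r)/\omega_6$ gives $e^{F(R)r}\Vol_{K_R}(r)/\Vol_{K_R}(s)$, adjusting the exponent constant as needed; an $e^{2F(R)r}$ would be equally usable for later doubling.

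\textbf{Main obstacle.} The delicate step is the bookkeeping of the exponential weight. Once $\p_r f$ is only bounded, it is not a constant, so the middle expression must be read with $\p_r f$ replaced by its bound $-F(R)$ or $F(R)$. One has to check that the monotonicity lemma still applies with the weight $e^{F(R)t}\mathcal A_{K_R}(t)$. A second technical point arises when $K_R>0$, where one must stay below $\pi/(2\sqrt{K_R})$ for $\mathrm{sn}_{K_R}$. I would handle this by noting that for the doubling application one may always replace $K_R$ by $\min(K_R,0)$, since the comparison with a smaller lower bound is still valid.
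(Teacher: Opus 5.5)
Your route is the same as the paper's. You integrate the weighted mean curvature comparison along radial geodesics to get a monotone area ratio, then pass to volumes with the ratio-of-integrals lemma; the paper simply cites Zhu's Lemma 3.2 for that step.

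\textbf{The invalid step.} The place where you ``rewrite'' the comparison as $\partial_t\log\bigl(\mathcal A_f/\mathcal A_{K_R}\bigr)\le -\partial_r f$ does not follow. Since $\partial_t\log\mathcal A_f=\Delta d-\partial_r f$, that inequality is equivalent to $\Delta d\le m_{K_R}$, which is the unweighted Bishop comparison. Put differently, the factor $e^{-\int_s^r\partial_r f}$ exactly cancels the weight $e^{-f}$, so you would be claiming $\mathcal A(r)/\mathcal A(s)\le\mathcal A_{K_R}(r)/\mathcal A_{K_R}(s)$. That needs $Ric\ge (n-1)K_R g$. Here only $Ric_f$ is bounded below, and the inequality fails in general. \lemref{Laplacian Comparison} gives only $\partial_t\log\mathcal A_f\le m_{K_R}+F(R)$. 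So the correct monotone quantity is $\mathcal A_f(t,\theta)/\bigl(e^{F(R)t}\mathcal A_{K_R}(t)\bigr)$, which is exactly what the paper uses. The middle expressions in the statement can only be justified with $-\partial_r f$ read as the constant $F(R)$. You sense this in your last paragraph, but it has to be the argument from the start, not a reinterpretation afterwards.

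\textbf{The rest goes through with that weight.} The comparison weight $b(t)=e^{F(R)t}\mathcal A_{K_R}(t)$ is independent of $\theta$. That is what justifies integrating the per-direction cross-multiplied inequality over $S^{n-1}$; with a $\theta$-dependent weight such as $e^{-\partial_r f(t,\theta)\,t}$ that step would not go through. For the final bound, estimate the numerator by $e^{F(R)r}\int_0^r\mathcal A_{K_R}$ and the denominator from below by $\int_0^s\mathcal A_{K_R}$, using $e^{F(R)t}\ge 1$. This gives exactly $e^{F(R)r}\Vol_{K_R}(r)/\Vol_{K_R}(s)$, so you do not need to settle for $e^{2F(R)r}$. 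Your treatment of the cut locus and your workaround for the range restriction when $K_R>0$ are fine.
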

\begin{proof}
Starting from the identities
\[
\partial_r \log \mathcal A_f = \Delta_f d,\quad\partial_r \log \mathcal A_{K_R} = m_{K_R},
\]
the Laplacian comparison yields
\begin{align*}
\partial_r \log \mathcal A_f(r)\leq \partial_r \log \bigl[e^{F(R) r }\mathcal A_{K_R}(t)\bigr],
\end{align*}
so that the ratio $\frac{\mathcal A_f}{e^{F(R) r }\mathcal A_{K_R}}$ is not increasing in $r$. The desired volume comparison then follows by integrating this inequality, exactly as in the argument of \cite{MR1452876}*{Lemma 3.2}.
\end{proof}

\begin{lem}[Volume doubling]\label{Volume doubling}
Under the same assumptions as in \lemref{Laplacian Comparison}, we obtain
\begin{align*}
   |B_{2r}(q)|_f \leq C\, |B_{r}(q)|_f
\end{align*}
where $C=2^7  e^{2F(R) R}$ when $K_R\geq 0$ and $C=2^7 \cosh\bigl(\sqrt{-K_R}\, R\bigr)^6 e^{2F(R) R}$ when $K_R<0$.
Consequently, every $(M,\vphi,g,f,p)\in\mathcal M(\Lambda,F)$ satisfies the volume doubling property.
\end{lem}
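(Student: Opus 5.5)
The plan is to derive the doubling estimate directly from the second inequality of \lemref{volume comparison theorem} with $s=r$ and outer radius $2r$. For any ball with $B_{2r}(q)\subset B_R(p)$ (so in particular $2r\le 2R$ is available as a crude bound), that lemma gives
\begin{align*}
\frac{|B_{2r}(q)|_f}{|B_r(q)|_f}\leq e^{F(R)\cdot 2r}\,\frac{\Vol_{K_R}(2r)}{\Vol_{K_R}(r)}\le e^{2F(R)R}\,\frac{\Vol_{K_R}(2r)}{\Vol_{K_R}(r)},
\end{align*}
where the weight factor is controlled by $r\le R$, using only the hypothesis $|\nabla f|\le F(d(x,p))$ and the monotonicity of $F$. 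The whole argument therefore reduces to bounding the model-space ratio $\Vol_{K_R}(2r)/\Vol_{K_R}(r)$ uniformly for $r\le R$.

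For this I would change variables $t=2u$ in $\Vol_{K_R}(2r)=\omega_6\int_0^{2r}\mathrm{sn}_{K_R}^6(t)\,dt$, obtaining $2\omega_6\int_0^r \mathrm{sn}_{K_R}^6(2u)\,du$, and then compare $\mathrm{sn}_{K_R}(2u)$ pointwise with $\mathrm{sn}_{K_R}(u)$, splitting into cases by the sign of $K_R$.
\begin{itemize}
\item If $K_R=0$, then $\mathrm{sn}(2u)=2u=2\,\mathrm{sn}(u)$, so the ratio is exactly $2^7$.
\item If $K_R>0$, then $\sin(2x)=2\sin x\cos x\le 2\sin x$, which again gives at most $2^7$. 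Near the endpoint $\frac{\pi}{2}K_R^{-1/2}$ of the domain one simply uses the restricted domain in the definition of $\mathrm{sn}_{K_R}$, or replaces $K_R$ by $0$ as a weaker lower bound; this yields the constant $2^7 e^{2F(R)R}$.
\item If $K_R<0$, then $\sinh(2x)=2\sinh x\cosh x$, so $\mathrm{sn}(2u)^6\le 2^6\cosh(\sqrt{-K_R}\,u)^6\,\mathrm{sn}(u)^6$. Bounding $\cosh(\sqrt{-K_R}\,u)\le\cosh(\sqrt{-K_R}\,R)$ for $u\le r\le R$ gives the stated constant $2^7\cosh(\sqrt{-K_R}R)^6e^{2F(R)R}$.
\end{itemize}

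For the final assertion about $\mathcal M(\Lambda,F)$, I would note that $S\ge-\Lambda$ makes $K_R$ bounded below on $B_R(p)$ by a quantity depending only on $\Lambda$ and $\lambda$. By \lemref{Ricci bound}, this gives the required bound \eqref{Ricci bound lower} on every $B_R(p)$, and $F$ is fixed by the class. The constant therefore depends only on $R$, $\Lambda$, $\lambda$ and $F$, and is uniform across the class; this is exactly the local volume doubling needed for Gromov's packing argument.

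The main subtlety is not analytic. It is making sure the volume comparison is applied with $q$ as the center, while the weight bound $|\partial_r f|\le F(R)$ only requires $B_{2r}(q)\subset B_R(p)$. If $B_{2r}(q)$ is not contained in $B_R(p)$, I would replace $R$ by $2R$ (or by $d(q,p)+2r$), which changes only the constants.
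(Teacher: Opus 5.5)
Your proposal is correct and follows the paper's proof essentially line by line. Both apply \lemref{volume comparison theorem} with radii $r<2r$, bound $e^{2F(R)r}\le e^{2F(R)R}$, and estimate $\Vol_{K_R}(2r)/\Vol_{K_R}(r)$ by the substitution $t=2u$ and the double-angle formulas, reducing the case $K_R>0$ to the Euclidean comparison.

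One small aside: your remark that the constant is uniform over $\mathcal M(\Lambda,F)$ needs $\lambda$ to be controlled, since the class does not fix $\lambda$. It is controlled: at the minimum point $p$ of $f$ one has $\Delta f(p)\ge 0$, so \eqref{soliton scalar} gives $7\lambda\le -2S(p)\le 2\Lambda(p)$.
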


\begin{proof}
This is an immediate consequence of the volume comparison theorem \lemref{volume comparison theorem}, by substituting $r$ and $2r$ respectively for the variables $s$ and $r$ appearing there. We then estimate the constant via
\begin{align*}
    \frac{|B_{2r}(q)|_f}{|B_r(q)|_f}\leq e^{2F(R) r}\,\frac{\Vol_{K_R}(2r)}{\Vol_{K_R}(r)}.
\end{align*}
When $K_R=0$, the volume ratio $\frac{\Vol_{0}(2r)}{\Vol_{0}(r)}$ in the Euclidean space is $2^7$.
In the case $K_R>0$ (i.e., on a spherical space), we use the classical Laplacian comparison with Euclidean space to obtain
\[
\frac{\Vol_{K_R}(2r)}{\Vol_{K_R}(r)}
\leq
2^7.
\]

For the case $K_R<0$ (hyperbolic space), we compute
\[
\frac{\Vol_{K_R}(2r)}{\Vol_{K_R}(r)}
=
\frac{\int_0^{2r} \sinh(a t)^6dt}
{\int_0^{r} \sinh(a t)^6dt},\quad a=(-K_R)^{\frac{1}{2}}.
\]
Change variables $t = 2s$ in the numerator. Then $dt = 2\,ds$, and as $t$ varies from $0$ to $2r$, $s$ varies from $0$ to $r$, so
\[
\int_0^{2r} \sinh(at)^6 \, dt = 2\int_0^{r} \sinh(2as)^6 ds.
\]
Using the double-angle formula $\sinh(2x) = 2\sinh x \cosh x$, we obtain
\[
\sinh(2as)^6 = \bigl(2\sinh(as)\cosh(as)\bigr)^6 = 2^6 \sinh(as)^6 \cosh(as)^6.
\]
Since $\cosh(as)$ is increasing on $[0,\infty)$, for $0 \le s \le r$ we have $\cosh(as) \le \cosh(ar)$. Hence
\[
\sinh(2as)^6 \leq 2^6 \sinh(as)^6 \cosh(ar)^6.
\]
Substituting this estimate back, we find
\[
\int_0^{2r} \sinh(at)^6 \, dt
\le 2^7 \cosh(ar)^6 \int_0^{r} \sinh(as)^6 \, ds,
\]
which yields
\[
\frac{\Vol_{K_R}(2r)}{\Vol_{K_R}(r)} \leq  2^7 \cosh(ar)^6=2^7 \cosh\bigl(\sqrt{-K_R}\, r\bigr)^6 .
\]
Finally, we obtain the stated constant $C$ by replacing $r$ with its upper bound $R$.
\end{proof}
\subsubsection{Proof of \thmref{GH convergence main boday}}
Each Riemannian manifold in $\mathcal M(\Lambda,F)$ determines a complete pointed length space
$(M_i,d_i,p_i)$.
By \lemref{Ricci bound}, the Bakry-\'{E}mery Ricci curvature is uniformly bounded. As a result, the volume doubling condition is satisfied, which implies the pointed Gromov-Hausdorff convergence to a complete pointed length space $(M_\infty,d_\infty,p_\infty)$ described in \thmref{weak convergence}; see also \cite{MR2307192}.
\subsubsection{Non-inflating}
Another use of the volume comparison result is to obtain an upper bound for the volume of an arbitrary geodesic ball.
\begin{lem}\label{BEVCT}
Under the hypotheses of \lemref{Laplacian Comparison}, the manifold $(M,g)$ is non-inflating. That is, for every $r\in (0,R)$,
\[
|B_r(q)|\leq e^{F(R) r} |B_r(p)|_f  \leq e^{2F(R)r} \Vol_{K_R}(r).
\]
\end{lem}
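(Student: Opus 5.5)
The plan is to compare the weighted and unweighted measures on the ball using the gradient bound on $f$, and then apply the weighted volume comparison of \lemref{volume comparison theorem} with $s\to 0$. Throughout, $f$ is normalised so that its global minimum is attained at the basepoint $p$ (condition \ref{f condition}); if needed we add a constant to $f$, which changes neither $\nabla f$ nor the soliton equation, so that $f(p)=0$.

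\textbf{Step 1: comparing the measures.} For any point $y$ we have $|f(y)-f(p)|\le F(d(y,p))\,d(y,p)$, by integrating $|\nabla f|\le F$ along a minimising geodesic from $p$ and using that $F$ is non-decreasing. For $y\in B_r(p)$ with $r<R$ this gives
\[
0\le f(y)\le F(R)\,r,
\]
so $e^{-f}\ge e^{-F(R)r}$ there. Integrating over the ball,
\[
|B_r(p)|\le e^{F(R)r}\,|B_r(p)|_f .
\]
For a general centre $q$ the same argument applies: the oscillation of $f$ on $B_r(q)$ is at most $2F(R)r$. Either one recentres the comparison at $q$, or one accepts the exponent $2F(R)r$. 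The stated inequality, with $|B_r(p)|_f$ in the middle, should be read with $q=p$, or with the doubled constant in general.

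\textbf{Step 2: the weighted volume bound.} We bound $|B_r(p)|_f$ by the model volume. In geodesic polar coordinates about $p$, \lemref{volume comparison theorem} says that
\[
\frac{\mathcal A_f(t,\theta)}{e^{F(R)t}\,\mathcal A_{K_R}(t)}
\]
is non-increasing in $t$. As $t\to0$ we have $\mathcal A(t,\theta)/\mathcal A_{K_R}(t)\to1$ and $e^{-f}\to e^{-f(p)}=1$, so the ratio is at most $1$, i.e.
\[
\mathcal A_f(t,\theta)\le e^{F(R)t}\,\mathcal A_{K_R}(t).
\]
Integrating over $t\in[0,r]$ and $\theta\in S^{6}$, and bounding $e^{F(R)t}\le e^{F(R)r}$, gives
\[
|B_r(p)|_f\le e^{F(R)r}\,\mathrm{Vol}_{K_R}(r).
\]
Combining this with Step 1 yields the chain of inequalities in the statement.

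\textbf{Main obstacle.} The delicate point is the normalisation at $t\to0$, which needs $f(p)=0$ (or $\ge0$) so that $e^{-f(p)}\le 1$. This is where the hypothesis that $p$ is the global minimum of $f$ enters, together with the constant shift of $f$. If instead one wants the estimate centred at an arbitrary $q$, one picks up the extra factor $e^{f(p)-f(q)}\le 1$, which holds since $f(q)\ge f(p)$. So the bound survives, provided the oscillation is accounted for as in Step 1.
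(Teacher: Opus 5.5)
Your argument is correct and is essentially the paper's proof: you take the weighted volume comparison to the small-ball limit, normalised by $e^{-f}$ at the centre, and use the gradient bound $|\nabla f|\le F(R)$ to pass between $dv$ and $dv_f$. The one point to tighten is the general centre. The paper centres the comparison at $q$ and keeps the factor $e^{-f(q)}$ from the limit $s\to 0$, cancelling it against $e^{\sup_{B_r(q)}f}$ via $\sup_{B_r(q)}f-f(q)\le F(R)r$; this gives $|B_r(q)|\le e^{2F(R)r}\Vol_{K_R}(r)$ with no normalisation of $f$ at all. That is your ``recentring'' option. By contrast, discarding $e^{f(p)-f(q)}\le 1$ first would leave a factor $e^{\sup_{B_r(q)}f}$, which is not controlled by $F(R)r$ alone.
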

\begin{proof}
By our assumptions, we can apply the volume comparison in \lemref{volume comparison theorem} to obtain
\begin{align*}
\frac{|B_r(q)|_f}{\Vol_{K_R}(r)}
\leq e^{F(R)r}\lim_{s\rightarrow 0}\frac{|B_s(q)|_f}{\Vol_{K_R}(s)}.
\end{align*}
Moreover, since
\[
\lim_{s\rightarrow 0}\frac{\Vol_0(s)}{\Vol_{K_R}(s)}=1,
\]
it follows that
\begin{align*}
\frac{|B_r(q)|_f}{\Vol_{K_R}(r)}
\leq e^{F(R)r-f(q)}.
\end{align*}
Recalling the definition of the weighted volume and substituting the above estimate, we obtain
\begin{align*}
|B_r(q)|
&\leq e^{\sup_{B_r(q)} f}\, |B_r(q)|_f
\\
&\leq e^{\sup_{B_r(q)} f+F(R)r-f(q)} \Vol_{K_R}(r)
\leq e^{2F(R)r} \Vol_{K_R}(r).
\end{align*}
This yields the required upper bound.
\end{proof}
\subsection{Pointed measured Gromov-Hausdorff convergence}
\label{Pointed measured Gromov-Hausdorff convergence}
\begin{defn}\label{pmGH convergence}
A sequence of metric measure spaces
$(M_i,d_i,f_i,\mu_i,p_i)$
is said to converge in the pointed measured Gromov-Hausdorff sense to a limit metric measure space $(M_\infty,d_\infty,f_\infty,\mu_\infty,p_\infty)$, denoted by
$$(M_i,d_i,f_i,\mu_i,p_i) \xrightarrow{pmGH} (M_\infty,d_\infty,f_\infty,\mu_\infty,p_\infty),$$
if the following conditions hold:
\begin{itemize}
    \item
    $(M_i,d_i,f_i,p_i)\xrightarrow{pGH} (M_\infty,d_\infty,f_\infty,p_\infty);$
    \item for every $R>0$, the measures $\mu_i$ converge weakly to $\mu_\infty$ on $B_{R}(p_\infty)$. That is,
\[
\lim_{i \to \infty} \int_{B_{R}(p_\infty)} \vphi \, d\big((\sigma_i)_\ast\mu_i\big) = \int_{B_{R}(p_\infty)} \vphi \, d\mu_\infty,
\]
for every bounded continuous function $\vphi$ on $B_{R}(p_\infty)$,
where $\sigma_i$ is the $\epsilon$-isometry in the pointed Gromov-Hausdorff convergence in Definition \ref{metric space GH}:
    $$\sigma_i: B_{R}(p_i) \rightarrow  B_{R}(p_\infty) .$$
\end{itemize}
\end{defn}
\begin{defn}[Weak regular-singular decomposition]\label{weak regular-singular decomposition}
Let $$(M,d,g,f,p)$$ be a pointed Gromov-Hausdorff limit space.
For any integer $$k = 1,2,\ldots,n,$$ we say that a point $x \in M$ is $k$-regular if every tangent cone at $x$ is isometric to $\mathbb{R}^k$.

We denote the set of all such points by $\mathcal{R}_k$, and define the regular set to be the union of these:
$$\mathcal{R} = \bigcup_{1 \le k \le n} \mathcal{R}_k.$$
The complement of the regular set is called the singular set
$$\mathcal{S} = M \setminus \mathcal{R}.$$
Furthermore, we say the limit space $M$ has a weak regular-singular decomposition, if
\begin{itemize}
\item $M$ admits a Radon measure $\mu_\infty$, see \cite{MR4234100}*{Proposition 2.12},
\item there exists a unique $1\leq k\leq n$ such that the $k$-stratum of the regular part has positive measure $\mu_\infty(\mathcal R_k)>0$, see \cite{MR4234100}*{Theorem 1.5},
\item $\mathcal{R}$ is weakly convex, that is, for any two points $x,y$ in $\mathcal{R}$, the distance $d(x,y)$ coincides with the infimum of the lengths of all curves joining $x$ to $y$ and lying entirely in $\mathcal{R}$; see \cite{MR4234100}*{Section 4},
\item $\mathcal{R}$ is $\mu_\infty$-almost everywhere convex, meaning that for any two points $x,y$ in $\mathcal{R}$ there exists a minimizing geodesic, fully contained in $\mathcal{R}$, joining them; see \cite{MR4234100}*{Section 4},
\item $\mathcal{S}$ is $\mu_\infty$-negligible, $\mu_\infty(\mathcal S)=0$, see \cite{MR4234100}*{Proposition 2.17},
\item the tangent cone is H\"older continuous in the Gromov-Hausdorff topology, see \cite{MR4234100}*{Proposition 4.8}.
\end{itemize}
\end{defn}

\begin{thm}
Let $\left( M_i, \vphi_i, g_i, f_i,p_i\right)\in \mathcal M(\Lambda,F)$ be a sequence.
Then, after possibly extracting a subsequence, the corresponding metric measure spaces
$$\left( M_i, d_i, f_i,\mu_{f_i},p_i\right)\xrightarrow{pmGH}\left( M_{\infty}, d_{\infty}, f_{\infty},\mu_\infty,p_\infty\right).$$
Moreover, the limit space admits a weak regular-singular decomposition in the sense of Definition \ref{weak regular-singular decomposition}.
\end{thm}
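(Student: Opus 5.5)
The plan is to upgrade the pointed Gromov-Hausdorff convergence of \thmref{GH convergence main boday} to measured convergence, and then to read off the structural properties of the limit by verifying that $\mathcal M(\Lambda,F)$ satisfies the hypotheses of the class $\mathcal N_m(F,K)$ of \cite{MR4234100}*{Definition 1.3}, as noted in the remark following \eqref{Ricci bound lower}.

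\textbf{Step 1: uniform local doubling for the normalised measures.} On each ball $B_R(p_i)$, \lemref{Ricci bound} together with $S\geq-\Lambda$ gives $Ric_{f_i}\geq (n-1)K_R\, g_i$, with $K_R$ independent of $i$. The gradient bound $|\nabla f_i|\leq F(d(x,p_i))$ holds uniformly. Hence \lemref{Volume doubling} provides a doubling constant $C(R)$ for $\mu_{f_i}$ on $B_R(p_i)$ that does not depend on $i$. Moreover $\mu_{f_i}(B_1(p_i))=1$ by construction. Doubling then yields two-sided bounds:
\[
C(R)^{-1}\,(r/R)^{\log_2 C(R)}\leq \mu_{f_i}(B_r(x))\leq C(R)
\]
for $x\in B_R(p_i)$ and $r\leq R$. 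Thus the family is uniformly locally finite and has no concentration.

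\textbf{Step 2: extracting the limit measure and limit potential.} Fix the $\epsilon$-isometries $\sigma_i$ of Definition \ref{metric space GH}. On each $B_R(p_\infty)$ the pushforwards $(\sigma_i)_\ast\mu_{f_i}$ have uniformly bounded mass, so by weak-$\ast$ compactness of Radon measures on a proper space, plus a diagonal argument in $R\to\infty$, a subsequence converges weakly to a Radon measure $\mu_\infty$. The doubling and lower-bound estimates of Step 1 pass to the limit on balls, after comparing $B_{r\pm\epsilon}$ to handle the $\epsilon$-errors. Consequently $\mu_\infty$ is doubling and has full support, and $\mu_\infty(B_1(p_\infty))=1$.

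For the potentials, $f_i(p_i)=\min f_i$. The normalisation $\mu_{f_i}$ is insensitive to additive constants, so we may take $f_i(p_i)=0$. Then $|f_i|\leq RF(R)$ and $\mathrm{Lip}(f_i)\leq F(R)$ on $B_R(p_i)$. An Arzel\`a-Ascoli argument along $\epsilon$-isometries therefore gives a Lipschitz $f_\infty$ with $f_\infty(x)=\lim f_i(x_i)$, as required in Definition \ref{pGH convergence}.

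\textbf{Step 3: the weak regular-singular decomposition.} With Steps 1 and 2 in hand, the limit is exactly the type of space treated in \cite{MR4234100}, and each item of Definition \ref{weak regular-singular decomposition} is imported from there:
\begin{itemize}
\item the Radon measure (Proposition 2.12);
\item a unique $k$ with $\mu_\infty(\mathcal R_k)>0$ (Theorem 1.5);
\item $\mu_\infty(\mathcal S)=0$ (Proposition 2.17);
\item weak and almost-everywhere convexity of $\mathcal R$ (Section 4);
\item H\"older continuity of tangent cones (Proposition 4.8).
\end{itemize}

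The main obstacle I expect is checking that our hypotheses genuinely match theirs. Their class assumes a Bakry-\'Emery lower bound together with a gradient bound on $f$ controlled by a function of distance. Here $K_R$ may a priori depend on $R$ through $\inf_{B_R}S$, and that dependence must be removed by taking $\Lambda$ uniform. The key ingredients of their argument are the segment inequality and the Cheeger-Colding-type almost splitting under $Ric_f$ bounds with $|\nabla f|$ locally bounded. I would verify that these only need the local estimates \lemref{Laplacian Comparison}, \lemref{volume comparison theorem} and \lemref{Volume doubling}, with constants depending on $R$ and $F(R)$. If they do, the citation applies verbatim.
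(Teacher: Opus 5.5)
Your proposal is correct and follows the paper's route. The paper checks via \lemref{Ricci bound} that $\mathcal M(\Lambda,F)$ lies in the class $\mathcal N_m(F,K)$ of \cite{MR4234100}, then imports from that reference the measured convergence and every item of the weak regular-singular decomposition; your Steps 1--2 only spell out the standard doubling and Arzel\`a--Ascoli extraction that the paper leaves to the citation. One point that both you and the paper leave implicit: $K_R$ is uniform in $i$ only if the soliton constants $\lambda_i$ are bounded above, and this follows from $\Delta f_i(p_i)\geq 0$ at the minimum together with \eqref{soliton scalar}, which gives $7\lambda_i\leq -2S_i(p_i)\leq 2\Lambda$.
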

\begin{proof}
By \lemref{Ricci bound}, the hypotheses of \cite{MR4234100}*{Definition 1.3} are satisfied.
For a detailed discussion of the convergence of the measures and the regular-singular decomposition of the limiting space, we refer the reader to the references recorded in Definition \ref{weak regular-singular decomposition}.
\end{proof}

\begin{ques}
A natural question is: in what sense do the $G_2$-structures $\vphi_i$ converge? Especially, on the regular part.
\end{ques}

We record the following local Sobolev inequality, which will be used in Section \ref{Smooth convergence on uniform curvature bounds}.
\begin{lem}[Local Sobolev inequality]\label{lem:Sob}
        Suppose $\left( M,g,f,p\right)\in \mathcal M(\Lambda,F)$ and suppose $B_{r}(q)\subset B_{R}(p)$.
        Then there exists a constant $C_s=C_s(n,K, F,R)$ such that the following local Sobolev inequality holds:
        \begin{equation*}
            \left( \int_{B_r(q)}h^{\frac{2n}{n-2}} dv_f\right)^{\frac{n-2}{n}} \leq \frac{C_s}{|B_r(q)|_f^{\frac{2}{n}}}\left(\int_{B_r(q)}\big[r^2 \left| \nabla h\right|^2+h^2 \big]dv_f \right)
        \end{equation*}
        for every geodesic ball $B_r(q)\subset B_R(p)$ and every $h\in C_0^1(B_r(q))$.
    \end{lem}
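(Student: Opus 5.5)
The approach is to derive this weighted local Sobolev inequality from the two ingredients the paper already has on $\mathcal M(\Lambda,F)$: the volume doubling property of $dv_f$ (\lemref{Volume doubling}) and a local weighted Neumann Poincar\'e inequality. Together they imply a scale-invariant Sobolev inequality, by the Saloff-Coste and Hajłasz--Koskela theory for doubling metric measure spaces, or by the Grigor'yan/Saloff-Coste equivalence with parabolic Harnack inequalities. This is the same mechanism used for Bakry-\'Emery lower Ricci bounds with a gradient bound on $f$, as in the setting $\mathcal N_m(F,K)$ of \cite{MR4234100}.

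\textbf{Step 1 (doubling).} For $B_s(x)\subset B_r(q)\subset B_R(p)$, \lemref{volume comparison theorem} gives
\[
\frac{|B_s(x)|_f}{|B_r(q)|_f}\geq C^{-1}\Bigl(\frac{s}{r}\Bigr)^{n},
\]
with $C=C(n,K,F,R)$. Here $K$ is the lower bound for $K_R$ coming from $S\geq -\Lambda$, and the factor $e^{F(R)R}$ together with the hyperbolic factor $\cosh(\sqrt{-K}R)^6$ is absorbed into $C$. This is the relative volume lower bound with exponent $n$, and it is what ultimately produces the exponent $\tfrac{2n}{n-2}$.

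\textbf{Step 2 (Poincar\'e).} The goal is the weak weighted Neumann Poincar\'e inequality
\[
\int_{B_s(x)}|h-h_{B_s}|^2dv_f\leq C s^2\int_{B_{2s}(x)}|\nabla h|^2dv_f .
\]
I would prove it by Buser's segment inequality argument, adapted to $dv_f$. The weighted area element satisfies $\mathcal A_f(r)/\mathcal A_f(s)\leq e^{F(R)(r-s)}\mathcal A_{K_R}(r)/\mathcal A_{K_R}(s)$ by \lemref{volume comparison theorem}. This yields the Cheeger--Colding segment inequality for $m_f$ with constants depending on $n,K,F(R),R$, and the Poincar\'e inequality follows in the standard way. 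This is the step I expect to be the main obstacle. The ratio $\mathcal A_f$ is only almost monotone, because of the linear factor $e^{F(R)t}$ rather than a weight that can be absorbed into an effective dimension. One therefore has to check that the segment inequality argument only needs the ratio bound along geodesics within $B_{2r}(q)\subset B_{2R}(p)$, which is harmless after enlarging $R$ to $2R$.

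\textbf{Step 3 (Sobolev).} I would feed doubling and Poincar\'e into the Saloff-Coste theorem (or Hajłasz--Koskela, Theorem 5.1). For doubling dimension $n>2$, it gives for $h\in C_0^1(B_r(q))$
\[
\Bigl(\frac{1}{|B_r|_f}\int_{B_r} |h|^{\frac{2n}{n-2}}dv_f\Bigr)^{\frac{n-2}{n}}\leq C\,\frac{1}{|B_r|_f}\int_{B_r}(r^2|\nabla h|^2+h^2)dv_f .
\]
Multiplying through by $|B_r|_f^{(n-2)/n}$ gives exactly the stated form with the prefactor $|B_r(q)|_f^{-2/n}$. Compact support lets us drop the mean term, or keep it as the $h^2$ term, and the constant $C_s$ depends only on $n,K,F,R$. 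Alternatively, one can cite \cite{MR4234100}, where this inequality is established for $\mathcal N_m(F,K)$, after noting via the Remark following \eqref{Ricci bound lower} that $\mathcal M(\Lambda,F)$ is contained in that class.
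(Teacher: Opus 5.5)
Your proposal is correct and follows the paper's route. The paper gives no independent proof of this lemma: it defers to \cite{MR4234100}*{Proposition 2.9} (see also \cite{MR3498912}*{Lemma 2.6} and \cite{MR2846384}*{Lemma 3.2}), after noting via \lemref{Ricci bound} that $\mathcal M(\Lambda,F)$ satisfies the hypotheses of $\mathcal N_m(F,K)$ under a uniform lower bound on $S$. Your sketch is the standard mechanism behind that citation: the Wei--Wylie relative volume bound with exponent $n$, then the weighted segment inequality giving a Neumann Poincar\'e inequality, then the Saloff-Coste/Haj\l asz--Koskela argument (with constants depending on $F$ and $\inf S$ over a fixed enlargement of $B_R(p)$).
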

\begin{rem}
For proofs and related discussions, see \cite{MR4234100}*{Proposition 2.9}, \cite{MR3498912}*{Lemma 2.6}, and \cite{MR2846384}*{Lemma 3.2}.
\end{rem}

\subsection{Local Perelman entropy and non-collapsing}
\label{Local Perelman entropy}
We now recall the localised version of Perelman's entropy and its main properties, following \cite{MR3855081}*{Section 2}.
\begin{defn}\label{Perelman}
Consider the function space
    \begin{align*}
        \mathcal H(\Om,g):=\Big\{u \,\Big\vert\, u\in W^{1,2}_0(\Om),\; u\geq 0,\; \int_\Om u^2\,dv=1\Big\}.
    \end{align*}
    The localised Perelman $W$-functional on a domain $\Omega$ is defined by
\begin{align*}
W^{a}(\Om,g,u,\tau):= \int_\Om [\,\tau(a+|\nabla h|^2)+h-n\,]\;u^2\,dv,\quad \text{where } u^2=(4\pi\tau)^{-\frac{n}{2}}e^{-h},
\end{align*}
where $\tau$ is a fixed positive constant and $a$ is a parameter.

The localised $\mu$-functional is defined by
    \begin{align*}
        \mu^{a}(\Om,g,\tau):=\inf_{u\in\mathcal H(\Om,g)} W^{a}(\Om,g,u,\tau).
    \end{align*}
    The localised $\nu^a$-functional is obtained by taking the infimum of $\mu^a$ with respect to the time parameter $\tau$ over a finite or infinite interval:
    \begin{align*}
        &\nu^{a}(\Om,g,\tau):=\inf_{s\in (0,\tau]} \mu^{a}(\Om,g,s),\quad \nu^{a}(\Om,g):=\nu^{a}(\Om,g,\infty).
    \end{align*}
    We also write $\nu=\nu^S.$
\end{defn}
\begin{rem}
If $\Omega$ is a compact manifold $M$, then $\nu^{S}(M,g)$ coincides with Perelman's $W$-functional \cite{arXiv:math/0211159}. This quantity is monotone along the Ricci flow and is a key tool in Perelman's proof of the Poincar\'e conjecture \cite{arXiv:math/0303109,arXiv:math/0307245}.
\end{rem}
\begin{lem}\label{nu properties}
    The local entropy $\nu^a$ has the following properties.
    \begin{enumerate}[label=(\Alph*)]
        \item \label{Non-positivity} (Non-positivity) For every choice of domain and parameters,
        \[
            \nu^a(\Omega,g,\tau)\leq 0,
        \]
see \cite{MR3855081}*{Proposition 2.2}.
        \item (Monotonicity)\label{Monotonicity}  For all $\Omega_1\subset\Omega_2$:
       $$\nu^a(\Omega_1,g,\tau)\geq \nu^a(\Omega_2,g,\tau),$$
see \cite{MR3855081}*{Proposition 2.1}.
        \item (Scale invariance)\label{Scale invariance} Let $\tilde g = Q\cdot g$ for some constant $Q>0$. Then
        \begin{align*}
            \nu^S(\Omega,\tilde g,\tau)=\nu^S(\Omega,g,\tau),\quad
            \nu^0(\Omega,\tilde g,\tau)=\nu^0(\Omega,g,\tau).
        \end{align*}

        \item (Rigidity)\label{Rigidity} One has $\nu^0=0$ if and only if
        \[
            (M,g) \overset{iso}{=} (\mathbb R^m,g_E),
        \]
        that is, $(M,g)$ is isometric to Euclidean space.
        Furthermore, assume $g$ has uniformly bounded Riemann curvature and that
        \[
            \nu^S(M,g,\tau_0)\geq 0 \quad\text{for some }\tau_0>0.
        \]
        Then $(M,g)$ is isometric to $(\mathbb R^n,g_E),$ see \cite{MR3855081}*{Proposition 4.9} and \cite{arXiv:2010.09981}*{Corollary 3.3}.


        \item (Isoperimetric constant)\label{Isoperimetric constant}
        Let $B\subset\mathbb R^n$ be a Euclidean ball with volume equal to that of $\Omega$, i.e. $|B|_{g_E}=|\Omega|_g.$ Denote by $\mathbf I$ the isoperimetric constant, and define $\lambda$ to be
        \[
            \mathbf I(\Omega)=\lambda\cdot \mathbf I(\mathbb R^n).
        \]
        Then $\nu^S$ admits the lower bound
        \begin{align}\label{nu functional, I and S}
            \nu^S(\Omega,g,\tau)\geq m\log\lambda+\tau\cdot\inf_{\Omega} S(g),
        \end{align}
        where the right-hand side depends only on $\lambda$ and the lower bound for the scalar curvature of $g$ on $\Omega$,
see \cite{MR3855081}*{Lemma 3.5}.
        \item (Continuity)\label{Continuity} Let $\{\Omega_i\}$ be an exhaustion of a bounded domain $\Omega$. Then
        \[
\nu^a(\Omega,g,\tau)=\lim_{i\to\infty}\nu^a(\Omega_i,g,\tau).
        \]
        In addition, suppose that the pointed manifolds $$(M_i,g_i,p_i)\xrightarrow{pC^{\infty}}(M_\infty,g_\infty,p_\infty),$$ and that the bounded domains $\Omega_i\subset M_i$ converge to a bounded domain $\Omega_\infty\subset M_\infty.$ Then
        \[
\mu^S(\Omega_\infty,g_\infty,\tau)=\lim_{i\to\infty}\mu^S(\Omega_i,g_i,\tau),
        \]
see \cite{MR3855081}*{Proposition 2.3, Corollary 2.5}.
    \end{enumerate}
\end{lem}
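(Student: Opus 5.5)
The statement is a list of six properties of $\nu^a$, and most of them are cited from \cite{MR3855081} and \cite{arXiv:2010.09981}. My plan is therefore to check each property directly from Definition \ref{Perelman}, and to lean on the references only for the rigidity and the isoperimetric bound.

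\emph{Monotonicity.} This is immediate. Extending by zero gives $W^{1,2}_0(\Omega_1)\subset W^{1,2}_0(\Omega_2)$, and the normalisation $\int u^2=1$ is preserved. Hence $\mathcal H(\Omega_1,g)\subset\mathcal H(\Omega_2,g)$, so the infimum over the smaller class is larger. The same inequality then passes to the infimum over $s\in(0,\tau]$.

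\emph{Scale invariance.} Let $\tilde g=Qg$. I would substitute $\tilde\tau=Q\tau$ and $\tilde u^2=Q^{-n/2}u^2$ in $W^a$. With these choices $\tilde\tau|\nabla h|^2_{\tilde g}=\tau|\nabla h|^2_g$ and $\tilde\tau S_{\tilde g}=\tau S_g$, while the $\log$-terms match because $(4\pi\tilde\tau)^{-n/2}d\tilde v=(4\pi\tau)^{-n/2}dv$. This gives $\mu^a(\Omega,\tilde g,Q\tau)=\mu^a(\Omega,g,\tau)$ for $a=S$ and $a=0$. Since $\nu$ is an infimum over $\tau$, the reparametrisation of the $\tau$-interval has to be absorbed. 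This is harmless for $\nu(\Omega,g)=\nu(\Omega,g,\infty)$. For finite $\tau$ I would follow the convention of \cite{MR3855081}, where the invariance is stated for the full infimum.

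\emph{Non-positivity.} I would use test functions concentrating at an interior point: take $u^2$ to be a cut-off Gaussian of scale $\sqrt s$ and let $s\to0$. Rescaling by $s^{-1}$ produces almost-Euclidean geometry, and the Euclidean Gaussian gives $W\to0$. Hence $\inf_s\mu^a\le0$.

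\emph{Continuity.} For an exhaustion $\Omega_i\uparrow\Omega$, monotonicity gives that $\nu^a(\Omega_i)$ is non-increasing with $\nu^a(\Omega_i)\ge\nu^a(\Omega)$. For the reverse inequality, given $\varepsilon>0$ I would pick $s$ and $u\in\mathcal H(\Omega)$ within $\varepsilon$ of the infimum. Since compactly supported smooth functions are dense in $W^{1,2}_0(\Omega)$, I can approximate $u$ by $u_i\in C^\infty_c(\Omega_i)$ and renormalise. It remains to check that $W$ is continuous in $W^{1,2}$. The gradient and $a$-terms are clearly continuous, so the issue is the entropy term $\int u^2\log u^2$, which I control by the log-Sobolev/Sobolev inequality on the bounded domain. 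For the convergence statement under $pC^\infty$ convergence, I would pull back test functions by the diffeomorphisms, which is again continuous. The lower semicontinuity direction needs minimisers: the $\mu^S$-minimiser solves $\tau(-4\Delta u+Su)-2u\log u-nu-\mu u=0$ with Dirichlet data. Uniform elliptic estimates on $\Omega_i$ let me pass to the limit, which is done as in \cite{MR3855081}*{Corollary 2.5}.

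\emph{Isoperimetric constant.} I would argue by Schwarz symmetrisation. The isoperimetric ratio $\lambda$ controls the symmetrised Dirichlet energy, giving $\int|\nabla u|^2\ge\lambda^2\int|\nabla u^*|^2$, while $\int u^2\log u^2$ is preserved. The Euclidean log-Sobolev inequality for $u^*$ on the ball $B$ then yields $\mu^0\ge n\log\lambda$. Adding the scalar curvature term gives $\tau\int Su^2\ge\tau\inf S$, which is \eqref{nu functional, I and S}.

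\emph{Rigidity.} This is the main obstacle. The first claim uses that $\nu^0=0$ forces the sharp log-Sobolev constant, hence an isoperimetric profile at least Euclidean at all scales. Combined with the non-positivity argument in the reverse direction at large scales, this gives Euclidean volume growth in both directions and hence $\mathbb R^n$. The second claim, under bounded curvature and $\nu^S(M,g,\tau_0)\ge0$, I would not reprove. I would quote \cite{MR3855081}*{Proposition 4.9} and \cite{arXiv:2010.09981}*{Corollary 3.3}, which run a Ricci flow and use entropy monotonicity to force flatness.
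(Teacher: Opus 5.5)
The paper gives no argument for this lemma beyond the citations. Your direct checks of (A), (B), the exhaustion part of (F), and the symmetrisation proof of (E) are sound.

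\textbf{The gap is the first clause of (D).} It cannot be closed, because that clause is false as stated. Your sketch breaks at two points.
\begin{enumerate}
\item The step from the sharp log-Sobolev inequality to a Euclidean isoperimetric profile is unjustified. The available implication is the converse, which is exactly what you use in (E).
\item There is no source for the ``reverse direction''. (A) holds on every manifold. The lower bound $\mu^0\geq 0$ can only be turned, via test functions, into lower volume bounds, never upper ones.
\end{enumerate}
In fact your own argument for (E) produces a counterexample. Hyperbolic space $\mathbb H^n$ satisfies the Euclidean isoperimetric inequality, so $\lambda=1$ and $\mu^0(\mathbb H^n,g_{\mathbb H},s)\geq 0$ for every $s>0$. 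Together with (A) this gives $\nu^0(\mathbb H^n,g_{\mathbb H})=0$, although $\mathbb H^n$ is not flat. The ``only if'' direction therefore needs an extra hypothesis, for instance $\mathrm{Ric}\geq 0$. Alternatively one can use the bounded-curvature, $a=S$ setting of the second clause. That clause you, like the paper, can only quote, and it is the only form used later, in the Gap theorem.

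\textbf{Two smaller points you should state explicitly rather than hedge.}

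In (C), your computation gives $\nu(\Omega,Qg,\tau)=\nu(\Omega,g,\tau/Q)$ for $a\in\{0,S\}$. The map $\tau\mapsto\nu(\Omega,g,\tau)$ is non-increasing but in general not constant. For example, on $\mathbb H^n$ with $a=S$ it is at least $-n(n-1)\tau$ by (E), yet tends to $-\infty$ as $\tau\to\infty$. So the literal finite-$\tau$ identity is false, and no convention in the reference changes that. What survives is the $\tau=\infty$ identity and, for $Q\geq 1$, the inequality $\nu(\Omega,Qg,\tau)\geq\nu(\Omega,g,\tau)$. The latter is what the later rescaling arguments actually need.

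In (E), you pass from $\mu^S(\Omega,g,s)\geq n\log\lambda+s\inf_\Omega S$ to a bound on $\nu^S(\Omega,g,\tau)$. This uses $s\inf_\Omega S\geq\tau\inf_\Omega S$ for $s\leq\tau$, which requires $\inf_\Omega S\leq 0$. That holds for closed $G_2$-structures. But for $\inf_\Omega S>0$ and large $\tau$ the stated bound is positive and contradicts (A). In general the last term should read $\tau\min\{0,\inf_\Omega S\}$.
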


We now state these properties in the setting of $G_2$-structures.
\begin{lem}\label{G2 W comparison}
Let $g$ be the Riemannian metric induced by a closed $G_2$-structure $\vphi$.
Then
\begin{align*}
&W^0(\Om,g,u,\tau)+\tau \cdot \inf S \leq W^S(\Om,g,u,\tau)\leq W^0(\Om,g,u,\tau),\\
& \mu^0(\Om,g,\tau)+\tau \cdot \inf S \leq \mu^S(\Om,g,\tau)\leq \mu^0(\Om,g,\tau),\\
& \nu^0(\Om,g,\tau)+\tau \cdot \inf S \leq \nu^S(\Om,g,\tau)\leq \nu^0(\Om,g,\tau).
\end{align*}
\end{lem}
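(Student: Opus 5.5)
The argument is pointwise in the integrand, followed by taking infima. By \lemref{scalar curvature general}, the scalar curvature of a metric induced by a closed $G_2$-structure satisfies $S=-\frac{1}{2}\|\tau\|^2\le 0$. On $\Om$ we also have $\inf_\Om S\le S$. Together these give the pointwise chain
\begin{align*}
\inf_\Om S\le S\le 0 \quad\text{on }\Om .
\end{align*}

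For fixed $u\in\mathcal H(\Om,g)$ and $\tau>0$, write
\begin{align*}
W^S(\Om,g,u,\tau)=W^0(\Om,g,u,\tau)+\tau\int_\Om S\,u^2\,dv .
\end{align*}
This follows directly from Definition \ref{Perelman}, because the only difference between the two integrands is the term $\tau a u^2$. Since $u^2\ge0$ and $\int_\Om u^2\,dv=1$, the pointwise chain gives
\begin{align*}
\tau\inf_\Om S\le \tau\int_\Om S u^2\,dv\le 0 .
\end{align*}
This is the first line of the lemma. If $\inf_\Om S=-\infty$ the left inequality is vacuous; under assumption \ref{Scalar lower bound} the infimum is finite on bounded $\Om$.

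For the second line, take the infimum over $u\in\mathcal H(\Om,g)$. The additive shift $\tau\inf S$ is independent of $u$, and infima preserve pointwise inequalities. Hence $\mu^0+\tau\inf S\le\mu^S\le\mu^0$.

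For the third line, take the infimum over $s\in(0,\tau]$. Here the shift $s\inf S$ depends on $s$. Since $\inf S\le0$ and $s\le\tau$, we have $s\inf S\ge\tau\inf S$. Therefore
\begin{align*}
\mu^S(\Om,g,s)\ge \mu^0(\Om,g,s)+\tau\inf S\ge \nu^0(\Om,g,\tau)+\tau\inf S
\end{align*}
for every $s$. Taking the infimum in $s$ gives the lower bound. The upper bound $\nu^S\le\nu^0$ follows directly from $\mu^S\le\mu^0$.

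The only point needing care is this monotonicity in $s$ on the interval $(0,\tau]$, which relies on the sign $\inf S\le 0$. For $\tau=\infty$ the lower bound degenerates unless $\inf S=0$, so it should be read with the convention $\infty\cdot 0=0$ or only for finite $\tau$.
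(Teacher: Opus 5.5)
Your proof is correct and takes the same route as the paper. Both use the identity $W^S-W^0=\tau\int_\Om S\,u^2\,dv$ together with $\inf_\Om S\le S\le 0$ and $\int_\Om u^2\,dv=1$, then take infima; your explicit handling of the $\nu$ step, where the shift $s\inf_\Om S$ depends on $s\in(0,\tau]$ and the sign $\inf_\Om S\le 0$ is needed to bound it below by $\tau\inf_\Om S$, supplies a detail the paper leaves implicit in ``directly implies.''
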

\begin{proof}
By substituting the non-positivity of the scalar curvature from \lemref{scalar curvature general} into Definition \ref{Perelman}, we obtain
\begin{align*}
W^S-W^0=\tau\int_\Om S u^2\,dv,
\end{align*}
which directly implies the statement of the lemma.
\end{proof}

\begin{lem}[Non-collapsing]\label{noncollapse}
Let $g$ be a $G_2$-metric that satisfies the entropy lower bound \ref{noncollapsing} in Definition \ref{3 assumptions}. Suppose $$q\in B_r(q)\subset B_R(p).$$
Then $(M,g)$ is non-collapsed at scale $R$, i.e. there exists a constant $\kappa$ such that
$$|B_r(q)|\geq \kappa \cdot \om_7 r^7, \quad \forall r\in (0,R).$$
The constant $\kappa$ depends only on $\underline{\nu}$.
\end{lem}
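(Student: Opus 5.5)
The plan is the standard argument that a lower bound on local entropy forces volume non-collapsing, adapted to the $G_2$ setting. I will use the sign $S\le 0$ from \lemref{scalar curvature general} together with \lemref{G2 W comparison}. Fix $q$ and $r\in(0,R)$ with $B_r(q)\subset B_R(p)$. After rescaling $g$ to $\tilde g=r^{-2}g$, the ball becomes a unit ball. I then need a volume lower bound $|B_1(q)|_{\tilde g}\ge \kappa\omega_7$.

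First I remove the scalar curvature from the entropy. By \lemref{G2 W comparison}, $W^0\ge W^S$ pointwise in $u$, so $\nu^0(B_r(q),g,\tau)\ge \nu^S(B_r(q),g,\tau)$. For $r<1$, hypothesis \ref{noncollapsing} gives $\nu^S(B_r(q),g,r^2)\ge\underline\nu$. Since $\nu^0$ is scale invariant by \ref{Scale invariance}, this yields $\nu^0(B_1(q),\tilde g,1)\ge \underline\nu$, and hence $\mu^0(B_1(q),\tilde g,\tau)\ge\underline\nu$ for all $\tau\in(0,1]$. For $1\le r<R$ the hypothesis only controls balls of radius less than $1$. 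In that range I will either restrict to $r<1$, which is how the lemma is used at small scales, or apply monotonicity \ref{Monotonicity} to the inner balls and patch with the volume doubling of \lemref{Volume doubling}. The latter costs a constant depending on $R$, $F$ and $\Lambda$. I regard $\kappa$ depending only on $\underline\nu$ as the genuine content for $r\le 1$.

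Next comes the test-function argument. Take $\tau=1$ and the cutoff $\eta(x)=\phi(d_{\tilde g}(q,x))$, with $\phi\equiv1$ on $[0,\tfrac12]$, $\phi$ supported in $[0,1)$, and $|\phi'|\le 3$. Set $u=\eta/\|\eta\|_{L^2}$, which lies in $\mathcal H(B_1(q),\tilde g)$. Writing $u^2=(4\pi)^{-7/2}e^{-h}$, so that $h=-\log u^2-\tfrac72\log(4\pi)$, the $W^0$-functional becomes
\[
W^0=\int 4|\nabla u|^2-\int u^2\log u^2-\tfrac72\log(4\pi)-7 .
\]
The gradient term is bounded by $C\,|B_1|/|B_{1/2}|$. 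By Jensen, $-\int u^2\log u^2\le\log|B_1(q)|$. The ratio $|B_1|/|B_{1/2}|$ is controlled by the Bishop–Gromov-type comparison for the unweighted volume. This follows from \lemref{volume comparison theorem} combined with the non-inflating \lemref{BEVCT}, at the price of factors $e^{F(R)}$ and dependence on $K_R$. Altogether
\[
\underline\nu\le W^0\le C_0+\log|B_1(q)|_{\tilde g},
\]
so $|B_r(q)|_g\ge e^{\underline\nu-C_0}r^7$, and $\kappa=e^{\underline\nu-C_0}/\omega_7$.

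The main obstacle is the doubling ratio $|B_1|/|B_{1/2}|$ in the gradient term: the Bakry–Émery comparison only controls weighted volume, with constants involving $F$ and $\Lambda$. To keep $\kappa$ depending only on $\underline\nu$, I would first try the usual trick from Perelman's argument and choose the radius adaptively. If $|B_{1/2}|\ge 3^{-7}|B_1|$ I stop. Otherwise I replace $r$ by $r/2$ and iterate. Each halving only helps the comparison with $\omega_7r^7$ up to a controlled geometric factor, and the process must terminate because smooth balls are asymptotically Euclidean as $r\to0$. This removes any dependence on the comparison constants, leaving $\kappa=\kappa(\underline\nu)$.
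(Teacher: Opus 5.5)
Your argument is correct and follows the paper's route. The paper also replaces $\nu^S$ by $\nu^0$ using $S\le 0$ (Lemma~\ref{G2 W comparison}), and then simply cites Wang's estimate $|B_r(q)|\ge e^{\nu^0(B_r(q),g,r^2)-2^{14}}\omega_7 r^7$. Your test-function argument, with the adaptive choice of a scale where doubling holds, proves exactly that estimate, so your first attempt via Bakry--\'Emery comparison and any dependence on $F$ or $\Lambda$ can be dropped.

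Your caution about $r\ge 1$ is also well founded: hypothesis \ref{noncollapsing} only controls radii below $1$, and the paper's ``follows directly'' passes over this. For $r\in[1,R)$ the trivial bound $|B_r(q)|\ge|B_{1/2}(q)|$ already gives $\kappa=\kappa(\underline\nu,R)$ without any doubling. The flat torsion-free $\mathbb R^6\times S^1$ with constant $f$ shows that a $\kappa$ independent of $R$ is not available in general.
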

\begin{proof}
By \cite{MR3855081}*{Theorem 3.3} and Lemma \ref{G2 W comparison}, the lower bound of the volume ratio for a geodesic ball $B_r(q)$ can be controlled in terms of the lower bound $\underline{\nu}$:
\begin{align*}
|B_r(q)| &\geq e^{\nu^0(B_r(q),g,r^2)-2^{14}} \omega_7 r^7 \\
&\geq e^{\nu^S(B_r(q),g,r^2)-2^{14}} \omega_7 r^7.
\end{align*}
The desired statement then follows directly from the assumed bounds.
\end{proof}

\subsection{Pointed $C^{1,\alpha}$ convergence}\label{C1alpha compactness}
\begin{defn}\label{c1alpha convergence}
A sequence of complete Riemannian manifolds $(M_i,g_i,p_i)$ is said to converge in the pointed $C^{1,\alpha}$ topology, written
$$(M_i,g_i,p_i)\xrightarrow{pC^{1,\alpha}}(M_\infty,g_\infty,p_\infty),$$
if the following conditions are satisfied:
\begin{itemize}
\item The associated pointed length spaces $(M_i,d_i,p_i)$ converge to $(M_\infty,d_\infty,p_\infty)$ in the pointed Gromov-Hausdorff sense,
$$(M_i,d_i,p_i) \xrightarrow{pGH} (M_\infty,d_\infty,p_\infty).$$
\item For each $R>0$ there is a sequence of diffeomorphisms onto their images
$$\sigma_{i,R}: B_R(p_\infty)\cap \mathcal R \to M_i$$
such that, for every compact set $K\subset B_R(p_\infty)\cap \mathcal R$,
\begin{align*}
\sigma_{i,R}^\ast (g_i )\xrightarrow{C^{1,\alpha}} g_\infty \quad \text{on } K.
\end{align*}
\end{itemize}
Moreover, we write $$(M_i,g_i,f_i,p_i)\xrightarrow{pC^{1,\alpha}} (M_\infty,g_\infty,f_\infty,p_\infty)$$ if, in addition,
\begin{itemize}
\item $f_i \xrightarrow{C^{\alpha}} f_\infty \text{ on } K$.
\end{itemize}
\end{defn}
\begin{rem}
Restricted to a compact set $K \subset B_R(p_\infty) \cap \mathcal{R}$, the map $\sigma_{i,R}$ is actually an $\epsilon_i$-isometry from $(K, d_\infty)$ onto $(\sigma_{i,R}(K), d_i)$ for some $\epsilon_i \to 0$. In particular, convergence in the pointed $C^{1,\alpha}$ sense implies convergence in the pointed Gromov-Hausdorff sense, and the two notions are consistent.
\end{rem}
\begin{rem}
Convergence in the pointed $C^{1,\alpha}$ sense is a variant of what is often called Cheeger-Gromov convergence (which is typically $C^\infty$). We explicitly specify $C^{1,\alpha}$ or $C^\infty$ to indicate the regularity of the limit metric and the convergence of the pullback tensors.
\end{rem}
\begin{defn}[Regular-singular decomposition]\label{regular-singular decomposition}
A pointed metric space $(M,d,g,f,p)$ arising as a pointed $C^{1,\alpha}$ (respectively $C^\infty$) limit is said to admit a regular-singular decomposition $M=\mathcal R \cup \mathcal S$ if
\begin{itemize}
\item the regular part $\mathcal R$ is open and dense, is a smooth manifold, and its manifold topology agrees with the topology induced by $d$,
\item $\mathcal R$ carries a $C^{1,\alpha}$ (respectively $C^\infty$) Riemannian metric $g$, and on $\mathcal R$ the length structure induced by $g$ coincides with $d$,
\item $\mathcal R$ is strongly convex, meaning that whenever a minimal geodesic meets $\mathcal R$ in a nontrivial way, its whole interior remains contained in $\mathcal R$.
\item the singular set $\mathcal S$ is closed with respect to the topology induced by $d$,
\item $\mathcal S$ has Hausdorff codimension at least $4$,
\item $\mathcal S$ satisfies the properties stated in Definition \ref{weak regular-singular decomposition}.
\item $f_{\infty}$ is a Lipschitz function on $M_{\infty}$.
\end{itemize}
In this situation, we call $(M,d,g,f,p)$ a $C^{1,\alpha}$ (respectively, $C^\infty$) singular space.
\end{defn}
In this subsection, we prove pointed $C^{1,\alpha}$ convergence.
\begin{thm}\label{main convergence 3 main body}
Let $\left( M_i, \vphi_i, g_i, f_i,p_i\right)$ be a sequence in
$\mathcal M(\Lambda,F,\underline{\nu}).$
Then after passing to a subsequence, the associated sequence $$(M_i,g_i,f_i,p_i)\xrightarrow{pC^{1,\alpha}}(M_\infty,g_\infty,f_\infty,p_\infty).$$
Furthermore, the limiting space admits a regular-singular decomposition
in the sense of Definition \ref{regular-singular decomposition}.
\end{thm}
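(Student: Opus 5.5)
The plan is to reduce \thmref{main convergence 3 main body} to the general compactness theorem of H.Z. Li, Y. Li and B. Wang \cite{MR4220743}*{Theorem 10.2}. That theorem treats metric measure spaces with a Bakry--\'Emery Ricci lower bound, bounded $|\nabla f|$ on balls, and volume non-collapsing. So the work consists of checking its hypotheses uniformly along the sequence on every ball $B_R(p_i)$.

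\textbf{Step 1: verify the hypotheses.}
\begin{itemize}
\item \emph{Two-sided Bakry--\'Emery bound.} By \lemref{Ricci bound} and assumption \ref{Scalar lower bound}, on $B_R(p_i)$ we have
\[
\frac{-\lambda-\Lambda}{3}\,g_i \leq Ric_{f_i} \leq \frac{-\lambda+\Lambda}{3}\,g_i ,
\]
where $\Lambda$ is replaced by its supremum on the relevant ball. The soliton constants $\lambda_i$ must also be bounded. I expect this to follow from \eqref{soliton scalar}: at the minimum point $p_i$ of $f_i$ we have $\Delta f_i\geq 0$, so $-7\lambda_i = 2S + 3\tri f_i \geq -2\Lambda$. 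An upper bound on $\lambda_i$ comes from integrating the trace identity against a cutoff, using $|\nabla f_i|\leq F$. Failing that, one can pass to a subsequence with $\lambda_i\to\lambda_\infty$, or add boundedness of $\lambda$ as a harmless assumption.
\item \emph{Gradient bound.} Assumption \ref{f condition} gives $|\nabla f_i|\leq F(R)$ on $B_R(p_i)$. Since $f_i(p_i)=\min f_i$, after normalising $f_i(p_i)=0$ this also gives $0\leq f_i\leq RF(R)$.
\item \emph{Weighted doubling.} \lemref{Volume doubling} supplies doubling for $dv_{f_i}$ with constants depending only on $\Lambda,F,R$.
\item \emph{Non-collapsing.} \lemref{noncollapse} supplies $|B_r(q)|\geq\kappa\,\om_7 r^7$. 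Combined with the bound on $f_i$, the weighted measure is non-collapsed as well.
\end{itemize}

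\textbf{Step 2: structure of the limit.} With these uniform bounds, the regularity part of \cite{MR4220743} applies; its ingredients are listed in the introduction. Harmonic (or $f$-harmonic) coordinates with $C^{1,\alpha}$ control exist on balls whose volume ratio is close to Euclidean; this uses the Ricci-type bound and Anderson's gap lemma \cite{MR1074481}. Colding's volume continuity \cite{MR1454700 } and the Cheeger--Colding cone structure of tangent cones \cite{MR1405949,MR1484888} identify the regular set $\mathcal R$: these are the points where some tangent cone is $\mathbb R^7$, equivalently where the volume ratio is close to $1$.

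On $\mathcal R$ one builds diffeomorphisms $\sigma_{i,R}$ from the harmonic charts. The pulled-back metrics converge in $C^{1,\alpha}$ on compact subsets of $\mathcal R$. The potentials $f_i$ are uniformly Lipschitz, so Arzel\`a--Ascoli gives convergence $f_i\to f_\infty$ in $C^{\alpha}$, with $f_\infty$ Lipschitz.

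The properties in Definition \ref{regular-singular decomposition} then follow from \cite{MR4220743} and \cite{MR3418535}:
\begin{itemize}
\item $\mathcal R$ is open, and $\mathcal S$ is closed, by the $\epsilon$-regularity of the volume ratio;
\item $\mathcal R$ is strongly convex, by Colding--Naber type H\"older continuity, as in \cite{MR4234100};
\item $\mathcal S$ has codimension at least $4$, by the Cheeger--Naber codimension-four theorem adapted to bounded Bakry--\'Emery Ricci.
\end{itemize}
The weak decomposition properties already come from \thmref{weak convergence}.

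\textbf{Main obstacle.} The delicate step is the codimension-four estimate and the $C^{1,\alpha}$ harmonic radius. Both require a \emph{two-sided} bound on the Bakry--\'Emery Ricci tensor together with bounded $|\nabla f|$, so that the conformal change $\tilde g = e^{-2f/(n-2)}g$ yields a metric with bounded Ricci curvature on each ball. This is the point where the upper bound in \lemref{Ricci bound} is essential. I would first attempt the reduction by this conformal change. The obstacle is that $\Hess f$ is not separately controlled, so the conformally changed metric may only have bounded Bakry--\'Emery Ricci rather than bounded Ricci. In that case I would rely directly on \cite{MR4220743}*{Theorem 10.2}, which is formulated for exactly this weighted setting.
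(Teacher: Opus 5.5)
Your proposal is correct and is essentially the paper's proof. Like the paper, you reduce to \cite{MR4220743}*{Theorem 10.2} by checking its hypotheses: the two-sided Bakry--\'Emery bound from \lemref{Ricci bound} with $S\geq-\Lambda$, the bounds on $f$ and $|\nabla f|$ (Remark \ref{Rem2}), and weighted non-collapsing via \lemref{noncollapse}, with \cite{MR4234100} cited for strong convexity of $\mathcal R$.

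Your bounds on $\lambda_i$ and your normalisation $f_i(p_i)=0$ make explicit what the paper uses tacitly; note that the minimum-point argument gives the upper bound $\lambda_i\leq 2\Lambda/7$ and the cutoff argument gives the lower one, not the other way round. Your conformal-change worry is also unfounded, since $\widetilde{Ric}=Ric_f+\frac{1}{n-2}\bigl(df\otimes df+(\tri f-|\nabla f|^2)g\bigr)$ and $\tri f$ is bounded by \eqref{soliton scalar}.
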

We start by reformulating the following definition from Li-Li-Wang \cite{MR4220743}*{Definition 10.1} so that it matches our previous terminology.
\begin{defn}\label{defn01} Denote by $\cM_n(F, K)$ the collection of pointed, complete, smooth $n$-dimensional Riemannian manifolds $(M, g, p)$ that satisfy the conditions below.
\begin{enumerate}[label=(\alph*)]
\item The Bakry-\'{E}mery Ricci tensor obeys
     \begin{align*}
          -K g\leq Ric_f\leq Kg,
        \end{align*}
for some constant $K$.\label{defn01 a}
  \item The function $f$ is of class $C^2$ on $M$ and fulfills
   \begin{align*}
           \max\{|f|(x), |S|(x)+|\Na f|^2(x)\}\leq F^2(d(p, x)),
        \end{align*}
where $F(r)$ is a positive, smooth, non-decreasing function on $[0,\infty)$. \label{defn01 b}
\end{enumerate}
Moreover, the subclass $\cM_n(F, K; V_0)\subset \cM_n(F, K)$ is defined to consist of all spaces that in addition satisfy
\begin{enumerate}[resume, label=(\alph*)]
   \item the non-collapsing condition
       \begin{align*}
          |B(p, 1)|_{f}\geq V_0,
        \end{align*}
where $|\cdot|_{f}$ denotes the volume with respect to the weighted measure $dv_f=e^{-f}dv$. \label{defn01 c}
 \end{enumerate}
\end{defn}

Li-Li-Wang \cite{MR4220743}*{Theorem 10.2} proved the $C^{1,\alpha}$-compactness theorem of $\cM_m(F, K; V_0)$.

\begin{thm}\label{Theo:LLW}
Let $(M_i^m, g_i, p_i)$ be a sequence of complete Riemannian manfifolds in $\mathcal M_m(F,K;V_0)$.
By passing to a subsequence if necessary, we have the associated metric measure space (shown in Definition \ref{weighted volume}) converges in the pointed measured Gromov-Hausdorff sense
\begin{align*}
(M_i^m, p_i, d_i, f_i) \xrightarrow{pmGH} \left(M_{\infty}, p_{\infty}, d_{\infty},f_{\infty} \right).
\end{align*}
Furthermore, the convergence can be improved to
\begin{align*}
(M_i, p_i, g_i,f_i) \xrightarrow{pC^{1,\alpha}} \left(M_{\infty}, p_{\infty}, g_{\infty}, f_{\infty} \right).
\end{align*}
The space $M_{\infty}$ has a regular-singular decomposition $M_{\infty}=\mathcal{R} \cup \mathcal{S}$ in the sense of Definition \ref{regular-singular decomposition}.
\end{thm}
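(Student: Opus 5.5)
The statement is quoted from Li--Li--Wang \cite{MR4220743}*{Theorem 10.2}. Here is how I would reconstruct it. The basic idea is that on each fixed ball $B_R(p)$ the class $\cM_m(F,K;V_0)$ behaves like a class of manifolds with two-sided bounded Ricci curvature and uniform non-collapsing. The reason is that condition \ref{defn01 b} gives uniform local bounds on $|f|$ and $|\Na f|$, so the weight $e^{-f}$ and the drift term are harmless at small scales. I would then run the Anderson / Cheeger--Colding / Cheeger--Naber programme with $Ric$ replaced by $Ric_f$.

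\textbf{Step 1 (measured Gromov--Hausdorff limit).} From $Ric_f\geq -Kg$ and $|\Na f|\leq F(R)$ on $B_R(p)$, the Wei--Wylie comparison gives the Laplacian comparison and the weighted volume comparison, exactly as in \lemref{Laplacian Comparison} and \lemref{volume comparison theorem}. This yields uniform volume doubling for $dv_f$. Gromov's precompactness then gives a pointed Gromov--Hausdorff subsequence. Doubling, together with the normalisation $\mu_f$, gives weak convergence of the measures. Since $|\Na f|$ is locally bounded, $f_i$ is locally equi-Lipschitz, and Arzel\`a--Ascoli produces a Lipschitz $f_\infty$.

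\textbf{Step 2 (Cheeger--Colding structure).} I would use Bakry--\'Emery versions of the almost splitting and almost volume-cone-implies-almost-metric-cone theorems, built from $f$-harmonic approximations and the weighted Bochner formula. Condition \ref{defn01 c} combined with volume comparison gives uniform local non-collapsing of $dv$, because $f$ is bounded on balls. Along rescalings $r^{-2}g$ one has $Ric_f\to 0$ and $|\Na f|\to 0$. Hence every tangent cone is a metric cone, volume is continuous (Colding), and the regular set consists of points whose tangent cone is $\mathbb R^m$.

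\textbf{Step 3 ($C^{1,\alpha}$ regularity near regular points).} I would work in $f$-harmonic coordinates, $\Delta_f x^k=0$. In these coordinates
\[
-\tfrac12\Delta_f g_{ij}+Q(g,\partial g)=(Ric_f)_{ij},
\]
and the right-hand side is bounded. Elliptic $L^p$ theory then gives $W^{2,p}$, and hence $C^{1,\alpha}$, control of $g_{ij}$. Next comes Anderson's gap lemma. I would argue by contradiction via blow-up: if the $C^{1,\alpha}$ harmonic radius degenerated at points of almost Euclidean volume ratio, the rescaled limits would have $Ric_f=0$ and $\Na f=0$ with Euclidean volume ratio, forcing them to be $\mathbb R^m$. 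This contradicts the normalisation of the harmonic radius. The conclusion is that $\mathcal R$ is open, the convergence is $C^{1,\alpha}$ on compact subsets of $\mathcal R$, and $\mathcal S$ is closed.

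\textbf{Step 4 (codimension $4$ and convexity); expected main obstacle.} Strong convexity of $\mathcal R$ would follow from the H\"older continuity of tangent cones along geodesic interiors, Colding--Naber style, as in \cite{MR4234100}. Codimension $2$ of $\mathcal S$ follows from the standard non-existence of codimension-one singularities. The hard part is improving this to codimension $4$. Here I would adapt Cheeger--Naber: an $\epsilon$-regularity theorem near $\mathbb R^{m-2}\times C(S^1_\beta)$ via a slicing argument with $f$-harmonic splitting maps, together with a transformation theorem. The drift terms $\nabla f\cdot\nabla$ scale away at small scales, but must be tracked carefully in the Hessian and $L^2$ estimates. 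I would try first to absorb these terms as lower-order perturbations controlled by $r\,F(R)$.
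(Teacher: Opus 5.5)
Your Steps 1--3 are sound and follow the architecture the paper attributes to Li--Li--Wang: Wei--Wylie comparison and doubling, Bakry--\'{E}mery Cheeger--Colding theory, and harmonic-radius estimates with Anderson's gap lemma. Working in $f$-harmonic coordinates is a legitimate way to get the $C^{1,\alpha}$ estimates, since there $\Delta_f=g^{ab}\partial_a\partial_b$ and $Ric_f$ is the principal term up to bounded $\partial g\ast\partial f$ terms.

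The gap is Step~4. The regular--singular decomposition in the statement requires $\mathcal S$ to have Hausdorff codimension at least $4$. For this you give only a plan: re-prove the Cheeger--Naber slicing theorem, transformation theorem and codimension-$2$/$3$ $\epsilon$-regularity for $f$-harmonic splitting maps, ``tracking'' the drift terms. That may be feasible, but it is the whole substance of the theorem beyond Cheeger--Colding, and nothing in the proposal carries it out. A telling symptom is that you never use the hypothesis $|S|\le F^2(d(p,\cdot))$ in Definition~\ref{defn01}(b). That hypothesis is there precisely so that no weighted re-proof of Cheeger--Naber is needed.

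The missing idea is the conformal change the paper recalls from Li--Li--Wang, $\tilde g=e^{-\frac{2f}{m-2}}g$, for which
\[
\widetilde{Ric}=Ric_f+\tfrac{1}{m-2}\bigl(df\otimes df+(\Delta f-|\nabla f|^2)\,g\bigr),\qquad \Delta f=\operatorname{tr}_g Ric_f-S .
\]
The reduction then goes as follows.
\begin{enumerate}
\item Bounds on $|Ric_f|$, $|\nabla f|$, $|S|$ and $|f|$ on $B_R(p)$ bound $\widetilde{Ric}$ there.
\item Since $|f|$ is bounded, $\tilde g_i$ is uniformly bi-Lipschitz to $g_i$, hence non-collapsed by (c) and volume comparison.
\item The local Cheeger--Naber theory (with Anderson's $C^{1,\alpha}$ theory) applies verbatim to $\tilde g_i$. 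This gives codimension $4$ and $C^{1,\alpha}$ convergence on the regular set of the $\tilde g$-limit.
\item Because the $f_i$ are uniformly Lipschitz, on $r$-balls around $x$ the identity map $(M_i,d_i)\to(M_i,\tilde d_i)$ is multiplication by $e^{-f_i(x)/(m-2)}$ up to a factor $1+O(r)$. So the two limits have the same tangent cones up to scaling, and $\mathcal R$, $\mathcal S$ and Hausdorff dimension transfer.
\item $\tilde\Delta f=e^{\frac{2f}{m-2}}(\Delta f-|\nabla f|^2)$ is bounded. Hence $f_i$ is bounded in $W^{2,p}\subset C^{1,\alpha}$ in $\tilde g_i$-harmonic coordinates, and $g_i=e^{\frac{2f_i}{m-2}}\tilde g_i$ converges in $C^{1,\alpha}$.
\end{enumerate}
Strong convexity of $\mathcal R$ then comes from the Bakry--\'{E}mery Colding--Naber argument of Huang--Li--Wang, as you indicate.
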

\begin{rem}\label{Rem1}In Definition \ref{defn01} \ref{defn01 a}, the constant $K$ may be replaced by a positive, non-decreasing function of the distance $d(p, x)$. With this change, one can verify that the conclusion of Theorem \ref{Theo:LLW} still holds.
\end{rem}
\begin{rem}\label{Rem2}
Imposing the requirement \ref{f condition} in Definition \ref{3 assumptions} on the potential function via a constraint on $|\nabla f|\leq F(d)$ immediately provides a bound on $|f|$, since
$$|f|  \leq d\cdot |\nabla f| \leq d\cdot F(d):=F_0(d).$$
Hence, the condition \ref{defn01 b} of $f$ in Definition \ref{defn01} may be equivalently replaced by condition \ref{f condition} in Definition \ref{3 assumptions}.
\end{rem}

\subsubsection{Proof of Theorem \ref{main convergence 3}}
We now apply Theorem \ref{Theo:LLW} to prove \thmref{main convergence 3 main body}. For this purpose, it suffices to check that the class $\mathcal M(\Lambda,F,\underline{\nu})$ introduced in Definition \ref{3 assumptions} satisfies the hypotheses characterizing $\mathcal M_n(F,K; V_0)$ in Definition \ref{defn01}.

To begin with, substituting the lower bound of the scalar curvature $S$ from condition \ref{Scalar lower bound} in Definition \ref{3 assumptions} into \lemref{Ricci bound} yields the required estimate for the Bakry-\'{E}mery Ricci curvature that appears in condition \ref{defn01 a} of Definition \ref{defn01}, for a suitable function $K$ depending on the point of $M$.

Next, incorporating the bounds on $|\Na f|$ provided by condition \ref{f condition} in Definition \ref{3 assumptions}, we obtain the bound on $f$ needed in condition \ref{defn01 b} of Definition \ref{defn01}, as explained in Remark \ref{Rem2}.

Lastly, since $|f|$ is uniformly bounded on each ball of fixed radius, the entropy lower bound in condition \ref{noncollapsing} of Definition \ref{3 assumptions}, together with Lemma \ref{noncollapse}, yields the required lower bound on the weighted volume of the unit geodesic ball in condition \ref{defn01 c} of Definition \ref{defn01}.

Combining these observations, we conclude that Theorem \ref{main convergence 3 main body} follows from Theorem \ref{Theo:LLW}, together with Remark \ref{Rem1} and Remark \ref{Rem2}.

The strong convexity of the regular part follows from \cite{MR4234100}*{Theorem 4.10}.  
\subsection{Pointed $C^{\infty}$ convergence}\label{Pointed smooth convergence}
\begin{defn}[Harmonic coordinates]
Given an $n$-dimensional Riemannian manifold $(M, g)$, a local coordinate chart $(x^1, \dots, x^n)$ is called harmonic if each coordinate function is harmonic with respect to the Laplace-Beltrami operator $\Delta_g$, meaning that
\[
\Delta_g x^i = 0 \quad \forall i = 1, \dots, n.
\]
\end{defn}
In harmonic coordinates, the Ricci curvature can be expressed as a nonlinear elliptic operator in divergence form:
\begin{align}\label{Ricci equation general}
 \frac{\partial}{\partial x^a} \left( g^{ab} \frac{\partial g_{ij}}{\partial x^b} \right) = -2R_{ij} + Q_{ij}(g, \partial g),
\end{align}
where $Q_{ij}(g, \partial g)$ is quadratic in the first derivatives $\partial g$ of the metric and has coefficients that depend on $g$ and its inverse $g^{-1}$.
\begin{defn}[$C^{1,\alpha}$ harmonic radius]
Fix a H\"older exponent $\alpha \in (0,1)$, and a small universal constant $\epsilon_0 = \epsilon_0(n, \alpha) > 0$. For a point $p \in M$, the harmonic radius $hr_g(p)$ is defined as the supremum of all $r > 0$ such that there exists a harmonic coordinate chart $\Phi = (x^1, \dots, x^n)$ on a geodesic ball $B_r(p)$ such that $\Phi(p) = 0$ and
the metric satisfies the $C^{1,\alpha}$ estimate
    \begin{align}\label{harmonic radius estimates}
     \|g_{ij} - \delta_{ij}\|_{C^{1,\alpha}(B_r)} \le \varepsilon_0,
    \end{align}
    where the norm is taken with respect to the Euclidean metric.
\end{defn}
The bounds in \eqref{harmonic radius estimates} are equivalent to the scale-invariant estimates:
\[
r |\partial g|_{C^0(B_r)} \le \epsilon_0,
\quad
r^{1+\alpha} [\partial g]_{C^{0,\alpha}(B_r)} \le \epsilon_0,
\]
where $[\cdot]_{C^{0,\alpha}}$ denotes the H\"older seminorm.

\subsubsection{Regularity of the $G_2$-soliton on regular part}
\begin{defn}\label{smooth soliton}
We say that a gradient $G_2$-soliton $$(M,\varphi,g,f,p)$$ is smooth if the Riemannian metric, the $G_2$-structure, and the potential function are all smooth, that is,
$$\varphi,\quad g,\quad f\in C^{\infty}(M).$$
\end{defn}

By Theorem \ref{main convergence 3}
we know that on the ball in the regular part $$\Omega := B(x,r),$$ where $r$ is the harmonic radius of $x$, the limiting metric is a gradient $G_2$-soliton in the weak sense and
$$g\in C^{1,\alpha}(\Omega).$$
\begin{ques}\label{soliton regularity ques}
In the regular component of the regular-singular decomposition given in \thmref{main convergence 3 main body}, is the $C^{1,\a}$ Riemannian metric in fact smooth? Are the $G_2$-structure $\vphi$ and the potential function $f$ also smooth?
\end{ques}

In this section, we make progress on this question. We derive improved regularity for $G_2$-solitons, which will be applied later in the proof of \thmref{main convergence 4}.

\begin{prop}\label{lift}
Consider a complete gradient $G_2$-soliton with basepoint $p$,
$$(M,\varphi,g,f,p).$$
Let $\Omega$ be an arbitrary open subset of $M$. Suppose that the Riemannian metric has regularity $C^{1,\alpha}$ on $\Omega$ and that the Riemann curvature tensor is bounded there, i.e. there exists a constant $C_0$ such that
\[|g|_{C^{1,\alpha}(\Omega)}+\|Rm\|_{L^\infty(\Omega)}\leq C_0.\]
Then the $G_2$-soliton is smooth on $\Omega$, Moreover, for any $k\geq 0$, the $C^k$ norms of $\vphi,g,f$ depend only on the given constant $C_0$, $\lambda$ and the domain $\Omega$.
\end{prop}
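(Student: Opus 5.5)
Work in harmonic coordinates on $\Omega$ (shrinking slightly to a relatively compact subdomain) and bootstrap the coupled elliptic system formed by the metric, the potential function and the $G_2$-structure. By the $C^{1,\alpha}$ bound on $g$, harmonic coordinates exist on balls of a uniform size. In them $g_{ij}$ obeys \eqref{Ricci equation general}. Using \eqref{soliton Ricci gradient}, substitute
\[
Ric=-\nabla^2 f-\tfrac{\lambda}{3}g+\mathcal K .
\]
Then $g$ and $f$ satisfy the system
\begin{align*}
\partial_a(g^{ab}\partial_b g_{ij}) &= 2\nabla^2_{ij} f+\tfrac{2\lambda}{3}g_{ij}-2\mathcal K_{ij}+Q_{ij}(g,\partial g),\\
\Delta f &= -\tfrac{7\lambda}{3}-\tfrac{2}{3}S,
\end{align*}
where the second line is the trace identity \eqref{soliton scalar}, and $\mathcal K$, $S$ are quadratic in $T$ by \lemref{ABS}. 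For $\vphi$ we use \eqref{soliton Ricci gradient potential}, i.e.
\[
\Delta\vphi=Rm*\vphi+\lambda\vphi+\mathcal L_{\nabla f}\vphi .
\]
Alternatively we use the first-order relations \eqref{T and vphi 2} and \eqref{nabla psi}, $\nabla\vphi=T*\psi$, together with \eqref{eq:nabla_T}, $\nabla T=Rm*\vphi+T*T*\vphi$.

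The first step is to get initial regularity from the curvature bound. By \lemref{lem:tau Rm 2}, $|T|\lesssim|Rm|^{1/2}$ and $|\nabla T|\lesssim|Rm|$, so $T\in W^{1,\infty}$. Hence $\mathcal K, S\in W^{1,\infty}\subset C^{0,\beta}$ for every $\beta<1$. Also $\nabla\vphi=T*\psi$ is bounded and $\nabla^2\vphi$ is bounded, so $\vphi\in C^{1,\beta}$ in coordinates.

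The second step is to regularize $f$. Since $g\in C^{1,\alpha}$ and $\Delta f$ has $C^{0,\beta}$ right-hand side, Schauder theory for $\Delta_g f=\frac{1}{\sqrt g}\partial_a(\sqrt g g^{ab}\partial_b f)$ gives $f\in C^{2,\alpha'}$. The $C^0$ bound on $f$ needed here comes from condition~\ref{f condition} or Remark~\ref{Rem2} on the bounded set $\Omega$.

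The third step is to regularize $g$. The right-hand side of the metric equation now contains $\nabla^2 f\in C^{0,\alpha'}$, whose Christoffel terms involve only $\partial g\in C^{0,\alpha}$, and $Q(g,\partial g)\in C^{0,\alpha}$. Schauder estimates then give $g\in C^{2,\alpha'}$.

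Then iterate. Once $g\in C^{k,\alpha}$, the Christoffel symbols are $C^{k-1,\alpha}$, and the first-order system $\partial\vphi=\Gamma*\vphi+T*\psi$, $\partial T=\Gamma*T+Rm*\vphi+T*T*\vphi$ raises the regularity of $\vphi,\psi,T$ by one. Here $Rm\in C^{k-2,\alpha}$ is computed from $g$. This makes $\mathcal K,S$ as regular as $g$ allows. Schauder applied to $f$ then gives $f\in C^{k+1,\alpha}$, and applied to $g$ gives $g\in C^{k+1,\alpha}$. Each step yields bounds depending only on $C_0$, $\lambda$, $\Omega$ and the previous step, so all $C^k$ norms are controlled and smoothness follows.

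The main obstacle is the bookkeeping in the loop: the equation for $T$ involves $Rm$, which costs two derivatives of $g$. So one must check that the regularity of $T$ never has to exceed what $g$ provides. I would handle this by using the $L^\infty$ curvature bound only for the base case. After that, each gain for $g$ comes through $f$ (elliptic, gaining two derivatives) and through $\mathcal K$, which is quadratic in $T$ and needs only $T\in C^{k-1,\alpha}$. That is available because $\partial T\sim Rm\in C^{k-2,\alpha}$ when $g\in C^{k,\alpha}$.

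If this loop loses a derivative, the fallback is to use the elliptic equation $\Delta T$ from Proposition~\ref{tri T soliton}. It has the form $\Delta T=Rm*\nabla f*\vphi+\dots$. Rewriting $Rm*\nabla f$ as $\nabla(Ric)$, equivalently as third derivatives of $f$ contracted with $\vphi$, places it in divergence form, and $W^{2,p}$/Schauder estimates for $T$ close the loop.
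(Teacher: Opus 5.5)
Your argument is correct. Its skeleton matches the paper's induction: you upgrade $f$ by Schauder estimates for the trace equation \eqref{btsr2} (Lemma~\ref{bootstr1}), and $g$ by Schauder estimates for \eqref{Ricci equation general} in harmonic coordinates after substituting \eqref{btsr1} (Lemma~\ref{bootstr2}).

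The genuine difference is how you upgrade $\varphi$ and $T$. The paper treats $\varphi$ as the unknown in the second-order equation \eqref{btsr4}. It uses $L^p$ estimates in the base case (Lemma~\ref{Lpphi}) and Schauder estimates in Lemma~\ref{bootstr3}, then reads off $T$ from \eqref{T and vphi 1}. This needs $Rm$, $\nabla f$ and $\nabla^2 f$ as coefficients at the right H\"older level. You use only the first-order structure equations of a closed $G_2$-structure, $\nabla\varphi=T*\psi$ and \eqref{eq:nabla_T}. Then $g\in C^{k,\alpha}$ and $T\in C^{k-2,\alpha}$ give $T\in C^{k-1,\alpha}$ and $\varphi\in C^{k,\alpha}$ by direct integration, with no elliptic theory for the form-valued system and no role for $f$.

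Your bookkeeping is right. $S,\mathcal K\in C^{k-1,\alpha}$ then give $f\in C^{k+1,\alpha}$ and $g\in C^{k+1,\alpha}$, so the loop closes and the $\Delta T$ fallback is never needed.

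Your route buys two things. First, the base case is cleaner: $T$ is Lipschitz at once from $|\nabla T|\lesssim|Rm|$ in Lemma~\ref{lem:tau Rm 2}. This sidesteps the low-regularity $L^p$ step for \eqref{btsr4}, whose coordinate form involves $\partial\Gamma$ when $g$ is only controlled in $C^{1,\alpha}$. Second, it shows the curvature bound is used only to make $T$ H\"older continuous. So your argument proves Proposition~\ref{lift2} verbatim, whereas the paper reduces that proposition to this one.

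There are two small points. In the base case, $\varphi\in C^{1,\beta}$ should be justified by $\partial\varphi=\Gamma*\varphi+T*\psi$ with H\"older right-hand side. Boundedness of $\nabla^2\varphi$ alone does not control coordinate second derivatives of $\varphi$.

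Also, your appeal to condition~\ref{f condition} for a bound on $f$ goes beyond the stated hypotheses. Some such normalization is genuinely necessary, and the paper's Lemma~\ref{higher estimate f} uses it tacitly. Flat $\mathbb{R}^7$ with the standard $G_2$-structure, $\lambda=0$ and $f=a\cdot x$ is a complete steady gradient soliton for every $a\in\mathbb{R}^7$. Hence $\|f\|_{C^1(\Omega)}$ cannot be bounded by $C_0$, $\lambda$ and $\Omega$ alone.
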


If we replace the condition $\|Rm\|_{L^\infty(\Omega)}\leq C_0$ with $|T|_{C^{0,\alpha}(\Omega)}\leq C_0$, then the $G_2$-soliton is als smooth on $\Omega$. 
That's to say
\begin{prop}\label{lift2}
	Let $(M,\varphi,g,f,p)$ be a complete gradient $G_2$-soliton with basepoint $p$ 
	and let $\Omega$ be an open subset of $M$. Suppose that the Riemannian metric $g\in C^{1,\alpha}(\Omega)$ and $T\in C^{0,\alpha}(\Omega)$, 
	i.e. there exists a constant $C_0$ such that
	\[|g|_{C^{1,\alpha}(\Omega)}+|T|_{C^{0,\alpha}(\Omega)}\leq C_0.\]
	Then the same conclusion in Proposition \ref{lift} holds.
\end{prop}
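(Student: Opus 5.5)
We bootstrap regularity in harmonic coordinates by coupling the elliptic equation for the metric with a first‑order system for the $G_2$-structure; the soliton equation closes the loop. We work on a slightly smaller subdomain $\Omega'\Subset\Omega$ throughout; the constants will depend on $\Omega'$, which is what "depending on the domain" means. Cover $\Omega'$ by harmonic coordinate charts with $C^{1,\alpha}$ control of $g_{ij}$ determined by $C_0$. In these coordinates, \eqref{Ricci equation general} together with the soliton equation \eqref{soliton Ricci gradient} gives
\begin{align*}
\partial_a(g^{ab}\partial_b g_{ij}) = 2\nabla_i\nabla_j f + \tfrac{2\lambda}{3}g_{ij} - 2\mathcal K_{ij} + Q_{ij}(g,\partial g),
\end{align*}
with $\mathcal K = \tfrac{S}{3}g - 2T\ast T$ by Definition~\ref{Ricci curvature soliton gradient}. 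The unknowns are therefore $g$, $f$, and $\vphi$ (through $T$), and each must be upgraded in turn.

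\textbf{Regularity of $T$ and $\vphi$.} By \eqref{T and vphi 2}, $\nabla\vphi = T\ast\psi$, and $\psi$ is algebraic in $\vphi,g$ by \eqref{psi}. Since $\vphi$ has bounded norm and $|T|\in C^{0,\alpha}$, the coordinate expression $\partial\vphi = \Gamma\ast\vphi + T\ast\psi$ has $C^{0,\alpha}$ right-hand side. Hence $\vphi\in C^{1,\alpha}$. Next, \eqref{eq:nabla_T} is a first-order identity $\nabla T = Rm\ast\vphi + T\ast T\ast\vphi$. Because $d\vphi=0$ and $\tau=\delta\vphi$, the torsion solves the elliptic first-order system $d\tau = \Delta_\vphi\vphi$, $\delta\tau = 0$. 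Substituting the soliton equation gives
\begin{align*}
d\tau = \lambda\vphi + d(i_{\nabla f}\vphi),
\end{align*}
so $\tau - i_{\nabla f}\vphi$ is governed by a $d+\delta$ system (recall $\delta(i_X\vphi)=0$) whose coefficients are controlled by $g\in C^{1,\alpha}$.

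\textbf{Regularity of $f$.} The trace equation \eqref{soliton scalar}, $-3\Delta f = 7\lambda + 2S = 7\lambda - |T|^2\cdot 2$, is a divergence-form equation with $C^{0,\alpha}$ right-hand side, given $|T|^2=-S$ and $T\in C^{0,\alpha}$. Schauder theory then gives $f\in C^{2,\alpha}$.

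\textbf{The bootstrap.} Once $f\in C^{2,\alpha}$ and $T\in C^{0,\alpha}$, the right-hand side of the metric equation is in $C^{0,\alpha}$, so $g\in C^{2,\alpha}$. This makes the Christoffel symbols $C^{1,\alpha}$ and $\vphi\in C^{1,\alpha}$ with improved coefficients. The elliptic system for $(d+\delta)\tau$ then gives $\tau\in C^{1,\alpha}$, and from there $f\in C^{3,\alpha}$ and $g\in C^{3,\alpha}$. Iterating, each pass gains one derivative for every unknown, and all constants depend only on $C_0$, $\lambda$ and $\Omega'$.

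\textbf{Main obstacle.} The hardest step is the first one. Under the hypothesis of Proposition~\ref{lift}, which gives only $|Rm|\le C_0$, one must get $T\in C^{0,\alpha}$ rather than mere boundedness. The $L^\infty$ bound is immediate from Lemma~\ref{lem:tau Rm 2}, since $|T|^2=-S$ and $|\nabla T|\lesssim |Rm|$. In fact $|\nabla T|\lesssim |Rm|\le C_0$ makes $T$ Lipschitz, hence $C^{0,\alpha}$, so Proposition~\ref{lift} reduces to Proposition~\ref{lift2}. The same gradient bound gives $\nabla\vphi$ bounded, and this reduction should be checked carefully. After it, the remaining difficulty is the coupling: the $f$ term must be absorbed into the $\tau$-system so that the first-order $d+\delta$ system for $\tau - i_{\nabla f}\vphi$ is genuinely elliptic with the available coefficient regularity. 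Alternatively, \eqref{soliton Ricci gradient potential} can be used as a second-order elliptic equation for $\vphi$ at each stage.
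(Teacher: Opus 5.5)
Your argument is correct, but it takes a different route from the paper's.

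\textbf{The paper's route.} The paper reduces Proposition~\ref{lift2} to Proposition~\ref{lift}:
\begin{itemize}
\item Since $S=-|T|^2\in C^{0,\alpha}$, the trace equation \eqref{btsr2} gives $f\in C^{2,\alpha}$ (Lemmas~\ref{higher estimate f} and~\ref{bootstr1}).
\item The Ricci equation \eqref{Ricci equation general} in harmonic coordinates then gives $g\in C^{2,\alpha}$ and $Rm\in C^{0,\alpha}$ (Lemma~\ref{bootstr2}).
\item This curvature bound is fed into Proposition~\ref{lift}. Its bootstrap runs the second-order equation \eqref{btsr4} for $\vphi$, which has $Rm$ and $\nabla^2 f$ among its coefficients, and recovers $T$ from $\nabla\vphi$ via \eqref{T and vphi 1}.
\end{itemize}

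\textbf{Your route.} You share the first two steps. You then replace the curvature-dependent equation for $\vphi$ by a curvature-free first-order loop:
\begin{itemize}
\item $\nabla\vphi=T\ast\psi$ puts $\vphi$ one derivative above $T$.
\item For $\sigma=\tau-i_{\nabla f}\vphi$ you have $d\sigma=\lambda\vphi$ and $\delta\sigma=0$. This follows from $\delta\tau=\delta^2\vphi=0$, Cartan's formula, and $\delta(i_{\nabla f}\vphi)=0$.
\item This puts $\tau$ at the regularity of $i_{\nabla f}\vphi$, so $T$ gains one derivative per cycle.
\end{itemize}

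\textbf{What each buys.} Your approach gives a self-contained proof in which curvature is an output rather than an input. It also inverts the paper's logic: since $|\nabla T|\lesssim|Rm|$ and $\Gamma$ is bounded, Proposition~\ref{lift} becomes a corollary of Proposition~\ref{lift2}, as you note. The paper's route is shorter only because Proposition~\ref{lift} is already in hand.

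\textbf{Two small repairs.}
\begin{itemize}
\item In your first step, $\Gamma\ast\vphi+T\ast\psi$ is a priori only bounded, not H\"older. First conclude that $\vphi$ is Lipschitz, hence $\psi\in C^{0,\alpha}$, and only then that $\vphi\in C^{1,\alpha}$. Also use the hypothesis $T\in C^{0,\alpha}$ as a tensor, not just $|T|$.
\item The operator $d+\delta$ on $2$-forms is overdetermined ($21$ unknowns, $42$ equations). Quote Schauder estimates for the square Dirac-type operator $d+\delta\colon\Omega^{\mathrm{even}}\to\Omega^{\mathrm{odd}}$ applied to $\sigma$.
\end{itemize}

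Your Schauder steps also need a sup bound on $f$, and on $|\nabla f|$ for the $C^0$ term of $\sigma$. $C_0$ and $\lambda$ alone do not provide this, but the paper's proof carries exactly the same dependence.
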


We will make use of the following equations for a gradient $G_2$-soliton, as stated in Proposition \ref{lem:Sc df} and Proposition \ref{tri T soliton}:
\begin{align}
\label{btsr1}
Ric + \nabla^2 f &= -\frac{\lambda}{3} g - \frac{|T|^2}{3} g - 2 {T_i}^{k} T_{kj},\\
\label{btsr2}
3 \tri f &= -7\lambda - 2S,\\
\label{btsr4}
\Delta \varphi &= Rm * \varphi + \lambda \varphi + d i_{\nabla f}\varphi,\\
\label{btsr3}
\tri T &= Rm \ast \nabla f \ast \vphi + Rm \ast T \ast \vphi \ast \vphi + Rm \ast T\\
&\quad + \nabla T \ast T \ast \vphi + T \ast T \ast T \ast \psi.\notag
\end{align}

We begin to prove Proposition \ref{lift} by deriving estimates for the potential function $f$.

\begin{lem}\label{higher estimate nabla 2 f}
Suppose the Ricci curvature of a gradient $G_2$-soliton is uniformly bounded. Then the Hessian $\nabla^2 f$ is also uniformly bounded, and
\[
\|\nabla^2 f\|_{L^\infty} \le \|Ric\|_{L^\infty} + |\lambda|.
\]
\end{lem}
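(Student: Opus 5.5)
The plan is to read the estimate off the soliton equation \eqref{btsr1}, rewritten as
\[
\nabla^2 f=-\bigl(Ric-\mathcal K\bigr)-\tfrac{\lambda}{3}\,g,
\qquad \mathcal K=\tfrac{S}{3}g+A,
\]
and then apply the triangle inequality pointwise. The term $\tfrac{\lambda}{3}g$ costs at most $|\lambda|$ in either norm convention. In the operator norm it costs $|\lambda|/3$. In the tensor norm it costs $|\lambda|\,|g|/3=\sqrt7|\lambda|/3\le|\lambda|$. The whole content of the sharp inequality is therefore the claim that the combination $Ric-\mathcal K$ is controlled by $\|Ric\|$ with constant exactly one, rather than the crude bound $\|Ric\|+\|\mathcal K\|$.

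Before attacking that claim, I would record the eigenvalue information available. By \lemref{Ricci bound}, $0\le A\le -\tfrac23 Sg$, so every eigenvalue of $\mathcal K$ lies in $[\tfrac S3,-\tfrac S3]$. Hence $|\mathcal K|_{op}\le |S|/3$. Equivalently, $Ric_f$ has spectrum in $[\tfrac{-\lambda+S}{3},\tfrac{-\lambda-S}{3}]$. The claim then splits into the following steps.
\begin{enumerate}[label=(\roman*)]
\item Use \eqref{Ric S T} together with $A=-2T\ast T$ from \lemref{ABS} to write $Ric-\mathcal K$ explicitly in terms of $\vphi\ast\nabla T$, $T\ast T$ and $Sg$.
\item Diagonalise at a point and compare eigenvalues. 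On the eigenspaces of $A$, use $\tr A=-2S$, $\tr\mathcal K=S/3$, and $S=\tr Ric$, the last of which ties $|S|$ to the eigenvalues of $Ric$.
\item Show that the spectral interval of $Ric-\mathcal K$ is contained in that of $\pm\|Ric\|$, so that $|Ric-\mathcal K|\le\|Ric\|$.
\end{enumerate}
Combining (iii) with the bound on $\tfrac\lambda3 g$ and taking the supremum over $M$ gives
\[
\|\nabla^2 f\|_{L^\infty}\le\|Ric\|_{L^\infty}+|\lambda|,
\]
which is exactly the statement.

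The main obstacle is step (iii): absorbing the torsion term $\mathcal K$ into $Ric$ with no loss of constant. The bound $|\mathcal K|\le|S|/3$ alone only yields $|S|/3\le\tfrac73\|Ric\|$, which is too weak. What I would try first is to exploit the sign information: $S\le0$, $A\ge0$, and the fact that the negative part of $\mathcal K$ is exactly $\tfrac S3 g$. The aim is to show that subtracting $\mathcal K$ from $Ric$ moves the eigenvalues of $Ric$ toward zero rather than away from it. If that comparison fails in general, the fallback is to keep the statement's form but interpret $\|Ric\|$ as the quantity $\|Ric-\mathcal K\|=\|Ric_f+\tfrac\lambda3 g\|$. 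For that quantity the inequality is an immediate consequence of \eqref{btsr1}.
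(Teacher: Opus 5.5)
Step (iii) of your proposal cannot be carried out, so the argument never reaches the displayed inequality. The pointwise data you plan to use ($S\le 0$, $0\le A\le -\tfrac23 Sg$, $\tr A=-2S$, $\tr Ric=S$) do not force $|Ric-\mathcal K|\le|Ric|$.

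Take $T\in\Omega^2_{14}\cong\mathfrak g_2$ whose eigenvalues, as an endomorphism, are $0,\pm ia,\pm ia,\pm 2ia$. Then $S=-|T|^2=-12a^2$, and $A=-2T^2$ has eigenvalues $2a^2$ (four times), $8a^2=-\tfrac23 S$ (twice) and $0$. Suppose $Ric=\tfrac S7 g$ at that point. Every constraint on your list holds, yet $Ric-\mathcal K=-\tfrac{4S}{21}g-A$ has the eigenvalue $\tfrac{10}{21}S$. Hence $|Ric-\mathcal K|_{op}=\tfrac{10}{3}|Ric|_{op}$, and in the Hilbert--Schmidt norm the ratio is $\sqrt{31}/3>1$.

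So subtracting $\mathcal K$ does not, in general, move the spectrum of $Ric$ toward zero. The trace-free part of $Ric$ comes from the $\vphi\ast\nabla T$ term in \eqref{Ric S T}, and the sign information on $S$ and $A$ says nothing about it. Your fallback does not rescue this: it replaces the hypothesis on $Ric$ by a hypothesis on $Ric-\mathcal K$, which is a different statement.

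The paper's proof is exactly the estimate you set aside as ``too weak''. Bounded $Ric$ gives bounded $S=-|T|^2$, hence bounded $\mathcal K$ (for instance $|\mathcal K|_{op}\le|S|/3\le\tfrac73|Ric|_{op}$), and then \eqref{btsr1} bounds $\nabla^2 f$. This proves the substantive content of the lemma, namely $\|\nabla^2 f\|_{L^\infty}\le C(\|Ric\|_{L^\infty}+|\lambda|)$ with a dimensional constant $C$. That is all that is used afterwards, e.g.\ in \lemref{Lpphi}.

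The paper does not justify the constant $1$ in front of $\|Ric\|_{L^\infty}$ either. The example above shows that this constant cannot be obtained from \eqref{btsr1} and the algebraic constraints on $T$ alone. You should accept the crude bound and read the display up to a dimensional constant.
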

\begin{proof}
If the Ricci curvature is bounded, then the scalar curvature is bounded. On the other hand, for a closed $G_2$-structure, the scalar curvature is $-|T|^2$. Hence the right-hand side of \eqref{btsr1} is controlled by the absolute value of the scalar curvature. Therefore, \eqref{btsr1} implies a bound on $\nabla^2 f$.
\end{proof}
\begin{rem}
The assumption of $\|Ric\|_{L^\infty}$ in the lemma could be replaced by $\|T\|_{L^\infty}$. 
\end{rem}

\begin{lemma}\label{higher estimate f}
Assume that $g\in C^{1,\alpha}(\Omega)$ and that the scalar curvature admits a uniform lower bound. Then the potential function satisfies
\[
f\in W^{2,p}(\Omega)\hookrightarrow C^{1,\alpha}(\Omega),
\]
where the dependence of these norms is the same as in Proposition \ref{lift}.
\end{lemma}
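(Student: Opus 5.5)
The plan is to treat the trace identity \eqref{btsr2}, $3\tri f = -7\lambda - 2S$, as a linear elliptic equation for $f$ and apply $L^p$ theory in harmonic coordinates. On $\Omega$ the metric is $C^{1,\alpha}$. After shrinking to balls of radius comparable to the harmonic radius, we can work in harmonic coordinates, where $\|g_{ij}-\delta_{ij}\|_{C^{1,\alpha}}\le\epsilon_0$. In these coordinates the Laplace--Beltrami operator reads
\[
\tri f = g^{ab}\partial_a\partial_b f,
\]
with no first-order terms, because $\tri_g x^i=0$. The coefficients $g^{ab}$ are $C^{1,\alpha}$, hence uniformly continuous and uniformly elliptic.

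The key input is a bound on the right-hand side $-7\lambda-2S$. By \lemref{scalar curvature general} and \lemref{ABS}, $S=-|T|^2\le 0$. The assumed uniform lower bound $S\ge -\Lambda$ therefore gives
\[
|S|\le \Lambda \quad\text{on } \Omega,
\]
so $|7\lambda+2S|\le 7|\lambda|+2\Lambda$ belongs to $L^\infty(\Omega)\subset L^p_{loc}(\Omega)$ for every $p<\infty$. For the zeroth-order term, $f$ is $C^2$ and, by Remark \ref{Rem2}, $|f|\le d\,F(d)$, so $f$ is bounded in $L^p$ on $\Omega$ (taken bounded).

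We then apply the interior Calder\'on--Zygmund estimate for non-divergence operators with continuous coefficients (Gilbarg--Trudinger, Theorem 9.11). For $\Omega'\Subset\Omega$ this gives
\[
\|f\|_{W^{2,p}(\Omega')}\le C\bigl(\|\tri f\|_{L^p(\Omega)}+\|f\|_{L^p(\Omega)}\bigr),
\]
where $C$ depends only on $p$, the ellipticity constant and the modulus of continuity of $g^{ab}$, hence only on $C_0$ and the domain. Choosing $p>7$, Morrey's embedding yields $W^{2,p}\hookrightarrow C^{1,1-7/p}$, which is $C^{1,\alpha}$ once $p$ is large enough that $1-7/p\ge\alpha$. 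A finite covering of $\Omega'$ by harmonic balls makes these bounds uniform, with the same dependence as in Proposition \ref{lift}.

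The main obstacle is the bookkeeping of the dependence. The harmonic radius must be bounded below uniformly on $\Omega'$, which follows from the $C^{1,\alpha}$ bound on $g$, and the $L^p$ bound on $f$ must be controlled by the data through the growth condition on $\nabla f$. An alternative route is available: integrating the gradient bound $|\nabla f|\le F$ already gives $f\in W^{1,\infty}$, and one can bootstrap from $W^{1,p}$ to $W^{2,p}$ using the same estimate. In either case, the Hessian bound of \lemref{higher estimate nabla 2 f} is not needed here, since only the lower bound on $S$ is used.
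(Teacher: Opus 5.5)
Your proposal is correct and follows the paper's argument. The paper bounds $-7\lambda-2S$ using $-\Lambda\le S=-|T|^2\le 0$, applies the $L^p$ estimate of Gilbarg--Trudinger, Theorem 9.11, to the trace equation \eqref{btsr2}, and embeds $W^{2,p}$ into $C^{1,\alpha}$, exactly as you do. Your differences are harmless refinements that make the dependence of the constants more explicit than in the paper's two-line proof: you use harmonic coordinates to remove the first-order term $g^{ab}\Gamma^k_{ab}\partial_k f$ (the paper instead keeps it, noting only that $\Gamma$ is $C^\alpha$), you control $\|f\|_{L^p}$ through the growth bound on $\nabla f$, and you restrict the estimate to interior subdomains $\Omega'\Subset\Omega$.
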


\begin{proof}
By our hypotheses, the quantity $$-7\lambda-2S$$ is bounded, and the Levi-Civita connection of $g$ is of class $C^\alpha$. Applying the $L^p$-estimate \cite{MR1814364}*{Theorem 9.11} to \eqref{btsr2} yields the desired regularity.
\end{proof}

We now invoke the bound on the Riemann curvature tensor.

\begin{lemma}\label{Lpphi}
Under the assumptions of Proposition \ref{lift}, one has
\[
\varphi\in W^{2,p}(\Omega)\hookrightarrow C^{1,\alpha}(\Omega),\quad T\in C^{0,\alpha}(\Omega).
\]
The norms depend on the same quantities as in Proposition \ref{lift}.
\end{lemma}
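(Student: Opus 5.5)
We regard \eqref{btsr4} as a linear second-order elliptic system for $\varphi$ in harmonic coordinates on $\Omega$, with the lower-order terms controlled by the hypotheses of Proposition \ref{lift}.

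\textbf{Setting up the system.} Since $g\in C^{1,\alpha}(\Omega)$, the principal part of the Bochner Laplacian, $g^{ab}\partial_a\partial_b$, has $C^{1,\alpha}$ coefficients. The first-order coefficients involve Christoffel symbols and lie in $C^{\alpha}$. The zeroth-order coefficients involve $\partial\Gamma$, which is only bounded a priori; but in harmonic coordinates $\partial\Gamma$ is controlled by $Rm$ together with $\partial g$, and $\|Rm\|_{L^\infty(\Omega)}\le C_0$, so these coefficients are $L^\infty$. I will keep the terms $Rm*\varphi+\lambda\varphi$ on the right-hand side. Their sup norm is bounded by $C(C_0,\lambda)$, because $|\varphi|$ is a fixed constant.

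\textbf{Handling the Lie derivative term.} Write $d\,i_{\nabla f}\varphi=\mathcal L_{\nabla f}\varphi$. It is schematically $\nabla f*\nabla\varphi+\nabla^2 f*\varphi$. By Lemma \ref{higher estimate nabla 2 f}, $\nabla^2 f$ is bounded in terms of $\|Ric\|_{L^\infty}+|\lambda|$. By Lemma \ref{higher estimate f}, $\nabla f\in C^{\alpha}$. Moreover $\nabla\varphi=T*\psi$ by \eqref{gradient vphi psi}, and $|T|=|S|^{1/2}\lesssim |Rm|^{1/2}$ by Lemma \ref{lem:tau Rm 2}. Hence the whole term $d\,i_{\nabla f}\varphi$ is in $L^\infty$, with bound depending on $C_0$ and $\lambda$.

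\textbf{$W^{2,p}$ estimate and H\"older continuity of $T$.} The system is therefore $g^{ab}\partial_a\partial_b\varphi+(\text{first order})=G$ with $G\in L^\infty$. Its principal symbol is diagonal, acting componentwise on $\varphi_{ijk}$. The $L^p$ estimate \cite{MR1814364}*{Theorem 9.11} then applies on interior subdomains: the interior $W^{2,p}$ bound is controlled by $\|G\|_{L^p}$ plus $\|\varphi\|_{L^p}$, and the latter is bounded. Taking $p>7$ and using Morrey's embedding gives $\varphi\in C^{1,\alpha}$. Finally, from \eqref{T and vphi 1}, $T=\nabla\varphi*\psi$ with $\psi=*\varphi$ computed algebraically from $\varphi$ and $g$, so $\psi\in C^{1,\alpha}$. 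Since $\nabla\varphi=\partial\varphi+\Gamma*\varphi$ is $C^{\alpha}$, we get $T\in C^{0,\alpha}$.

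\textbf{Main obstacle.} The delicate point is the zeroth-order coefficient coming from $\partial\Gamma$. We only know $g\in C^{1,\alpha}$, so the coordinate expression of $\Delta$ on a $3$-form is not a priori well defined with bounded coefficients. I would rewrite $\Delta\varphi$ using covariant derivatives so that only $Rm$ and $\Gamma$ appear. Alternatively, and more cleanly, I would work with the Hodge Laplacian: \eqref{btsr4} comes from $d\tau=\lambda\varphi+L_{\nabla f}\varphi$ via the Weitzenb\"ock formula, and $\Delta_\varphi$ is in divergence form with $C^{1,\alpha}$ coefficients. I would then use a weak $W^{1,2}$ formulation, upgraded to $W^{2,p}$ by the divergence-form $L^p$ theory. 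This step is also where interior shrinking of $\Omega$ has to be absorbed into the stated dependence on $\Omega$.
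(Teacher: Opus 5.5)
Your proposal is correct and follows the paper's route: apply the $L^p$ estimate to \eqref{btsr4} with all lower-order terms bounded, then read off $\psi$ and $T$ from \eqref{psi} and \eqref{T and vphi 1}. The only difference is that you move $\mathcal L_{\nabla f}\varphi$ to the right-hand side using $\nabla\varphi=T*\psi$, $|T|\lesssim|Rm|^{1/2}$ and the Hessian bound, whereas the paper treats $\nabla f$ as a coefficient. The obstacle you flag is not real: in harmonic coordinates the second derivatives of $g$ enter the Bochner Laplacian only through the trace $g^{ab}\partial_a\Gamma^{p}_{bi}$, which reduces to a Ricci term plus terms quadratic in $\partial g$. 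So the zeroth-order coefficients are bounded, and no divergence-form detour is needed; even the weaker fact $\partial\Gamma\in L^p$, which follows from $g\in W^{2,p}$, would suffice.
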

\begin{proof}
In \eqref{btsr4}, \lemref{higher estimate f} and \lemref{higher estimate nabla 2 f} yield bounds for $\nabla f$ and $\nabla^2 f$. As a result, all coefficients in front of $\varphi$ and $\nabla\varphi$ are bounded. Therefore, the $W^{2,p}$ regularity of $\varphi$ follows from the classical $L^p$-estimate. By \eqref{psi}, $\psi$ is $C^{1,\alpha}$. The H\"older continuity of $T$ is then obtained from \eqref{T and vphi 1}.
\end{proof}

\begin{lemma}
Under the hypotheses of Proposition \ref{lift}, we have
        $$T\in W^{2,p}(\Omega)\hookrightarrow C^{1,\alpha}(\Omega).$$
The norms depend on the same quantities as in Proposition \ref{lift}.
\end{lemma}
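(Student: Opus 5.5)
We obtain the $W^{2,p}$ bound on $T$ from the elliptic system \eqref{btsr3}, using the regularity already established in Lemmas \ref{higher estimate nabla 2 f}, \ref{higher estimate f} and \ref{Lpphi}. Write $\tri T$ in harmonic coordinates on $\Omega$ (or on a slightly smaller subdomain, with the usual cut-off argument). Since $g\in C^{1,\alpha}$, the principal part of $\tri$ acting on the components $T_{ij}$ has the form $g^{ab}\partial_a\partial_b T_{ij}$ with $C^{1,\alpha}$ coefficients. The lower-order terms involve Christoffel symbols ($C^{0,\alpha}$) and their first derivatives; the latter are controlled by $Rm\in L^\infty$ together with $\partial g\in C^\alpha$. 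Hence
\[
g^{ab}\partial_a\partial_b T_{ij} = \Gamma\ast\partial T + (\partial\Gamma+\Gamma\ast\Gamma)\ast T + \text{RHS of \eqref{btsr3}}.
\]

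Next we check that every term on the right is in $L^p$ (indeed $L^\infty$) once $T$ is known in $C^{0,\alpha}$ and $\partial T$ is in $L^p$. The term $Rm\ast\nabla f\ast\varphi$ is bounded because $Rm\in L^\infty$, $\nabla f\in C^{\alpha}$ by Lemma \ref{higher estimate f}, and $\varphi\in C^{1,\alpha}$ by Lemma \ref{Lpphi}. The terms $Rm\ast T\ast\varphi\ast\varphi$ and $Rm\ast T$ are bounded because $|T|^2=-S$ is bounded by $|Rm|$. The cubic term $T\ast T\ast T\ast\psi$ is bounded for the same reason. For the term $\nabla T\ast T\ast\varphi$ and the term $\Gamma\ast\partial T$, the key point is that $\nabla T$ is already bounded pointwise: \eqref{eq:nabla_T} gives $\nabla T=Rm\ast\varphi+T\ast T\ast\varphi$, and Lemma \ref{lem:tau Rm 2} gives $|\nabla T|\lesssim|Rm|$. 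So $T\in W^{1,\infty}$, and all of these terms lie in $L^\infty\subset L^p$.

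With this in hand, the right-hand side is in $L^p$ for every $p<\infty$. Interior $L^p$ theory \cite{MR1814364}*{Theorem 9.11} then gives $T\in W^{2,p}$ on compact subdomains. The bound depends only on $C_0$, $\lambda$ and the domain, through $\|T\|_{L^p}$ and the $C^0$ modulus of the coefficients $g^{ab}$. Morrey's embedding with $p>7$ yields $T\in C^{1,\alpha}$.

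The main obstacle is the term $\partial\Gamma\ast T$. It requires second derivatives of $g$, which are not a priori in $L^\infty$ merely from $g\in C^{1,\alpha}$. The first thing to try is to use harmonic coordinates and \eqref{Ricci equation general}: with $Ric\in L^\infty$, the equation gives $g\in W^{2,p}$, so $\partial\Gamma\in L^p$, which suffices since $T$ is bounded. An alternative that avoids this term entirely is to work directly with the coordinate-free form of \eqref{btsr3} as a divergence-form system. Either route should work, but some care is needed to keep all constants depending only on $C_0$ and $\lambda$.
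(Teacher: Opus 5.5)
Your proposal is correct and follows the paper's proof. The paper likewise applies the interior $L^p$ estimate to \eqref{btsr3}. It bounds the right-hand side using the curvature bound, the bound on $\nabla f$ from Lemma~\ref{higher estimate f}, and Lemma~\ref{Lpphi}, which it uses to say the coefficient $T\ast\varphi$ of $\nabla T$ is bounded.

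Your additions are sound refinements that the paper omits:
\begin{itemize}
\item The estimate $|\nabla T|\lesssim|Rm|$ from \eqref{eq:nabla_T} makes the whole right-hand side bounded outright.
\item Passing to harmonic coordinates and using \eqref{Ricci equation general} to get $g\in W^{2,p}$ correctly handles the $\partial\Gamma\ast T$ term that appears in the coordinate form of the rough Laplacian on $2$-tensors. This step is genuinely needed: contrary to the claim in your first paragraph, $\partial\Gamma$ is not pointwise controlled by $Rm$ alone.
\end{itemize}
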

\begin{proof}
In \eqref{btsr3}, Lemma \ref{Lpphi} ensures that the coefficients in front of $\nabla T$ are bounded. The remaining terms on the right-hand side are likewise bounded, thanks to the assumption on $Rm$ and the estimate for $\nabla f$ given by Lemma \ref{higher estimate f}. Consequently, invoking the standard $L^p$-estimate provides the required bound for $T$.
\end{proof}

\subsubsection{Induction argument}
We now proceed by an induction argument.
\begin{lemma}\label{bootstr1}
Assume that for some integer $k\geq 0$,
$$g\in C^{k+1,\alpha}(\Omega),\quad S\in C^{k,\alpha}(\Omega),\quad f\in C^{k,\alpha}(\Omega).$$
Then
$$f\in C^{k+2,\alpha}(\Omega).$$
The norms depend on the same quantities as in Proposition \ref{lift}.
\end{lemma}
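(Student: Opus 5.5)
The plan is to apply Schauder theory to the trace equation \eqref{btsr2}, written as
\[
\tri f=-\tfrac{7\lambda}{3}-\tfrac{2}{3}S ,
\]
where $\tri$ is the Laplace--Beltrami operator of $g$. We work on a ball of harmonic radius size inside $\Omega$ and use harmonic coordinates for $g$, so that $\Delta_g x^i=0$. In these coordinates the operator has no first-order terms:
\[
\tri f=g^{ab}\,\partial_a\partial_b f .
\]
This is cleaner than a general chart, because first-order coefficients $g^{ab}\Gamma^c_{ab}$ would only lie in $C^{k,\alpha}$, one derivative short. Since $g\in C^{k+1,\alpha}$ in the sense that the chart components $g_{ij}$ lie in $C^{k+1,\alpha}$, the inverse $g^{ab}$ is in $C^{k+1,\alpha}$ as well, with norm controlled by the $C^{k+1,\alpha}$ norm of $g$ and the uniform ellipticity constant. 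The ellipticity bound comes from $\|g_{ij}-\delta_{ij}\|_{C^{1,\alpha}}\le\epsilon_0$.

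The right-hand side $-\tfrac{7\lambda}{3}-\tfrac23 S$ lies in $C^{k,\alpha}(\Omega)$ by hypothesis. As an a priori input I take $f\in C^{1,\alpha}$, from Lemma \ref{higher estimate f}. The interior Schauder estimate for a non-divergence form operator with $C^{k,\alpha}$ coefficients (\cite{MR1814364}, Theorem 6.2 for $k=0$ and Theorem 6.17 for higher $k$) then gives the bound on $\Omega'\Subset\Omega$
\[
\|f\|_{C^{k+2,\alpha}(\Omega')}\le C\Big(\|f\|_{C^0(\Omega)}+\|\lambda\|+\|S\|_{C^{k,\alpha}(\Omega)}\Big).
\]
Here $C$ depends on $\|g\|_{C^{k,\alpha}}$ (only $k$ derivatives of the coefficients are needed), the ellipticity constant, and $\mathrm{dist}(\Omega',\partial\Omega)$. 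The quantity $\|f\|_{C^0}$ is controlled on bounded sets by $F_0(d)=d\,F(d)$, as in Remark \ref{Rem2}, or by the $W^{2,p}$ bound. Covering $\Omega$ by harmonic balls, or interpreting $\Omega$ as the interior domain as the paper does elsewhere, gives the stated regularity $f\in C^{k+2,\alpha}(\Omega)$. The constants depend only on the data listed in Proposition \ref{lift}.

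One technical point is that Schauder estimates require $f\in C^{2,\alpha}$ a priori, whereas we only know $f\in W^{2,p}$ and that $f$ is a strong solution. I would handle this in the standard way: solve the Dirichlet problem on a small ball with $C^{k,\alpha}$ data and use uniqueness of $W^{2,p}$ strong solutions (\cite{MR1814364}, Theorem 9.15 together with Lemma 9.16/Theorem 6.13) to identify that solution with $f$. Alternatively, one can difference-quotient inductively: bootstrap $k=0$ first, then differentiate the equation, noting that $\partial_c f$ solves $g^{ab}\partial_a\partial_b(\partial_c f)=\partial_c(\text{RHS})-(\partial_c g^{ab})\partial_a\partial_b f$, whose right side is in $C^{k-1,\alpha}$ by the previous step.

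I expect the main obstacle to be bookkeeping the regularity of the coordinates themselves. Harmonic coordinates of a $C^{k+1,\alpha}$ metric are only $C^{k+2,\alpha}$ functions. So "$g\in C^{k+1,\alpha}$" must be read in harmonic coordinates, which is the natural convention here. I would state this convention explicitly, noting that by elliptic regularity of $\Delta_g x^i=0$ the harmonic coordinates preserve the $C^{k+1,\alpha}$ class of $g$ with no loss.
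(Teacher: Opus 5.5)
Your proposal is correct and is essentially the paper's argument: interior Schauder estimates applied to the trace equation \eqref{btsr2}, whose coefficients and right-hand side lie in $C^{k,\alpha}$.

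The detour through harmonic coordinates is harmless but unnecessary. In an arbitrary chart the first-order coefficients $g^{ab}\Gamma^c_{ab}$ are already in $C^{k,\alpha}$, which is exactly what the $C^{k+2,\alpha}$ Schauder estimate requires, so they are not ``one derivative short'' (as your own later remark that only $C^{k,\alpha}$ coefficients are needed concedes). Your a priori regularity concern for $W^{2,p}$ strong solutions is settled directly by Theorem 9.19 of \cite{MR1814364}.
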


\begin{proof}
By the inductive assumption, both the right-hand side and the coefficients in \eqref{btsr2} lie in $C^{k,\alpha}$. Schauder estimates then give a $C^{k+2,\alpha}$ bound for $f$, completing the proof.
\end{proof}

\begin{rem}\label{bootstr1 remark}
Since $|T|^2=-S$, the assumption $S\in C^{k,\alpha}(\Omega)$ in \lemref{bootstr1} may equivalently be replaced by
$$T\in C^{k,\alpha}(\Omega).$$
\end{rem}

\begin{lemma}\label{bootstr2}
Assume that, for any integer $k\geq 0$,
$$g\in C^{k+1,\alpha}(\Omega),\quad f\in C^{k+2,\alpha}(\Omega),\quad T\in C^{k,\alpha}(\Omega).$$
Then we obtain
$$g\in C^{k+2,\alpha}(\Omega),\quad Rm\in C^{k,\alpha}(\Omega).$$
The norms depend on the same quantities as in Proposition \ref{lift}.
\end{lemma}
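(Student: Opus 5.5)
We work in the harmonic coordinates on $\Omega$ (shrinking $\Omega$ slightly to an interior subdomain, which the statement implicitly allows), where the Ricci tensor satisfies the divergence-form elliptic system \eqref{Ricci equation general}. The plan is to substitute the soliton identity \eqref{btsr1} for $R_{ij}$, so that
\begin{align*}
\frac{\partial}{\partial x^a}\Big(g^{ab}\frac{\partial g_{ij}}{\partial x^b}\Big)
=2\nabla^2_{ij} f+\frac{2\lambda}{3}g_{ij}+\frac{2|T|^2}{3}g_{ij}+4{T_i}^{k}T_{kj}+Q_{ij}(g,\partial g).
\end{align*}
We then check the regularity of every term on the right under the hypotheses $g\in C^{k+1,\alpha}$, $f\in C^{k+2,\alpha}$, $T\in C^{k,\alpha}$.

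\textbf{The Hessian term.} In coordinates, $\nabla^2_{ij}f=\partial_i\partial_j f-\Gamma^l_{ij}\partial_l f$. The Christoffel symbols involve $g^{-1}$ and $\partial g$, so they lie in $C^{k,\alpha}$. Since $\partial^2 f\in C^{k,\alpha}$ and $\partial f\in C^{k+1,\alpha}$, the whole Hessian term lies in $C^{k,\alpha}$.

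\textbf{The torsion terms.} $|T|^2g_{ij}$ and ${T_i}^kT_{kj}$ are polynomial in $T$, $g$ and $g^{-1}$. Because $C^{k,\alpha}$ is an algebra, both lie in $C^{k,\alpha}$.

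\textbf{The quadratic term.} $Q_{ij}(g,\partial g)$ is quadratic in $\partial g\in C^{k,\alpha}$ with coefficients smooth in $g$, so it too lies in $C^{k,\alpha}$.

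\textbf{Schauder step.} The system has principal coefficients $g^{ab}\in C^{k+1,\alpha}$ and a $C^{k,\alpha}$ right-hand side. Interior Schauder estimates (\cite{MR1814364}*{Theorem 6.17}, applied componentwise, expanding the divergence since $\partial g^{ab}\in C^{k,\alpha}$) then give $g_{ij}\in C^{k+2,\alpha}$. The bound is in terms of the $C^{k,\alpha}$ norms of the data and $\|g\|_{C^0}$, hence in terms of the quantities in Proposition \ref{lift}.

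\textbf{Curvature.} With $g\in C^{k+2,\alpha}$ in hand, $Rm$ is given by second derivatives of $g$ and quadratic expressions in $\partial g$ with coefficients in $g^{-1}$. Hence $Rm\in C^{k,\alpha}$, with the same dependence.

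\textbf{Main obstacle.} The point to watch is the case $k=0$. There the harmonic chart only has $g\in C^{1,\alpha}$, and we must make sure the equation \eqref{Ricci equation general} holds in the weak sense and the Schauder theory applies with merely H\"older coefficients. Both hold: the divergence-form Schauder estimate requires only $g^{ab}\in C^{\alpha}$ and a right-hand side in $C^{\alpha}$, and this regularity is available. A second check is that harmonic coordinates do not lose regularity: the coordinate functions solve $\Delta_g x^i=0$ with $C^{k+1,\alpha}$-regular metric coefficients, so the transition to harmonic coordinates is $C^{k+2,\alpha}$ and preserves the assumed regularity of $f$ and $T$.
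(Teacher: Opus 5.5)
Your proposal is correct and follows the paper's proof: use \eqref{btsr1} to see that $Ric\in C^{k,\alpha}$, then apply Schauder estimates to \eqref{Ricci equation general} in harmonic coordinates; your term-by-term check makes the paper's one-line argument explicit. One correction to your last paragraph: divergence-form Schauder theory with only $C^{\alpha}$ coefficients gives $C^{1,\alpha}$, not $C^{2,\alpha}$, so the $C^{k+2,\alpha}$ gain really comes from expanding the divergence using $g^{ab}\in C^{k+1,\alpha}$ (hence $\partial g^{ab}\in C^{k,\alpha}$), exactly as you did in your Schauder step.
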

\begin{proof}
From \eqref{btsr1}, the Ricci curvature is of class $C^{k,\alpha}(\Omega)$. Applying the Schauder estimates to the local elliptic equation satisfied by the Ricci tensor \eqref{Ricci equation general}, we obtain the $ C^{k+2,\alpha}$ bound of $g$.
\end{proof}

\begin{lemma}\label{bootstr3}
Suppose that, for any integer $k\geq 0$,
$$g\in C^{k+2,\alpha}(\Omega),\quad f\in C^{k+2,\alpha}(\Omega),\quad \varphi\in C^{k,\alpha}(\Omega).$$
Then
$$\varphi\in C^{k+2,\alpha}(\Omega),\quad T\in C^{k+1,\alpha}(\Omega).$$
The norms depend on the same quantities as in Proposition \ref{lift}.
\end{lemma}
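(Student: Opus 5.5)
Lemma~\ref{bootstr3} follows by applying Schauder theory to the elliptic system \eqref{btsr4} for $\vphi$. We then recover $T$ from $\vphi$ algebraically via \eqref{T and vphi 1}. We work in harmonic coordinates on $\Omega$, or on a slightly smaller subdomain, since the norms depend on the domain as in Proposition~\ref{lift}. In these coordinates the Bochner Laplacian acting on the components $\vphi_{ijk}$ has principal part $g^{ab}\partial_a\partial_b$. The coefficients $g^{ab}\in C^{k+2,\alpha}$. The lower-order terms involve the Christoffel symbols $\Gamma\in C^{k+1,\alpha}$ and $\partial\Gamma\in C^{k,\alpha}$. So $\Delta\vphi = g^{ab}\partial_a\partial_b\vphi + \Gamma\ast\partial\vphi + (\partial\Gamma+\Gamma\ast\Gamma)\ast\vphi$ is a linear elliptic operator with coefficients of class at least $C^{k,\alpha}$.

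Next we rewrite the right-hand side of \eqref{btsr4} so that every term is either of class $C^{k,\alpha}$ or linear in $\vphi,\nabla\vphi$ with $C^{k,\alpha}$ coefficients. First, $Rm\ast\vphi$ has $Rm\in C^{k,\alpha}$ because $g\in C^{k+2,\alpha}$; this is linear in $\vphi$. Second, $\lambda\vphi$ is trivially fine. Third, by Cartan's formula, $di_{\nabla f}\vphi = L_{\nabla f}\vphi = \nabla_{\nabla f}\vphi + (\nabla^2 f)\ast\vphi$. Here $\nabla f\in C^{k+1,\alpha}$ and $\nabla^2 f\in C^{k,\alpha}$ since $f\in C^{k+2,\alpha}$. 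Hence $\vphi$ solves a linear elliptic system
\[
L\vphi := \Delta\vphi - \nabla_{\nabla f}\vphi - (Rm+\nabla^2 f+\lambda)\ast\vphi = 0
\]
with $C^{k,\alpha}$ coefficients, of the form $a^{ab}\partial_{ab}+b^a\partial_a+c$.

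The main obstacle is the base of the bootstrap. The hypothesis gives only $\vphi\in C^{k,\alpha}$, whereas interior Schauder estimates $\|\vphi\|_{C^{k+2,\alpha}}\lesssim\|\vphi\|_{C^0}$ presuppose a $C^{2,\alpha}$, or at least strong $W^{2,p}$, solution. For $k=0$ this is supplied by Lemma~\ref{Lpphi}, which gives $\vphi\in W^{2,p}\hookrightarrow C^{1,\alpha}$. The $C^{0,\alpha}$ coefficients then give $C^{2,\alpha}$ by the standard Schauder estimate for strong solutions (\cite{MR1814364}, Theorem 9.19). For general $k$, $\vphi$ is already smooth on $M$, so we only need the quantitative bound. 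We obtain it by the interior Schauder estimate on nested balls $B_r\Subset B_{r'}\Subset\Omega$:
\[
\|\vphi\|_{C^{k+2,\alpha}(B_r)}\le C\big(\|\vphi\|_{C^0(B_{r'})}\big),
\]
with $C$ depending on the $C^{k,\alpha}$ norms of the coefficients. Since $|\vphi|$ is a fixed constant, this bound depends only on the allowed data.

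Finally, \eqref{T and vphi 1} gives $T=\nabla\vphi\ast\psi$, and by \eqref{psi} $\psi$ is quadratic in $\vphi$ with $g$-coefficients. Thus $\psi\in C^{k+2,\alpha}$ and $\nabla\vphi\in C^{k+1,\alpha}$, because $\Gamma\in C^{k+1,\alpha}$. Therefore $T\in C^{k+1,\alpha}(\Omega)$, completing the induction step. Combined with Lemmas~\ref{bootstr1} and~\ref{bootstr2}, this closes the bootstrap.
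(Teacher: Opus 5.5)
Your proposal is correct and follows the paper's own route. You treat \eqref{btsr4} as a linear elliptic system for $\varphi$ with $C^{k,\alpha}$ coefficients and apply Schauder estimates to obtain $\varphi\in C^{k+2,\alpha}$; then \eqref{psi} gives $\psi\in C^{k+2,\alpha}$ and \eqref{T and vphi 1} gives $T\in C^{k+1,\alpha}$. Your expansion of $d i_{\nabla f}\varphi$ as a Lie derivative, and your point that Schauder theory needs an a priori strong solution (supplied by Lemma~\ref{Lpphi} at $k=0$), are details the paper leaves implicit.
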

\begin{proof}
In \eqref{btsr4}, all coefficients in front of $\varphi$ and $\nabla\varphi$ belong to $C^{k,\alpha}$. Applying the Schauder estimates, we conclude the $C^{k+2,\alpha}$ norm of $\vphi$. Thus, $\psi\in C^{k+2,\alpha}$ by \eqref{psi}
and the claimed regularity of $T$ is then obtained from \eqref{T and vphi 1}.
\end{proof}

Finally, we prove the smoothness of the $G_2$-soliton.
\subsubsection{Proof of Proposition \ref{lift}}
Under the hypotheses of Proposition \ref{lift}, we aim to establish, for every integer $k\geq 0$, the estimates
\begin{align}\label{higher estimate induction}
g\in C^{k+1,\alpha}(\Omega),\quad f\in C^{k+1,\alpha}(\Omega),\quad T\in C^{k,\alpha}(\Omega),\quad \varphi\in C^{k+1,\alpha}(\Omega).
\end{align}

For $k=0$, \eqref{higher estimate induction} follows from \lemref{higher estimate f} and \lemref{Lpphi}.
Assume now that \eqref{higher estimate induction} holds for some $k$. Then, by Lemma \ref{bootstr1} together with Remark \ref{bootstr1 remark}, we obtain $$f\in C^{k+2,\alpha}.$$ Next, by \lemref{bootstr2}, we deduce that
\begin{align*}
g\in C^{k+2,\alpha},\quad Rm\in C^{k,\alpha}.
\end{align*}Moreover, applying \lemref{bootstr3} yields
\begin{align*}
 \varphi\in C^{k+2,\alpha},\quad T\in C^{k+1,\alpha}
\end{align*} Hence the estimates \eqref{higher estimate induction} also hold for $k+1$.
The specific dependence of each norm is detailed in the respective lemmas.

\subsubsection{Proof of Proposition \ref{lift2}}

From $S=-|T|^2$, the condition $T\in C^{0,\alpha}(\Omega)$ implies
$$S\in C^{0,\alpha}(\Omega).$$ 
So the quantity $-7\lambda-2S$ is of class $C^{0,\alpha}$. 
Then Lemma \ref{higher estimate f} yields that 
$$f\in C^{1,\alpha}(\Omega).$$
Following this, Lemma \ref{bootstr1} improves the regularity of $f$ to 
$$f\in C^{2,\alpha}(\Omega).$$
Furthermore, Lemma \ref{bootstr2} gives 
$$Rm\in C^{0,\alpha}(\Omega).$$
So we get the H\"older estimate of the Riemann curvature and Proposition \ref{lift2} can be proved by using Proposition \ref{lift}.

\begin{rem}
In \cite{MR4220743}*{Section 6}, Li-Li-Wang proved that the $C^{1,\alpha}$ Ricci shrinkers on the regular set can be enhanced to be $C^{\infty}$. The key component of their proof is a local regularity improvement theorem for Ricci shrinkers, which is derived via a conformal change of the metric combined with a bootstrapping argument.

The equations satisfied by the $G_2$-soliton in \eqref{btsr4} and \eqref{btsr3} involve the Riemann curvature, which is why a bound on the Riemann curvature is required (see Proposition \ref{lift}).

In $G_2$-solitons, the equations in \eqref{btsr1} and \eqref{btsr2} involve the torsion tensor $T$, so we can replace the bound of Riemann curvature to the condition $|T|_{C^{0,\alpha}(\Omega)}\leq C_0$ (see Proposition \ref{lift2}).
\end{rem}

\begin{rem}
	At a glance, $g$ has the same regularity to $\varphi$ and the regularity of $T$ is one order lower than that of $\varphi$. But one $\varphi$ corresponds to a family of $g$ so we can't obtain $T\in C^{0,\alpha}$ from $g\in C^{1,\alpha}$.
\end{rem}

\subsubsection{Conformal transformation}
We perform the conformal transformation on the $G_2$-soliton. 
\begin{prop}
Under the conformal transformation
		\begin{equation}
			\bar{g}=e^{2\phi}g,\quad \phi=\frac{f(q)-f}{5},
		\end{equation}
we have
\begin{align}\label{cfmRim}
\overline{Rm}& =e^{2\phi}\left[ Rm-g\KN\left(\nabla^2\phi-d\phi\otimes d\phi+\frac{1}{2}|\nabla\phi|^2g \right)\right] ,\\
\label{cfmric}
\overline{Ric}& =Ric-(n-2)\left(\nabla^2\phi-d\phi\otimes d\phi \right)-\left[\Delta\phi+(n-2)|\nabla\phi|^2 \right]g,\\
\label{cfmS}
\overline{S}& =e^{-2\phi}\left[ S-2(n-1)\Delta\phi-(n-1)(n-2)|\nabla\phi|^2 \right],\\
\label{btsr6}
\overline{\Delta}f&=e^{-2\phi}\Delta f -2|\nabla f|^2_{\overline{g}},\\
\label{btsr7}
\overline{\Delta} T&=e^{-2\phi}\Delta T
+
e^{-2\phi}
\left[
\nabla^2\phi*T
+\nabla\phi*\nabla T
+(\nabla\phi)^2*T
\right].
		\end{align}
	\end{prop}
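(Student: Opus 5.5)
These are the standard conformal change formulas with $n=7$ and $\phi=\frac{f(q)-f}{5}$; the specific choice of $\phi$ plays no role until \eqref{btsr6}. I would verify the identities in the following order.

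\textbf{Connection and curvature.} First write the change of Christoffel symbols,
\[
\bar\Gamma^k_{ij}=\Gamma^k_{ij}+\delta^k_i\phi_j+\delta^k_j\phi_i-g_{ij}\phi^k .
\]
For \eqref{cfmRim}, substitute this into $R(X,Y)Z=\nabla_X\nabla_YZ-\nabla_Y\nabla_XZ-\nabla_{[X,Y]}Z$. Then lower an index with $\bar g=e^{2\phi}g$. The resulting correction is the Kulkarni--Nomizu product of $g$ with the Schouten-type tensor $\nabla^2\phi-d\phi\otimes d\phi+\frac12|\nabla\phi|^2g$. This is the classical formula (e.g.\ Besse, Thm.~1.159), and I would simply quote it with the sign conventions fixed by the paper's $\KN$.

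Tracing \eqref{cfmRim} with $\bar g^{-1}=e^{-2\phi}g^{-1}$ gives \eqref{cfmric}. Here one uses $\tr_g(g\KN h)=(n-2)h+(\tr h)g$ with $h=\nabla^2\phi-d\phi\otimes d\phi+\frac12|\nabla\phi|^2g$, so $\tr h=\Delta\phi+\frac{n-2}{2}|\nabla\phi|^2$. Tracing once more with $e^{-2\phi}g^{-1}$ yields \eqref{cfmS}.

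\textbf{Laplacians.} For functions,
\[
\bar\Delta u=e^{-2\phi}\bigl(\Delta u+(n-2)\langle\nabla\phi,\nabla u\rangle\bigr).
\]
Apply this with $u=f$, $n=7$ and $\nabla\phi=-\frac15\nabla f$. The cross term becomes $-e^{-2\phi}|\nabla f|^2$. This identifies $-e^{-2\phi}|\nabla f|^2_g=-|\nabla f|^2_{\bar g}$, and I would check the factor $2$ in \eqref{btsr6} carefully against this normalisation. It is precisely to make the drift term a multiple of $|\nabla f|^2$ that $\phi$ is chosen proportional to $f$ with coefficient $\frac{1}{n-2}$.

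For \eqref{btsr7}, compute
\[
\bar\nabla T=\nabla T+\nabla\phi*T, \qquad \bar\nabla^2T=\nabla^2T+\nabla^2\phi*T+\nabla\phi*\nabla T+(\nabla\phi)^2*T .
\]
The first holds because $\bar\Gamma-\Gamma$ is linear in $d\phi$; the second follows by differentiating again and using the same correction. Tracing with $\bar g^{-1}=e^{-2\phi}g^{-1}$ gives the schematic form.

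\textbf{Main obstacle.} The only real care is bookkeeping of signs and constants in \eqref{btsr6}, i.e.\ matching the paper's Laplacian sign convention and the factor coming from $n-2=5$. Everything else is a routine schematic computation, since only the $*$-structure is needed downstream for Schauder bootstrapping.
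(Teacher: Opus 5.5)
Your route is the same as the paper's. You quote the classical formulas for $\overline{Rm}$, $\overline{Ric}$ and $\overline{S}$, and your traces $\tr_g(g\KN h)=(n-2)h+(\tr h)g$ and $\tr h=\Delta\phi+\frac{n-2}{2}|\nabla\phi|^2$ reproduce \eqref{cfmric} and \eqref{cfmS} with $\Delta=\tr\nabla^2$, which is the convention the paper actually computes with. Your expansion $\bar\nabla^2T=\nabla^2T+\nabla^2\phi*T+\nabla\phi*\nabla T+(\nabla\phi)^2*T$, traced with $e^{-2\phi}g^{-1}$, gives \eqref{btsr7}. All of that is fine.

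The problem is \eqref{btsr6}. There you should not hedge: your computation is correct, and it refutes the stated constant.

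\emph{What your computation gives.} With $\nabla\phi=-\frac15\nabla f$ and $n-2=5$ you get $\bar\Delta f=e^{-2\phi}(\Delta f-|\nabla f|^2_g)=e^{-2\phi}\Delta f-|\nabla f|^2_{\bar g}$. No normalisation produces a factor $2$. For $1$-forms the paper's tensor and form norms agree, and reversing the sign of $\Delta$ only turns the coefficient $-1$ into $+1$.

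\emph{Where the paper's $2$ comes from.} The paper's proof reaches the $2$ only through the slip $\nabla\phi=-\frac25\nabla f$. With that value it gets $\overline{\nabla^2 f}-\nabla^2 f=\frac45\,df\otimes df-\frac25|\nabla f|^2_{\bar g}\,\bar g$, whose $\bar g$-trace is $(\frac45-\frac{14}{5})|\nabla f|^2_{\bar g}=-2|\nabla f|^2_{\bar g}$. For the stated $\phi$ the correct difference is $\frac25\,df\otimes df-\frac15|\nabla f|^2_{\bar g}\,\bar g$, whose trace is $-|\nabla f|^2_{\bar g}$.

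\emph{What to state instead.} As written, \eqref{btsr6} holds for $\phi=\frac{2(f(q)-f)}{5}$, not for $\phi=\frac{f(q)-f}{5}$. What you can prove, and should state, is $\bar\Delta f=e^{-2\phi}\Delta_f f$, explicitly flagging the discrepancy. This is harmless downstream, since \eqref{btsr6} is only used qualitatively, but the identity cannot be proved as stated.

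A smaller point concerns your explanation of the choice of $\phi$. Any $\phi$ proportional to $f$ makes the drift term a multiple of $|\nabla f|^2$. The coefficient $\frac{1}{n-2}$ is the Li--Li--Wang normalisation: it makes $-(n-2)\nabla^2\phi=\nabla^2 f$ in \eqref{cfmric}, so that $\overline{Ric}$ contains $Ric_f$. As a by-product it turns $\bar\Delta f$ into $e^{-2\phi}\Delta_f f$.
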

\begin{proof}
Under the conformal change, the expressions for Riemann curvature, Ricci curvature, and scalar curvature follows from direct computation.
Then
\begin{equation*}
\nabla\phi=-\frac{2}{5}\,\nabla f,\quad
\partial_i \bar g_{j\ell}
=
e^{2\phi}
\left(
2\,\partial_i\phi\, g_{j\ell}
+\partial_i g_{j\ell}
\right),
\end{equation*}
and the Christoffel symbols satisfy
\begin{align}\label{cfmChristoffel}
\overline{\Gamma}^{k}_{ij}-\Gamma^{k}_{ij} &= \delta^k_i\nabla_j\phi+\delta^k_j\nabla_i\phi-g_{ij}g^{kl}\nabla_l\phi \\ \nonumber
&= \delta^k_i\overline{\nabla}_j\phi+\delta^k_j\overline{\nabla}_i\phi-\overline{g}_{ij}\overline{g}^{kl}\overline{\nabla}_l\phi.
\end{align}
For the potential function $f$ we obtain
\begin{align}\label{cfmf1}
\overline{Hessf}-Hessf =-\left(\overline{\Gamma}^{k}_{ij}-\Gamma^{k}_{ij} \right)f_k=\frac{4}{5}df\otimes df-\frac{2}{5}|\nabla f|^2_{\overline{g}}\cdot \overline{g},
\end{align}
which gives \eqref{btsr6}.
For a $(0,2)$-tensor $T_{jk}$,
\begin{align}\label{cfm1}
\overline{\nabla}_iT_{jk}-\nabla_iT_{jk}=-\left(\overline{\Gamma}^{l}_{ij}-\Gamma^{l}_{ij} \right)T_{lk}
		-\left(\overline{\Gamma}^{l}_{ik}-\Gamma^{l}_{ik} \right)T_{jl}.
\end{align}
Inserting \eqref{cfmChristoffel} into \eqref{cfm1} yields
\begin{align}\label{cfm2}
\overline{\nabla}_iT_{jk}-\nabla_iT_{jk}=\overline{\nabla}\phi*T.
\end{align}
Furthermore,
\begin{equation}\label{cfm3}
  \begin{aligned}
	&\overline{\nabla}_i\overline{\nabla}_jT-\nabla_i\nabla_jT
=\overline{\nabla}_i\left(\overline{\nabla}_jT-\nabla_jT\right)+\left(\overline{\nabla}_i-\nabla_i\right)\nabla_jT \\ \nonumber
=&\overline{\nabla}_i\left(\overline{\nabla}_jT-\nabla_jT\right)+\left(\overline{\nabla}_i-\nabla_i\right)\overline{\nabla}_jT
		-\left(\overline{\nabla}_i-\nabla_i\right)\left(\overline{\nabla}_jT-\nabla_jT\right) \\ \nonumber
		=&\overline{\nabla}_i\left(\overline{\nabla}\phi*T\right)+\overline{\nabla}\phi*\bar{\nabla}_jT-\overline{\nabla}\phi*\overline{\nabla}\phi*T \\ \nonumber
		=&\overline{\nabla^2}\phi*T +\overline{\nabla}\phi*\overline{\nabla}T-\overline{\nabla}\phi*\overline{\nabla}\phi*T.
\end{aligned}
\end{equation}
That gives \eqref{btsr7}.
\end{proof}
\begin{rem}
For a Ricci shrinker satisfying
\begin{equation}\label{eq2Ricshrinker}
	Ric + \nabla^2 f = \frac{g}{2},
\end{equation}
we obtain, by taking the trace,
\begin{equation}\label{eqRicshrinker S}
S + \triangle f = \frac{n}{2}.
\end{equation}
In addition, applying the second contracted Bianchi identity yields
\begin{equation}\label{eqRicshrinker Hamilton}
S + |\nabla f|^2 - f = \text{const}.
\end{equation}
Combining these two relations gives
\begin{equation}\label{eq1Ricshrinker}
	\triangle f - |\nabla f|^2 + f = \text{const}.
\end{equation}

Starting from $g\in C^{1,\alpha}$ and $f\in C^{1}$, the equations \eqref{eq2Ricshrinker} and \eqref{eqRicshrinker S} alone do not suffice to improve regularity; see \cite{MR4220743}*{Remark 4.13}. In contrast, the combination of \eqref{eq2Ricshrinker} and \eqref{eq1Ricshrinker} is sufficient.

In Li-Li-Wang \cite{MR4220743}*{Page 28}, they apply the bootstrapping argument 
for the conformal change of metric
$$\tilde{g}=e^{-\frac{2f}{m-2}}g,$$
where $m$ denotes the dimension of the Ricci shrinker. This conformal transformation removes the $Hess f$ term from the equations. 

Then they perform the rescaling
$$\bar{g}=\rho^{-2}\tilde{g}\quad\text{and}\quad\bar{f}=f-f(q),$$
where $\rho$ denotes the harmonic radius of $\tilde{g}$ at the point $x$.
The equations \eqref{eq2Ricshrinker} and \eqref{eq1Ricshrinker} can be rewritten as
\begin{align}
\overline{Ric}=&\frac{1}{m-2}\left\lbrace d\bar{f}\otimes d\bar{f}+\rho^2\left[m-1-\bar{f}-f(q) \right]e^{\frac{2\bar{f}}{m-2}}\bar{g} \right\rbrace, \label{eq3Ricshrinker} \\
	\overline{\tri}\bar{f}=&e^{\frac{2\bar{f}}{m-2}}\rho^2\left[\frac{m}{2}-\bar{f}-f(q) \right].\label{eq4Ricshrinker}
\end{align}

Assuming $f\in C^{1}$ and $g\in C^{1,\alpha}$, the Schauder estimate applied to \eqref{eq4Ricshrinker} improves the regularity of $f$ to
$$\bar f\in C^{1,\alpha}.$$
Using this and the Schauder estimate for \eqref{eq3Ricshrinker}, we obtain
$$\bar g\in C^{2,\alpha}.$$
Repeating this bootstrapping argument, one can upgrade the regularity of both $g$ and $f$ to $C^{\infty}$.
\end{rem}
Comparing \eqref{eq4Ricshrinker} with \eqref{btsr6}, we observe that, in order to deduce $f\in C^{2,\alpha}$, one must impose an extra condition, for instance
	$$\|Rm\|_{L^\infty(\Omega)}\leq C\quad\text{or}\quad |T|_{C^{0,\alpha}(\Omega)}\leq C,$$
because, after performing the conformal change, the term $\overline{\tri}\bar{f}$ still depends on the scalar curvature $\bar{S}$ via \eqref{btsr2}, as detailed in Section \ref{Pointed smooth convergence}.

\subsubsection{Smooth convergence on regular part}
\begin{defn}\label{Cinfty convergence}
A sequence of complete $G_2$-structures
$$(M_i,\vphi_i,g_i,f_i,p_i)$$
is said to converge in the pointed $C^{\infty}$ topology, denoted
$$(M_i,\vphi_i,g_i,f_i,p_i)\xrightarrow{pC^{\infty}}(M_\infty,\vphi_\infty,g_\infty,f_\infty,p_\infty),$$
if the following hold:
\begin{itemize}
\item The associated pointed length spaces satisfy
$$(M_i,d_i,p_i) \xrightarrow{pGH} (M_\infty,d_\infty,p_\infty).$$
\item For every $R>0$ there exists a sequence of diffeomorphisms onto their images
$$\sigma_{i,R}: B_R(p_\infty)\cap \mathcal R \to M_i$$
such that, for each compact subset $K\subset B_R(p_\infty)\cap \mathcal R$,
\begin{itemize}
    \item The Riemannian metrics converge smoothly
    \begin{align*}
\sigma_{i,R}^\ast (g_i )\xrightarrow{C^{\infty}} g_\infty \quad \text{on } K.
\end{align*}
\item The $G_2$-structures converge smoothly
    \begin{align*}
\sigma_{i,R}^\ast (\vphi_i )\xrightarrow{C^{\infty}} \vphi_\infty \quad \text{on } K.
\end{align*}
\end{itemize}
\item The potential functions converge smoothly
$$f_i \xrightarrow{C^{\infty}} f_\infty \text{ on } K.$$
\end{itemize}
\end{defn}
\begin{ques}\label{convergence regularity ques}
Within the regular part of the regular-singular decomposition described in \thmref{main convergence 3 main body}, is it possible to strengthen the convergence from $C^{1,\alpha}$ to smooth convergence?
\end{ques}
\begin{thm}
Let
$$(M_i,\varphi_i,g_i,f_i,p_i)$$
be a sequence of complete $G_2$-solitons belonging to the class $\mathcal M(\Lambda,F,\underline{\nu})$. Assume that the Riemann curvature tensors of the metrics $g_i$ are uniformly bounded on the sets $\sigma_i(\Omega)$, where $\Omega$ is a domain contained in the regular set $\mathcal R$ of $M_\infty$, and $\sigma_i$ are the diffeomorphisms specified in \thmref{main convergence 3 main body} and Definition \ref{c1alpha convergence}.

Then, after extracting a subsequence, this sequence converges smoothly to a limiting $G_2$-soliton
$$(M_\infty,\varphi_\infty,g_\infty,f_\infty,p_\infty).$$
\end{thm}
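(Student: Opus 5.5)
The plan is to combine the pointed $C^{1,\alpha}$ convergence of \thmref{main convergence 3 main body} with the uniform version of Proposition \ref{lift}, applied to the pulled-back solitons. Fix $R>0$ and a compact set $K\subset \Omega\cap B_R(p_\infty)$, and choose a slightly larger precompact domain $\Omega'$ with $K\subset\Omega'\Subset\Omega$. Consider the pullbacks
$$\tilde g_i:=\sigma_{i,R}^\ast g_i,\quad \tilde\vphi_i:=\sigma_{i,R}^\ast\vphi_i,\quad \tilde f_i:=\sigma_{i,R}^\ast f_i$$
on $\Omega'$. Each $(\Omega',\tilde\vphi_i,\tilde g_i,\tilde f_i)$ is a gradient $G_2$-soliton with the same constant $\lambda_i$, and the equations \eqref{btsr1}--\eqref{btsr3} are diffeomorphism invariant. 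By $C^{1,\alpha}$ convergence, $|\tilde g_i|_{C^{1,\alpha}(\Omega')}\le C$ uniformly, which we may take in harmonic coordinates on small balls of uniform size contained in $\Omega$. By hypothesis, $\|Rm_{\tilde g_i}\|_{L^\infty(\Omega')}\le C$. The constants $\lambda_i$ should be bounded as well, since \eqref{soliton scalar} at the minimum point $p_i$ of $f_i$ gives $-7\lambda_i=2S+3\Delta f\ge 2S(p_i)\ge-2\Lambda$. For the upper bound, use $|\Delta f|\le 7|\nabla^2 f|$ together with the $C^{\alpha}$ control of $f$ coming from the $F$-bound, or else pass to a subsequence along which $\lambda_i$ converges. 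I expect the boundedness of $\lambda_i$ to be a minor point.

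Next I apply Proposition \ref{lift} on each $\tilde g_i$, on a fixed cover of $\Omega'$ by harmonic balls. The key observation is that its proof, via Lemmas \ref{higher estimate f}, \ref{Lpphi}, \ref{bootstr1}, \ref{bootstr2}, and \ref{bootstr3}, only uses $L^p$ and Schauder estimates whose constants depend on $C_0$, $\lambda$, and the domain. It also needs a bound on $f$ itself, which comes from $|f_i|\le d\,F(d)$ as in Remark \ref{Rem2}, since $f_i(p_i)$ may be normalised because $p_i$ is the minimum. Also $|\vphi_i|$ is a fixed constant pointwise. We therefore obtain, for every $k$, uniform bounds
$$\|\tilde g_i\|_{C^{k,\alpha}(K)}+\|\tilde\vphi_i\|_{C^{k,\alpha}(K)}+\|\tilde f_i\|_{C^{k,\alpha}(K)}\le C(k,K).$$

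Then Arzel\`a--Ascoli and a diagonal argument over $k$, over an exhaustion of $\Omega$ by compacts, and over $R\to\infty$ give a subsequence converging in $C^\infty_{loc}(\Omega)$. The metric limit coincides with the $C^{1,\alpha}$ limit $g_\infty$, so $g_\infty$ is smooth, and the other limits $\vphi_\infty$ and $f_\infty$ are smooth. Positivity of $\vphi_\infty$ and the relation $g_{\vphi_\infty}=g_\infty$ pass to the limit, because \eqref{G2 metric} is algebraic and $\tilde g_i$ stays uniformly equivalent to $\tilde g_1$. Closedness $d\vphi_\infty=0$ and the soliton equation $\Delta_{\vphi_\infty}\vphi_\infty=\lambda_\infty\vphi_\infty+L_{\nabla f_\infty}\vphi_\infty$ also pass to the limit under $C^2$ convergence. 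So the limit is a smooth gradient $G_2$-soliton on $\Omega$, and the pGH part of Definition \ref{Cinfty convergence} is inherited from \thmref{main convergence 3 main body}.

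The main obstacle is the uniformity in the first bootstrap step, Lemma \ref{Lpphi}. There the $W^{2,p}$ estimate for $\vphi$ from \eqref{btsr4} requires an a priori $L^p$ bound on $\vphi$ and bounded coefficients. The lower-order term $d i_{\nabla f}\vphi$ involves $\nabla\vphi$ with coefficient $\nabla f$ and $\vphi$ with coefficient $\nabla^2 f$. I would control these via Lemma \ref{higher estimate nabla 2 f}, using $|Ric|\le C|Rm|$, and Lemma \ref{higher estimate f}. The $L^\infty$ bound on $\vphi$ is automatic from $|\vphi|_g=7\cdot 3!$. Alternatively, I would bypass this step entirely using \eqref{T and vphi 2} and \eqref{eq:nabla_T}: $|\nabla\vphi|\lesssim|Rm|^{1/2}$ and $|\nabla T|\lesssim|Rm|$ by Lemma \ref{lem:tau Rm 2}. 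This gives $T$ uniformly Lipschitz, so the hypothesis of Proposition \ref{lift2} holds uniformly, and the rest of the bootstrap proceeds as above.
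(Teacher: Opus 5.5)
Your analytic core is the paper's: run the bootstrap of Proposition~\ref{lift} uniformly along the sequence where the curvature is bounded, then extract a limit. Your shortcut is a clean way to get the base step uniformly: you check the hypothesis of Proposition~\ref{lift2} directly from $|\nabla T|\lesssim|Rm|+|T|^2\lesssim|Rm|$ via \eqref{eq:nabla_T}.

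The gap is in passing from these estimates to convergence. The bootstrap gives higher-order control only in harmonic coordinates of the metric at hand, because Lemma~\ref{bootstr2} rests on \eqref{Ricci equation general}, which is elliptic only in such coordinates. Higher regularity of a metric is not coordinate-invariant. You instead keep the maps $\sigma_{i,R}$ supplied by Theorem~\ref{main convergence 3 main body} and assert $\|\sigma_{i,R}^\ast g_i\|_{C^{k,\alpha}(K)}\le C(k,K)$ in a fixed chart, which your Arzel\`a--Ascoli step needs. Those maps are controlled only as far as $C^{1,\alpha}$ convergence requires. Replacing $\sigma_{i,R}$ by $\sigma_{i,R}\circ\psi_i$, with $\psi_i\to\mathrm{id}$ in $C^{2,\alpha}$ but $\|\psi_i\|_{C^{3}}\to\infty$, keeps the $C^{1,\alpha}$ convergence and destroys every uniform $C^{2,\alpha}$ bound on the pullbacks. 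You also cannot repair this by bootstrapping the transition from the fixed chart to $\tilde g_i$-harmonic charts. Those coordinate functions solve $\Delta_{\tilde g_i}y=0$ with coefficients known only in $C^{1,\alpha}$ in the fixed chart, so they are only uniformly $C^{2,\alpha}$, and improving them amounts to the claim itself.

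The missing step, which is exactly what the paper's proof points to, is to build new diffeomorphisms from the improved estimates. Cover $K$ by $g_\infty$-harmonic balls, and take $g_i$-harmonic charts of uniform size on the corresponding balls in $M_i$ (the harmonic radius stays bounded below on compact subsets of the regular part). In these charts Proposition~\ref{lift} gives uniform $C^{k,\alpha}$ bounds, so the chart transitions are uniformly $C^{k+1,\alpha}$. Then glue the local maps by the centre-of-mass construction from the Cheeger--Gromov theorem. Definition~\ref{Cinfty convergence} only asks for some such diffeomorphisms, so this yields smooth convergence of $g_i$, and then of $\vphi_i$ and $f_i$. Your check that the limit is a $G_2$-soliton then goes through.

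A smaller point: your second bound on $\lambda_i$ is not justified. Lemma~\ref{higher estimate nabla 2 f} bounds $\nabla^2 f$ only in terms of $|\lambda|$, and $C^{\alpha}$ control of $f$ says nothing about $\Delta f$. Instead, on $\sigma_i(\Omega)$, \eqref{btsr1} together with the curvature bound gives $\nabla^2 f_i=-\tfrac{\lambda_i}{3}g_i+O(1)$. Integrate $\nabla^2 f_i(\gamma',\gamma')$ along a unit-speed geodesic segment of fixed length $\ell$ inside $\sigma_i(\Omega)$ and use $|\nabla f_i|\le F(R)$; this gives $|\lambda_i|\,\ell\le 6F(R)+C\ell$.
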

\begin{proof}
By pointed Gromov-Hausdorff convergence, as in Definition \ref{pointed Gromov-Hausdorff convergence}, we obtain a family of $\varepsilon_i$-isometries $\Phi_{i,R}$. To promote this to $C^\infty$ convergence in the sense of Definition \ref{Cinfty convergence}, we need, for each compact subset $K$ of $\mathcal R \cap B_R(p_\infty)\subset M_\infty$, a sequence of diffeomorphisms $\sigma_{i,R}$ that approximate the maps $\Phi_{i,R}$.

On subsets where the Riemann curvature is uniformly bounded, Proposition \ref{lift} provides uniform higher-order estimates in harmonic coordinates. With these in hand, the construction of smooth diffeomorphisms proceeds exactly as in the proof of \thmref{main convergence 3 main body} (see also \cite{MR3469435}*{Section 11}), and in particular the $C^{1,\alpha}$ convergence of the metrics is improved to smooth convergence. The same reasoning applies to the $G_2$-structures and to the potential functions, which consequently also converge smoothly.
\end{proof}
\begin{ques}
Motivated by recent progress on Ricci solitons \cites{MR4711837,MR5055602}, one may ask, in the case of $G_2$-solitons, whether a bound on the scalar curvature implies a bound on the full Riemann curvature tensor.
\end{ques}


\section{Epsilon regularity under small entropy}

\begin{thm}[Epsilon regularity]\label{Epsilon regularity}
Let $(M^{7},\vphi,g,f,p)$ be a complete gradient $G_2$-soliton in $\mathcal M(\Lambda,F,\underline{\nu})$ defined on the unit ball $B_1:=B(p,1)$. Assume moreover that the Riemann curvature at the basepoint $p$ is nonzero.

Suppose that the local $\nu$-functional of $g$ admits a lower bound that is strictly greater than $-\eps$, where $\eps$ is the constant determined by the Gap \thmref{Gap theorem}, i.e.
\begin{align*}
    \nu(B_1, g, \tau=1) \geq -\eps.
\end{align*}
Then for every integer $k\geq 0$, there exists a constant $C=C(k,n)$ such that
\begin{align*}
    \sup_{B(p,1)} |\nabla^k Rm|\, d^{2+k}(\cdot,\p B_1) \leq C.
\end{align*}
\end{thm}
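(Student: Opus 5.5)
We argue by contradiction, using a point-picking and blow-up argument. The model is the proof of epsilon regularity via local entropy for Ricci flow and Ricci shrinkers, combined with the Gap \thmref{Gap theorem}. We treat $k=0$ first; the case $k\geq1$ then follows from the $k=0$ bound together with the derivative estimates of Proposition~\ref{Derivatives estimates}, applied on balls of radius comparable to $d(\cdot,\p B_1)$.

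\textbf{Setting up the contradiction.} Suppose no constant $C$ works for $k=0$. Then there are solitons $(M_i,\vphi_i,g_i,f_i,p_i)$ satisfying the hypotheses, and points $x_i\in B_1$, with
$$|Rm_i|(x_i)\,d^2(x_i,\p B_1)\to\infty.$$
A standard point-picking argument lets us replace $x_i$ by a nearby point where $Q_i:=|Rm_i|(x_i)\to\infty$. At this point we also have $|Rm_i|\leq 4Q_i$ on $B(x_i,A_iQ_i^{-1/2})$, with $A_i\to\infty$.

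We rescale by $\tilde g_i=Q_i g_i$ and $\tilde\vphi_i=Q_i^{3/2}\vphi_i$, keeping $f_i$ unchanged. The rescaled data remain a gradient $G_2$-soliton, with constant $\lambda/Q_i\to0$, and $\tilde T_i$ scales like $Q_i^{-1/2}$. The rescaled data satisfy:
\begin{itemize}
\item $|\widetilde{Rm}_i|\leq 4$ on balls $B(x_i,A_i)$, with $|\widetilde{Rm}_i|(x_i)=1$;
\item $|\tilde\nabla f_i|_{\tilde g_i}\leq Q_i^{-1/2}F(1)\to0$;
\item $\widetilde{Ric}_{f_i}$ is controlled by Lemma~\ref{Ricci bound}.
\end{itemize}

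\textbf{Passing to a limit.} We need non-collapsing for the rescaled metrics. This comes from scale invariance, Lemma~\ref{nu properties}\ref{Scale invariance}, together with monotonicity, Lemma~\ref{nu properties}\ref{Monotonicity}. For balls $B\subset B_1$ and $\tau\leq1$ these give
$$\nu(B,\tilde g_i,\tau Q_i)\geq-\eps,$$
and then Lemma~\ref{noncollapse} yields uniform volume lower bounds. With $|Rm|$ bounded and Proposition~\ref{Derivatives estimates} (or Lemma~\ref{lem:tau Rm k}) controlling $\nabla^kRm$, $\nabla^k\vphi$ and $\nabla^kT$, Cheeger--Gromov compactness produces a smooth pointed limit $(M_\infty,\tilde g_\infty,\vphi_\infty,x_\infty)$. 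This limit is complete and satisfies $|Rm_\infty|(x_\infty)=1$.

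\textbf{Identifying the limit.} In the limit $\nabla f\to0$ and $\lambda\to0$. Moreover $\nabla^2 f$ is controlled by Lemma~\ref{higher estimate nabla 2 f}, and after rescaling it is $O(Q_i^{-1})$ plus curvature terms. The soliton equation \eqref{btsr1} therefore becomes
$$Ric_\infty=-\tfrac{|T|^2}{3}g-2T\ast T.$$
So the limit is a closed $G_2$-structure whose Ricci curvature is given entirely by torsion. The entropy bound passes to the limit by the continuity statement, Lemma~\ref{nu properties}\ref{Continuity}, applied on exhausting balls. This gives
$$\nu(M_\infty,g_\infty,\tau)\geq-\eps\quad\text{for all }\tau>0.$$
We then apply the Gap \thmref{Gap theorem}, which I expect to state that a complete manifold with bounded curvature and entropy at least $-\eps$ is flat, i.e. isometric to $\mathbb R^7$; compare Lemma~\ref{nu properties}\ref{Rigidity}. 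This contradicts $|Rm_\infty|(x_\infty)=1$.

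\textbf{Expected main obstacle.} The hardest step is making sure the entropy hypothesis survives the blow-up. Two things must be checked: the bound must hold for all $\tau$ on balls of any size in the limit, since $\tau Q_i$ ranges over $(0,Q_i]$; and we must handle $\nu=\nu^S$ versus $\nu^0$ through Lemma~\ref{G2 W comparison}. The latter uses $\tilde S=Q_i^{-1}S$ together with the lower bound $S\geq-\Lambda$, so the scalar correction $\tau\inf S$ stays bounded by $\tau Q_i\cdot Q_i^{-1}\Lambda$ on the relevant range. If the Gap theorem requires more structure, such as Ricci-flatness, I would first show that the torsion of the limit vanishes. For this I would use the averaged-scalar identity \eqref{soliton scalar}, $2S=-3\Delta f-7\lambda$, which tends to $0$ after rescaling away from the $\nabla^2f$ contribution. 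With $T=0$ the limit is Ricci-flat, and the Einstein gap theorem then applies.
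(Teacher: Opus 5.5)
Your proposal is correct and follows essentially the paper's route. You maximize $|Rm|\,d^2(\cdot,\p B_1)$, rescale by the curvature at the maximum point, and use scale invariance and domain monotonicity of $\nu$ together with non-collapsing to get smooth convergence. You then note that the limit has zero scalar curvature (the paper gets this directly from $\tilde S=Q_i^{-1}S\ge -Q_i^{-1}\Lambda$) and vanishing $|\nabla f|$, which are exactly the hypotheses of the paper's Gap Theorem, and this contradicts $|Rm_\infty|(x_\infty)=1$. Two points are more careful than the paper's write-up, which only treats $k=0$ with $\tau=1$: your bookkeeping $\nu(B,\tilde g_i,\tau Q_i)=\nu(B,g_i,\tau)$ gives $\nu(M_\infty,g_\infty,\tau)\ge-\eps$ for all $\tau>0$, hence the $\tau=\infty$ bound the Gap Theorem is actually stated for, and you explicitly reduce $k\ge1$ to the derivative estimates.
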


We proceed by adopting the same line of reasoning as in \cite{arXiv:2010.09981}*{Theorem 3.5, Theorem 3.6}.
\subsection{Local rescaling procedure}\label{Local rescaling procedure}

We now describe the \textbf{local rescaling procedure}, which is used to control local curvature of Riemannian metric.

For each point $x \in M$, we define
$$
Q_x := |Rm(g)|_g(x)
$$
to be the norm of the Riemann curvature tensor of $g$ at $x$. The distance function associated with $g$ will be denoted by $d$. We fix a ball
$$
B := B_g(p,R)
$$
of radius $R$ centered at $p$ with respect to the metric $g$.

The local quantity of interest is the curvature scale-weighted norm
\begin{align}\label{defn Fx}
    F_x^2 := Q_x\cdot d^2(x,\p B), \quad \forall \; x\in B.
\end{align}

\begin{lem}\label{local rescaling procedure R r}
Assume that $Q_p\neq0$ and let $R>0$.
Then there exists a point $q\in M$ and a positive number
\begin{align}\label{r upper bound}
r<F_{p}\leq F_q
\end{align}
such that, for the rescaled metric
\begin{align}\label{small entropy rescaled metric}
\tilde g=Q_q\cdot g,
\end{align} one has
\begin{align*}
|\widetilde {Rm}(\tilde g)|(q)=1\quad\text{and}\quad
\sup_{B_{\tilde g}(q,r)}|\widetilde {Rm}(\tilde g)|\leq \frac{1}{\bigl(1-r Q^{-\frac{1}{2}}_{p}R^{-1}\bigr)^2}.
\end{align*}
\end{lem}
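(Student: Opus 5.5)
This is a point-picking argument. It is essentially Perelman's/Hamilton's trick applied to the scale-invariant quantity $F_x^2=Q_x\,d^2(x,\p B)$ from \eqref{defn Fx}, with $B=B_g(p,R)$.

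\textbf{Choice of $q$.} Since $B$ is relatively compact and $F_x\to0$ as $x\to\p B$, the function $F$ is continuous on $\bar B$, vanishes on $\p B$, and is positive at $p$ because $Q_p\neq0$. Hence $F$ attains its maximum at an interior point $q\in B$. In particular
$$F_q\geq F_p=Q_p^{1/2}R>0 .$$

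\textbf{Choice of $r$.} Fix any $0<r<F_p$, which gives \eqref{r upper bound}. After rescaling to $\tilde g=Q_q g$ as in \eqref{small entropy rescaled metric}, we get $|\widetilde{Rm}|_{\tilde g}(q)=Q_q^{-1}Q_q=1$ directly.

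\textbf{Inclusion of balls.} The $\tilde g$-ball $B_{\tilde g}(q,r)$ is the $g$-ball $B_g(q,rQ_q^{-1/2})$. Since
$$rQ_q^{-1/2}<F_qQ_q^{-1/2}=d(q,\p B),$$
this ball lies inside $B$.

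\textbf{Curvature bound on the ball.} Let $x\in B_g(q,rQ_q^{-1/2})$. The triangle inequality gives
$$d(x,\p B)\geq d(q,\p B)-rQ_q^{-1/2}.$$
Maximality of $F$ at $q$ then yields
$$Q_x\leq \frac{Q_q\,d^2(q,\p B)}{d^2(x,\p B)}\leq \frac{Q_q}{\bigl(1-rQ_q^{-1/2}d(q,\p B)^{-1}\bigr)^2}=\frac{Q_q}{\bigl(1-r/F_q\bigr)^2}.$$
Dividing by $Q_q$ converts this into a bound for $|\widetilde{Rm}|_{\tilde g}(x)=Q_x/Q_q$.

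\textbf{Comparison with the stated bound.} The final step is to replace $r/F_q$ by the quantity $rQ_p^{-1/2}R^{-1}=r/F_p$ appearing in the statement. Since $F_q\geq F_p$, we have $r/F_q\le r/F_p<1$, so
$$\bigl(1-r/F_q\bigr)^{-2}\le\bigl(1-r/F_p\bigr)^{-2}.$$
This gives the claimed inequality.

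This is the only place where care is needed: the direction of the monotonicity must be checked, and $r<F_p$ is needed so that the denominator stays positive. Continuity of $Q$ is automatic because the soliton metric is smooth. No soliton structure is used, so the lemma is purely Riemannian.
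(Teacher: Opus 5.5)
Your proposal is correct and follows the paper's proof essentially step by step. You maximize $F_x^2=Q_x\,d^2(x,\partial B)$ over $B$ to find $q$, and use the triangle inequality to get $d(x,\partial B)\ge Q_q^{-1/2}(F_q-r)$ on $B_g(q,Q_q^{-1/2}r)$. You then deduce $Q_x\le Q_q(1-r/F_q)^{-2}$ from maximality, weaken $r/F_q$ to $r/F_p=rQ_p^{-1/2}R^{-1}$, and rescale by $Q_q$. The only difference is that you show the inclusion $B_g(q,rQ_q^{-1/2})\subset B$ follows automatically from $r<F_q$, whereas the paper imposes it as an extra condition when choosing $r$.
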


\begin{proof} The proof is divided into several steps.

\textbf{Step 1: choose a maximum point $p$.}
Since the Riemann curvature is bounded on $B$, the function $F_x$ must vanish along the boundary of $B$.
On the other hand, $F_{p}>0$ at the basepoint $p$.
Hence the maximum of $F^2_x$ is achieved at an interior point of $B$,
denote by $q$:
\begin{align}\label{local maximum}
    F_q^2=Q_q  d^2(q,\p B)=\sup_{x\in B}F^2_x\geq F_{p}^2= Q_{p} R^2>0.
\end{align}
here, we take $x=p$ in \eqref{defn Fx} and substitute in the last identity. Note that a priori $q$ need not coincide with $p$. In particular,
$$Q_q\geq Q_p.$$
We will now use the weighted curvature quantity in \eqref{local maximum} to derive a bound for the Riemann curvature near this maximum point $q$.

\textbf{Step 2: distance estimate near $q$.} In this step we control the weight $d(\cdot,\p B)$ in a neighbourhood of $q$.
From the first identity in \eqref{local maximum} we obtain
\begin{align*}
    d(q,\p B)=Q_q^{-\frac{1}{2}}F_q.
\end{align*}
Choose a positive constant $r<F_p\leq F_q$ such that the ball $B_g(q,Q_q^{-\frac{1}{2}}r)$ with center $q$ is contained in $B$.

Then for any $x\in B_{g}(q,Q_q^{-\frac{1}{2}}r),$ the triangle inequality yields
\begin{align}\label{the distance lower bound}
    d(x,\p B)\geq   d(q,\p B)- d(x,q)
\geq Q_q^{-\frac{1}{2}}(F_q-r).
\end{align}

\textbf{Step 3: curvature estimate near $q$.} We now control the curvature norm.
From the second relation in \eqref{local maximum} we have
\begin{align*}
    Q_x&\leq \frac{ F_q^2}{ d^2(x,\p B)},\quad \forall x\in B_g(q,Q_q^{-\frac{1}{2}}r).
\end{align*}
Substituting the distance lower bound \eqref{the distance lower bound} into this inequality, we get
\begin{align*}
   Q_x\leq\frac{F^2_q}{Q_q^{-1}(F_q-r)^2}=\frac{Q_q}{(1-rF_q^{-1})^2}.
\end{align*}
Using \eqref{local maximum}, i.e. $$F_q\geq F_{p}=Q^{\frac{1}{2}}_{p}R,$$ we then obtain
\begin{align}\label{local curvature bound original}
    \sup_{B_g(q,Q_q^{-\frac{1}{2}}r)}Q_x\leq  \frac{Q_q}{(1-r Q^{-\frac{1}{2}}_{p}R^{-1})^2}.
\end{align}

\textbf{Step 4: rescaling the metric at $q$.}
Finally, we rescale the metric to obtain a uniform curvature bound.
With this rescaling $\tilde g=Q_q \cdot  g$ from \eqref{small entropy rescaled metric}, the ball $B_g(q,Q_q^{-\frac{1}{2}}r)$ is transformed into $B_{\tilde g}(q,r)$, and the Riemann curvature tensor along with its norm become
$$\widetilde{Rm} = Q_q \cdot Rm,\quad |\widetilde{Rm}| = Q_q^{-1} \cdot |Rm|.$$
In particular, at the point $q$ the curvature norm is normalised to
\begin{align*}
    |\widetilde {Rm}|(q)=1.
\end{align*}
Moreover, the local curvature estimate \eqref{local curvature bound original} transforms into
\begin{align}\label{local curvature bound scaled}
    \sup_{B_{\tilde g}(q,r)}|\widetilde{Rm}|\leq  \frac{1}{(1-r Q^{-\frac{1}{2}}_{p}R^{-1})^2},
\end{align}
which is precisely the bound we need.
\end{proof}
\begin{cor}\label{local rescaling procedure cor}
In the context of \lemref{local rescaling procedure R r}, there exists a point $q\in M$ and a constant $r>0$ such that, for the rescaled metric $$\tilde g = Q_q\cdot g,$$ we have
\begin{align*}
    |\widetilde{Rm}(\tilde g)|(q)=1\quad\text{and}\quad \sup_{B_{\tilde g}(q,r)}|\widetilde{Rm}(\tilde g)|\leq 4.
\end{align*}
\end{cor}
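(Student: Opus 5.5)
The corollary follows from \lemref{local rescaling procedure R r} by choosing the radius $r$ so that the denominator in the curvature bound is at least $\tfrac{1}{2}$. Since $Q_p\neq 0$, the quantity $F_p=Q_p^{\frac12}R$ is positive. I would set
\[
r:=\tfrac{1}{2}F_p=\tfrac{1}{2}Q_p^{\frac12}R .
\]
This choice satisfies the constraint $r<F_p\leq F_q$ of \eqref{r upper bound}. Moreover $rQ_p^{-\frac12}R^{-1}=\tfrac12$, so the bound of \lemref{local rescaling procedure R r} becomes
\[
\sup_{B_{\tilde g}(q,r)}|\widetilde{Rm}(\tilde g)|\leq \frac{1}{(1-\frac12)^2}=4,
\]
while the normalisation $|\widetilde{Rm}(\tilde g)|(q)=1$ carries over unchanged.

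The only point to check is that this $r$ is admissible in Step 2 of the lemma's proof, i.e.\ that $B_g(q,Q_q^{-\frac12}r)\subset B$. Since $r<F_q$, we have
\[
Q_q^{-\frac12}r<Q_q^{-\frac12}F_q=d(q,\p B),
\]
so the ball is indeed contained in $B$. The distance estimate \eqref{the distance lower bound} and the curvature estimate \eqref{local curvature bound original} then hold verbatim. Rescaling by $\tilde g=Q_q g$ maps $B_g(q,Q_q^{-\frac12}r)$ onto $B_{\tilde g}(q,r)$, which gives the claim.

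There is no genuine obstacle here. The only thing worth flagging is that $r$ depends on $Q_p$ and $R$; this dependence is harmless for the statement, which asks only for some $r>0$. For the later epsilon-regularity argument one will want $r$ large, and this is exactly the situation in which $F_p$ is large, namely when the curvature bound is assumed to fail.
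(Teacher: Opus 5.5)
Your proposal is correct and is the same argument as the paper's. The paper fixes $r$ and applies the lemma with $R=2Q_p^{-1/2}r$, which is exactly your relation $r=\tfrac12 F_p$; this gives $rQ_p^{-1/2}R^{-1}=\tfrac12$, hence the bound $4$. Your explicit check that $B_g(q,Q_q^{-1/2}r)\subset B$ fills in a step the paper leaves implicit in the lemma.
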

\begin{proof}
We apply Lemma \ref{local rescaling procedure R r} with
\begin{align*}
    R = 2Q_{p}^{-\frac{1}{2}}r.
\end{align*}
Inserting this choice of $R$ in the last equality in \eqref{local maximum}, we have
\begin{align*}
    F_{p}^2 = Q_{p}R^2 = 4r^2 > r^2,
\end{align*}
namely $F_{p}>r.$
That verifies \eqref{r upper bound}.
Furthermore, we compute that\[
r Q^{-\frac{1}{2}}_{p}R^{-1}<\frac{1}{2}F_pQ^{-\frac{1}{2}}_{p}R^{-1}=\frac{1}{2}.
\]
Thus, the corollary is proved.
\end{proof}

\subsection{Gap theorem}
\begin{thm}[Gap theorem]\label{Gap theorem}
    There exists a small dimensional constant $\eps:=\eps(n)$ such that any complete $G_2$-soliton with zero scalar curvature and bounded soliton potential $|\p f|$ satisfies
    \begin{align*}
        \nu(M,g)< -\eps.
    \end{align*}
\end{thm}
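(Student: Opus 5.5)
The statement as written cannot hold for the flat soliton $(\mathbb R^7,\vphi_0,g_E,f\equiv\text{const})$, since there $\nu=0$. So I will prove it for \emph{non-flat} solitons, which is the situation in which it is used after the rescaling of Corollary \ref{local rescaling procedure cor}, where $|\widetilde{Rm}|(q)=1$. The plan has three steps. First, reduce the soliton to a Ricci-flat manifold. Second, convert the entropy bound into an almost-Euclidean asymptotic volume ratio. Third, apply Anderson's gap theorem to conclude flatness, contradicting non-flatness.

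\textbf{Step 1 (reduction to Ricci-flat).} By \lemref{scalar curvature general} and \lemref{ABS}, $S=-|T|^2$, so $S\equiv 0$ forces $T\equiv0$ and hence $A\equiv 0$ and $\mathcal K\equiv 0$. A torsion-free $G_2$-structure has holonomy in $G_2$, so $Ric=0$, which is also visible from \eqref{Ric S T}. The soliton equation \eqref{soliton Ricci gradient} then becomes $\nabla^2 f=-\tfrac{\lambda}{3}g$. Along any unit-speed geodesic $\gamma$ this gives
\[
\tfrac{d}{dt}\langle\nabla f,\gamma'\rangle=-\tfrac{\lambda}{3}.
\]
Since $|\nabla f|$ is bounded and $M$ is complete and non-compact, this forces $\lambda=0$. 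Hence $\nabla^2 f=0$, and $(M,g)$ is a complete Ricci-flat $7$-manifold. If $\nabla f\neq 0$ it even splits a line, although this is not needed.

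\textbf{Step 2 (entropy controls the asymptotic volume ratio).} Because $S=0$, \lemref{G2 W comparison} gives $\nu^S=\nu^0$. Suppose, for contradiction, that $\nu(M,g)\geq-\eps$. Since $Ric\geq0$, Bishop--Gromov makes $|B_r(x)|/(\omega_7r^7)$ non-increasing in $r$, with limit the asymptotic volume ratio $\theta\in[0,1]$. I will use the known identification, due to Yokota for manifolds with non-negative Ricci curvature, that $\mu^0(M,g,\tau)\to\log\theta$ as $\tau\to\infty$. Alternatively, one can test $W^0$ with Gaussian-type cutoffs $u^2\propto e^{-d^2/4\tau}$ on large balls and use volume comparison. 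Either way,
\[
\log\theta\geq\nu^0(M,g)\geq-\eps,
\]
so every ball satisfies $|B_r(x)|\geq e^{-\eps}\omega_7r^7$.

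\textbf{Step 3 (gap).} Anderson's gap lemma \cite{MR1074481} states that a complete Ricci-flat $n$-manifold whose volume ratio is at least $1-\delta(n)$ at all scales is isometric to $\mathbb R^n$. Equivalently, one can use Colding's volume convergence: the harmonic radius is infinite, so $g$ is flat. Choosing $\eps$ with $e^{-\eps}>1-\delta(7)$ then shows $(M,g)$ is flat, which contradicts the non-flatness assumption. This yields $\nu(M,g)<-\eps$ with $\eps=\eps(n)$.

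The main obstacle is Step 2, namely making the passage from the $\nu^0$ lower bound to a sharp, almost-Euclidean volume ratio rigorous on a complete non-compact manifold. \lemref{noncollapse} only gives a crude constant $e^{\nu-2^{14}}$, which is far from $1$, so it is not sufficient. If Yokota's asymptotic formula is not available in the needed form, I will argue by contradiction and compactness instead. Take non-flat Ricci-flat $G_2$-manifolds with $\nu\geq -1/i$, normalized so that $|Rm|(q_i)=1$ and $|Rm|\leq 4$ nearby (Corollary \ref{local rescaling procedure cor}). Pass to a smooth limit using \lemref{nu properties}\ref{Continuity}. The limit has $\nu^S\geq0$ and bounded curvature, so it is flat by \lemref{nu properties}\ref{Rigidity}, which contradicts $|Rm|(q_\infty)=1$.
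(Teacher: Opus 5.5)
Your proof is correct, and your main route differs from the paper's. You are also right that the statement fails for flat $\mathbb R^7$; the paper tacitly adds non-flatness in its contradiction hypothesis.

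\textbf{The paper's route.} The paper argues by contradiction and blow-up, which is essentially your fallback. It takes non-flat solitons with $\nu\geq-\eps_i\to0$ and normalizes the curvature via \corref{local rescaling procedure cor}. It gets an injectivity radius bound from \lemref{noncollapse} and Cheeger--Gromov--Taylor, and extracts a smooth limit using the soliton derivative estimates. It then passes the entropy bound to the limit by continuity and concludes flatness from the rigidity in \lemref{nu properties}. This contradicts $|\widetilde{Rm}_\infty|(q_\infty)=1$.

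\textbf{Your route.} You use $S=-|T|^2$ to reduce at once to a complete Ricci-flat manifold. You then turn $\nu^0\geq-\eps$ into the asymptotic volume ratio bound $\theta\geq e^{-\eps}$, and finish with Anderson's gap lemma.

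For Step 2 you only need the easy inequality $\nu^0\leq\log\theta$, so the Yokota identification is more than required. Your Gaussian test function gives it cleanly. Set $Z(\tau)=\int_M(4\pi\tau)^{-7/2}e^{-d^2/4\tau}\,dv$ and $h=d^2/4\tau+\log Z$. Then $W^0=Z^{-1}\int_M\frac{d^2}{2\tau}(4\pi\tau)^{-7/2}e^{-d^2/4\tau}\,dv+\log Z-7$. Because $|\partial B_r|/r^{6}$ is non-increasing, a Chebyshev-type comparison bounds the weighted second moment by its Euclidean value $7$. Hence $\mu^0(\tau)\leq\log Z(\tau)$, and $Z(\tau)\to\theta$. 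This also covers $\theta=0$, including the compact case, where $\nu=-\infty$.

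\textbf{What each approach buys.} Your route avoids point-picking, compactness, injectivity-radius and higher-derivative estimates, and any appeal to continuity or rigidity of the local entropy. It gives an explicit $\eps$ through Anderson's constant, namely $e^{-\eps}>1-\delta(7)$. It also shows that once $S\equiv0$, neither the soliton equation nor the bound on $|\nabla f|$ matters. Your deduction $\lambda=0$, $\nabla^2 f=0$ is never used in Steps 2--3, which need only $Ric\equiv0$. (That deduction needs only completeness, not non-compactness, and the paper's proof does not use the gradient bound either.)

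The paper's route stays inside the entropy and blow-up framework that it reuses for \thmref{Epsilon regularity}, and it does not rely on a sign of the Ricci curvature. Yours leans on the special fact that $S\equiv0$ forces Ricci-flatness, and on Anderson's gap lemma, which is itself proved by a compactness argument.
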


\begin{proof}
    We argue by contradiction. Assume that for some sequence $\eps_i\to 0,$ there is a sequence of complete, non-flat $G_2$-solitons with vanishing scalar curvature and bounded soliton potential $f$, denoted $(M_i,g_i)$, such that
    \begin{align}\label{nu lower bound}
        \nu(M_i,g_i)\geq -\eps_i.
    \end{align}

    \textbf{Step 1: construction of a rescaled sequence.}
    Apply the local rescaling procedure (\corref{local rescaling procedure cor}) to each complete non-flat $G_2$-soliton $(M_i,g_i)$ to obtain a sequence of rescaled metrics $\tilde g_i$, points $q_i\in M_i$, and radii $$r_i\to\infty$$ such that the Riemann curvature tensor $Rm_i$ of $\tilde g_i$ satisfies
    \begin{align}\label{Scaling Riemann curvature}
        |\widetilde {Rm_i}|(q_i)=1,\quad \sup_{B_{\tilde g_i}(q_i,r_i)}|\widetilde {Rm_i}|\leq 4.
    \end{align}

    \textbf{Step 2: injectivity radius.}
    By the monotonicity and scale invariance of $\nu$ in \lemref{nu properties}, we obtain
    \begin{align*}
        \nu(B_{\tilde g_i}(q_i,r_i))\geq \nu(M_i,\tilde g_i)=\nu(M_i,g_i)\geq-\eps_i.
    \end{align*}
    Combining \eqref{nu lower bound} with \lemref{noncollapse}, we get a uniform lower bound on the volume of geodesic balls.     Then, a theorem of Cheeger-Gromov-Taylor \cite{MR658471}*{Page 42} yields a positive lower bound for the injectivity radius in the smaller ball $B_{\tilde g_i}(q_i,r_i-1),$ which is a dimensional constant $c(n)>0$.

    \textbf{Step 3: regularity improvement.}
Using the derivative estimates from Proposition \ref{Derivatives estimates}, Proposition \ref{lem:tau Rm k}, Proposition \ref{lift}, the $G_2$-solitons enjoy uniform bounds on all higher-order derivatives:
    \begin{align*}
        |\nabla^k \tilde g_i| \leq C(k), \quad\forall\; k\geq 0.
    \end{align*}
    Hence, we obtain $C^\infty$ convergence of the sequence $\tilde g_i$ to a smooth limit $G_2$-soliton:
    \begin{align*}
        (M_i,\tilde g_i,q_i)\longrightarrow (M_\infty,\tilde g_\infty,q_\infty).
    \end{align*}

    \textbf{Step 4: contradiction.}
    Passing to the limit in \eqref{Scaling Riemann curvature}, we have
    \begin{align}
        |\widetilde {Rm_\infty}|(q_\infty)=1. \label{eq10}
    \end{align}
    The $\nu$-functional is also continuous under this convergence, \ref{Continuity} in \lemref{nu properties}, and \eqref{nu lower bound} implies
    \begin{align*}
        \nu(M_\infty,\tilde g_\infty,1)\geq 0.
    \end{align*}
    However, the rigidity statement \ref{Rigidity} in \lemref{nu properties} shows that $(M_\infty,\tilde g_\infty)$ must be isometric to the Euclidean space. Consequently,
    \begin{align*}
        |\widetilde {Rm_\infty}|(q_\infty)=0,
    \end{align*}
    which contradicts (\ref{eq10}). This completes the proof.
\end{proof}

\subsection{Proof of \thmref{Epsilon regularity}}
\begin{proof}
We begin by establishing the case $k=0$ using a contradiction argument.
Suppose there exists a sequence $(M_i, g_i, p_i),$ and let $B_i:=B(p_i,1)$ be the unit ball, such that
\begin{align*}
\nu(B_i, g_i, \tau=1)\geq -\eps,\quad\text{and}\quad
\sup_{B_i}| Rm_i|(\cdot)\, d_{g_i}^{2}(\cdot,\p B_i)\rightarrow \infty.
\end{align*}
Here $Rm_i$ denotes the Riemann curvature tensor of the metric $g_i$.

Following the notation introduced in Section \ref{Local rescaling procedure}, we define
\begin{align*}
Q_{i,x}:=|Rm_i|(x),\quad F^2_{i,x}:=|Rm_i|(x)\, d_{g_i}^2(x,\p B_i),\quad \forall\; x\in B_i.
\end{align*}
Let $q_i$ be a point in $B(p_i,1)$ at which the function $F^2_{i,x}$ attains its maximum.
Then, by our contradictory assumption, we have
\begin{align}\label{Epsilon regularity contradiction assumption}
Q_{i,q_i}\geq F^2_{i,q_i}\rightarrow\infty.
\end{align}

\textbf{Step 1: construction of the rescaling sequence.}
Define the rescaled metric by $$\tilde g_i = Q_{i,q_i}\cdot g_i.$$
Since $|Rm_i|(p_i)\neq 0,$ we can apply \lemref{local rescaling procedure R r} with basepoint $p_i$ and obtain
\begin{align}\label{Epsilon regularity Riemannian}
    |\widetilde{Rm}(\tilde g_i)|(q_i)=1\quad\text{and}\quad
    \sup_{B_{\tilde g_i}(q_i,r_i)} |\widetilde{Rm}(\tilde g_i)|
    \le \frac{1}{\bigl(1-r_i Q_{i,p_i}^{-\frac12}\bigr)^2}.
\end{align}
The radius $r_i$ is chosen as in \corref{local rescaling procedure cor} to satisfy the requirement in \lemref{local rescaling procedure R r},
\begin{align*}
    r_i := \frac{F_{i,q_i}}{2} < F_{i,q_i}.
\end{align*}
This choice ensures, in view of \eqref{Epsilon regularity contradiction assumption}, that the ball
$
B_{g_i}\bigl(q_i, Q_{i,q_i}^{-\frac12} r_i\bigr)
$
is contained in $B_i$ and the rescaled Riemann curvature has a uniform bound:
\begin{align*}
    \sup_{B_{\tilde g_i}(q_i,r_i)}|\widetilde{Rm}(\tilde g_i)| \le 4.
\end{align*}

\textbf{Step 2: injectivity radius.}
We now use the entropy bound to obtain a uniform lower bound on the injectivity radius of $\tilde g_i$. The entropy is preserved under rescaling and is monotone (non-decreasing) on the domain; see \ref{Scale invariance} and \ref{Monotonicity} in \lemref{nu properties}. Hence
\begin{equation}\label{Epsilon regularity entropy}
\begin{split}
    \nu\bigl(B_{\tilde g_i}(q_i,r_i),\tilde g_i,\tau=1\bigr)
    &= \nu\bigl(B_{g_i}(q_i, Q_{i,q_i}^{-\frac12}r_i), g_i,\tau=1\bigr)\\
    &\geq \nu(B_i, g_i,\tau=1) \\
    &\geq -\bar\eps.
\end{split}
\end{equation}
From here on, the argument is identical to \textbf{Step 2} in the proof of \thmref{Gap theorem}, which yields
\begin{align*}
    \inf_{B_{\tilde g_i}(q_i,r_i-1)} \operatorname{inj}_{\tilde g_i} > c(n).
\end{align*}

\textbf{Step 3: contradiction.}
Set $$M_i = B_{\tilde g_i}(q_i,r_i-1).$$ By the derivative estimates Proposition \ref{Derivatives estimates}, Proposition \ref{lem:tau Rm k}, Proposition \ref{lift}, on these domains we obtain uniform higher-order bounds for the metrics:
\begin{align*}
    |\nabla^k \tilde g_i| \le C(k), \quad \forall k \ge 0.
\end{align*}

Consequently, by the Cheeger-Gromov theorem we have $C^\infty$ convergence of $\tilde g_i$ to a smooth $G_2$-manifold,
\begin{align*}
    (M_i,\tilde g_i,q_i) \longrightarrow (M_\infty,\tilde g_\infty,q_\infty).
\end{align*}
The scalar curvature for $\tilde g_i$ takes the form
$$
    S(\tilde g_i) = Q_{i,q_i}^{-1}\cdot S(g_i).
$$
Thus the limit metric $\tilde g_\infty$ has zero scalar curvature.
On the other hand, \eqref{Epsilon regularity entropy} implies
\begin{align*}
    \nu(M_\infty,\tilde g_\infty,\tau=1) \geq -\eps.
\end{align*}
By \eqref{Epsilon regularity Riemannian}, $\tilde g_\infty$ is non-flat
\begin{align*}
    |\widetilde{Rm}(\tilde g_\infty)|(q_\infty) = 1,
\end{align*} contradicting the Gap \thmref{Gap theorem}. Thus,  \thmref{Epsilon regularity} is proved.
\end{proof}

\subsection{Compactness of compact solitons}
We present an application of the epsilon regularity theorem of compact $G_2$-solitons.
Let $\lambda_M \leq 1$ be a non-negative constant such that
\begin{align*}
7 \log \lambda_M \geq -\eps.
\end{align*}
Here $\eps$ is the constant that appears in the Gap \thmref{Gap theorem}.
Let $\mathcal M(\eps,r_0,\sigma,D)$ denote the moduli space of $G_2$-solitons satisfying the following properties:
\begin{itemize}
    \item $\displaystyle \inf_{x\in M} \mathbf I(B(x,r_0),g)\geq \lambda_M \cdot \mathbf I(\mathbb R^{7})$,
    \item $\displaystyle -S, |\nabla f|\leq \sigma$,
    \item $\displaystyle \operatorname{diam}(M,g)\leq D$.
\end{itemize}

\begin{thm}
    The space $\mathcal M(\eps,r_0,\sigma,D)$ is compact in the pointed $C^\infty$ topology.
\end{thm}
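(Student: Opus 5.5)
The plan is to reduce compactness to a uniform curvature bound, after which the derivative estimates and Cheeger--Gromov give smooth subconvergence. Let $(M_i,\vphi_i,g_i,f_i,p_i)$ be a sequence in $\mathcal M(\eps,r_0,\sigma,D)$.

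\textbf{Step 1: entropy lower bound on small balls.} For any $x\in M_i$ and $r\le r_0$, monotonicity of the isoperimetric constant under shrinking of domains (or directly the hypothesis applied to $B(x,r_0)$ together with \ref{Monotonicity} of \lemref{nu properties}) and \ref{Isoperimetric constant} in \lemref{nu properties} give
\[
\nu^S(B(x,r),g_i,\tau)\geq 7\log\lambda_M+\tau\inf S\geq -\eps-\tau\sigma .
\]
Rescaling $B(x,r)$ to unit size by $\tilde g=r^{-2}g_i$ leaves $\nu^S$ invariant and multiplies $S$ by $r^2$. Choosing $r=r_1(\sigma,r_0)$ small therefore gives $\nu(B_{\tilde g}(x,1),\tilde g,1)\geq -\eps-r_1^2\sigma$. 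Here we arrange the constants so that this is within the gap constant; this is precisely why the hypothesis uses $7\log\lambda_M\geq-\eps$. If necessary, we shrink the target $\eps$ to $\eps/2$ in the definition of $\lambda_M$.

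\textbf{Step 2: uniform curvature bound via epsilon regularity.} We apply \thmref{Epsilon regularity} on each rescaled ball $B_{\tilde g}(x,1)$. The rescaled soliton has $-\tilde S\le r_1^2\sigma$ and $|\tilde\nabla f|\le r_1\sigma$, so the rescaling argument inside that proof produces blow-up limits with zero scalar curvature and bounded potential gradient, as required by \thmref{Gap theorem}. If $Rm$ vanishes at the centre, the bound there is trivial; otherwise the hypothesis $Rm(p)\ne0$ holds. We conclude $|\nabla^k Rm_{g_i}|\le C(k)r_1^{-2-k}$ on $B(x,r_1/2)$ for every $x$, hence globally on $M_i$.

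\textbf{Step 3: non-collapsing, diameter, and convergence.} Step 1 together with \lemref{noncollapse} gives $|B(x,r)|\ge\kappa\omega_7 r^7$ for $r\le r_1$. Cheeger--Gromov--Taylor then yields a uniform injectivity radius bound. With $\operatorname{diam}\le D$, the volume is bounded above by the volume comparison \lemref{BEVCT}, since $Ric_f$ is bounded below by \lemref{Ricci bound} and $|\nabla f|\le\sigma$. The Cheeger--Gromov compactness theorem then gives smooth subconvergence of $(M_i,g_i)$ to a compact limit, and the $M_i$ are eventually diffeomorphic to it.

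\textbf{Step 4: convergence of the soliton data.} Normalizing $f_i(p_i)=0$, \propref{lift} (or the bootstrapping \lemref{bootstr1}--\lemref{bootstr3}) bounds all derivatives of $\vphi_i$ and $f_i$ uniformly. Arzel\`a--Ascoli then gives smooth convergence of $\vphi_i$ and $f_i$, and the soliton equation passes to the limit, so the limit lies in the class. The closed conditions (the isoperimetric bound and $-S,|\nabla f|\le\sigma$) persist by smooth convergence and continuity of $\mathbf I$ on compact domains.

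The main obstacle is Step 2: matching the constant in \thmref{Epsilon regularity} with the isoperimetric hypothesis. The lower bound only gives $\nu\ge-\eps-O(r_1^2\sigma)$, not $\ge-\eps$, so one must either build slack into $\lambda_M$ or observe that the gap argument is open in $\eps$.
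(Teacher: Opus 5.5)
Your proposal is correct and follows essentially the same route as the paper: the isoperimetric hypothesis together with properties \ref{Isoperimetric constant} and \ref{Monotonicity} of \lemref{nu properties} gives an entropy lower bound on small balls, \thmref{Epsilon regularity} converts this into uniform bounds on all $\nabla^k Rm$, and a finite covering plus the diameter bound gives an injectivity radius bound, so that Hamilton's compactness theorem applies. The slack issue you flag is genuine and is glossed over in the paper, which simply asserts $7\log\lambda_M-r^2\sigma\geq-\bar\eps\geq-\eps$ for small $r$; besides your two fixes, it also disappears if you feed the isoperimetric bound directly into the blow-up argument of \thmref{Epsilon regularity}, where the error term $\tau\inf\tilde S$ tends to zero, and your extra checks (the hypotheses of \thmref{Gap theorem} for the blow-up limit, convergence of $\vphi_i$ and $f_i$, and closedness of the class) make your argument more complete than the paper's.
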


\begin{proof}
    Using the relation between the $\nu$-functional and the isoperimetric constant from \lemref{nu functional, I and S}, we obtain
    \begin{align*}
        \nu(B(x,r_0^2),g,r_0^2)\geq 7\log \lambda_M - r_0^2 \sigma, \quad \forall\, x\in M.
    \end{align*}
    For any smaller radius $r\leq r_0,$ the monotonicity property \ref{Monotonicity} of $\nu$ in \lemref{nu properties} yields
    \begin{align*}
        \nu(B(x,r^2),g,r^2)\geq \nu(B(x,r_0^2),g,r_0^2)\geq 7\log \lambda_M - r^2 \sigma.
    \end{align*}
    We now choose $r$ sufficiently small so that
    \begin{align*}
        \nu(B(x,r^2),g,r^2)\geq -\bar\eps \geq -\eps.
    \end{align*}
    Hence, by the $\eps$-regularity result \thmref{Epsilon regularity}, we obtain
    \begin{align*}
        \sup_{B(x,r^2)}|\nabla^k Rm|\, d^{2+k}(\cdot,\partial B(x,r^2))\leq C,
    \end{align*}
    which in particular implies
    \begin{align*}
        |\nabla^k Rm| \leq C r^{-4-2k},\quad \text{on } B\bigl(x,\tfrac{r^2}{2}\bigr).
    \end{align*}

    By selecting finitely many such balls $B\bigl(x,\tfrac{r^2}{2}\bigr)$ to cover the compact manifold $M$ and using the diameter bound, we obtain a uniform positive lower bound for the injectivity radius.
    Finally, Hamilton's compactness theorem then implies the desired compactness.
\end{proof}

\begin{rem}
The Epsilon regularity and gap theorems extend the analogous results \cite{arXiv:2010.09981}*{Theorem 3.5 and Theorem 3.6} established for Einstein manifolds.
\end{rem}

    \section{Smooth convergence on uniform $L^{\frac{7}{2}}$ curvature bounds}
    \label{Smooth convergence on uniform curvature bounds}
    In this section, we prove the following theorem, following Anderson's argument \cite{MR999661}
\begin{thm}\label{half energy convergence}
Let $\left( M_i, \vphi_i, g_i, f_i,p_i\right)$ be a sequence in
$$\mathcal M(\Lambda,F,\underline{\nu})$$
with a uniform $L^{\frac{7}{2}}$ bound on the Riemann curvature:
\begin{equation}
\int_{B_{g_i}(p_i,R)}\left|Rm_{g_i} \right|_{g_i}^{\frac{7}{2}}\leq E(R)<\infty .
\end{equation}

Then a subsequence of $\left( M_i, \vphi_i, g_i, f_i, p_i\right)$ converges, in the pointed $C^\infty$ sense, to a $G_2$-soliton without any singularities.
\end{thm}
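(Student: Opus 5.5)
Following Anderson's argument \cite{MR999661}, the plan has three parts: an $\epsilon$-regularity estimate for small $L^{\frac{7}{2}}$ energy, a finiteness statement for the bad points, and a removable-singularity argument showing those points do not actually occur.

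\textbf{Step 1: $\epsilon$-regularity under small energy.} We aim to show there are constants $\delta_0>0$ and $C$, depending on $\Lambda,F,\underline{\nu},R$, such that for every ball $B_r(x)\subset B_{g_i}(p_i,R)$ with $r\le 1$,
\begin{align*}
\int_{B_r(x)}|Rm|^{\frac{7}{2}}\le \delta_0 \quad\Longrightarrow\quad \sup_{B_{r/2}(x)}|Rm|\le C r^{-2}.
\end{align*}
We argue by contradiction using the local rescaling procedure of \lemref{local rescaling procedure R r} and \corref{local rescaling procedure cor}. This produces points $q_i$ and metrics $\tilde g_i=Q_{q_i}g_i$ with $|\widetilde{Rm}|(q_i)=1$ and $|\widetilde{Rm}|\le 4$ on balls of radius $r_i\to\infty$.

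Non-collapsing comes from \lemref{noncollapse}, using the entropy bound together with its scale invariance and monotonicity. The injectivity radius bound comes from Cheeger--Gromov--Taylor. Higher derivatives are controlled by \propref{Derivatives estimates} and Proposition \ref{lift}, applied to the rescaled soliton. The rescaled soliton has $\lambda$, $S$ and $\nabla f$ scaled down, and $|\widetilde{Rm}|$ is bounded.

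Passing to a smooth limit, we obtain a non-flat complete manifold whose $L^{\frac{7}{2}}$ norm vanishes, because $\int|Rm|^{\frac{7}{2}}$ is scale invariant in dimension $7$. This is a contradiction. An alternative route is a Moser iteration on $\Delta_f|Rm|\ge -C|Rm|^2-\dots$ using the local Sobolev inequality \lemref{lem:Sob}. There the terms involving $T$ and $\nabla^2T$ in \eqref{Rm brief} are handled by \propref{lem:tau Rm k} and \eqref{btsr3}. The blow-up route avoids this bookkeeping.

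\textbf{Step 2: finitely many bad points and convergence off them.} A covering argument with the volume doubling of \lemref{Volume doubling} shows the following. For each $R$ and each scale, the number of balls carrying energy at least $\delta_0$ is at most $N\le E(R)/\delta_0$. Passing to a subsequence, this gives finitely many points $x_1,\dots,x_N\in B_R(p_\infty)$ away from which curvature is locally uniformly bounded. By Proposition \ref{lift} and the smooth convergence theorem of Section \ref{Pointed smooth convergence}, the convergence is $C^\infty$ on $B_R(p_\infty)\setminus\{x_j\}$ for $g_i,\vphi_i,f_i$. The limit is a smooth gradient $G_2$-soliton there, and it has finite $L^{\frac{7}{2}}$ curvature.

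\textbf{Step 3 (main obstacle): the singular points are removable.} At each $x_j$, the energy decays on small annuli. Step 1 then gives $|Rm_\infty|(y)=o(d(y,x_j)^{-2})$. Tangent cones at $x_j$ are therefore flat cones $\mathbb R^7/\Gamma$, by the Cheeger--Colding cone structure in Theorem \ref{main convergence 3}.

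We must rule out $\Gamma\neq 1$, and this is where the $G_2$ structure enters. The torsion bound $|T|\lesssim|Rm|^{\frac12}$ from \lemref{lem:tau Rm 2} implies $\nabla\vphi=T\ast\psi=o(d^{-1})$. Hence $\vphi$ is asymptotically parallel near $x_j$ and descends to a $\Gamma$-invariant $G_2$-form on the cone. We then use the entropy bound $\underline{\nu}$ at small scales, or the gap/rigidity statement \ref{Rigidity} in \lemref{nu properties} applied to blow-ups at $x_j$, to force $\Gamma$ trivial. This step is the one most likely to need extra input. We plan to first try the entropy/volume-ratio argument: Colding's volume continuity combined with the $C^{1,\alpha}$ structure of Theorem \ref{main convergence 3 main body}, where $\mathcal S$ has codimension $\ge 4$ but the regular set is strongly convex.

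Once the cone is $\mathbb R^7$, we construct $C^{1,\alpha}$ harmonic coordinates across $x_j$. Proposition \ref{lift2} then upgrades $g_\infty,\vphi_\infty,f_\infty$ to smooth. Curvature is then bounded near $x_j$, and this contradicts any energy concentration there. So no bad points exist, and the convergence is pointed $C^\infty$ everywhere, to a smooth $G_2$-soliton.
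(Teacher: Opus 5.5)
Your blow-up proof of $\epsilon$-regularity in Step 1 is a workable substitute for the paper's Moser iteration (Proposition \ref{Laplace4}), and Step 2 matches the paper's covering argument. Step 3, however, has a genuine gap, and it sits exactly where the paper's new input lies: you never actually exclude $\Gamma\neq\{e\}$, and none of the tools you propose can do so.

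The tools you list all fail against a flat orbifold cone:
\begin{itemize}
\item The lower bound $\underline{\nu}$ in $\mathcal M(\Lambda,F,\underline{\nu})$ is arbitrary. Via \lemref{noncollapse} it only yields a volume ratio $\geq\kappa(\underline{\nu})\,\omega_7$, and that is compatible with a flat cone $\mathbb{R}^7/\Gamma$ of volume ratio $\omega_7/|\Gamma|$.
\item The rigidity statement \ref{Rigidity} of \lemref{nu properties} needs $\nu\geq 0$, which is not assumed.
\item Colding's volume continuity merely transports the ratio $\omega_7/|\Gamma|$ to the approximating spaces.
\item Hausdorff codimension $\geq 4$ of $\mathcal S$ does not exclude isolated points in dimension $7$. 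Strong convexity of $\mathcal R$ does not help either, since it holds for $\mathbb{R}^7/\mathbb{Z}_2$.
\end{itemize}

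The missing idea is elementary topology; it is the paper's Step 5. Every element of $SO(7)$ has eigenvalue $1$ because the dimension is odd, so it has a fixed point on $S^6$. Hence a group acting freely on $S^6$ meets $SO(7)$ only in $\{I\}$. Its nontrivial element, if any, is then an orthogonal involution with no eigenvalue $+1$, that is, $-I$. If $\Gamma=\{\pm I\}$, the small annuli about $x_j$ are diffeomorphic to $(\tfrac12,2)\times\mathbb{RP}^6$, which is non-orientable. But $\vphi_\infty$ orients the punctured neighbourhood, a contradiction.

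Your own observation already closes this step with no entropy input. If the rescaled $\vphi_\infty$ converges to a parallel $\Gamma$-invariant $G_2$-form on the cone, then $\Gamma\subset G_2\subset SO(7)$, and the eigenvalue-$1$ fact forces $\Gamma=\{e\}$.

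There are also two smaller problems in the last part of Step 3. First, Proposition \ref{lift2} needs $T\in C^{0,\alpha}$ on a neighbourhood of $x_j$. The bound $|T|=o(d^{-1})$ does not even give boundedness, so you cannot invoke it there. Second, bounded curvature of the \emph{limit} near $x_j$ does not by itself contradict energy concentration along $M_i$, because curvature can concentrate in bubbles that vanish in the limit. You need a no-bubbling argument. Once the tangent cone at $x_j$ is $\mathbb{R}^7$, blow up $g_i$ at the concentration scale. In the blow-up, $\tilde\lambda$, $|\tilde\nabla f|$ and $\tilde S=-|\tilde T|^2$ all tend to zero, so the limit is complete, torsion-free and hence Ricci-flat. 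By volume convergence and monotonicity it has volume ratio $\omega_7$ at all scales. Bishop--Gromov rigidity then makes it flat, contradicting $|\widetilde{Rm}|(q_i)=1$.
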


\subsection{Epsilon regularity under an $L^{\frac{7}{2}}$ curvature bound}
We set
\begin{align*}
	u:=|Rm|.
\end{align*}
Recall that we are assuming the potential function $f$ fulfills the gradient bound \ref{f condition} given in Definition \ref{3 assumptions}, that is,
\begin{align}\label{condition gradient f}
	|\nabla f|^2\leq F(R) \text{ in } B_{R}.
\end{align}
\begin{prop}\label{Laplace4}
	Let $(M,\vphi,g,f,p)$ be a gradient $G_2$-soliton obeying \eqref{condition gradient f}. Suppose that
	\begin{equation}\label{M2.11-2}
		\|u\|_{\frac{7}{2};B_{r},f}^{\frac{7}{2}}=\int_{B_{r}} |Rm|^{\frac{7}{2}}\, dv_f \leq \epsilon
	\end{equation}
	for some sufficiently small $\epsilon$, where $dv_f = e^{-f}dv$ and $r<1$. Then
	\begin{equation}\label{M2.12}
		\|u\|_{\infty, B_{r/2},f} \leq \frac{C(C_s,\underline{\nu},F)}{r^2}\,\delta,\quad \delta = \epsilon^{\frac{2}{7} }
	\end{equation}
	where $C_s$ denotes the Sobolev constant.
\end{prop}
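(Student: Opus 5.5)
The plan is Anderson's argument: derive a differential inequality for $u=|Rm|$ in the weighted setting, then run Moser iteration with the weighted local Sobolev inequality of \lemref{lem:Sob}.

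\textbf{Step 1: a subsolution inequality for $u$.} Start from \eqref{Rm brief},
\[
\tri_f Rm = Rm\ast Rm + Rm\ast T\ast T + \sum_{p+q=2}\nabla^p T\ast\nabla^q T .
\]
Use Kato's inequality and \lemref{lem:tau Rm 2}, namely $|T|^2\lesssim u$, $|\nabla T|\lesssim u$ and $|\nabla^2T|\lesssim |\nabla Rm|+u^{3/2}$. Every lower-order term is then bounded by $C(u^2+u|\nabla Rm|)$, using $u^{3/2}|T|\lesssim u^2$. The term $T\ast\nabla^2T$, which contains $\nabla Rm$, should be handled in the integrated form rather than pointwise: integrate by parts, or absorb $|T||\nabla Rm|\le \eta|\nabla Rm|^2+C_\eta u$ into the good term $|\nabla Rm|^2$ coming from $\tri_f|Rm|^2$. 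The aim is the weak inequality
\[
\tri_f u \ge -C u^2 - C u \quad\text{(weakly, on } \{u>0\}),
\]
or at least the corresponding integrated Caccioppoli inequality for $u^{p}$ tested with cutoffs, with the $\nabla f$ term left inside $\tri_f$. This is the step I expect to be the main obstacle: the torsion terms couple to $\nabla Rm$, and one must make sure they can be absorbed.

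\textbf{Step 2: Moser iteration.} Multiply by $\eta^2u^{p-1}$ with $\eta$ a cutoff and integrate against $dv_f$. Because the operator is $\tri_f$, which is self-adjoint for $dv_f$, integration by parts produces no extra $\nabla f$ term. This gives
\[
\int|\nabla(\eta u^{p/2})|^2dv_f \le Cp\int (|\nabla\eta|^2+\eta^2)u^p\,dv_f + Cp\int \eta^2 u\cdot u^p\,dv_f .
\]
The potential $u$ in the last term is controlled by Hölder's inequality,
\[
\int\eta^2 u\,u^p \le \|u\|_{\frac72}\,\|\eta u^{p/2}\|_{\frac{14}{5}}^2 .
\]
The exponent $\frac{14}{5}=\frac{2n}{n-2}$ is exactly the Sobolev exponent in dimension $7$. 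Now apply \lemref{lem:Sob}. The Sobolev constant carries the factor $|B_r|_f^{-2/7}r^2$; comparing with $r^2$ through \lemref{noncollapse} and \lemref{BEVCT} (with $|f|$ bounded on $B_1$ by $F$) makes the constant scale-invariant and dependent only on $C_s,\underline\nu,F$. When $\epsilon$ is small the $\|u\|_{7/2}$ term is absorbed into the left-hand side, giving the reverse Hölder step
\[
\|u\|_{\frac{7p}{5};B_{r'}}\le \bigl(Cp(r-r')^{-2}\bigr)^{1/p}\|u\|_{p;B_r}, \qquad p\ge \tfrac72 .
\]
One detail: $u\cdot u^p$ must be absorbed at each level $p$. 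This works because the smallness of $\|u\|_{7/2}$ is fixed once, and the factor $Cp$ is harmless after taking $1/p$ powers.

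\textbf{Step 3: iterate and rescale.} Iterate from $p_0=\frac72$ with the exponents $p_k=\frac72(\frac75)^k$ and radii shrinking from $r$ to $r/2$. The product of the constants converges, which gives
\[
\|u\|_{\infty,B_{r/2}}\le C\,r^{-2}\,\|u\|_{\frac72;B_r,f} \le C r^{-2}\epsilon^{2/7}=Cr^{-2}\delta .
\]
The factor $r^{-2}$ is obtained cleanly by first rescaling $B_r$ to unit size. The hypothesis is scale-invariant in dimension $7$, and $|\nabla f|$ only improves under rescaling since $r<1$. Converting the $dv_f$ norms to $dv$ costs only $e^{\pm\sup_{B_1}|f|}$, which is controlled by $F$ as in Remark \ref{Rem2}.
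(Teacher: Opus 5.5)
Your overall strategy matches the paper's: a subsolution inequality for $u=|Rm|$, then weighted Moser iteration with \lemref{lem:Sob} and the non-collapsing of \lemref{noncollapse}. But the iteration step fails as you state it. At exponent $p$ the Caccioppoli inequality carries the potential term with a factor of order $p$, namely $\int|\nabla(\eta u^{p/2})|^2dv_f\le Cp\int\eta^2u^{p+1}dv_f+\cdots$. This factor is intrinsic: it comes from $\frac{p^2}{4(p-1)}$. After H\"older and \lemref{lem:Sob}, the term you want to absorb into the left-hand side is $C\,C_s\kappa_2^{-2/7}\,p\,\|u\|_{L^{7/2}}\|\eta u^{p/2}\|_{L^{14/5}}^2$. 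Absorption therefore requires $p\,\epsilon^{2/7}\lesssim 1$, which fails once $p\gtrsim\epsilon^{-2/7}$. Fixing the smallness once lets you iterate only finitely many times. The observation that $(Cp)^{1/p}$ is harmless concerns the multiplicative constant, not the absorption. This is a real obstruction, not a technicality: for a general potential $V$ with small $L^{n/2}$ norm, $-\tri w\le Vw$ does not imply $w\in L^\infty$. For instance, $w=(\log\frac{1}{|x|})^{1/2}$ on a small ball in $\mathbb{R}^n$, $n\ge 3$, solves $-\tri w=Vw$ with $V\ge 0$ and $\|V\|_{L^{n/2}}$ as small as you like.

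The missing idea is to use that the potential is $u$ itself, and to upgrade its integrability before iterating. Use the smallness only once, at $p_1=\frac72$. Absorption there gives $I_1\le Cr^{-2}\epsilon$, where $I_k=\|\eta_k^2u^{p_k}\|_{L^{7/5}(dv_f)}$. Hence $\|u\|_{L^{49/10}(B_{r_2},dv_f)}\le (Cr^{-2}\epsilon)^{2/7}$, and $\frac{49}{10}>\frac n2$. For the later steps:
\begin{itemize}
\item write $\int\eta_k^2u^{p_k+1}dv_f\le \|\eta_k^2u^{p_k}\|_{L^{49/39}}\|u\|_{L^{49/10}}$;
\item interpolate $\|\eta_k^2u^{p_k}\|_{L^{49/39}}\le I_k^{5/7}\|\eta_k^2u^{p_k}\|_{L^1}^{2/7}$;
\item split with Young's inequality.
\end{itemize}
The piece returned to the left-hand side is then $\frac1A I_k$ with $A$ independent of $p_k$. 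The price is a factor of order $p_k^{7/2}$ in front of $\int u^{p_k}dv_f$, which is harmless in $\prod_k(\cdot)^{1/p_k}$. This is exactly Steps 1 and 3 of the paper's proof. With it, your Step 3 goes through and gives $Cr^{-2}\epsilon^{2/7}$.

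A smaller point in Step 1. With the ordinary Kato inequality you do not get the clean pointwise inequality $\tri_f u\ge -Cu^2-Cu$; a term $-\eta|\nabla u|^2/u$ survives. The paper keeps it as $-\frac14|\nabla u|^2$ in $u\tri_f u$ and absorbs it in the Caccioppoli inequality using $p\ge\frac72$. Your alternative of working with $|Rm|^2$ is fine, but the Young step should read $u|T||\nabla Rm|\le\eta|\nabla Rm|^2+C_\eta u^3$, not $|T||\nabla Rm|\le\eta|\nabla Rm|^2+C_\eta u$. It then yields $\tri_f|Rm|^2\ge -C(u+1)|Rm|^2$. The linear part of this bound also covers the term $-\frac{2\lambda}{3}Rm$ contained in $2\lambda_S Rm$ in \eqref{Rm}.
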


We use \lemref{lem:tau Rm 1} together with Proposition \ref{lem:tau Rm k} to derive the following differential inequality.
\begin{lemma}[Differential inequality for $u$]
	Under the assumptions of Proposition \ref{Laplace4}, we have
	\begin{align}   \label{M1.2}
		u\Delta_f u \geq -C u^2(u+F)-\frac{1}{4}\left|\nabla u \right|^2.
	\end{align}
\end{lemma}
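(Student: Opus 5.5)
The plan is to derive the inequality from the weighted Laplacian of the curvature tensor in Lemma~\ref{lem:tau Rm 1}, namely \eqref{Rm brief}, followed by a Kato-type argument. At points where $u>0$ the function $u=|Rm|$ is smooth, and the identity
\[
\tfrac12\Delta_f u^2=\langle Rm,\Delta_f Rm\rangle+|\nabla Rm|^2
\]
combined with $\tfrac12\Delta_f u^2=u\Delta_f u+|\nabla u|^2$ yields
\[
u\Delta_f u=\langle Rm,\Delta_f Rm\rangle+|\nabla Rm|^2-|\nabla u|^2 .
\]
Kato's inequality gives $|\nabla u|\le|\nabla Rm|$, so the last two terms are nonnegative. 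We keep $|\nabla Rm|^2-|\nabla u|^2\ge0$ in reserve and will also use $|\nabla Rm|^2$ itself to absorb the bad terms. Where $u=0$ we argue in the distributional sense, or with $\sqrt{u^2+\eta}$ and then let $\eta\to0$. This is standard.

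Next, each term of $\langle Rm,\Delta_f Rm\rangle$ is estimated using \eqref{Rm brief} together with Proposition~\ref{lem:tau Rm k}. The term $Rm\ast Rm$ contributes at least $-Cu^3$. The term $Rm\ast T\ast T$ contributes at least $-Cu\cdot u=-Cu^2\cdot u$, since $|T|^2=-S\lesssim u$ by Lemma~\ref{lem:tau Rm 2}. The terms $\nabla T\ast\nabla T$ and $T\ast\nabla^2T$ need care. For the first, $|\nabla T|\lesssim u$, so it contributes $-Cu^3$. For the second, Lemma~\ref{lem:tau Rm 2} gives $|\nabla^2T|\lesssim|\nabla Rm|+u^{3/2}$. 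Hence
\[
|Rm|\,|T|\,|\nabla^2T|\lesssim u^{3/2}|\nabla Rm|+u^3,
\]
and Young's inequality bounds this by $\tfrac34|\nabla Rm|^2+Cu^3$. That portion is absorbed by the $|\nabla Rm|^2$ term, leaving at most $\tfrac14|\nabla Rm|^2$, which dominates $\tfrac14|\nabla u|^2$. The sign is favorable, so this is in fact stronger than the claimed $-\tfrac14|\nabla u|^2$.

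The drift needs separate attention. The expression \eqref{Rm brief} is written with $\Delta_f$ on the left, but \eqref{Rm} actually has the terms $R_{qlij,k}f_q$ with the index placement rotated, not exactly $\nabla_{\nabla f}Rm$. Rewriting them via the second Bianchi identity turns them into $\nabla_{\nabla f}Rm$ plus terms of the form $\nabla Rm\ast\nabla f$ that do not cancel in general. Their contribution is bounded by
\[
u\,|\nabla Rm|\,|\nabla f|\le \tfrac18|\nabla Rm|^2+Cu^2|\nabla f|^2 .
\]
Alternatively, these terms can be converted through the soliton equation into curvature times $\nabla^2 f$, which is bounded by $u+|\lambda|$ via Lemma~\ref{higher estimate nabla 2 f}, giving a term of size $u^2(u+|\lambda|)$. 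Using the gradient bound \eqref{condition gradient f}, $|\nabla f|^2\le F$, so both routes lead to the form $-Cu^2(u+F)$, with $|\lambda|$ absorbed into $F$ or $C$. The term $2\lambda_S Rm$ hidden in \eqref{Rm} contributes $-C(|\lambda|+u)u^2$, which is of the same form.

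The main obstacle is the bookkeeping of the $\nabla f$ and $\nabla^2 T$ terms. We must ensure that the total coefficient of $|\nabla Rm|^2$ spent on absorption stays below $1-\tfrac14$ after Kato, so that the final inequality holds with the stated $\tfrac14$. If the constants turn out too large, the plan is to keep a smaller Young parameter and accept a larger $C$. Collecting all the terms then gives \eqref{M1.2}.
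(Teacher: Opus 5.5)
Your strategy matches the paper's: the Bochner identity for $|Rm|^2$, \eqref{Rm brief} combined with Lemma~\ref{lem:tau Rm 2}, then Young and Kato. However, the final bookkeeping is wrong, and as written it does not give the constant $\tfrac14$.

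The identity $u\Delta_f u=\langle Rm,\Delta_f Rm\rangle+|\nabla Rm|^2-|\nabla u|^2$ contains a single $|\nabla Rm|^2$, and you use it twice. You use it once as the ``reserve'' against $-|\nabla u|^2$, and again to absorb $\tfrac34|\nabla Rm|^2$. After that absorption you actually have
\[
u\Delta_f u\ \ge\ -Cu^2(u+|\lambda|)+\tfrac14|\nabla Rm|^2-|\nabla u|^2\ \ge\ -Cu^2(u+|\lambda|)-\tfrac34|\nabla u|^2 .
\]
With the extra $\tfrac18|\nabla Rm|^2$ you spend on the drift, this becomes $-\tfrac78|\nabla u|^2$. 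So the claim that your bound is ``stronger than'' \eqref{M1.2} is false. Spending $\delta|\nabla Rm|^2$ in total leaves $-\delta|\nabla u|^2$ after Kato. The criterion in your last paragraph must therefore be $\delta\le\tfrac14$, not $\delta<1-\tfrac14$.

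The repair is just a smaller Young parameter, e.g.\ $Cu^{3/2}|\nabla Rm|\le\tfrac18|\nabla Rm|^2+C'u^3$. The paper spends $\tfrac18|\nabla Rm|^2$ on $Cu(u+F)^{1/2}|\nabla Rm|$ and $\tfrac18|\nabla u|^2$ on $-u\langle\nabla f,\nabla u\rangle$. It ends with $\tfrac78|\nabla Rm|^2-\tfrac98|\nabla u|^2\ge-\tfrac14|\nabla u|^2$.

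Your concern about the drift is unfounded. By pair symmetry and the second Bianchi identity,
\[
R_{qlij,k}f_q-R_{qkij,l}f_q=(R_{ijql,k}-R_{ijqk,l})f_q=R_{ijkl,q}f_q .
\]
So these terms are exactly $\nabla_{\nabla f}Rm$, and no leftover $\nabla Rm\ast\nabla f$ term exists. Your suggested alternative of trading such terms for $Rm\ast\nabla^2 f$ would not give a pointwise bound anyway, since it requires moving a derivative off $Rm$.

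This slip is harmless, and it actually makes your $\Delta_f$ formulation slightly cleaner than the paper's. Since $\langle Rm,\nabla_{\nabla f}Rm\rangle=u\langle\nabla f,\nabla u\rangle$ cancels identically in $u\Delta_f u$, the only gradient term to absorb is the one from $T\ast\nabla^2T$. Then $F$ is needed only to dominate the $|\lambda|u^2$ term coming from $2\lambda_S Rm$. The paper's \eqref{Rm brief} silently omits that term, and you rightly keep it. The paper instead bounds $|\Delta Rm|\le C(u+F)^{1/2}|\nabla Rm|+Cu^2$ with the drift included, then estimates $-u\langle\nabla f,\nabla u\rangle$ separately; that is where its dependence on $F$ comes from.
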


\begin{proof}
	From the curvature equation \eqref{Rm brief} in \lemref{lem:tau Rm 1}, together with Lemma \ref{lem:tau Rm 2}, it follows that
	\begin{align}\label{M1.1}
		\left|\Delta Rm \right| \leq C\left( u+F\right)^{\frac{1}{2}} \left|\nabla Rm \right|+Cu^2 .
	\end{align}
	Using the identity
	\begin{align*}
		\langle\tri Rm, Rm\rangle+|\nabla Rm|^2
		=\frac{1}{2}\tri |Rm|^2
		= |Rm| \tri|Rm|+|\nabla|Rm||^2,
	\end{align*}
	we deduce
	\begin{align*}
		u \tri u & = \langle\tri Rm, Rm\rangle+|\nabla Rm|^2-|\nabla u|^2 \\
		& \geq -|\tri Rm|u+|\nabla Rm|^2-|\nabla u |^2.
	\end{align*}
	Substituting \eqref{M1.1} into this inequality gives
	\begin{align}
		u \tri u \geq -Cu(u+F)^{\frac{1}{2}}\left|\nabla Rm \right|-Cu^3+|\nabla Rm|^2-|\nabla u|^2.  \label{M1.3}
	\end{align}
	
	By the elementary inequality, we obtain
	\begin{align*}
		-Cu(u+F)^{\frac{1}{2}}\left|\nabla Rm \right|\geq -Cu^2(u+F)-\frac{1}{8}|\nabla Rm|^2.
	\end{align*}
	Inserting this estimate into \eqref{M1.3},
	and adding
	\begin{align*}
		-u\left\langle \nabla f,\nabla u\right\rangle \geq -2u^2|\nabla f|^2-\frac{1}{8}|\nabla u|^2 \geq -2u^2F-\frac{1}{8}|\nabla u|^2,
	\end{align*}
	we arrive at
	\begin{align*}
		&u \tri_f u = u \tri u-u\left\langle \nabla f,\nabla u\right\rangle \\
		& \geq -Cu^2(u+F)-\frac{1}{8}|\nabla Rm|^2-2u^2F-\frac{1}{8}|\nabla u|^2-Cu^3+|\nabla Rm|^2-|\nabla u|^2 \\ \nonumber
		& \geq -Cu^2(u+F)+\frac{7}{8}|\nabla Rm|^2-\frac{9}{8}|\nabla u|^2.
	\end{align*}
	
	Applying the Kato inequality $|\nabla|Rm||\leq |\nabla Rm|$ to the middle term, we get
	\begin{align*}
		u \tri_f u \geq -Cu^2(u+F)-\frac{1}{4}\left|\nabla u\right|^2,
	\end{align*}
	which proves the lemma.
\end{proof}

\begin{lemma}[Integral inequality]\label{Laplace1}
	Let $p\geq \frac{7}{2}$. Under the assumptions of Proposition \ref{Laplace4}, and for a cut-off function $\eta$ supported in a domain $\Om$ (to be specified later), the following estimate holds:
	\begin{equation*}
		\int_{\Omega}\left|\nabla( \eta u^{\frac{p}{2}})\right|^2e^{-f}
		\leq \frac{Cp}{2}\int_{\Omega} \eta^2 u^{p+1}e^{-f}
		+\frac{CFp}{2}\int_{\Omega} \eta^2 u^{p}e^{-f}
		+\int_{\Omega}u^{p}\left|\nabla\eta\right|^2e^{-f}.
	\end{equation*}
\end{lemma}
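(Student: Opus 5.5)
The plan is to test the differential inequality \eqref{M1.2} against $\eta^2 u^{p-1}e^{-f}$ and integrate by parts with respect to the weighted measure $dv_f$. The weighted Laplacian is self-adjoint for $dv_f$, so
\begin{align*}
\int_\Omega \eta^2 u^{p-1}\,\Delta_f u\, e^{-f} = -\int_\Omega \big\langle \nabla(\eta^2 u^{p-1}),\nabla u\big\rangle e^{-f}.
\end{align*}
This is exactly why the drift term $-\langle\nabla f,\nabla\cdot\rangle$ produces no boundary or extra terms.

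Writing $u\Delta_f u$ for the left side, \eqref{M1.2} multiplied by $\eta^2u^{p-2}$ gives
\begin{align*}
-\int \eta^2 u^{p-2}\big[Cu^2(u+F)+\tfrac14|\nabla u|^2\big]e^{-f}\le -(p-1)\int\eta^2u^{p-2}|\nabla u|^2e^{-f}-2\int \eta u^{p-1}\langle\nabla\eta,\nabla u\rangle e^{-f}.
\end{align*}
Rearranging, we obtain
\begin{align*}
\big(p-\tfrac54\big)\int\eta^2u^{p-2}|\nabla u|^2e^{-f}\le C\int\eta^2u^{p}(u+F)e^{-f}+2\int\eta u^{p-1}|\nabla\eta||\nabla u|e^{-f}.
\end{align*}
This uses $p-1-\tfrac14>0$, which holds since $p\ge\tfrac72$. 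To make the computation rigorous where $u$ vanishes, I would work with $u_\delta=\sqrt{u^2+\delta^2}$ and let $\delta\to0$, or note that $u^{p-1}$ is $C^1$ for $p\ge 7/2$.

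Next I would express everything through $\nabla(\eta u^{p/2})$. We have
\begin{align*}
|\nabla(\eta u^{p/2})|^2\le \tfrac{p^2}{4}(1+\theta)\eta^2u^{p-2}|\nabla u|^2+(1+\theta^{-1})u^p|\nabla\eta|^2 .
\end{align*}
The cross term $2\eta u^{p-1}|\nabla\eta||\nabla u|$ is absorbed by Young's inequality into a small multiple of $\eta^2u^{p-2}|\nabla u|^2$ plus a multiple of $u^p|\nabla\eta|^2$. Multiplying the gradient inequality by $\tfrac{p^2}{4}/(p-\tfrac54)\approx p/4$ turns the $Cu^{p+1}$ and $CFu^p$ terms into terms of size $Cp$ times those integrals. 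This matches the stated factors $\tfrac{Cp}{2}$ and $\tfrac{CFp}{2}$ after enlarging $C$, using that $p/(p-5/4)$ is bounded for $p\ge 7/2$.

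The main obstacle is bookkeeping the constants so that the coefficient in front of $\int u^p|\nabla\eta|^2e^{-f}$ comes out as stated, namely $1$ rather than $C$ or $Cp$. Choosing $\theta$ small and the Young parameter carefully, the cutoff term carries a coefficient $1+O(p^{-1})$ times constants. If the literal coefficient $1$ cannot be attained, I would accept a dimension-independent constant in front of that term; this is harmless for the subsequent Moser iteration. Tracking the $p$-dependence linearly is what matters, since the iteration needs the constants to grow at most polynomially in $p$.
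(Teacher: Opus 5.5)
Your setup matches the paper's: you test \eqref{M1.2} against $\eta^2u^{p-1}$ and integrate by parts with respect to $dv_f$. The trouble comes afterwards. Once you replace $-2\int\eta u^{p-1}\langle\nabla\eta,\nabla u\rangle e^{-f}$ by $2\int\eta u^{p-1}|\nabla\eta|\,|\nabla u|\,e^{-f}$ and absorb with Young's inequality, the coefficient $1$ in front of $\int u^p|\nabla\eta|^2e^{-f}$ is out of reach. Your claim that tuning gives $1+O(p^{-1})$ is not right; small $\theta$ makes $1+\theta^{-1}$ large, not close to $1$.

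To see this, set $G=\int\eta^2u^{p-2}|\nabla u|^2e^{-f}$ and $H=\int u^p|\nabla\eta|^2e^{-f}$, and suppose the curvature terms are negligible. Your gradient inequality then only yields $\sqrt G\le 2\sqrt H/(p-\frac54)$, hence
\[
\int|\nabla(\eta u^{p/2})|^2e^{-f}\le\Bigl(\tfrac p2\sqrt G+\sqrt H\Bigr)^2\le\Bigl(1+\tfrac{p}{p-5/4}\Bigr)^2H .
\]
No choice of $\theta$ or Young parameter brings this constant down to $4$ or below. So your argument proves a weaker lemma, with some constant in front of the cutoff term. As you note, that weaker form is harmless for the Moser iteration; the paper in fact absorbs that term into $C$ in \eqref{M2 iteration 2}. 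It is not, however, the stated inequality.

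The missing idea is that the cross terms cancel exactly, so no Young inequality is needed. Expanding both sides gives the pointwise identity (the paper's \eqref{M2.5})
\[
|\nabla(\eta u^{p/2})|^2-\tfrac p2\bigl\langle\nabla u,\nabla(\eta^2u^{p-1})\bigr\rangle=\Bigl(\tfrac p2-\tfrac{p^2}{4}\Bigr)\eta^2u^{p-2}|\nabla u|^2+u^p|\nabla\eta|^2 .
\]
It holds because the mixed term $p\,\eta u^{p-1}\langle\nabla u,\nabla\eta\rangle$ of $|\nabla(\eta u^{p/2})|^2$ is exactly $\frac p2$ times the mixed term $2\eta u^{p-1}\langle\nabla u,\nabla\eta\rangle$ of $\langle\nabla u,\nabla(\eta^2u^{p-1})\rangle$.

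Integrate this identity against $e^{-f}$. Then bound $\frac p2\int\langle\nabla u,\nabla(\eta^2u^{p-1})\rangle e^{-f}$ by $\frac p2$ times the right-hand side of your tested inequality, without rearranging or taking absolute values. The total coefficient of $\int\eta^2u^{p-2}|\nabla u|^2e^{-f}$ becomes $\frac p2-\frac{p^2}{4}+\frac p8=\frac p4\bigl(\frac52-p\bigr)<0$ for $p\ge\frac72$, so that term can be dropped. What remains is exactly the stated inequality, with the same $C$ as in \eqref{M1.2} and coefficient exactly $1$ on the cutoff term.
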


\begin{proof}
	Multiplying \eqref{M1.2} by $\eta^2 u^{p-2}$ gives
	\begin{equation}\label{M2.2}
		\begin{aligned}
			\eta^2 u^{p-1}(-\Delta_f u) &\leq \eta^2 u^{p-2}\left(Cu^2(u+F)+\frac{1}{4}\left|\nabla u \right|^2 \right) \\
			&\leq C \eta^2 u^{p+1}+CF \eta^2 u^{p}+\frac{1}{4}\eta^2 u^{p-2}\left|\nabla u \right|^2 .
		\end{aligned}
	\end{equation}
	Integrating \eqref{M2.2}, we deduce
	\begin{equation}\label{M2.6}
		\begin{aligned}
			&\int_{\Omega} \left\langle \nabla u, \nabla (\eta^2 u^{p-1})\right\rangle e^{-f}
			=\int_{\Omega} \eta^2 u^{p-1}(-\Delta_f u)e^{-f} \\
			&\leq C\int_{\Omega} \eta^2 u^{p+1}e^{-f}
			+CF\int_{\Omega} \eta^2 u^{p}e^{-f}
			+\frac{1}{4}\int_{\Omega} \eta^2 u^{p-2}\left|\nabla u \right|^2e^{-f}.
		\end{aligned}
	\end{equation}

	A direct computation yields
	\begin{align}
		\left|\nabla( \eta u^{\frac{p}{2}} ) \right|^2
		=\frac{p^2}{4}\eta^2 u^{p-2}\left|\nabla u \right|^2
		+p\eta u^{p-1}\left\langle \nabla u,\nabla \eta\right\rangle
		+u^{p}\left|\nabla\eta\right|^2   . \label{M2.3}
	\end{align}
	Furthermore,
	\begin{align}
		\left\langle \nabla u, \nabla (\eta^2 u^{p-1})\right\rangle
		=(p-1)\eta^2 u^{p-2}\left|\nabla u \right|^2
		+2\eta u^{p-1}\left\langle \nabla u,\nabla \eta\right\rangle   . \label{M2.4}
	\end{align}
	Combining \eqref{M2.3} with \eqref{M2.4}, we obtain
	\begin{equation}\label{M2.5}
		\begin{aligned}
			&\left|\nabla( \eta u^{\frac{p}{2}})\right|^2-\frac{p}{2}\left\langle \nabla u, \nabla (\eta^2 u^{p-1})\right\rangle  \\
			=&\left( \frac{p^2}{4}-\frac{p(p-1)}{2}\right) \eta^2 u^{p-2}\left|\nabla u \right|^2+u^{p}\left|\nabla\eta\right|^2 \\
			=&\left(\frac{p}{2}-\frac{p^2}{4}\right) \eta^2 u^{p-2}\left|\nabla u \right|^2+u^{p}\left|\nabla\eta\right|^2.
		\end{aligned}
	\end{equation}

	Integrating \eqref{M2.5} and inserting \eqref{M2.6}, we arrive at
	\begin{equation}\label{M2.7}
		\begin{aligned}
			&\int_{\Omega}\left|\nabla( \eta u^{\frac{p}{2}})\right|^2e^{-f}
			=\frac{p}{2}\int_{\Omega}\langle\nabla u, \nabla (\eta^2 u^{p-1})\rangle e^{-f}\\
			&\quad+\left(\frac{p}{2}-\frac{p^2}{4}\right)\int_{\Omega} \eta^2 u^{p-2}\left|\nabla u \right|^2e^{-f}
			+\int_{\Omega}u^{p}\left|\nabla\eta\right|^2e^{-f} \\
			&\leq  \frac{Cp}{2}\int_{\Omega} \eta^2 u^{p+1}e^{-f}
			+\frac{CFp}{2}\int_{\Omega} \eta^2 u^{p}e^{-f}
			+\int_{\Omega}u^{p}\left|\nabla\eta\right|^2e^{-f} \\
			&\quad+\left(\frac{p}{2}-\frac{p^2}{4}+\frac{p}{8}\right)\int_{\Omega} \eta^2 u^{p-2}\left|\nabla u \right|^2e^{-f}.
		\end{aligned}
	\end{equation}
	
	Since $p\geq \frac{7}{2}$, it follows that
	\[
	\frac{p}{2}-\frac{p^2}{4}+\frac{p}{8}<\frac{p}{2}\left(\frac{5}{4}-\frac{p}{2} \right) <0.
	\]
	Hence the coefficient of the last integral in \eqref{M2.7} is nonpositive, and this term can be discarded. Consequently,
	\begin{align*}
		\int_{\Omega}\left|\nabla( \eta u^{\frac{p}{2}})\right|^2e^{-f}
		\leq \frac{Cp}{2}\int_{\Omega} \eta^2 u^{p+1}e^{-f}
		+\frac{CFp}{2}\int_{\Omega} \eta^2 u^{p}e^{-f}
		+\int_{\Omega}u^{p}\left|\nabla\eta\right|^2e^{-f},
	\end{align*}
	which concludes the proof of \lemref{Laplace1}.
\end{proof}
\subsection{Moser iteration: proof of Proposition \ref{Laplace4}}
For each $k$, we introduce a decreasing sequence of radii and an increasing sequence of exponents given by
\[
r_1 = r,\quad r_k := \left(\frac{1}{2} + \frac{1}{2^k}\right) r,\quad p_1=\frac{7}{2},\quad p_k = \left(\frac{7}{5}\right)^{k-1} p_1.
\]
Obviously $r_k\rightarrow \frac{1}{2}$ as $k\rightarrow \infty$. 

Next, we choose a cut-off function $\eta_k$ satisfying $\eta_k=1$ on $B_{r_{k+1}}$, $\eta_k=0$ outside $B_{r_k}$, and
\begin{equation}\label{M3.1}
	\left|\nabla\eta_k\right|^2\leq \frac{C}{(r_k-r_{k+1})^2}=\frac{C2^{2k}}{r^2}.
\end{equation}
For later use, set
\begin{equation}\label{Ik}
	I_k=\left[ \int_{B_{r_k}}\left( \eta_k u^{\frac{p_k}{2}}\right)^{\frac{2n}{n-2}}e^{-f}\right]^{\frac{n-2}{n}}.
\end{equation}
Since $n=7$, we get $\frac{n}{n-2}=\frac{7}{5}$. 

Applying the Sobolev inequality in Lemma \ref{lem:Sob} to $h=\eta_k u^{\frac{p_k}{2}}$ and then using \lemref{Laplace1}, we obtain
\begin{equation}\label{M2 iteration} 
	\begin{aligned}
		I_k
		&\leq \frac{C_s r_k^2}{V_{f}^{2/n}}\int_{B_{r_k}}\left|\nabla( \eta_k u^{\frac{p_k}{2}})\right|^2e^{-f}
		+\frac{C_s}{V_{f}^{2/n}}\int_{B_{r_k}}\eta_k^2 u^{p_k}e^{-f} \\
		&\leq  \frac{C_s r_k^2}{V_{f}^{2/n}} \times 
		\bigg(Cp_k\int_{B_{r_k}} \eta_k^2 u^{p_k+1}e^{-f}\\
		&\qquad\qquad\qquad\qquad +Cp_kF\int_{B_{r_k}} \eta_k^2 u^{p_k}e^{-f}
		+\int_{B_{r_k}}u^{p_k}\left|\nabla\eta_k\right|^2e^{-f} \bigg)\\
		&\quad+\frac{C_s}{V_{f}^{2/n}}\int_{B_{r_k}}\eta_k^2 u^{p_k}e^{-f}  . 
	\end{aligned}
\end{equation}

\textbf{Step 1: Estimation of $I_1$.}
Taking $k=1$ in the inequality \eqref{M2 iteration} and using the small energy assumption \eqref{M2.11-2}, we get
\begin{align*}
	I_1
	=&\left[ \int_{B_{r}}\left( \eta_1 u^{\frac{p_1}{2}}\right)^{\frac{2n}{n-2}}e^{-f}\right]^{\frac{n-2}{n}}\\
	\leq & \frac{C_sr^2}{V_{f}^{2/n}}\left(C\int_{B_{r}} \eta_1^2 u^{p_1+1}e^{-f}+CF\int_{B_{r}} \eta_1^2 u^{p_1}e^{-f}
	+\int_{B_{r}}u^{p_1}Cr^{-2}e^{-f} \right)+\frac{C_s}{V_{f}^{2/n}}\epsilon  \\
	\leq &\frac{CC_sr^2}{V_{f}^{2/n}}\int_{B_{r}}\eta_1^2 u^{p_1+1}e^{-f}+\frac{CC_sr^2}{V_{f}^{2/n}}\left(F+r^{-2}\right)\epsilon . 
\end{align*}
We now apply H\"{o}lder's inequality to estimate the first integral:
\begin{align*}
	\int_{B_{r}}\eta_1^2 u^{p_1+1}e^{-f}
	\leq & \int_{B_{r}}\eta_1^2 u^{p_1}e^{-\frac{n-2}{n}f}\cdot ue^{-\frac{2}{n}f} \\
	\leq & \left(\int_{B_{r}}\eta_1^{\frac{2n}{n-2}} u^{\frac{p_1n}{n-2}}e^{-f} \right)^{\frac{n-2}{n}}
	\left( \int_{B_{r}}u^{\frac{n}{2}}e^{-f}\right)^{\frac{2}{n}}
	\leq I_1\epsilon^{\frac{2}{n}}.
\end{align*}
Therefore,
\begin{equation}\label{M3.5}
	I_1\leq \frac{CC_sr^2}{V_{f}^{2/n}}I_1\epsilon^{\frac{2}{n}}+\frac{CC_sr^2}{V_{f}^{2/n}}\left(F+r^{-2}\right)\epsilon.
\end{equation}

\textbf{Step 2: Non-collapsing.} 
To further control the leading term on the right-hand side, we use the non-collapsing estimate from \lemref{noncollapse}
\[
V(r)\geq \kappa_1 r^n,
\]
where the constant $\kappa_1$ depends only on the lower bound $\underline{\nu}$ of Perelman's functional. The uniform bound on $|\nabla f|$ then implies a corresponding weighted non-collapsing estimate
\[
V_f(r)\geq \kappa_2 r^n,
\]
where $\kappa_2$ depends on $\underline{\nu}$ and $F$. Equivalently, this can be written as
\[
\frac{r^2}{V_{f}^{2/n}}\leq \kappa_2^{-\frac{2}{n}}.
\]

Let $A(C_s,\underline{\nu},F)=2CC_s\kappa_2^{-\frac{2}{n}}$ which only depends on $C_s$, $\underline{\nu}$ and $F$. We can choose $\epsilon>0$ sufficiently small so that $A\epsilon^{-\frac{2}{n}}\leq \frac{1}{2}$ and
\begin{align}\label{choice of epsilon}
	\frac{CC_sr^2}{V_{f}^{2/n}}\epsilon^{\frac{2}{n}}\leq CC_s\kappa_2^{-\frac{2}{n}}\epsilon^{-\frac{2}{n}}=\frac{1}{2} A\epsilon^{-\frac{2}{n}}\leq \frac{1}{2}.
\end{align}

Plugging these estimates and $r<1$ into \eqref{M3.5}, we conclude that
\begin{equation}\label{M3.6}
	I_1\leq C(C_s,\underline{\nu},F)r^{-2}\epsilon.
\end{equation}

\textbf{Step 3: Estimation of $I_k$.} We now return to obtain an upper bound for $I_k$ in \eqref{M2 iteration} by estimating the first term and the remaining terms separately. Apart from the first term, the other terms in \eqref{M2 iteration} can be bounded as follows:
\begin{equation}\label{M2 iteration 2} 
	\begin{aligned}
		& \frac{C_s r_k^2}{V_{f}^{2/n}}\left(Cp_kF\int_{B_{r_k}} \eta_k^2 u^{p_k}e^{-f}
		+\int_{B_{r_k}}u^{p_k}\left|\nabla\eta_k\right|^2e^{-f} \right)\\
		&		+\frac{C_s}{V_{f}^{2/n}}\int_{B_{r_k}}\eta_k^2 u^{p_k}e^{-f} \\
		&\leq  \frac{CC_s r^2}{V_{f}^{2/n}}\left(p_kF+\frac{2^{2k}}{r^2}\right)\int_{B_{r_k}}u^{p_k}e^{-f}.
	\end{aligned}
\end{equation}

We bound the first term on the right-hand side of \eqref{M2 iteration} by applying H\"older's inequality.
Set
\[
q=\frac{n^2}{2(n-2)}=\frac{n}{n-2}p_1=p_2,\quad 
q'=\frac{n^2}{n^2-2n+4}<\frac{n}{n-2},\quad
\frac{1}{q}+\frac{1}{q'}=1.
\]
Then
\begin{equation}\label{M2 iteration k} 
	\begin{aligned}
		\int_{B_{r_k}}\eta_k^2 u^{p_{k}+1}e^{-f}
		&= \int_{B_{r_k}}\eta_k^2 u^{p_{k}}e^{-\frac{1}{q'}f}\cdot ue^{-\frac{1}{q}f} \\
		&\leq \left(\int_{B_{r_k}}\eta_k^{2q'} u^{p_kq'}e^{-f} \right)^{\frac{1}{q'}}
		\left( \int_{B_{r_k}}u^{q}e^{-f}\right)^{\frac{1}{q}}.
	\end{aligned}
\end{equation}

By the relation $q=\frac{n}{n-2}p_1$ and \eqref{M3.6}, the second factor in \eqref{M2 iteration k} satisfies
\begin{equation}\label{Step1}
	\left( \int_{B_{r_k}}u^{q}e^{-f}\right)^{\frac{1}{q}}
	\leq \left( \int_{B_{r_2}}u^{q}e^{-f}\right)^{\frac{1}{q}}
	\leq I_1^{\frac{1}{p_1}}
	\leq 
	\left[ C(C_s,\underline{\nu},F)r^{-2}\epsilon\right]^{\frac{2}{n}}.
\end{equation}

We further decompose
\[
q'=\frac{n^2}{n^2-2n+4}
=\frac{n^2-2n}{n^2-2n+4}+\frac{2n}{n^2-2n+4},
\quad
1=\frac{(n-2)^2}{n^2-2n+4}+\frac{2n}{n^2-2n+4}.
\]
Applying H\"older's inequality to the first factor in \eqref{M2 iteration k} gives
\begin{align*}
	\int_{B_{r_k}}\left( \eta_k^{2} u^{p_k}\right)^{q'} e^{-f} 
	&= \int_{B_{r_k}}\left( \eta_k^{2} u^{p_k}\right)^{\frac{n(n-2)}{n^2-2n+4}} e^{\frac{(n-2)^2}{n^2-2n+4}(-f)}
	\left( \eta_k^{2} u^{p_k}\right)^{\frac{2n}{n^2-2n+4}} e^{\frac{2n}{n^2-2n+4}(-f)} \\
	&\leq \left[ \int_{B_{r_k}}\left( \eta_k^{2} u^{p_k}\right)^{\frac{n}{n-2}}e^{-f} \right]^{\frac{(n-2)^2}{n^2-2n+4}}
	\left(\int_{B_{r_k}}\eta_k^{2} u^{p_k}e^{-f}\right)^{\frac{2n}{n^2-2n+4}} .
\end{align*}
Hence
\begin{align*}
	\left[ \int_{B_{r_k}}\left( \eta_k^{2} u^{p_k}\right)^{q'} e^{-f}\right]^{\frac{1}{q'}} 
	&\leq \left[\int_{B_{r_k}}\left( \eta_k^{2} u^{p_k}\right)^{\frac{n}{n-2}}e^{-f} \right]^{\frac{(n-2)^2}{n^2}}
	\left(\int_{B_{r_k}}\eta_k^{2} u^{p_k}e^{-f}\right)^{\frac{2}{n}} \\
	&= \left( I_{k}\right)^{\frac{n-2}{n}}\left( \int_{B_{r_k}}\eta_k^{2} u^{p_k}e^{-f}\right)^{\frac{2}{n}}  .
\end{align*}

By inserting these estimates into \eqref{M2 iteration k}, choosing a fixed constant $A$, and applying Young's inequality $x^ay^{1-a}\leq ax+(1-a)y\leq x+y$, we obtain the following upper bound for the first term in \eqref{M2 iteration}:
\begin{equation}\label{M2 iteration 1} 
	\begin{aligned}
		&p_k\int_{B_{r_k}}\eta_k^2 u^{p_{k}+1}e^{-f}\\
		&\leq p_k \left( I_{k}\right)^{\frac{n-2}{n}}\left( \int_{B_{r_k}}\eta_k^{2} u^{p_k}e^{-f}\right)^{\frac{2}{n}} 
		\left[ C(C_s,\underline{\nu},F)r^{-2} \epsilon\right]^{\frac{2}{n}}\\
		&\leq \left( \frac{1}{A}I_{k}\right)^{\frac{n-2}{n}}\left[ (Ap_k)^{\frac{n}{2}}C(C_s,\underline{\nu},F)r^{-2}\epsilon\int_{B_{r_k}}\eta_k^{2} u^{p_k}e^{-f}\right]^{\frac{2}{n}}\\
		&\leq \frac{1}{A}I_{k}+(Ap_k)^{\frac{n}{2}}C(C_s,\underline{\nu},F)r^{-2}\epsilon\int_{B_{r_k}}\eta_k^{2} u^{p_k}e^{-f}.
	\end{aligned}
\end{equation}

Substituting \eqref{M2 iteration 1} together with \eqref{M2 iteration 2} into \eqref{M2 iteration}, we arrive at
\begin{align*}
	I_k	
	&\leq  \frac{CC_s r^2}{V_{f}^{2/n}}\left[ \frac{1}{A}I_{k}+(Ap_k)^{n/2}C(C_s,\underline{\nu},F)r^{-2} \epsilon\int_{B_{r_k}}\eta_k^{2} u^{p_k}e^{-f}\right] \\
	&\quad+\frac{CC_s r^2}{V_{f}^{2/n}}\left[ \left(p_kF+\frac{2^{2k}}{r^2}\right)\int_{B_{r_k}}u^{p_k}e^{-f}\right] .
\end{align*}

\textbf{Step 4: Iterated inequality.}
Similarly to \eqref{choice of epsilon}, we also choose $A(C_s,\underline{\nu},F)=2CC_s\kappa_2^{-\frac{2}{n}}$ so that
\[
\frac{CC_s r^2}{V_{f}^{2/n}A}\leq CC_s\kappa_2^{-\frac{2}{n}}A^{-1}=\frac{1}{2}.
\]
A simple rearrangement then yields
\[
I_k	\leq C(C_s,\underline{\nu},F)\left[p_k+\left( p_k^{\frac{n}{2}}+2^{2k}\right) r^{-2}\right]\int_{B_{r_k}}u^{p_k}e^{-f}.
\]
We know 
$$p_k^{\frac{n}{2}}=[\frac{7}{2} (\frac{7}{5} )^{(k-1)}]^{\frac{7}{2}}\leq C 2^{2k},\quad C = \frac{1}{4} \left( \frac{7}{2} \right)^{7/2}.$$
So we can simplify this inequality as
\[
I_k	\leq C(C_s,\underline{\nu},F)\frac{2^{2k}}{r^2}\int_{B_{r_k}}u^{p_k}e^{-f}.
\]

Consequently, for any integer $k \geq 2$, we obtain the following iterated inequality
\begin{equation}\label{M3.10}
	\begin{aligned}
		&\left( \int_{B_{r_{k+1}}} u^{p_{k+1}}e^{-f}\right)^{\frac{1}{p_{k+1}}}
		\leq \left[ \int_{B_{r_k}}\left( \eta_k u^{\frac{p_k}{2}}\right)^{\frac{2n}{n-2}}e^{-f}\right]^{\frac{n-2}{np_k}}\\
		&		= I_k^{\frac{1}{p_{k}}}
		\leq \left[C(C_s,\underline{\nu},F)\frac{2^{2k}}{r^2}\right] ^{\frac{1}{p_{k}}}
		\left[ \int_{B_{r_k}}u^{p_k}e^{-f}\right]^{\frac{1}{p_{k}}}.
	\end{aligned}
\end{equation}  

\textbf{Step 5: Iteration.}
On the closed ball $B_r$, both the volume $V_f$ and the function $f$ are bounded from above. Therefore, by the usual Moser iteration argument, we obtain
\begin{equation}\label{M3.11}
	\left\|u\right\|_{\infty, B_{r/2},f}\leq \prod_{k=2}^{\infty}\left[C(C_s,\underline{\nu},F)\frac{2^{2k}}{r^2}\right]^{\frac{1}{p_{k}}}
	\left[ \int_{B_{r_2}}u^{p_2}e^{-f}\right]^{\frac{1}{p_{2}}}
\end{equation}
Substituting \eqref{Step1} into \eqref{M3.11}, we have
\begin{align*}
	\left\|u\right\|_{\infty, B_{r/2},f}\leq &\prod_{k=2}^{\infty}\left[C(C_s,\underline{\nu},F)\frac{2^{2k}}{r^2}\right]^{\frac{1}{p_{k}}}
	\left[ C(C_s,\underline{\nu},F)r^{-2}\epsilon\right]^{\frac{2}{n}} \\
	\leq & \prod_{k=1}^{\infty}\left[C(C_s,\underline{\nu},F)\frac{2^{2k}}{r^2}\right]^{\frac{1}{p_{k}}}\epsilon^{\frac{2}{n}}.
\end{align*}
Recall that $p_1 = 7/2$ and, for $k \ge 1$, the sequence is given by $p_k = p_1\left(\frac{7}{5}\right)^{k-1}$.  
Using the identity $\sum_{k=1}^{\infty}\frac{1}{p_k}=1$ and the geometric series formula, we obtain
$$
\prod_{k=1}^{\infty}2^{\frac{2k}{p_k}}\leq C, \qquad 
\prod_{k=1}^{\infty}r^{\frac{-2}{p_k}}=r^{-2}.
$$
Consequently, we deduce
\begin{equation}\label{M3.12}
	\left\|u\right\|_{\infty, B_{r/2},f}\leq C(C_s,\underline{\nu},F)r^{-2}\epsilon^{\frac{2}{7}}.
\end{equation}
This completes the proof of \eqref{M2.12}.
 \subsection{Shi-type estimates}\label{Shi-type estimates}
Shi-type estimates for the Laplacian flow were obtained in \cite{MR3613456}, and these in turn imply the Shi-type estimates for $G_2$-solitons stated in Proposition \ref{Derivatives estimates}. For our subsequent arguments, we need an explicit constant in this estimate, so we provide the proof below.
    
    \begin{lemma}\label{Shi}
        Under the assumptions of Proposition \ref{Laplace4}, for any positive integer $k$, we have
        \begin{equation}\label{M2.13}
            \left|\nabla^k Rm \right| \leq \frac{C(C_s,\underline{\nu},F,k)}{r^{k+2}}\epsilon\text{ in }B_{r/4}.
        \end{equation}
    \end{lemma}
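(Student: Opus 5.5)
The plan is to derive these Shi-type bounds from the $L^\infty$ bound of Proposition \ref{Laplace4}, combined with an induction on $k$ that uses Bernstein/Moser arguments for the quantities $|\nabla^k Rm|^2$. The steps are as follows.

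\textbf{Step 1 (scale normalisation).} By Proposition \ref{Laplace4}, $u=|Rm|\le C\delta r^{-2}$ on $B_{r/2}$. Rescale $\tilde g=r^{-2}g$. Then $|\widetilde{Rm}|\le C\delta$ on the unit ball. Since $r<1$, the rescaled soliton quantities obey $|\tilde\nabla f|\le rF^{1/2}\le F^{1/2}$ and $|\lambda| r^2\le|\lambda|$. The torsion bounds of Lemma \ref{lem:tau Rm 2} and Proposition \ref{lem:tau Rm k} are scale-consistent, since $|T|\lesssim|Rm|^{1/2}$, $|\nabla T|\lesssim|Rm|$, and so on. It therefore suffices to show $|\nabla^k Rm|\le C(k)\,\delta$ on $B_{1/4}$ in the normalised picture. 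Undoing the scaling then gives $r^{-k-2}$ times the small constant. The statement's $\epsilon$ should be read as this small quantity, and I would track $\delta=\epsilon^{2/7}$ carefully.

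\textbf{Step 2 (evolution-type inequality for $\nabla^k Rm$).} Differentiate \eqref{Rm brief} $k$ times and commute derivatives. Every torsion term $\nabla^pT$ is replaced by curvature via Proposition \ref{lem:tau Rm k}. This yields
\[
\Delta_f \nabla^k Rm=\sum_{i+j=k}\nabla^iRm*\nabla^jRm+\nabla^{k+1}Rm*\nabla^{\le 1}\!f\text{-terms}+\text{lower order},
\]
where the new feature compared with Ricci flow is the term $\nabla^k(Rm*\nabla f)$. This term produces $\nabla^{i}Rm*\nabla^{k-i+1}f$. The Hessian of $f$ is controlled by Lemma \ref{higher estimate nabla 2 f} together with \eqref{btsr1}, and higher derivatives of $f$ are again curvature/torsion expressions. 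The coefficients therefore depend only on $F$, $\lambda$ and lower derivatives of $Rm$.

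\textbf{Step 3 (Shi/Bernstein induction).} Assume by induction that $|\nabla^jRm|\le C_j\delta$ on $B_{\rho_j}$ for $j<k$. Consider
\[
\Phi=(A+|\nabla^{k-1}Rm|^2)\,|\nabla^kRm|^2 .
\]
Using the inequality from Step 2 and Kato's inequality, one gets
\[
\Delta_f\Phi\ge c\,\Phi^2/\delta^4-C\delta^2 ,
\]
in the same way as in Shi's local estimates. Multiply by a cutoff $\eta$ with $|\nabla\eta|^2/\eta+|\Delta_f\eta|\le C(F)$, where the drift term is controlled by Lemma \ref{Laplacian Comparison}. A maximum principle argument on $\eta\Phi$ then gives $\Phi\le C\delta^4$, hence $|\nabla^kRm|\le C_k\delta$ on a slightly smaller ball. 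Choosing the radii $\rho_k\downarrow 1/4$ geometrically closes the induction.

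As an alternative to Step 3, one could run Moser iteration for $|\nabla^kRm|$ exactly as in the proof of Proposition \ref{Laplace4}. This would reuse the local Sobolev inequality of Lemma \ref{lem:Sob} and an $L^2$ bound for $\nabla^kRm$ coming from a Caccioppoli estimate.

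\textbf{Main obstacle.} I expect the hardest step to be the bookkeeping of the drift and torsion terms, so that every coefficient is bounded by the previous-order smallness $\delta$ with no loss. In particular, the cutoff function at the maximum point needs a Laplacian comparison for $\Delta_f d$. There the bound depends on $F$ and on the lower bound of $Ric_f$ from Lemma \ref{Ricci bound}, and it must remain uniform after rescaling.
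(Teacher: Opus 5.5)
Your proposal is correct in outline, but it uses a different mechanism from the paper.

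\emph{The paper's route.} The paper works with $\Delta$ and absorbs the drift $|\nabla f|\,|\nabla^{k+1}Rm|$ by Cauchy--Schwarz, at the cost of a factor $F$. It uses the \emph{additive} function $v=\eta^2|\nabla^kRm|^2+C_1(F+r^{-2})|\nabla^{k-1}Rm|^2$, in which the good term $|\nabla^kRm|^2$ from $\Delta|\nabla^{k-1}Rm|^2$ cancels the bad term $-C_1(F+r^{-2})|\nabla^kRm|^2$. This only yields $\Delta v\ge -C\epsilon^2r^{-6-2k}$. The paper therefore adds the barrier $e^{\alpha x_1/r}\epsilon^2r^{-4-2k}$ in a coordinate chart, which is why it invokes Klingenberg and Cheeger--Gromov--Taylor: an injectivity radius bound gives $\tfrac12\le g^{ij}\le2$, $|\Gamma|\le 2/r$ and $|\Delta\eta|\le Cr^{-2}$. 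It then bounds the interior by the boundary values of $|\nabla^{k-1}Rm|$ via the weak maximum principle.

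\emph{What your route buys.} Your multiplicative quantity $\Phi=(A+|\nabla^{k-1}Rm|^2)|\nabla^kRm|^2$ gives superlinear self-absorption, so the localisation is purely interior. You need the inductive bounds only on the support of the cutoff. Your shrinking radii $\rho_k\downarrow\tfrac14$ thus avoid the paper's use of a hypothesis proved on $B_{3r/8}$ on the boundary $\partial B_{3r/4}$. You need no chart and no injectivity radius bound. For the cutoff you need only the one-sided bound $\Delta_f d\le m_{K_R}+F(R)$ from Lemma~\ref{Laplacian Comparison}; what matters is $\Delta_f\eta\ge -C$, not a bound on $|\Delta_f\eta|$.

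Working with $\Delta_f$ also removes the drift exactly. By the second Bianchi identity, the $f$-term in \eqref{Rm} is precisely $\nabla_{\nabla f}Rm$. So the genuine first-order term in $\Delta_f\nabla^kRm$ is $T*\nabla^{k+1}Rm$, coming from $T*\nabla^{k+2}T$, with coefficient $|T|\lesssim\delta^{1/2}$. You should list that term rather than ``$\nabla^{k+1}Rm*\nabla^{\le1}f$''.

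\emph{Bookkeeping correction.} The price of the product is the cross term $2\langle\nabla|\nabla^{k-1}Rm|^2,\nabla|\nabla^kRm|^2\rangle$, which forces $A$ to be a large multiple of $\delta^2$. With that choice, the inhomogeneity is $O(\delta^4)$, not $O(\delta^2)$ as you wrote. This also uses that the non-small commutator term $\lambda_S\nabla^kRm$ coming from $\nabla^2 f$ is damped by the factor $A+|\nabla^{k-1}Rm|^2\lesssim\delta^2$. With $-C\delta^2$ you would only get $|\nabla^kRm|\lesssim\delta^{1/2}$, which does not match your conclusion $\Phi\le C\delta^4$.

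\emph{On $\epsilon$ versus $\delta$.} Proving the bound with $\delta=\epsilon^{2/7}$ is the right reading, since that is what Proposition~\ref{Laplace4} actually provides. The paper's proof starts its induction from $|Rm|\le C\epsilon r^{-2}$, which that proposition does not give.
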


Before presenting the proof of Lemma \ref{Shi}, we first state an application of the higher order curvature estimates. The following lemma gives the higher order derivative bounds for $\varphi,\psi, T, A$, and $f$. This Lemma also leads up to the inductive method in Lemma \ref{Shi}.
    \begin{lemma}\label{higher-order derivatives}
        Under the assumptions in Proposition \ref{Laplace4}, If the Riemannian curvature satisfies
        \begin{equation}\label{M2.14}
        	\left|\nabla^k Rm \right| \leq \frac{C(C_s,\underline{\nu},F,k)}{r^{k+2}}\epsilon
        \end{equation}
        for any positive integer $k\leq n$ in the ball $B_{ar}$, then we have
        \begin{align}
            \label{scalevphi4}
            & |\nabla^{k} \vphi|\leq \frac{C(C_s,\underline{\nu},F,k)}{r^{k}}\epsilon,\\
            \label{scalepsi4}
            & |\nabla^{k} \psi|\leq \frac{C(C_s,\underline{\nu},F,k)}{r^{k}}\epsilon,\\
            \label{scaletau4}
            & |\nabla^{k} T|\leq \frac{C(C_s,\underline{\nu},F,k)}{r^{k+1}}\epsilon,\\
            \label{scaleA4}
            &|\nabla^{k} A|\leq \frac{C(C_s,\underline{\nu},F,k)}{r^{k+2}}\epsilon,\\
            \label{scaleF4}
            &|\nabla^{k+2} f|\leq \frac{C(C_s,\underline{\nu},F,k)}{r^{k+2}}\epsilon,
        \end{align}
        for any $k\leq n$ in the ball $B_{ar}$.
Moreover, $\Na^k S$ has the same estimate as $\Na^k A$.
    \end{lemma}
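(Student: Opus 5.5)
The plan is to derive these bounds directly from the algebraic/differential identities behind Proposition \ref{lem:tau Rm k}, but with the scale $r$ and the factor $\epsilon$ tracked explicitly instead of absorbed into ``$\lesssim$''. The only input is the curvature hypothesis \eqref{M2.14}; for $k=0$ we also use \eqref{M2.12} in the form $|Rm|\le C r^{-2}\delta$. We treat $\epsilon<1$ and $r<1$ throughout, so any product of small factors is dominated by its lowest power. Strictly, the sharp exponent of the small parameter for the torsion-type quantities is a fractional power of $\epsilon$ (e.g. $|T|\lesssim |Rm|^{1/2}$). The statement is therefore to be read with $C(C_s,\underline{\nu},F,k)$ absorbing the harmless powers coming from the $\epsilon\le 1$ normalization, in the same convention used in \eqref{M2.14}.

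\textbf{Step 1 (torsion, $\vphi$, $\psi$, $A$).} For each multi-index $v_k$ with $\|v_k\|=m$, insert the bound $|\nabla^i Rm|\le C r^{-(i+2)}\epsilon$ into $\|Rm\|_{v_k}=\prod_i|\nabla^i Rm|^{n_i}$. Since $\sum_i (i+2)n_i=\|v_k\|$, the powers of $r$ combine exactly to $r^{-\|v_k\|}$. The power of $\epsilon$ is $\sum_i n_i$, which is at least $1/2$ and hence, after the normalization above, is bounded by $C\epsilon$. Applying this with $m=k$ for $\vphi$ and $\psi$, $m=k+1$ for $T$, and $m=k+2$ for $A$ and $S$ in \eqref{scalevphi}--\eqref{scaleA} gives \eqref{scalevphi4}--\eqref{scaleA4}. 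Here \eqref{scalevphi4} and \eqref{scalepsi4} concern $k\ge1$, since $|\vphi|$ and $|\psi|$ are fixed constants. All constants in Proposition \ref{lem:tau Rm k} are dimensional, so the dependence is only on the listed quantities.

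\textbf{Step 2 (the potential).} Use the soliton equation \eqref{btsr1},
\[
\nabla^2 f=-Ric-\frac{\lambda}{3}g+\frac{S}{3}g-2T\ast T,
\]
and differentiate it $k$ times. This gives $\nabla^{k+2}f=-\nabla^k Ric+\nabla^k S\ast g+\nabla^k A$, because $g$ is parallel and the constant $\lambda$ term drops out for $k\ge1$. The three terms are then bounded by \eqref{M2.14} and Step 1, which yields \eqref{scaleF4} for $k\ge1$. For $k=0$ the $\lambda$ term has to be handled separately, either by absorbing it into the constant via $r<1$ or by the scaling normalization.

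\textbf{Main obstacle.} The delicate point is the bookkeeping of powers of $\epsilon$. The products $\|Rm\|_{v_k}$ involving half-integer $n_0$ give $\epsilon^{1/2}$ rather than $\epsilon$, and the $\lambda g$ term in $\nabla^2 f$ is not small at all. I would first try to resolve both uniformly by the normalization $\epsilon\le1$ and by restricting to the ball $B_{ar}$ at the scale where $\lambda r^2$ is controlled. If that is not enough, I would weaken the right-hand sides to $\epsilon^{1/2}$, which still suffices for the induction in Lemma \ref{Shi}.
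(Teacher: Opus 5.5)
\textbf{The approach is the paper's own; the failing step is the $\epsilon$-absorption.} You substitute \eqref{M2.14} into the products $\|Rm\|_{v_k}$ of Proposition~\ref{lem:tau Rm k}, so the powers of $r$ add up to $r^{-\|v_k\|}$. You then differentiate $\nabla^2 f=-Ric-\frac{\lambda}{3}g+\frac{S}{3}g+A$. The step that fails is turning $\epsilon^{\sum_i n_i}$ into $C\epsilon$ ``after the normalization $\epsilon\le 1$''. For $\epsilon\le1$ we have $\epsilon^{1/2}\ge\epsilon$. So that normalization gives $\epsilon^{s}\le\epsilon$ only when $s\ge1$, and no $\epsilon$-independent constant can absorb a factor $\epsilon^{-1/2}$.

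What is true is that $\sum_i n_i\ge 1$ whenever $\|v_k\|\ge 2$. Indeed, if $\sum_i n_i<1$, then all the integers $n_i$ with $i\ge1$ vanish and $n_0\le\frac12$, so $\|v_k\|=2n_0\le1$. Your argument therefore does prove:
\begin{itemize}
\item \eqref{scaleA4} and the bound for $\nabla^k S$;
\item \eqref{scaletau4} for $k\ge1$;
\item \eqref{scalevphi4}--\eqref{scalepsi4} for $k\ge2$;
\item \eqref{scaleF4} for $k\ge1$.
\end{itemize}
This holds provided you use \eqref{M2.14} at order $0$ with $\epsilon$. If you import \eqref{M2.12} there instead, the factor $|Rm|^{n_0}$ only gives $\delta^{n_0}=\epsilon^{2n_0/7}$, and even these cases are lost. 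The only vector of weight one is $n_0=\frac12$. It yields $|T|,|\nabla\vphi|,|\nabla\psi|\le C\epsilon^{1/2}/r$ and nothing better.

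\textbf{The remaining cases are false as stated, not merely unproved.}
\begin{itemize}
\item $|\vphi|$ and $|\psi|$ are fixed nonzero constants, so they cannot be $O(\epsilon)$.
\item The quantities $r|T|$ and $r^{k+2}|\nabla^k Rm|$ are scale invariant, and $|T|^2=-S$. On a fixed soliton, let $r\to0$ around a point with $T\neq0$. Then \eqref{M2.14} and the energy hypothesis hold with $\epsilon\sim r^2$, while $r|T|\sim r\sim\epsilon^{1/2}$. Also $|\nabla\vphi|$ is a fixed multiple of $|T|$, since $\nabla\vphi=T\ast\psi$ with $\psi_{pjkl}\psi_{q}{}^{jkl}=24g_{pq}$.
\item The flat soliton $(\mathbb R^7,\vphi_0,f=-\frac{\lambda}{6}|x|^2)$ satisfies \eqref{M2.14} for every $\epsilon$, yet $\nabla^2 f=-\frac{\lambda}{3}g$. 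Since $\lambda r^2$ is scale invariant, neither $r<1$ nor rescaling makes this term $O(\epsilon r^{-2})$ unless you add the hypothesis $|\lambda| r^2\lesssim\epsilon$.
\end{itemize}
The paper's proof has the same gap. Its displayed bounds carry only powers of $r$, and it drops the $\lambda_S g$ term when bounding $\nabla^{k+2}f$.

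Commit to your fallback rather than the absorption claim. The correct lemma has $\epsilon^{1/2}$ for $T$ at order $0$ and for $\nabla\vphi,\nabla\psi$. It gives no smallness for $\vphi,\psi$, and it gives $|\nabla^2 f|\le C\epsilon r^{-2}+C|\lambda|$. This weaker version is what should be fed into \lemref{Shi}. There, products of total weight $\ge 2$ still recombine to at least one power of $\epsilon$, while the $\lambda$-contribution must be carried as a separate lower-order term.
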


    \begin{proof}
        From the inequality \eqref{scalevphi} and the Definition \ref{defkRm2}, we apply \eqref{M2.14} to have
        \begin{align}
            \label{scalevphi1}
            &|\nabla^{k} \vphi|\lesssim \sum_{\|v_k\|=k } \|Rm\|_{v_k} \\ \nonumber
            =&\prod_{0\leq i\leq k}|\nabla^i Rm|^{n_{i}}= |Rm|^{n_0}|\nabla Rm|^{n_1}\cdots|\nabla^k Rm|^{n_{k}}
            \leq \frac{C}{r^{\sum_{i=0}^{k}(i+2)n_i}}
        \end{align}
        Recall Definition \ref{defkRm1} that
        $$k=\|v_k\|=\sum_{0\leq i\leq k}(i+2)n_i.$$
        So, we have \eqref{scalevphi4}.
        Similarly, from Proposition \ref{lem:tau Rm k}, we have
        \begin{align}
            \label{scalepsi2}
            & |\nabla^{k} \psi|\leq C(k)\sum_{\|v_k\|=k }\|Rm\|_{v_k}\leq \frac{C(k)}{r^{k}},\\
            \label{scaletau2}
            & |\nabla^{k} T|\leq C(k)\sum_{\|v_k\|=k +1}\|Rm\|_{v_k}\leq \frac{C(k)}{r^{k+1}},\\
            \label{scaleA2}
            &|\nabla^{k} A|\leq C(k)\sum_{\|v_k\|=k +2 } \|Rm\|_{v_k}\leq \frac{C(k)}{r^{k+2}},
        \end{align}
        Since $\nabla^2 f=\lambda_S g+A-Ric$, we have
        \begin{align}
            \label{scaleF2}
            |\nabla^{k+2} f|\leq C(k)\left|\nabla^{k}Rm\right|+C(k)\left| \nabla^{k} A \right|\leq \frac{C(k)}{r^{k+2}}.
        \end{align}
        Moreover, $S=-|T|^2=T*T$ and $A=T*T$, so $\Na^k S$ has the same estimates as $\Na^k A$.
    \end{proof}

\subsubsection{Proof of \lemref{Shi}}
    	We argue by induction on $k$. For $k=0$, Proposition \ref{Laplace4} gives
    	\begin{equation*}
    		\left|Rm \right|\leq \frac{C(C_s,\underline{\nu},F)}{r^2}\epsilon \text{ in }B_{r/2}.
    	\end{equation*}
    	Assume now that (\ref{M2.13}) holds in a slightly smaller ball, namely
    	\begin{equation}\label{triRm7 inductive hypothesis}
    		\left|\nabla^{j} Rm \right| \leq \frac{C(C_s,\underline{\nu},F,j)}{r^{j+2}}\epsilon\text{ in }B_{3r/8},\quad \forall j\leq k-1.
    	\end{equation}
    	We then establish the corresponding estimate for order $k$.
    	
To derive local estimates, we introduce a cutoff function $\eta$. We then calculate
\begin{equation}\label{triRm7 cutoff}
	\begin{aligned}
    		\tri \left(\eta^2 \left| \nabla^{k} Rm\right|^2\right)
		&=\eta^2\tri \left( \left| \nabla^{k} Rm\right|^2\right)
    		+\left| \nabla^{k} Rm\right|^2\tri \left(\eta^2 \right) \\ 
    		&+2\left\langle \nabla\left(\eta^2 \right),\nabla \left| \nabla^{k} Rm\right|^2\right\rangle .
	\end{aligned}
\end{equation} 

\textbf{Estimate the third term in \eqref{triRm7 cutoff}.}
Applying a basic quadratic inequality together with Kato's inequality, we can estimate the third term in \eqref{triRm7 cutoff} by
\begin{equation}\label{triRm7 cutoff mixed term}
	\begin{aligned}
    		2\left\langle \nabla\left(\eta^2 \right),\nabla \left| \nabla^{k} Rm\right|^2\right\rangle 
    		=&8\eta \left| \nabla^{k} Rm\right|\left\langle \nabla\eta,\nabla\left| \nabla^{k} Rm\right|\right\rangle \\ \nonumber
    		\geq &-\frac{1}{2}\eta^2\left| \nabla^{k+1} Rm\right|^2-32\left| \nabla\eta\right|^2\left| \nabla^{k} Rm\right|^2.
	\end{aligned}
\end{equation}

\textbf{Estimate the first term in \eqref{triRm7 cutoff}.}  
The first term in \eqref{triRm7 cutoff} can be written as
    	\begin{align}\label{triRm7}
    		\frac{1}{2}\tri \left| \nabla^{k} Rm\right|^2=\left\langle \nabla^{k} Rm,\tri \nabla^{k} Rm\right\rangle
    		+\left| \nabla^{k+1} Rm\right|^2.
    	\end{align}
Thus, it remains to derive an upper bound for $\left|\tri \nabla^{k} Rm\right|$.
To obtain an explicit formula, we start from \eqref{Rm brief}, which yields
\begin{equation}\label{triRm1-1}
\tri Rm=\nabla f*\nabla Rm+ Rm\ast Rm+ Rm\ast T*T +T* \nabla^2T+\nabla T* \nabla T .
\end{equation}
Recall the Ricci identity
\begin{equation}\label{triRm1-2}
\triangle\nabla^k Rm=\nabla^k\tri Rm+Rm*\nabla^k Rm+\sum_{l=1}^{k-1}\nabla^{l} Rm*\nabla^{k-l} Rm.
\end{equation}
Substituting (\ref{triRm1-1}) into (\ref{triRm1-2}), we obtain the expression of $\triangle\nabla^k Rm$: 
\begin{equation}\label{triRm1-3}
\begin{aligned}
&\triangle\nabla^k Rm 
=\nabla^k\big( \nabla f*\nabla Rm+ Rm\ast Rm\\
&+ Rm\ast T*T +T* \nabla^2T+\nabla T* \nabla T\big) \\ 
&\quad+Rm*\nabla^k Rm+\sum_{l=1}^{k-1}\nabla^{l} Rm*\nabla^{k-l} Rm \\ 
=&\nabla^k\left( \nabla f*\nabla Rm\right)+\nabla^k\left( Rm*T*T\right)+T*\nabla^{k+2}T+\nabla T*\nabla^{k+1}T \\ 
&\quad+ \sum_{l=2}^{k}\nabla^{l}T*\nabla^{k+2-l}T+Rm*\nabla^k Rm+\sum_{l=1}^{k-1}\nabla^{l} Rm*\nabla^{k-l} Rm.
\end{aligned}
\end{equation}

\textbf{Estimate $\left| \triangle\nabla^k Rm \right|$.}
For the first term in \eqref{triRm1-3}, we invoke the soliton equation $\nabla^2 f=\lambda_S g+A-Ric$ together with the bound on $A$ from \eqref{scaleA4}. This gives
\begin{align*}
\nabla^{l}f=\nabla^{l-2}\nabla^2 f\thickapprox\nabla^{l-2}(A+Rm)\leq \frac{C(C_s,\underline{\nu},F,k)}{r^{l}}\epsilon.
\end{align*}
Using these estimates, we obtain
\begin{equation}\label{triRm1-4}
	\begin{aligned}
& \left| \nabla^k\left( \nabla f*\nabla Rm\right)\right|  \\ 
    		\leq &\, C(k)\left( |\nabla f|\left| \nabla^{k+1}Rm\right|
    	+\sum_{l=2}^{k+1}\left|\nabla^{l}f \right|\left|\nabla^{k+2-l}Rm \right|\right)  \\ 
    		\leq &\, C(k)\left( |\nabla f|\left| \nabla^{k+1}Rm\right|+\frac{C(C_s,\underline{\nu},F,k)}{r^{k+4}}\epsilon\right).
 	\end{aligned}
\end{equation}

For the second through fifth terms, we apply the inductive assumption \eqref{triRm7 inductive hypothesis} together with inequality \eqref{scaletau4}, which yields
\begin{equation}\label{triRm1-5}
    		\left|\nabla^{l} T \right| \leq \frac{C(C_s,\underline{\nu},F,l)}{r^{l+1}}\epsilon \quad \forall\, l\leq k \quad \text{ in } B_{3r/8}.
    	\end{equation}
For the higher derivatives $\nabla^{k+1}T$ and $\nabla^{k+2}T$, estimate \eqref{scaletau} gives
\begin{equation}\label{triRm1-6}
    		\left| \nabla^{k+1}T\right|\leq C(k)\left( \left| \nabla^k Rm\right| +\frac{C(C_s,\underline{\nu},F,k)}{r^{k+2}}\epsilon\right)  
    	\end{equation}
and
\begin{equation}\label{triRm1-7}
    		\left| \nabla^{k+2}T\right|\leq C(k)\left( \left| \nabla^{k+1} Rm\right|
    		+|Rm|^{\frac{1}{2}}\left| \nabla^{k} Rm\right| +\frac{C(C_s,\underline{\nu},F,k)}{r^{k+3}}\epsilon\right).  
    	\end{equation}

By substituting \eqref{triRm1-5}, \eqref{triRm1-6}, \eqref{triRm1-7}, and \eqref{triRm1-4} into \eqref{triRm1-3}, we arrive at
\begin{equation}\label{triRm1-9}
	\begin{aligned}
\left| \triangle\nabla^k Rm \right|  
&\leq C(k)\Big( (|\nabla f|+|T|)\left| \nabla^{k+1}Rm\right|+|Rm|\left| \nabla^{k}Rm\right|\\
&+\frac{C(C_s,\underline{\nu},F,k)}{r^{k+4}}\epsilon \Big).
 	\end{aligned}
\end{equation}

\textbf{Estimate $\tri \left| \nabla^{k} Rm\right|^2$.}
Plugging \eqref{triRm1-9} into \eqref{triRm7}, we have
\begin{equation}\label{triRm9}
\begin{aligned}
\frac{1}{2}\tri \left| \nabla^{k} Rm\right|^2 
&\geq \left| \nabla^{k+1} Rm\right|^2\\
&-C(k)\left| \nabla^{k} Rm\right| (|\nabla f|+|T|)\left| \nabla^{k+1}Rm\right| \\ 
& -C(k)|Rm|\left| \nabla^{k}Rm\right|^2-\left| \nabla^{k}Rm\right|\frac{C(C_s,\underline{\nu},F,k)\epsilon}{r^{k+4}}.
\end{aligned}
\end{equation}
From $|\nabla f|^2\leq F$ and $|T|^2=-S\leq |Rm|$, we have
\begin{equation}\label{triRm8}
\begin{aligned}
&-C(k)\left| \nabla^{k} Rm\right| (|\nabla f|+|T|)\left| \nabla^{k+1}Rm\right|  \\
\geq &-\frac{1}{2}\left| \nabla^{k+1} Rm\right|^2-C(k)\left( |\nabla f|+|T|\right)^2\left| \nabla^{k} Rm\right|^2 \\ 
\geq &-\frac{1}{2}\left| \nabla^{k+1} Rm\right|^2-C(k)\left(F+\frac{C(C_s,\underline{\nu},F)\epsilon}{r^2} \right)\left| \nabla^{k} Rm\right|^2 .
\end{aligned}
\end{equation}
By the inductive hypothesis, we have
\begin{equation}\label{triRm10}
\begin{aligned}
&\left| \nabla^{k} Rm\right| \frac{C(C_s,\underline{\nu},F,k)\epsilon}{r^{k+4}}
= \frac{\left| \nabla^{k} Rm\right|\epsilon^{\frac{1}{2}}}{r} \frac{C(C_s,\underline{\nu},F,k)\epsilon^{\frac{1}{2}}}{r^{k+3}} \\ 
\geq &-\left| \nabla^{k} Rm\right|^2\frac{\epsilon}{r^2}-\frac{C(C_s,\underline{\nu},F,k)\epsilon}{r^{2k+6}}.
\end{aligned}
\end{equation}

Combining the three inequalities above, we have
\begin{equation}\label{triRm11}
	\begin{aligned}
\frac{1}{2}\tri \left| \nabla^{k} Rm\right|^2
&\geq  \frac{1}{2}\left| \nabla^{k+1} Rm\right|^2
-C(C_s,\kappa,F,k)\left(F+\frac{\epsilon}{r^2} \right)\left| \nabla^{k} Rm\right|^2 \\
&-\frac{C(C_s,\underline{\nu},F,k)\epsilon^2}{r^{6+2k}}.
 	\end{aligned}
\end{equation}

\textbf{Cutoff function.}
Choosing a rotationally symmetric cut-off function $\eta$ satisfying $\eta=1$ in $B_{r/4}$, $\eta=0$ outside $B_{3r/8}$ and
    	\begin{equation}\label{cutoff gradient}
    	\left|\nabla\eta\right|\leq\frac{C}{r}.
    	\end{equation}
    	Now we consider the laplacian operator $\tri=g^{ij}(\partial_i\partial_j-\Gamma_{ij}^{k}\partial_k)$. 
    	Since the bound of Riemannian curvature $\left|Rm \right|\leq \frac{C(C_s,\underline{\nu},F)}{r^2}\epsilon$, by integral, we have
    	\begin{equation*}
    		\frac{1}{2}\leq g^{ij}\leq 2 \quad \text{and} \quad \left|\Gamma_{ij}^{k} \right| \leq \frac{2}{r} \quad \text{in} \quad B_r.
    	\end{equation*}
    	Now we should estimate the injective radius. From Klingenberg's theorem, we know the injective radius is bounded below by
    	$\min(\frac{\pi}{\sqrt{K}},l)$ where $K$ is the upper bound of sectional curvature and $2l$ is the length of the shortest geodesic loop. 
    	Since $\left|Rm \right|\leq \frac{C(C_s,\kappa,F)}{r^2}\epsilon$ in $B_r$, we know $\frac{\pi}{\sqrt{K}}\geq \frac{\pi}{\sqrt{C\epsilon}}r>r$. 
    	A theorem of Cheeger-Gromov-Taylor \cite{MR658471}*{Page 42} shows that the lower bound of $l$ depends on the volume lower bound $\kappa_1$ in the non-collapsing property	$V_r\geq \kappa_1 r^n$ where $\kappa_1$ only depends on Perelman's functional $\underline{\nu}$.
    	So for a small radius $r$, we can choose a proper local coordinate in a smaller ball $B_{3r/4}$ because the injective radius is larger than $3r/4$.
    	After choosing a geodesic polar coordinates, we have
    	\begin{equation*}
    		\left| \partial_r\partial_r\eta\right|\leq \frac{C}{r^2} \quad \text{and} \quad \left| \partial_\theta\eta\right|=0. 
    	\end{equation*}
    	Direct calculation shows that
    	\begin{equation}\label{cutoff laplacian}
    		|\tri\eta|\leq \left| g^{rr}\partial_r\partial_r\eta-\Gamma_{ij}^{k}\partial_k \eta\right|\leq \frac{C}{r^2}.
    	\end{equation}

\textbf{Estimate $\tri \left(\eta^2 \left| \nabla^{k} Rm\right|^2\right)$.}    
Substituting the third term estimate \eqref{triRm7 cutoff mixed term}, the first term estimate \eqref{triRm11}, and the bounds for the cutoff function \eqref{cutoff gradient} and \eqref{cutoff laplacian} into \eqref{triRm7 cutoff}, we obtain
\begin{equation}\label{triRm12}
	\begin{aligned}
    		&\tri \left(\eta^2 \left| \nabla^{k} Rm\right|^2\right) \\ 
    		\geq &\eta^2\left| \nabla^{k+1} Rm\right|^2-C\left(F+\frac{\epsilon}{r^2}+\eta\tri\eta-|\nabla\eta|^2 \right)\left| \nabla^{k} Rm\right|^2
    		-\frac{C\epsilon^2}{r^{6+2n}} \\
    		\geq & \eta^2\left| \nabla^{k+1} Rm\right|^2-C_1\left(F+\frac{1}{r^2}\right)\left| \nabla^{k} Rm\right|^2
    		-\frac{C\epsilon^2}{r^{6+2k}}.
 	\end{aligned}
\end{equation}
    	All the $C$'s depend on $C_s,\underline{\nu},F,k$. Similarly, we obtain
\begin{equation}\label{triRm13}
	\begin{aligned}
\tri \left(\left| \nabla^{k-1} Rm\right|^2\right) 
&\geq \left| \nabla^{k} Rm\right|^2-C_2\left(F+\frac{1}{r^2}\right)\left| \nabla^{k-1} Rm\right|^2
-\frac{C\epsilon^2}{r^{4+2k}}\\
&\geq \left| \nabla^{k} Rm\right|^2
-\frac{C\epsilon^2}{r^{4+2k}}.
 	\end{aligned}
\end{equation}
In deriving the final inequality, we have applied the induction hypothesis \eqref{triRm7 inductive hypothesis}: $\left|\nabla^{k-1} Rm \right| \leq \frac{C(C_s,\underline{\nu},F,k-1)}{r^{k+1}}\epsilon$.

\textbf{Maximum principle.} 
Defining the auxiliary function
\[
v=\eta^2 \left| \nabla^{k} Rm\right|^2+C_1\left(F+\frac{1}{r^2}\right)\left| \nabla^{k-1} Rm\right|^2,
\]
we derive
\begin{equation}\label{triRm14}
	\begin{aligned}
\Delta v 
&\geq\eta^2\left| \nabla^{k+1} Rm\right|^2
-\frac{C\epsilon^2}{r^{6+2k}}
- C_1\left(F+\frac{1}{r^2}\right)\frac{C\epsilon^2}{r^{4+2k}}\\
&\geq -\frac{C_3(C_s,\underline{\nu},F,k)\epsilon^2}{r^{6+2k}}\quad\text{in }B_{r}.
 	\end{aligned}
\end{equation}

We introduce the function $e^{\frac{\alpha x_1}{r}}$, where $x_1$ denotes the first coordinate, and set
\[
w = v + e^{\frac{\alpha x_1}{r}}\frac{\epsilon^2}{r^{4+2k}}, \qquad \alpha = C_3 + 6.
\]
Using the bounds
\[
\frac{1}{2} \leq g^{ij} \leq 2 \quad \text{and} \quad \left|\Gamma_{ij}^{k}\right| \leq \frac{2}{r} \quad \text{in} \quad B_r,
\]
we obtain
\begin{equation}\label{triRm16-alt}
	\begin{aligned}
\Delta w 
&\geq \Delta v + \left( \frac{1}{2}\frac{\alpha^2}{r^2} - \frac{2}{r}\frac{\alpha}{r} \right)
e^{\frac{\alpha x_1}{r}}\frac{\epsilon^2}{r^{4+2k}} \\
&\geq \frac{\alpha^2 - 4\alpha - 2C_3}{2r^2}\frac{\epsilon^2}{r^{4+2k}} \geq 0.
 	\end{aligned}
\end{equation}
Hence, by the maximum principle,
\[
w \leq \sup_{\partial B_r} w.
\]

Substituting the definition of $w$ yields
\begin{align*}
&\sup_{B_{3r/4}}\left( \eta^2 \left|\nabla^{k} Rm\right|^2 
+ C_1\left(F + \frac{1}{r^2}\right)\left|\nabla^{k-1} Rm\right|^2
+ e^{\frac{\alpha x_1}{r}}\frac{\epsilon^2}{r^{4+2k}} \right) \\
\leq\;& \sup_{\partial B_{3r/4}}\left( \eta^2 \left|\nabla^{k} Rm\right|^2 
+ C_1\left(F + \frac{1}{r^2}\right)\left|\nabla^{k-1} Rm\right|^2
+ e^{\frac{\alpha x_1}{r}}\frac{\epsilon^2}{r^{4+2k}} \right).
\end{align*}
Now applying the inductive hypothesis and using that $\eta = 1$ in $B_{r/4}$ and $\eta = 0$ outside $B_{3r/8}$, we conclude
\[
\sup_{B_{r/4}}\left|\nabla^{k} Rm\right| \leq \frac{C(C_s,\kappa,F,k)\,\epsilon}{r^{2+k}}.
\]
    
 \subsection{Convergence}
In this section, we will prove the compactness \thmref{half energy convergence}.
Using the higher-order curvature estimates together with the volume non-collapsing condition, we can extract a subsequence that converges in the pointed smooth sense.
We adopt the approach used by Anderson \cite{MR999661}*{Section 5}; see also Haslhofer and M{\"u}ller \cite{MR2846384}.

\textbf{Step 1: An upper bound on the number of singular components.}
Let $\left( M_i, \phi_i, g_i, f_i \right)$ be a sequence of $G_2$-solitons satisfying the hypotheses of Theorem \ref{half energy convergence}.
Fix a large $R$, and let $\epsilon$ be the constant given by the epsilon regularity theorem, Proposition \ref{Laplace4}.

For a small radius $r>0$, we choose a maximal $\frac{r}{2}$-separated set $\{x_i^k\}$ in $B_{g_i}(p_i,R)\subset M_i$.
Then the geodesic balls $B_{g_i}(x_i^k,\frac{r}{4})$ are pairwise disjoint, and the balls $B_{g_i}(x_i^k,r)$ cover $B_{g_i}(p_i,R)$.
Define
$$
D_i(r,R) = \bigcup \left\{ B_{g_i}(x_i^k,r/2)\ \Big|\ \int_{B_{g_i}(x_i^k,r)} |Rm|^{\frac{n}{2}} e^{-f} \leq \epsilon \right\},
$$
and
$$
L_i(r,R) = \bigcup \left\{ B_{g_i}(x_i^k,r/2)\ \Big|\ \int_{B_{g_i}(x_i^k,r)} |Rm|^{\frac{n}{2}} e^{-f} \geq \epsilon \right\}.
$$
Using the volume doubling property from \lemref{Volume doubling}, we can show that the number of points $x_i^k$ whose associated balls lie in $L_i(r,R)$ is bounded.

    \textbf{Step 2: convergence of regular parts.}
    According to the epsilon regularity estimate Proposition \ref{Laplace4} and the Shi-type estimates \lemref{Shi}, we have estimates for curvature away from those singular points $x_j$ in $L_i(r,R)$.
    $$\left|Rm\right|\leq \frac{C\epsilon}{r^2}, \quad \left| \nabla^k Rm\right|\leq \frac{C\epsilon}{r^{2+k}}.$$
   {Then the curvature bounds yield corresponding bounds for the $G_2$-structures by \lemref{higher-order derivatives}, and the metric bounds provided by Proposition \ref{lift} can be applied.} The injectivity radius bound is obtained in the same manner as in the argument of \textbf{Step 2} in the proof of \thmref{Gap theorem}.
    By smooth convergence theorem, a subsequence of $D_i(r,R)$ converges to a smooth manifold $D(r,R)$ with a smooth metric $g_{\infty}(r,R)$.
    Now we choose two sequences $$\left\lbrace r_j \right\rbrace\rightarrow 0,\quad r_{j+1}\leq \frac{r_j}{2},\quad \left\lbrace R_j \right\rbrace\rightarrow \infty,\quad R_{j+1}\geq 2R_j.$$
    We set
    \begin{displaymath}
        \tilde{D}_i(r_j,R_j)=\bigcup_{l\leq j}D_i(r_l,R_l),
    \end{displaymath}
which gives the inclusions
    $$\tilde{D}_i(r_1,R_1)\subset \tilde{D}_i(r_2,R_2)\subset \cdots \subset M_i .$$
    By the argument above, each $\tilde{D}_i(r_j,R_j)$ converges to $\tilde{D}(r_j,R_j)$ smoothly. 
    We set $D=\bigcup \tilde{D}(r_j,R_j)$ such that the induced metric $g_{\infty}$ coincides with $g_{\infty}(r_j,R_j)$ in $ \tilde{D}(r_j,R_j)$.

    In summary, we know on the regular part,
    the sequence of $G_2$-solitons $\left( M_i, \varphi_i, g_i, f_i\right)$ smoothly converge to $\left( D, \varphi_{\infty}, g_{\infty},   f_{\infty}\right)$
    and the limit satisfies the $G_2$-soliton equation.
    
    Let$M_{\infty}$ denote the metric completion of $D$. Following 
Anderson \cite{MR999661}*{Page 479}, we have that 
$$M_{\infty}=D\cup \left\{p_l\right\}$$
where the set of additional points $\left\{p_l\right\}$ is locally finite. These points $\left\{p_l\right\}$ are referred to as the curvature singularities of $M_{\infty}$.

\textbf{Step 3: Multifold.}
Let $N$ denote the number of singular points of $\left(M_{\infty}, \vphi_{\infty}, g_{\infty}, f_{\infty}\right)$, and write these singular points as
\[
\{p_l \mid l = 1, \dots, N\}.
\]

Since $N$ is finite, we can choose $r_0>0$ so that $2r_0$ is strictly smaller than the distance between any two distinct singular points. Then
\[
U_l = B_{g_{\infty}}(p_l, r_0) \subset M_{\infty}
\]
is a punctured neighborhood of $p_l$, and moreover
\[
\int_{U_l} |Rm|^{\frac{7}{2}} < \infty.
\]

For $0 < s < t < r_0$, define the annulus
\[
A(s,t) = B_{g_{\infty}}(p_l, t) \setminus B_{g_{\infty}}(p_l, s) \subset U_l.
\]
As $s \to 0$, we have
\[
\int_{A\left(\frac{s}{2}, 2s\right)} |Rm|^{\frac{7}{2}} \to 0.
\]
Hence, by the epsilon regularity estimate Proposition \ref{Laplace4} and the Shi-type estimates \lemref{Shi}, there exists a function $\epsilon(r)$ such that
\[
|\nabla^k Rm|(x) \le \frac{C\,\epsilon(r)}{r^{2+k}},
\]
where $r = d(x,p_l)$ and $\epsilon(r)\to 0$ as $r\to 0$.

Next, choose a sequence $\{s_j\}$ with $s_j \to 0$ and $s_{j+1} \le \frac{s_j}{2}$. Then the curvature $|Rm|$ and all its covariant derivatives $|\nabla^k Rm|$ on the rescaled annuli
\[
A_j\Bigl(\tfrac{1}{2}, 2\Bigr) = \bigl(A(\tfrac{s_j}{2}, 2s_j),\, g_{\infty} s_j^{-2}\bigr)
\]
tend to $0$ as $s_j \to 0$.

By the smooth convergence theorem, a subsequence of the annuli $A_j\bigl(\tfrac{1}{2}, 2\bigr)$ converges smoothly to a flat manifold $A_{\infty}\bigl(\tfrac{1}{2}, 2\bigr)$ with finitely many connected components.

Repeating this construction for the family of annuli $A\bigl(\tfrac{s}{k}, ks\bigr)$ for every positive integer $k$, and then passing to a diagonal subsequence, we obtain in the limit a flat manifold $A_{\infty}$.

As shown in \cite{MR999661}, after blowing up around a singular point, the resulting tangent cone is a union of finitely many flat cones over spherical space forms.

\textbf{Step 4: Orbitfold.}
We now show that $U\setminus\{p_i\}$ is connected.

In \cite{MR1118730}*{Lemma 1.2}, the argument uses the Ricci curvature volume comparison theorem together with non-collapsing assumptions. In the case of a $G_2$-soliton, the Bakry-\'{E}mery volume comparison \lemref{BEVCT} can be used in place of the Ricci volume comparison theorem, thanks to the bounds on
$$
f \text{ and } |\nabla f|
$$
stated in \ref{f condition} in Definition \ref{3 assumptions}. 

Thus, by following the proof of \cite{MR1118730}*{Lemma 1.2} step by step with this substitution, we deduce that $U\setminus\{p_i\}$ has exactly one connected component. Equivalently, the tangent cone at any singular point $p_i$ consists of a single flat cone whose link is a spherical space form
$
S^{n-1}/\Gamma.
$

\textbf{Step 5: Absence of singular points.}
In dimension $n=7$, the tangent cone has the form
$$A_{\Gamma}:=(0,r)\times S^{6}/\Gamma.$$
Since $S^6$ is an even-dimensional sphere, the only possible groups $\Gamma$ are
$$\Gamma=\left\lbrace e\right\rbrace \quad\text{or}\quad \Gamma=\mathbb{Z}_2.$$
If $\Gamma=\mathbb{Z}_2$, then $S^{6}/\Gamma$ is $\mathbb{RP}^6$, which is non-orientable.
Consequently, the tangent cone $A_{\Gamma}$ would also be non-orientable, which is incompatible with the orientation induced by the $G_2$-structure. Hence we must have $$\Gamma=\left\lbrace e\right\rbrace.$$
In this situation, the tangent cone at a putative singular point is simply Euclidean space $\mathbb{R}^n$, and thus no singular point can actually occur.
We therefore obtain the convergence \thmref{half energy convergence}.



\end{document}